\newtheorem{theorem}{Theorem}[section]
\newtheorem{proposition}[theorem]{Proposition}
\newtheorem{lemma}[theorem]{Lemma}
\newtheorem{con lemma}[theorem]{Continuity Lemma}
\newtheorem{corollary}[theorem]{Corollary}
\theoremstyle{definition}
\newtheorem{definition}{Definition}
\theoremstyle{remark}
\newtheorem{remark}{Remark}
\newcommand{\R}{\mathbb{R}}
\newcommand{\Z}{\mathbb{Z}}
\newcommand{\N}{\mathbb{N}}
\def\beq{\begin{equation}}
\def\eeq{\end{equation}}
\def\pa{\partial}
\def\f{\varphi}
\def\eps{\varepsilon}
\def\wc{\rightharpoonup}
\def\o{\omega}
\def\wt{\widetilde}
\def\wh{\widehat}
\def\a{\alpha}
\def\n{\nabla}
\def\t{\vartheta}
\def\d{\delta}
\def\g{\gamma}
\def\s{\sigma}
\def\l{\lambda}
\def\r{\rho}
\title{\sc Symbolic dynamics: from the $N$-centre to the $(N+1)$-body problem, a preliminary study}
\author{Nicola Soave}
\begin{document}

\maketitle

{\footnotesize
 \centerline{Dipartimento di Ma\-t\-ema\-ti\-ca e Applicazioni, Universit\`a degli Studi di Milano-Bicocca}
   \centerline{Via Cozzi 53}
   \centerline{20125 Milano, Italy.} 
   \centerline{LAMFA, CNRS UMR 7352, Université de Picardie Jules Verne} 
\centerline{33 rue Saint-Leu}
\centerline{80039 Amiens, France.}}
\centerline{email: n.soave@campus.unimib.it}

\begin{abstract}
\noindent We consider a rotating $N$-centre problem, with $N \geq 3$ and homogeneous potentials of degree $-\a<0$, $\a \in [1,2)$. We prove the existence of infinitely many collision-free periodic solutions with negative and small \emph{Jacobi constant} and small values of the angular velocity, for any initial configuration of the centres. We will introduce a Maupertuis' type variational principle in order to apply the \emph{broken geodesics technique} developed in \cite{SoTe}. Major difficulties arise from the fact that, contrary to the classical Jacobi length, the related functional does not come from a Riemaniann structure but from a Finslerian one. Our existence result allows us to characterize the associated dynamical system  with a symbolic dynamics, where the symbols are given partitions of the centres in two non-empty sets.
\end{abstract}

\noindent \textbf{2000 Mathematics Subject Classification.} Primary: 70F10, 37N05; Secondary: 70F15, 37J30. \\
\textbf{Keywords:} symbolic dynamics, $N$-centre problem, restricted $(N+1)$-body problem, Maupertuis' principle.

\section{Introduction and main results}\label{intro}
In the classical $N$-centre problem it is investigated the motion of a test particle of null mass under the gravitational force fields of $N$ fixed heavy bodies (the \emph{centres}): if $c_k$ and $m_k$ denote respectively the position and the mass of the $k$-th centre, the motion equation is 
\beq\label{N centri}
\ddot{x}(t)=  -\sum_{k=1}^N \frac{m_k}{|x(t)-c_k|^3} (x(t)-c_k)= \n \left.\left(\sum_{k=1}^N\frac{m_k}{|x-c_k|}\right)\right|_{x=x(t)},
\eeq
where $x=x(t) \in \R^2$ denotes the position of the particle at time $t \in \R$; basic references for such a problem are \cite{Bo, BoNe, Di, KlKn, Kn, KnTa, SoTe} and the references therein. In this paper we consider $\a$-gravitational potentials of type
\[
V(x)=  \sum_{k=1}^N\frac{m_k}{\a|x-c_k|^\a} \qquad \a \in [1,2).
\]
Of course, for $\a=1$ we get the classical Newtonian potential; moreover, we assume that the centres are not fixed, but rotate according to the law $\xi_k(t):= \exp{\{i \nu t\}} c_k$. Under this assumption, the equation for the motion of the test particle is  
\beq\label{motion eq}
\ddot{x}(t)=-\sum_{k=1}^N \frac{m_k}{|x(t)-e^{i \nu t}c_k|^{\a+2}}\left(x(t)-e^{i \nu t}c_k\right).
\eeq
We will refer to the research of solutions to this equation as to \emph{the rotating $N$-centre problem} (briefly, the rotating problem). It is convenient to introduce a different frame of reference for $x$, taking into account the rotation of the centres: setting $x(t)=\exp{\{i \nu t\}} z(t)$, equation \eqref{motion eq} becomes
\beq\label{motion eq 2}
\ddot{z}(t)+2\nu i \dot{z}(t)=\nu^2 z(t) - \sum_{k=1}^N \frac{m_k}{|z(t)-c_k|^{\a+2}} \left( z(t)- c_k \right).
\eeq
We introduce $\Phi_\nu(z):= \nu^2|z|^2 /2+V(z)$, so that \eqref{motion eq 2} can be written as
\[
\ddot{z}(t)+2 \nu i \dot{z}(t)= \nabla \Phi_\nu(z(t)).
\]
Since the terms in $z$ and $\dot{z}$ are multiplied by powers of $\nu$, the idea is that if $|\nu|$ is sufficiently small, then equation \eqref{motion eq 2} can be regarded as a perturbation of the planar $N$-centre problem, which we dealt with in \cite{SoTe}. Note that, contrary to \eqref{N centri}, equation \eqref{motion eq 2} is not a conservative system; however, it is possible to find a first integral defining 
\[
J_{\nu}(z,\dot{z}):= \frac{1}{2}|\dot{z}|^2-\Phi_\nu(z).
\]
The value $h=J_\nu(z(t),\dot{z}(t))$, which is the same for every $t \in I$, is called the \emph{Jacobi constant}, in analogy with the same integral of the circular restricted $(N+1)$-body problem (see the discussion below for the relationship between the rotating problem and the restricted one). Note the similarity between $J_\nu$ and the usual energy function $H(z,\dot{z})=|\dot{z}|^2/2 - V(z)$: it results $H=J_0$.\\
In this paper we generalize the approach already developed in \cite{SoTe}, proving the existence of infinitely many collision-free periodic solutions of equation \eqref{motion eq 2} with negative and small (in absolute value) Jacobi constant, provided the angular velocity $|\nu|$ is sufficiently small. As a consequence, for those values of $h$ and $\nu$ we can characterize the dynamical system induced by \eqref{motion eq 2} on the level set
\[
\mathcal{U}_{h,\nu}:=\left\{(z,v) \in \R^4:  J_\nu(z,v)=h \right\}
\]
with a symbolic dynamics, where the symbols are some selected partitions of the centres in two different non-empty sets. Coming back to equation \eqref{motion eq}, this means that for $h<0$ and $|h|, |\nu|$ sufficiently small  we have infinitely many collision-free \emph{relative periodic solutions} (i.e. periodic in the rotating frame of reference) of the rotating problem; this existence result allows to prove the occurrence of symbolic dynamics in a proper submanifold of the phase space (which correspond to $\mathcal{U}_{h,\nu}$ through the transformation $x \leftrightsquigarrow z$).

\paragraph{Motivations.} The $N$-centre problem can be considered as a simplified version of the $(N+1)$-body problem, when one of the bodies is much faster then the others. Therefore, in order to understand if the \emph{broken geodesics technique} we introduced in \cite{SoTe} can be somehow extended to find solutions of the $(N+1)$-body problem, it seems reasonable to start considering an "easy test motion" for the centres, such as the uniformly circular one. This is strictly related to the study of the circular restricted $(N+1)$-body problem, which we briefly recall; assigned $N$ positive masses $m_1,\ldots,m_N$, let us consider any planar central configuration $(c_1,\ldots,c_N)$ of the $N$-body problem. A relative equilibrium of the $N$-body problem is a motion of type $\xi_k(t):=\exp{\{i \nu t \}} c_k$ ($k=1,\ldots,N$), with $\nu \in \R$, i.e. an equilibrium point in a rotating frame of reference with angular velocity $\nu$. The restricted problem consists in studying the motion of a test particle of null mass under the gravitational force field of $N$ bodies (the \emph{primaries}) which move according to a motion of relative equilibrium. This leads to the search of solutions to \eqref{motion eq 2}, but now $\nu$ cannot be considered as a free parameter: indeed, each central configuration determines the unique admissible value of $\nu$ through the relation
\beq\label{config centrali}
\nu^2=\frac{U(\mathbf{c})}{2I(\mathbf{c})}, \quad \text{where} \quad U(\mathbf{c})=\sum_{1\leq j<k\leq N}\frac{m_j m_k}{|c_j-c_k|}, \ I(\mathbf{c})= \frac{1}{2}\sum_{k=1}^N m_k |c_k|^2,
\eeq
see Meyer \cite{Me}. In particular, letting $\nu$ to tend to $0$, the relation \eqref{config centrali} implies that either $m_k \to 0$ for every $k$ or $|c_k| \to 0$ for every $k$; as a consequence, the equation of the restricted problem in the limit case $\nu \to 0$ tends to $\ddot{z}=0$, which has no relation with the $N$-centre problem or the $N$-body problem. As a toy model towards the real restricted $(N+1)$-body problem, we introduce the rotating $N$-centre problem; we point out that the motivation for its study is prevalently mathematical: our goal is to understand if the techniques introduced in \cite{SoTe} are sufficiently robust to survive when we perturb the $N$-centre problem by letting the centres move; the answer is yes, but, as we will see, the extension of our broken geodesics method is not trivial and requires new ideas. Therefore, the generalization to the real restricted problem seems possible, but extremely complicated.

\subsection{Periodic solutions}

Let $\mathcal{P}$ be the set of the possible partitions of the centers in two different non-empty sets. There are exactly $2^{N-1}-1$ such partitions, and to each of them we associate a label:
\[
\mathcal{P}=\left\{P_j:j=1,\ldots,2^{N-1}-1\right\}.
\] 
We give particular labels to those partitions which isolates one centre with respect to the others:
\[
P_j:=\{\{c_j\},\{c_1,\ldots,c_N\}\setminus \{c_j\}\} \qquad j=1,\ldots,N.
\]
The collection of these labels is the subset
\begin{equation}\label{def di P_1}
\mathcal{P}_1:=\{P_j \in \mathcal{P}: j=1,\ldots,N\} \subset \mathcal{P}.
\end{equation}
We define the \emph{right shift} $T_r:  \mathcal{P}^n \to \mathcal{P}^n$ as
\begin{equation*}
T_r((P_{j_1},P_{j_2},\ldots,P_{j_n})) = (P_{j_n},P_{j_1},\ldots, P_{j_{n-1}}),
\end{equation*}
and we say that \emph{$(P_{j_1},\ldots,P_{j_n}) \in \mathcal{P}^n$ is equivalent to $(P_{j_1}',\ldots, P_{j_n}') \in \mathcal{P}^n$} if there exists $m \in \mathbb{N}$ such that
\[
(P_{j_1}',\ldots, P_{j_n}')=T_r^m \left((P_{j_1},\ldots,P_{j_n}) \right).
\]
To describe the first main result which we are going to prove, let us look at Theorem 1.1 of \cite{SoTe}; therein we proved the existence of $\bar{h}<0$ such that for any $h \in (\bar{h},0)$ we can associate to any finite sequence of partition $(P_{j_1},\ldots, P_{j_n}) \in \mathcal{P}^n$ a periodic solution $x_{((P_{j_1},\ldots,P_{j_n}),h)}$ of the $N$-centre problem \eqref{N centri} with energy $h$. Under particular assumptions on $(P_{j_1},\ldots, P_{j_n})$, assumptions which are specified in points ($ii$)-$b$) or ($ii$)-$c$) of the quoted statement, we have to allow collision solutions, but it is always possible (for every $N \geq 3$) to build infinitely many collision-free solutions. We would like to repeat the game associating to a finite sequence of partitions, for sufficiently small values of the absolute value of the Jacobi constant $|h|$ and of the angular velocity $|\nu|$, a periodic solution of equation \eqref{motion eq 2}. In this paper we will put some restrictions on the sequences of partitions which we want to consider; this is motivated by the fact that the rotation of the centres makes impossible the use of some techniques employed in the study of the behaviour of collision-solutions. In this sense we observed in \cite{SoTe} that the study of the collisions requires a distinction among
\[
 1) \ \a=1 \text{ and } N \geq 4, \qquad  2) \ \a=1 \text{ and } N=3, \qquad 3) \ \a \in (1,2).
\]

\noindent We start from the first case.
\begin{theorem}\label{esistenza di soluzioni periodiche}
Let $\a=1$, $N \geq 4$, $c_1, \ldots, c_N \in \mathbb{R}^2$, $m_1,\ldots,m_N \in \R^+$. There exists $\bar{h}_1$ such that, given $h \in (\bar{h}_1,0)$, there is $\bar{\nu}_1= \bar{\nu}_1(h)>0$ such that to each $\nu \in (-\bar{\nu}_1,\bar{\nu}_1)$, $n \in \mathbb{N}$ and $(P_{j_1},\ldots,P_{j_n}) \in (\mathcal{P}\setminus \mathcal{P}_1)^n$ we can associate a collision-free periodic solution $z_{((P_{j_1},\ldots,P_{j_n}),h,\nu)}$ of
\beq\label{problema ristretto}
\begin{cases}
\ddot{z}(t)+2 \nu i \dot{z}(t)= \nabla \Phi_\nu(z(t)) \\
\frac{1}{2}|\dot{z}(t)|^2-\Phi_\nu(z(t))=h,
\end{cases}
\eeq
which depends on $(P_{j_1},\ldots,P_{j_n})$ in the following way. There exist $\bar{R},\bar{\d}>0$ (depending on $h$ only) such that $z_{((P_{j_1},\ldots,P_{j_n}),h,\nu)}$ crosses $2n$ times within one period the circle $\partial B_{\bar{R}}(0)$, at times $(t_k)_{k=0,\dots,2n-1}$, and
\begin{itemize}
\item  in $(t_{2k},t_{2k+1})$ the solution stays outside $B_{\bar{R}}(0)$ and
$$
|z_{((P_{j_1},\ldots,P_{j_n}),h,\nu)}(t_{2k})-z_{((P_{j_{1}},\ldots,P_{j_n}),h,\nu)}(t_{2k+1})|<\bar{\d};
$$
\item in $(t_{2k+1},t_{2k+2})$ the solution lies inside $B_{\bar{R}}(0)$ and separates the centres according to the partition $P_{j_k}$.
\end{itemize}
\end{theorem}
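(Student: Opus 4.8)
The plan is to recast the fixed-energy problem \eqref{problema ristretto} as a geodesic problem through a Maupertuis-type principle, and then to build the periodic solution by concatenating separately minimized interior and exterior arcs, i.e. by the \emph{broken geodesics technique} of \cite{SoTe}. First I would write down the length functional associated with \eqref{problema ristretto}. For the conservative part one has the Jacobi length $\int \sqrt{2(h+\Phi_\nu(\gamma))}\,|\dot\gamma|$, obtained from $\tfrac12|\dot z|^2=h+\Phi_\nu$; the gyroscopic term $2\nu i\dot z$ then contributes a term linear in the velocity, namely the integral of the magnetic $1$-form generating the Coriolis force, so that the total functional $\mathcal{M}_{h,\nu}(\gamma)=\int \big[\sqrt{2(h+\Phi_\nu(\gamma))}\,|\dot\gamma|+\langle A(\gamma),\dot\gamma\rangle\big]$ is of Randers type, that is Finslerian rather than Riemannian. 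I would verify that, on the admissible region $\{\Phi_\nu\geq -h\}$, the reparametrized critical points of $\mathcal{M}_{h,\nu}$ are exactly the solutions of \eqref{problema ristretto} with Jacobi constant $h$.

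Next I would fix $\bar R$ so that all the centres lie well inside $B_{\bar R}(0)$ and construct two families of arcs. For each partition $P\in\mathcal{P}\setminus\mathcal{P}_1$ I would minimize $\mathcal{M}_{h,\nu}$ among paths contained in $\overline{B_{\bar R}(0)}$ that join two points of $\partial B_{\bar R}(0)$, in the homotopy class of arcs separating the two groups of centres prescribed by $P$; these are the \emph{inner arcs}. For the \emph{outer arcs} I would minimize over paths lying outside $B_{\bar R}(0)$ that connect two boundary points in the trivial class. The chord bound $|z(t_{2k})-z(t_{2k+1})|<\bar\delta$ should come from the fact that outside $B_{\bar R}(0)$ the potential is nearly central ($\approx M/|z|$ plus small centrifugal and Coriolis corrections for $|\nu|$ small), so the minimizing exterior excursions are high-eccentricity, near-radial sub-arcs whose entry and exit points stay $\bar\delta$-close.

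The heart of the argument is perturbative. At $\nu=0$ the functional is the Riemannian Jacobi length, and the existence, the separation property and, crucially, the \emph{collision-freeness} of the inner minimizers are provided by Theorem~1.1 of \cite{SoTe}, which for $\alpha=1$, $N\geq 4$ is collision-free precisely on the partitions of $\mathcal{P}\setminus\mathcal{P}_1$ (those splitting the centres into two groups each of size $\geq 2$, so that the minimizer need not wind tightly around an isolated centre). I would then establish a \emph{Continuity Lemma} asserting that the minimal levels and the minimizers depend continuously on $\nu$; since collision-freeness is an open condition, the $\nu=0$ minimizer keeping a definite distance from the centres, it persists for $|\nu|<\bar\nu_1(h)$. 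Concatenating $n$ inner arcs in the order $P_{j_1},\dots,P_{j_n}$, alternating with $n$ outer arcs, produces a closed piecewise-geodesic loop with the prescribed combinatorics, broken only at the $2n$ crossing points on $\partial B_{\bar R}(0)$.

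Finally I would remove the corners: letting the $2n$ junction points vary freely on $\partial B_{\bar R}(0)$ and minimizing the total functional, the first-variation (matching) condition at each junction should force the incoming and outgoing momenta to agree, so that the glued curve is of class $C^1$ and hence, by the Maupertuis principle and interior regularity, a genuine smooth solution of \eqref{problema ristretto}. I expect the \textbf{main obstacle} to be this last step together with the Finsler nature of $\mathcal{M}_{h,\nu}$: because the Randers functional is not symmetric under $\dot z\mapsto-\dot z$, its Legendre transform is asymmetric and the clean Riemannian matching and Gauss-lemma arguments of \cite{SoTe} must be reworked; one must also guarantee that the minimizing junctions are transverse to $\partial B_{\bar R}(0)$, i.e. interior to the admissible boundary arcs, so that the junction condition is the full $C^1$ matching rather than a constrained one. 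Keeping the collision analysis tractable is exactly what forces the restriction to $(\mathcal{P}\setminus\mathcal{P}_1)^n$, the regime in which the rotation does not interfere with ruling out collisions.
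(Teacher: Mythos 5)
Your overall scheme --- a Randers-type Maupertuis functional, inner arcs minimized in partition classes inside $B_{\bar R}(0)$, junction points left free on $\partial B_{\bar R}(0)$ and removed by a first-variation matching argument, and a continuity-in-$\nu$ lemma transferring the collision-free property from the $\nu=0$ case of \cite{SoTe} --- is indeed the paper's scheme. But your construction of the \emph{outer} arcs contains a genuine gap: you propose to obtain them by minimizing the functional among paths lying outside $B_{\bar R}(0)$, and you claim the minimizers are ``high-eccentricity, near-radial sub-arcs''. This is false, and it is not a repairable detail. The radius $\bar R$ is chosen (see \eqref{scelta di R}) precisely as the radius of the circular solution of the $\a$-Kepler problem at the prescribed energy, so $\partial B_{\bar R}(0)$ is itself (a small perturbation of) a geodesic of the Jacobi metric; a minimizer among exterior paths joining two nearby boundary points is therefore essentially the short arc of the circle, hugging the constraint $|z|=\bar R$, not an excursion towards the zero-velocity curve. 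The near-radial ``close to brake'' excursions the theorem needs are \emph{non-minimal} critical points of $L_{h,\nu}$ --- the paper says this explicitly in the Remark closing Section \ref{dinamica interna} --- and are produced not variationally but perturbatively: by the implicit function theorem applied around the rectilinear brake orbit of the Kepler problem (Lemmas \ref{lemma 0.1}, \ref{lemma 0.2} and Proposition \ref{teorema 0.1}), which also yields the uniqueness and $\mathcal{C}^1$ dependence on $(p_0,p_1)$ that the later differentiability of the gluing functions $G_k$ requires. With your outer pieces the glued loop would not stay outside $B_{\bar R}(0)$ on $(t_{2k},t_{2k+1})$, the $2n$ transversal crossings would disappear, and the smoothing argument would break down (a constrained minimizer lying on $|z|=\bar R$ does not even solve the equation there). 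Relatedly, the chord bound $\bar\d$ is not a conclusion one extracts from minimization: it is the \emph{hypothesis} delimiting the neighbourhood on which the implicit function theorem produces outer solutions, and it is then built into the constraint set $D$ over which the junction points are minimized.

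A secondary weakness: ``collision-freeness is an open condition, hence persists for $|\nu|<\bar\nu_1(h)$'' is too loose as stated, because the endpoints of the inner arcs of the final solution are themselves outputs of the minimization of $F$ and move as $\nu$ varies; openness for each fixed pair of endpoints gives a threshold depending on that pair, while the theorem needs a threshold uniform over all $p_1,p_2\in\partial B_{\bar R}(0)$ and all partitions in $\mathcal{P}\setminus\mathcal{P}_1$. The paper obtains this uniformity by contradiction and compactness (Proposition \ref{coll per N >=4}): a sequence of inner minimizers with collision distance tending to zero, with $\nu'_m\to 0$ and endpoints converging, has a weak $H^1$ limit which the Continuity Lemma \ref{minimi interni con variazione di nu} identifies as a \emph{colliding} minimizer of $M_0$, contradicting Theorem 4.12 of \cite{SoTe}. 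So the Continuity Lemma you invoke is the right tool, but it must be applied along sequences of endpoints and angular velocities, not pointwise in $\nu$.
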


The following picture represents the typical shape of a trajectory in the rotating frame of reference, moving with angular velocity $\nu$.
\begin{center}
\begin{tikzpicture}[>=stealth]
\draw[dashed] (0,0) circle (1.5cm);
\draw[font=\footnotesize] (130:1.5cm) node[anchor=east]{$\bar{R}$};
\draw[->] (2cm,0cm) arc (0:30:2cm);
\draw[font=\footnotesize] (2.2cm,0.3cm) node{$\nu$};
\filldraw (-0.4,0.7) circle (1pt)
          (0.5,0.3) circle (1pt)
          (0,0.5) circle (1pt)
          (-0.4,-0.2) circle (1pt)
          (-0.2,-0.7) circle (1pt)
          (0.8,0) circle (1pt) ;
\draw[->] (310:1.5cm)  .. controls (316:3cm) and (319:3cm)..(320:1.5cm);
\draw[->] (320:1.5cm)  .. controls (-0.1,-0.1) and (-0.1,0.2)..(175:1.5cm);
\draw[->] (175:1.5cm)  .. controls (177:3.5cm) and (181:3cm)..(183:1.5cm);
\draw[->] (183:1.5cm)  .. controls (0,0.1) and (-0.2,-0.4)..(55:1.5cm);
\draw[->] (55:1.5cm)  .. controls (57:3.1cm) and (61:3cm)..(62:1.5cm);
\draw[->] (62:1.5cm)  .. controls (0,0.3) and (0.5,-0.7)..(310:1.5cm);
\end{tikzpicture}
\end{center}

Note the analogy with Theorem 1.1 of \cite{SoTe}: if $\a=1$ and $N \geq 4$ we can easily find a condition on $(P_{j_1},\ldots,P_{j_n})$ in order to ensure that the periodic solution $z_{((P_{j_1},\ldots,P_{j_n}),h,0)}$ of the $N$-centre problem 
\[
\begin{cases}
\ddot{z}(t)= \n V(z(t)) \\
\frac{1}{2}|\dot{z}(t)|^2-V(z(t))=h
\end{cases}
\]
is collision-free; it is sufficient to impose that $P_{j_k} \in (\mathcal{P} \setminus \mathcal{P}_1)$ for every $k$. If $N=3$ then $\mathcal{P}=\mathcal{P}_1$, so that if in addition $\a=1$ we have to use a little trick: let 
\[
(P_1,P_1,P_2,P_3)=G_1, \qquad (P_2, P_2,P_3,P_1) = G_2,
\]
and let $\mathcal{G}:=\{G_1, G_2\}$. We will observe (Remark \ref{no coll N centri} below) that no composed sequence obtained by the juxtaposition of $G_1$ and $G_2$ satisfies the symmetry conditions of cases ($ii$)-$b$) or ($ii$)-$c$) of Theorem 1.1 in \cite{SoTe}; this implies that a solution of the $N$-centre problem associated to $(P_{k_1},\ldots,P_{k_{4n}}) \in \mathcal{G}^n \subset \mathcal{P}^{4n}$ is collision-free. Coming back to the rotating problem, it results

\begin{theorem}\label{esistenza di sol periodiche 3}
Replacing the assumption $N \geq 4$ in Theorem \ref{esistenza di soluzioni periodiche} with $N=3$, the same statement holds true replacing $(\mathcal{P} \setminus \mathcal{P}_1)^n$ with $\mathcal{G}^n$.
\end{theorem}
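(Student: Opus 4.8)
The plan is to follow, essentially verbatim, the proof of Theorem \ref{esistenza di soluzioni periodiche}, exploiting the observation that the hypothesis $N \geq 4$ enters only through the collision analysis and plays no role whatsoever in the variational construction. Indeed, the existence part — producing, for a prescribed finite string of partitions and for small $|\nu|$, a periodic solution of \eqref{problema ristretto} with the stated crossing structure through $\partial B_{\bar R}(0)$ — is obtained from the Maupertuis-type variational principle together with the broken geodesics technique of \cite{SoTe}, and is insensitive to whether the partitions belong to $\mathcal{P}\setminus\mathcal{P}_1$. Thus, reading a sequence $(P_{k_1},\ldots,P_{k_{4n}}) \in \mathcal{G}^n \subset \mathcal{P}^{4n}$ simply as an ordinary length-$4n$ string of partitions, I would apply the same construction to associate to it a solution $z_{((P_{k_1},\ldots,P_{k_{4n}}),h,\nu)}$ of the rotating problem, crossing $\partial B_{\bar R}(0)$ a total of $8n$ times and separating the centres along the prescribed partitions on its inner arcs. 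The only point that must be re-examined for $N=3$ is that this solution is collision-free.

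First I would treat the unperturbed case $\nu = 0$. Here $z_{(\cdot,h,0)}$ coincides with the $N$-centre solution attached to the same string by Theorem 1.1 of \cite{SoTe}, which is collision-free unless the string satisfies one of the symmetry conditions of cases ($ii$)-$b$) or ($ii$)-$c$) of that statement. The content of Remark \ref{no coll N centri} is precisely that no string obtained by juxtaposing the blocks $G_1=(P_1,P_1,P_2,P_3)$ and $G_2=(P_2,P_2,P_3,P_1)$ meets those symmetry requirements; hence every $\mathcal{G}^n$-sequence yields a collision-free $N$-centre solution. Moreover, since each inner excursion realizes one of the finitely many partition patterns occurring inside $G_1$ and $G_2$, and since only the two block types are ever used, the distance of $z_{(\cdot,h,0)}$ from the centres admits a lower bound $\rho_0 = \rho_0(h)>0$ that is \emph{uniform} over all sequences in $\bigcup_n \mathcal{G}^n$.

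The final step is to transfer this collision-free property, together with its uniform margin, from $\nu = 0$ to small $\nu \neq 0$. This is exactly where the rotation could otherwise obstruct the argument: as stressed in the introduction, the time-reversal and reflection symmetries that \cite{SoTe} uses to analyse collision solutions are destroyed by the Coriolis term $2\nu i \dot z$, so a direct collision analysis for $\nu \neq 0$ is unavailable. Instead I would invoke the Continuity Lemma: the solutions $z_{(\cdot,h,\nu)}$ converge to $z_{(\cdot,h,0)}$ as $\nu \to 0$, uniformly on one period and — crucially — uniformly over the sequence and over $n$, since the trajectory decomposes into pieces drawn from the finite family $\{G_1,G_2\}$. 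Consequently there is a threshold $\bar\nu_1 = \bar\nu_1(h)>0$, independent of $n$ and of the chosen sequence, such that for $|\nu| < \bar\nu_1$ the trajectory stays at distance at least $\rho_0/2$ from every centre, and is therefore collision-free. The remaining geometric assertions (the $8n$ crossings, the estimate by $\bar\d$ on the outer arcs, the separation of the centres on the inner arcs) are inherited directly from the construction underlying Theorem \ref{esistenza di soluzioni periodiche}.

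The main obstacle I expect is precisely this uniformity in $n$: one must ensure that neither $\bar\nu_1$ nor the collision margin degenerates as the number of blocks grows, for otherwise the threshold would depend on $n$, contrary to the statement. The resolution rests on the finiteness of the set of block types together with the local nature of all the estimates — each crossing of $\partial B_{\bar R}(0)$ and each inner excursion is controlled by quantities depending only on $h$ and on which of $G_1,G_2$ is being realized — so that the global collision problem reduces to finitely many local ones, exactly as in the proof of Theorem \ref{esistenza di soluzioni periodiche}.
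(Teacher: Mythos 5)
There is a genuine gap, and it sits precisely where your proposal treats the matter as routine: the transfer from $\nu=0$ to small $\nu\neq 0$ ``uniformly over the sequence and over $n$''. You invoke ``the Continuity Lemma'' as if it gave convergence of the selected solutions $z_{(\cdot,h,\nu)}$ to the selected collision-free solution $z_{(\cdot,h,0)}$; it does not. Continuity Lemma \ref{minimi interni con variazione di nu} only says that if inner minimizers $u_{P_j}(\cdot\,;p_1^m,p_2^m;\eps,\nu_m')$ with $(p_1^m,p_2^m)\to(\wt{p}_1,\wt{p}_2)$ and $\nu_m'\to 0$ happen to converge weakly, then the limit is \emph{some} minimizer of $M_0$ in $K_{P_j}^{\wt{p}_1 \wt{p}_2}([0,1])$. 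For $N\geq 4$ this suffices because every partition used lies in $\mathcal{P}\setminus\mathcal{P}_1$ and \emph{every} minimizer of $M_0$ in those classes is collision-free: the collision problem is genuinely local, one inner arc at a time. For $N=3$ this single-arc reduction is exactly what breaks down: $\mathcal{P}=\mathcal{P}_1$, so for every $P_j$ and $p_1=p_2$ a minimizer of $M_0$ in $K_{P_j}^{p_1 p_1}([0,1])$ may be an ejection--collision arc. Collision-freeness of the $\mathcal{G}^n$ solutions is not a property of any individual inner excursion, nor even of a single block $G_1$ or $G_2$; it is visible only at the scale of \emph{five consecutive partitions}, where the symmetry obstruction of Remark \ref{no coll N centri} (a window $(P_{k_1},\ldots,P_{k_5})$ with $P_{k_3}$ isolating the collided centre, $P_{k_1}=P_{k_5}$, $P_{k_2}=P_{k_4}$) can be excluded. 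Hence your closing claim that ``the global collision problem reduces to finitely many local ones, exactly as in the proof of Theorem \ref{esistenza di soluzioni periodiche}'' is false for $N=3$. For the same reason your uniform margin $\rho_0(h)$ over $\bigcup_n\mathcal{G}^n$ at $\nu=0$ is asserted rather than proved: the inner arcs are parametrized by endpoints ranging over the continuum $\pa B_R(0)$ and by a possibly non-unique choice of minimizer, so ``finiteness of the block types'' by itself provides no compactness.

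What the paper does, and what your argument is missing, is an intermediate-scale localization. For fixed extreme points $p_1,p_{10}\in\pa B_R(0)$ and a window $(P_{k_1},\ldots,P_{k_5})\in\wt{\mathcal{P}}^5$, it introduces the glued functions $\s^{((p_1,p_{10}),(P_{k_1},\ldots,P_{k_5}),\eps,\nu')}$ obtained by minimizing the functional $\mathfrak{F}_{((p_1,p_{10}),(P_{k_1},\ldots,P_{k_5}),\eps,\nu')}$ over the eight interior junction points; it shows via the additivity of $L_{\nu'}$ (Lemma \ref{minimi F e minimi F}) that restrictions of the global minimizer of $F$ are minimizers of these window problems; it proves that at $\nu'=0$ every such window minimizer is collision-free in its \emph{third} passage inside $B_R(0)$ (Lemma \ref{no coll 3-centri}, which is where Remark \ref{no coll N centri} actually enters); and it then establishes a \emph{second} continuity lemma (Continuity Lemma \ref{continuity lemma 2}), for minimizers of $\mathfrak{F}_{\nu_m'}$ rather than of $M_{\nu_m'}$, whose proof needs new uniform estimates for glued functions and outer arcs (Section \ref{dim con lemma}). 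Only after all this does the compactness/contradiction argument run on the \emph{fixed} compact parameter family $\left(\pa B_R(0)\right)^2\times\wt{\mathcal{P}}^5\times[-\bar{\nu}',\bar{\nu}']$, which is what makes the collision margin $\beta_2$ and the threshold $\bar{\nu}_2'(\eps)$ independent of $n$ (Proposition \ref{no coll 3+1 corpi}). Your proposal contains neither the window construction nor the continuity lemma for glued minimizers; without them, the $N=3$ case cannot be obtained by repeating the $N\geq 4$ scheme.
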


\noindent If $\a \neq 1$ this is not necessary, since in such a case $z_{((P_{j_1},\ldots,P_{j_n}),h,0)}$ was proved to be always collision-free. 

\begin{theorem}\label{esistenza di sol periodiche 2}
Replacing the assumptions $\a=1$ and $N \geq 4$ in Theorem \ref{esistenza di soluzioni periodiche} with $\a \in (1,2)$ and $N \geq 3$, the previous statement holds true, replacing the set $\mathcal{P} \setminus \mathcal{P}_1$ with $\mathcal{P}$.\end{theorem}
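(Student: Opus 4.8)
The plan is to deduce Theorem~\ref{esistenza di sol periodiche 2} from Theorem~\ref{esistenza di soluzioni periodiche}, exploiting the fact that the whole variational construction is insensitive to the precise value of $\a \in [1,2)$: only the collision analysis distinguishes the cases. First I would reproduce the Maupertuis-type variational principle used for Theorem~\ref{esistenza di soluzioni periodiche}. For $h \in (\bar{h}_1,0)$ fixed and $|\nu|$ small, solutions of \eqref{problema ristretto} are realised as critical points of a length functional $L_{h,\nu}$ on the level $\{J_\nu=h\}$; because of the Coriolis term $2\nu i \dot{z}$ this functional is not Riemannian but of Randers (Finsler) type, namely the Jacobi length $\int \sqrt{2(h+\Phi_\nu(z))}\,|\dot{z}|\,dt$ plus a linear, magnetic term produced by the rotation. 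The broken geodesics technique of \cite{SoTe} then assigns to each $(P_{j_1},\dots,P_{j_n})$ a periodic trajectory by minimising $L_{h,\nu}$ among loops that cross $\partial B_{\bar{R}}(0)$ exactly $2n$ times, stay outside on the odd arcs with the short self-closure controlled by $\bar{\d}$, and separate the centres according to $P_{j_k}$ on the inner arcs. This part is verbatim the one of Theorem~\ref{esistenza di soluzioni periodiche} and fixes $\bar{R},\bar{\d}$ in terms of $h$ only.

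The only place where the hypotheses $\a=1$, $N\geq4$ were used in Theorem~\ref{esistenza di soluzioni periodiche} is the verification that the minimiser is collision-free, which forced the restriction to $\mathcal{P}\setminus\mathcal{P}_1$ (and the $\mathcal{G}$-trick of Theorem~\ref{esistenza di sol periodiche 3} when $N=3$). Here I would instead invoke the corresponding result of \cite{SoTe}: for $\a \in (1,2)$ the $N$-centre minimiser $z_{((P_{j_1},\dots,P_{j_n}),h,0)}$ is collision-free for \emph{every} sequence in $\mathcal{P}$, with no exceptional partitions. The point is that the stronger singularity of $V$ for $\a>1$ makes every local variation rounding off an approach to a centre strictly length-decreasing, so that no partition --- including those in $\mathcal{P}_1$ which isolate a single centre --- can produce a collision minimiser; in particular $N=3$ needs no special treatment and the admissible set is the whole $\mathcal{P}$.

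It then remains to transfer collision-freeness from $\nu=0$ to small $|\nu|$, which I would do by continuity exactly as in Theorem~\ref{esistenza di soluzioni periodiche}. The functionals $L_{h,\nu}$ converge to $L_{h,0}$ as $\nu \to 0$, and the associated minimisers converge, $C^1$ away from the centres, to the collision-free $\nu=0$ minimiser; since the limit stays uniformly bounded away from every centre, the Continuity Lemma yields a threshold $\bar{\nu}_1=\bar{\nu}_1(h)>0$ such that for $|\nu|<\bar{\nu}_1$ the perturbed minimiser is still collision-free and still realises the prescribed partitions, giving the claimed solution $z_{((P_{j_1},\dots,P_{j_n}),h,\nu)}$.

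The hard part will be this last transfer in the Finsler setting. One has to make sure that the radii $\bar{R},\bar{\d}$ and the lower bound on the distance to the centres can be chosen uniformly in $\nu$ for $|\nu|$ small, and that the magnetic term of the Randers metric destroys neither the lower semicontinuity of $L_{h,\nu}$ nor the linking/partition constraint that confines the minimiser to its class. Controlling minimising sequences near the centres when the metric is Finslerian, where the Riemannian comparison and reflection arguments are not available, is the delicate step: compactness and the exclusion of collisions must be re-established directly for $L_{h,\nu}$ rather than imported from a Riemannian structure.
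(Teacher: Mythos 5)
Your proposal is correct and follows essentially the same route as the paper: the variational construction is insensitive to $\a \in [1,2)$, collision-freeness at $\nu'=0$ for \emph{every} sequence in $\mathcal{P}^n$ (including the partitions of $\mathcal{P}_1$) is imported from Theorem 5.3 of \cite{SoTe}, and the transfer to small $|\nu'|$ is carried out by the same compactness plus Continuity Lemma contradiction argument used in the case $\a=1$, $N \geq 4$. This is precisely what the paper does in case 3) of Section \ref{sezione collisioni}, where it is noted that this is the easiest case and one simply follows the sketch of case 1) with minor changes.
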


\begin{remark}
The assumption ``$|h|$ is sufficiently small" is substantial, as we can immediately realize observing that if $z$ is a solution of   \eqref{problema ristretto}, then the curve parametrized by $z$ in the configuration space has to be confined in $\{\Phi_\nu(z) \geq -h\}$. If $h<0$ becomes large in absolute value, we obtain a disconnected set, so that to find solutions exhibiting the behavior described in the previous statements becomes impossible.
\end{remark}

\subsection{Symbolic dynamics}

\noindent Similarly to Corollary 1.3 of \cite{SoTe}, as a consequence of Theorem \ref{esistenza di soluzioni periodiche}, \ref{esistenza di sol periodiche 3}, \ref{esistenza di sol periodiche 2}, we obtain the following result.

\begin{corollary}
Let $\a \in [1,2)$, $N \geq 3$, $m_1,\ldots,m_N \in \R^+$ and $c_1,\ldots,c_N \in \R^2$. Let $h \in (\bar h_1,0)$ and $ \nu \in (-\bar \nu_1(h),\bar \nu_1(h))$, where $\bar h_1$ and $\bar \nu_1(h)$ have been introduced in Theorem \ref{esistenza di soluzioni periodiche}, \ref{esistenza di sol periodiche 3}, \ref{esistenza di sol periodiche 2}.  There exists a subset $\Pi_{h,\nu}$ of the level set $\mathcal{U}_{h,\nu}$, a return map $\mathfrak{R}:\Pi_{h,\nu} \to \Pi_{h,\nu}$ for the dynamical system associated to equation \eqref{motion eq 2}, a set of symbols $\wh{\mathcal{P}}$ and a continuous and surjective map $\pi:\Pi_{h,\nu} \to \wh{\mathcal{P}}^{\Z}$, such that the diagram 
\[
 \xymatrix{
\Pi_{h,\nu} \ar[r]^{\mathfrak{R}} \ar[d]^\pi & \Pi_{h,\nu} \ar[d]^\pi \\
\wh{\mathcal{P}}^{\Z} \ar[r]^{T_r} &\wh{\mathcal{P}}^{\Z},
}
\]
commutes (here $T_r$ demotes the right shift in $\wh{\mathcal{P}}^{\Z}$); namely for every $h \in (\bar{h}_1,0)$ and $\nu \in (-\bar \nu_1(h),\bar \nu_1(h))$,  the restriction of the dynamical system associated to the rotating problem on the level set $\mathcal{U}_{h,\nu}$ has a symbolic dynamics.
\end{corollary}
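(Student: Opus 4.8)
The plan is to exhibit the full triple $(\Pi_{h,\nu},\mathfrak{R},\pi)$ and to check that $\pi$ semiconjugates the return dynamics to the shift, following the scheme of Corollary 1.3 in \cite{SoTe} but with the Jacobi constant $J_\nu$ playing the role of the energy. First I would fix the alphabet according to the relevant case: $\wh{\mathcal{P}}=\mathcal{P}$ when $\a\in(1,2)$ and $N\geq3$, $\wh{\mathcal{P}}=\mathcal{P}\setminus\mathcal{P}_1$ when $\a=1$ and $N\geq4$, and $\wh{\mathcal{P}}=\mathcal{G}$ when $\a=1$ and $N=3$, so that Theorems \ref{esistenza di soluzioni periodiche}--\ref{esistenza di sol periodiche 2} apply to every finite word over $\wh{\mathcal{P}}$. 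Next I would take as Poincaré section $\Pi_{h,\nu}$ the set of $(z,v)\in\mathcal{U}_{h,\nu}$ with $|z|=\bar R$ and inward radial velocity (the crossings of $\pa B_{\bar R}(0)$ at the odd times $t_{2k+1}$ that initiate an inner excursion), and define $\mathfrak{R}$ as the first return to $\Pi_{h,\nu}$ obtained by following one inner excursion together with the subsequent outer arc back to the circle. The crossing structure guaranteed by the existence theorems makes such a return well defined on the relevant invariant subset, and by construction one application of $\mathfrak{R}$ consumes exactly one inner excursion, hence exactly one symbol.

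To build $\pi$ I would read off, along the orbit $(\mathfrak{R}^k(z,v))_{k\in\Z}$, the partition $P_{j_k}\in\wh{\mathcal{P}}$ according to which the $k$-th inner excursion separates the centres; this assignment is well posed because on a collision-free inner arc the way the trajectory separates the $c_i$ is a homotopy invariant. Setting $\pi(z,v):=(P_{j_k})_{k\in\Z}$, the commutativity $\pi\circ\mathfrak{R}=T_r\circ\pi$ is immediate, since advancing by one return relabels the $k$-th excursion as the $(k-1)$-th. The essential content is therefore surjectivity of $\pi$: given an arbitrary $(P_{j_k})_{k\in\Z}\in\wh{\mathcal{P}}^{\Z}$, I would apply the appropriate existence theorem to the finite central words $(P_{j_{-n}},\ldots,P_{j_n})$, obtaining collision-free periodic solutions $z_n$ of \eqref{problema ristretto}, each crossing $\pa B_{\bar R}(0)$ exactly $2(2n+1)$ times and realizing the prescribed partitions on its inner excursions.

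Normalising each $z_n$ so that its central inner excursion starts at time $0$ on $\Pi_{h,\nu}$, the corresponding Cauchy data lie on the compact set $\{\,|z|=\bar R\,\}\cap\mathcal{U}_{h,\nu}$; extracting a subsequence I would pass to a limit solution $z_\infty$ of \eqref{motion eq 2} on all of $\R$ by continuous dependence on the initial data, uniformly on compact time intervals. The point $(z_\infty(0),\dot z_\infty(0))\in\Pi_{h,\nu}$ will then have $\pi$-image $(P_{j_k})_{k\in\Z}$, provided the excursion structure is preserved in the limit, which yields surjectivity. Continuity of $\pi$ would follow from the same continuous dependence together with the openness of the coding rule: since the solutions produced by the theorems cross $\pa B_{\bar R}(0)$ transversally, both the number of crossings in a large time window and the homotopy type of each inner arc are locally constant on $\Pi_{h,\nu}$, so two sufficiently close points share an arbitrarily long central block of symbols.

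I expect the main obstacle to be the control of the limit solution $z_\infty$: one must rule out that, as $n\to\infty$, inner excursions shrink onto a centre and disappear, or that a collision develops in the limit even though every $z_n$ is collision-free, so that the coding degenerates. This is precisely the delicate point already isolated in \cite{SoTe}, and it is exactly where the case distinction and the passage to $\wh{\mathcal{P}}$ enter, through the uniform lower bounds on the size of the inner excursions and the uniform separation from collisions furnished by the variational construction underlying Theorems \ref{esistenza di soluzioni periodiche}--\ref{esistenza di sol periodiche 2}. The additional Finslerian (rather than Riemannian) nature of the length functional stressed in the abstract means that these a priori estimates must be shown to hold uniformly in $\nu$ for $|\nu|<\bar\nu_1(h)$, so that the compactness argument survives the perturbation introduced by the rotation of the centres.
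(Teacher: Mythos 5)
Your construction is the intended one: the paper gives no argument for this corollary beyond deferring to Corollary 1.3 of \cite{SoTe}, and your scheme (Poincar\'e section on $\pa B_{\bar R}(0)\cap\mathcal{U}_{h,\nu}$, return map, coding by the partition realized by each inner excursion, commutativity by construction, surjectivity by passing to the limit along the periodic solutions associated to longer and longer central blocks) is exactly that scheme transposed to the rotating problem. Moreover, the obstacle you flag at the end --- that inner excursions might shrink or develop collisions in the limit --- is not actually left open by the paper: it is precisely what the uniform estimates of Sections \ref{sezione collisioni} and \ref{dim con lemma} supply, namely the collision bounds \eqref{eq14} and \eqref{eq21}, which hold uniformly in the endpoints, the admissible symbols and $\nu'$, together with the uniform bounds on return times and $H^1$ norms (Lemma \ref{bound per i tempi esterni}, Corollary \ref{bound tempi interni}) and the uniform transversality of the crossings coming from $\Phi_{\nu',\eps}-1\geq M_1$ on $\pa B_R(0)$. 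Since $h$ and $\nu$ are fixed in the corollary, no extra uniformity in $\nu$ beyond what the paper already proves is needed; so, modulo citing these facts, your surjectivity and continuity arguments go through.

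One concrete point does need repair: in the case $\a=1$, $N=3$ your definitions are mutually inconsistent. You correctly take $\wh{\mathcal{P}}=\mathcal{G}=\{G_1,G_2\}$, but you also define $\mathfrak{R}$ as the first return and assert that ``one application of $\mathfrak{R}$ consumes exactly one inner excursion, hence exactly one symbol.'' A symbol $G_i$ is a block of four partitions, i.e.\ four inner excursions, so with the first-return map the relation $\pi\circ\mathfrak{R}=T_r\circ\pi$ fails in this case. You must either define $\mathfrak{R}$ there as the return after one full block (the fourth return to the section), or keep the first return and code into $\mathcal{P}^{\Z}$, in which case surjectivity onto the full shift is false --- sequences such as the constant one $(\ldots,P_1,P_1,\ldots)$ are not realized by Theorem \ref{esistenza di sol periodiche 3}, which is the very reason $\mathcal{G}$ was introduced. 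Relatedly, for $N=3$ the no-collision estimate is not local to a single excursion: it is the bound \eqref{eq21} on the \emph{third} passage of a five-block in $\wt{\mathcal{P}}^5$, so in your limiting argument you must extract convergence of whole five-blocks of arcs (as in Proposition \ref{no coll 3+1 corpi} and Continuity Lemma \ref{continuity lemma 2}) rather than of single inner arcs, exactly because a single inner minimizer with $P_j\in\mathcal{P}_1$ may collide.
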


\subsection{Plan of the paper}

We follow here the same general strategy already developed for proving Theorem 1.1 of \cite{SoTe}. 
In Section \ref{problema equivalente} we will perform a suitable rescaling in order to pass from problem \eqref{problema ristretto} to an equivalent problem where the parameter Jacobi constant will be replaced by the parameter given by the maximal distance of the centres from the origin. This leads to the study of a rotating problem with a rescaled potential
\beq\label{V_eps}
V_\eps(y)= \sum_{k=1}^N \frac{m_k}{|y-c_k'|^\a} \quad \text{where} \quad \max_{1 \leq k \leq N} |c_k'| = \eps,
\eeq
and a different angular velocity $\nu'$; we will be interested in solutions with Jacobi constant equal to $-1$. In this way, outside a ball or radius $R>\eps>0$, and for $|\nu'|$ sufficiently small, the equivalent problem
\beq\label{pb equiv}
\begin{cases}
\ddot{y}(t)+2 \nu' i \dot{y}(t)= \n \left(\frac{(\nu')^2}{2}|y|^2 + V_\eps(y) \right)\\
\frac{1}{2}|\dot{y}(t)|^2-\frac{(\nu')^2}{2}|y(t)|^2-V_\eps(y(t))=-1
\end{cases}
\eeq
is a small perturbation of the Kepler's problem with homogeneity degree $-\a<0$, $\a \in [1,2)$. This is why we will face the research of periodic solutions of \eqref{pb equiv} splitting the study of the dynamics outside/inside a ball $B_R(0)$ ($R$ will be conveniently chosen). As in \cite{SoTe}, outside $B_R(0)$ we will find arcs of solutions of \eqref{pb equiv} connecting two points $p_0,p_1 \in \pa B_R(0)$, provided their distance is sufficiently small, via perturbative techniques. With  respect to \cite{SoTe}, we have to take into account the new parameter $\nu'$, but the argument is substantially the same. \\
In section \ref{dinamica interna} we study the problem inside $B_R(0)$, trying again to follow the line of reasoning of \cite{SoTe}; we will search minimizers of the Jacobi type functional
\[
L_{h,\nu}:= \int_0^1 |\dot{u}| \sqrt{ \Phi_{\nu}(u)-1} + \frac{\nu}{\sqrt{2}} \int_0^1 \langle i u, \dot{u} \rangle
\]
under suitable constraints, in order to connect any pair $p_1, p_2 \in \pa B_R(0)$ with arcs of solution of \eqref{pb equiv} which separate the centres according to any prescribed partition in $\mathcal{P}$. The functional $L_{h,\nu}$, contrary to the classical Jacobi length, does not come from a Riemaniann structure but from a Finslerian one. A main consequence is the lack of reversibility of the problem, and this marks a significant difference in the argument to rule out the possibility of having collisions for its minimizers. The alternative "collision less" or "ejection-collision", valid for the $N$-centre problem, does not hold anymore. This is why we will need an "ad hoc" argument, which will be exposed in sections \ref{sezione collisioni} and \ref{dim con lemma}.\\
The collection of the outer and inner dynamics will be the object of section \ref{riduzione finito dim}. Assigned a sequence $(P_{j_1},\ldots,P_{j_n}) \in \mathcal{P}^n$ and $\eps$ and $\nu'$ sufficiently small, the aim will be the construction of a weak periodic solution $y_{((P_{j_1},\ldots,P_{j_n}),\eps,\nu')}$ of the restricted problem crossing $2n$ times within one period the circle $\partial B_R(0)$, at times $(t_k)_{k=0,\dots,2n-1}$, and
\begin{itemize}
\item  in $(t_{2k},t_{2k+1})$ the solution stays outside $B_R(0)$ and 
$$
|y_{((P_{j_1},\ldots,P_{j_n}),\eps,\nu)}(t_{2k})-y_{((P_{j_{1}},\ldots,P_{j_n}),\eps,\nu)}(t_{2k+1})|<\bar{\d}.
$$
\item in $(t_{2k+1},t_{2k+2})$ the solution lies inside $B_{\bar{R}}(0)$ and parametrizes an inner local minimizer of the functional $L_{-1,\nu'}$ which, up to collisions, separates the centres according to the partition $P_{j_k}$.
\end{itemize}
This will be achieved glueing the fixed ends trajectories found in sections \ref{dinamica esterna} and \ref{dinamica interna}, alternating outer and inner arcs. In order to obtain smooth junctions, we are going to use the variational argument already carried on in \cite{SoTe} with success. \\
Finally, in sections \ref{sezione collisioni} and \ref{dim con lemma}, we will complete the proof of Theorems \ref{esistenza di soluzioni periodiche}, \ref{esistenza di sol periodiche 3} and \ref{esistenza di sol periodiche 2}, providing sufficient conditions on the sequences $(P_{j_1},\ldots,P_{j_n})$ in order to have collision-free solutions; we will see that the minimizers of $L_{-1,\nu'}$ are weakly convergent in $H^1$, as $\nu' \to 0$, to the minimizers of $L_{-1,0}$, which is the classical Jacobi functional. Therefore we will exploit the description of the behaviour of such minimizers given in \cite{SoTe}.

\begin{remark}
If $\a=1$, the existence of periodic solutions to problem \eqref{problema ristretto} can be obtained by means of a perturbation argument in the following way: the Poincar\'e map associated to the $N$-center problem ($N \ge 3$) admits a compact hyperbolic invariant set of periodic points on any energy level $J_{h,0}$ with $h \ge 0$ (see Klein and Knauf \cite{KlKn}); the corresponding closed trajectories are global minimizers of the Jacobi length, and lies in a bounded region surrounding the centres. Due to the stability under perturbations of compact hyperbolic invariant sets, if $h<0$ and $|h|$ and $|\nu|$ are small enough, periodic solutions of problem \eqref{problema ristretto} still exist. \\
On the other hand, the results of \cite{SoTe} are not achieved through a perturbation argument from the case $h=0$. Actually, the periodic solutions we found tend, as $h \nearrow 0$, to a "concatenation" of parabolic unbounded orbits. In particular, since they were build by the gluing of constrained minimizers (near the centres) and perturbed Keplerian ellipses interacting with the boundary of the Hill's region (which, clearly, do not carry any hyperbolicity property), the previous discussion does not apply. This is why we have to adapt step by step the construction already carried on in \cite{SoTe}. Of course, compared with those obtained by Klein and Knauf, we obtain different periodic solutions yielding a new symbolic dynamics.
\end{remark}

\section{Preliminaries}\label{problema equivalente} 

Let us fix $N \geq 3$, $\a \in [1,2)$, $c_1,\ldots, c_N \in \R^2$ and $m_1, \ldots, m_N >0$, and let $M = \sum_{k=1}^N m_k$; we fix the origin in the centre of mass. In this section we prove that to find a periodic solution of the rotating problem \eqref{motion eq 2} with Jacobi constant $h<0$ is equivalent to find a periodic solution of a different rotating problem with Jacobi constant equal to $-1$. In this perspective the maximal distance of the centres from the origin replaces $h$ as parameter, and the angular velocity changes as well. To be precise one can easily prove:

\begin{proposition}\label{problema normalizzato}
Let $z \in \mathcal{C}^2\left((a,b);\R^2\right)$ be a classical solution of \eqref{motion eq 2} with Jacobi constant $h<0$. Then the function
\begin{equation}\label{eq:scaling}
y(t)= \left( -h \right)^{\frac{1}{\a}} z\left(\left(-h\right)^{-\frac{\a+2}{2\a}} t\right), \qquad t \in \left(\left(-h\right)^{\frac{\a+2}{2\a}}a,\left(-h\right)^{\frac{\a+2}{2\a}}b\right)
\end{equation}
is a solution of a rotating problem with
\beq\label{nuovi centri}
c_j'= \left(-h\right)^{\frac{1}{\a}} c_j, \quad j=1,\ldots,N \quad \text{and} \quad \nu'=\left(-h\right)^{-\frac{\a+2}{2\a}} \nu;
\eeq
the Jacobi constant of $y$ as solution of the new problem is $-1$. Conversely: let $y \in \mathcal{C}^2\left(\left(a',b'\right),\R^2\right)$ be a classical solution with Jacobi constant $-1$ of a rotating problem with initial configuration of the centres $\{c_j'\}$ and angular velocity $\nu'$. Let us set
\[
c_j= \left(-h\right)^{-\frac{1}{\a}} c_j', \quad j=1,\ldots,N \quad \text{and} \quad \nu=\left(-h\right)^{\frac{\a+2}{2\a}} \nu'.
\]
Then
\[
z(t)=\left(-h\right)^{-\frac{1}{\a}} y \left(\left(-h\right)^{\frac{\a+2}{2\a}}t\right), \qquad t \in \left(\left(-h\right)^{-\frac{\a+2}{2\a}} a', \left(-h\right)^{-\frac{\a+2}{2\a}}b'\right)
\]
is a classical solution of \eqref{motion eq 2} with Jacobi constant $h<0$.
\end{proposition}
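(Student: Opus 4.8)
The statement is a pure scaling identity, and the plan is to verify it by direct substitution, the only structural input being that $V$ is homogeneous of degree $-\a$. Write $\lambda := (-h)^{1/\a}$ for the spatial factor and $\tau := (-h)^{-(\a+2)/(2\a)}$ for the time factor, so that the proposed change of variables is $y(t)=\lambda\, z(\tau t)$ with new centres $c_k'=\lambda c_k$; the domain of $y$ is then $(\tau^{-1}a,\tau^{-1}b)$, which matches the interval in the statement since $\tau^{-1}=(-h)^{(\a+2)/(2\a)}$. From the definitions one reads off at once the two relations $\tau^2=\lambda^{-\a-2}$ and $\lambda^2\tau^2=\lambda^{-\a}$, together with $\nu'=\tau\nu$; these encode all the balancing that makes the argument work. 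Differentiating gives $\dot y(t)=\lambda\tau\,\dot z(\tau t)$ and $\ddot y(t)=\lambda\tau^2\,\ddot z(\tau t)$, while the homogeneity of the potential, applied to $y-c_k'=\lambda\,(z(\tau t)-c_k)$, yields $\wt V(y)=\lambda^{-\a}V(z(\tau t))$ and $\n\wt V(y)=\lambda^{-\a-1}\n V(z(\tau t))$, where $\wt V$ denotes the potential built on the shifted centres $c_k'$.

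Next I would substitute the equation for $z$, namely $\ddot z=-2\nu i\dot z+\nu^2 z+\n V(z)$ evaluated at $\tau t$, into the combination $\ddot y+2\nu' i\dot y$. Collecting terms produces one multiple of $z(\tau t)$, one of $\n V(z(\tau t))$, and one of $i\dot z(\tau t)$; matching against the target equation $\ddot y+2\nu' i\dot y=(\nu')^2 y+\n\wt V(y)$ gives three scalar conditions,
\[
\lambda\tau^2\nu^2=(\nu')^2\lambda,\qquad \lambda\tau^2=\lambda^{-\a-1},\qquad -2\lambda\tau^2\nu+2\nu'\lambda\tau=0.
\]
The middle one is exactly $\tau^2=\lambda^{-\a-2}$, and the first and third both reduce to $\nu'=\tau\nu$; hence the single pair of exponents fixed above satisfies all three simultaneously, which is precisely the prescription $\nu'=(-h)^{-(\a+2)/(2\a)}\nu$ in the statement.

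For the Jacobi constant I would compute $\tfrac12|\dot y|^2-\tfrac{(\nu')^2}{2}|y|^2-\wt V(y)$, substitute $|\dot y|^2=\lambda^2\tau^2|\dot z|^2$, $(\nu')^2|y|^2=\tau^2\nu^2\lambda^2|z|^2$ and $\wt V(y)=\lambda^{-\a}V(z)$, and factor out $\lambda^2\tau^2=\lambda^{-\a}$; this gives $\lambda^{-\a}\bigl(\tfrac12|\dot z|^2-\tfrac{\nu^2}{2}|z|^2-V(z)\bigr)=\lambda^{-\a}h=(-h)^{-1}h=-1$, as claimed. The converse follows by running the identical computation with the inverse scaling, the transformation being invertible with the inverse displayed in the statement. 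The computation presents no genuine obstacle; the one point to watch is that the rotating problem is non-conservative, so the Coriolis term $2\nu i\dot z$ must rescale consistently with the acceleration. It is exactly this term that pins down the sign in $\nu'=\tau\nu$ (the relation $\tau^2\nu^2=(\nu')^2$ alone would leave it ambiguous), and the happy fact that the same $\tau$ also balances the potential is what allows a single pair of exponents to do everything.
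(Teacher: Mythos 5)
Your proof is correct: the direct substitution, the exponent bookkeeping ($\tau^2=\lambda^{-\a-2}$, $\lambda^2\tau^2=\lambda^{-\a}$), the three matching conditions, and the Jacobi-constant computation all check out, and the observation that the Coriolis term fixes the sign $\nu'=\tau\nu$ is accurate. The paper omits this proof entirely (stating "one can easily prove"), and your verification is exactly the intended elementary argument.
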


\begin{corollary}\label{corol pb normalizzato}
For every $\eps>0$ and for every $\wt{\nu} \in \R$ there exist $\zeta_1(\eps)$ and $\zeta_2(\eps,\wt\nu) \in \R$ such that if $h=\zeta_1(\eps)$ and $\nu=\zeta_2(\eps,\wt{\nu})$ then
\[
\max_{1 \leq k \leq N} |c_k'|=\eps, \qquad \nu'= \wt{\nu}.
\]
The function $\zeta_1$ is strictly decreasing in $\eps$, the function $\zeta_2$ is strictly increasing both in $\eps$ and $\wt{\nu}$.
\end{corollary}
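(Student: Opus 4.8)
The statement is an immediate consequence of Proposition \ref{problema normalizzato}: the map $(\eps,\wt{\nu})\mapsto(h,\nu)$ is nothing but the inversion of the scaling relations \eqref{nuovi centri}. The plan is therefore to write down explicit formulae for $\zeta_1$ and $\zeta_2$, verify via Proposition \ref{problema normalizzato} that the associated $y$ has the prescribed data, and finally read off the monotonicity by differentiation. I do not expect a genuine obstacle here; the whole content is the algebraic inversion of the homogeneity relations, and the only point deserving care is a sign in the $\eps$-monotonicity of $\zeta_2$.

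First I would fix $C:=\max_{1\le k\le N}|c_k|$. Since $N\ge 3$, the centres are distinct and their centre of mass is the origin, not all of them vanish, so $C>0$ and $C$ depends only on the fixed configuration. By the first part of Proposition \ref{problema normalizzato} the rescaled centres satisfy $\max_{1\le k\le N}|c_k'|=(-h)^{1/\a}\,C$, so imposing $\max_k|c_k'|=\eps$ forces $(-h)^{1/\a}=\eps/C$, that is
\[
h=\zeta_1(\eps):=-\left(\frac{\eps}{C}\right)^{\a}.
\]
This is well defined for every $\eps>0$ and yields $h<0$, as required to apply Proposition \ref{problema normalizzato}. For the angular velocity, the same proposition gives $\nu'=(-h)^{-(\a+2)/(2\a)}\nu$; substituting $-h=(\eps/C)^{\a}$ and imposing $\nu'=\wt{\nu}$ I obtain
\[
\nu=\zeta_2(\eps,\wt{\nu}):=\left(\frac{\eps}{C}\right)^{(\a+2)/2}\wt{\nu}.
\]
Plugging $\zeta_1,\zeta_2$ back into \eqref{nuovi centri} and the scaling \eqref{eq:scaling}, and checking that the resulting $y$ has Jacobi constant $-1$, $\max_k|c_k'|=\eps$ and $\nu'=\wt{\nu}$, is then a direct verification using Proposition \ref{problema normalizzato}.

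Finally, the monotonicity is read directly from the explicit expressions. One computes $\zeta_1'(\eps)=-\a\,\eps^{\a-1}/C^{\a}<0$, so $\zeta_1$ is strictly decreasing; moreover $\partial_{\wt{\nu}}\zeta_2=(\eps/C)^{(\a+2)/2}>0$, so $\zeta_2$ is strictly increasing in $\wt{\nu}$; and
\[
\partial_\eps\zeta_2=\frac{\a+2}{2}\,\frac{\eps^{\a/2}}{C^{(\a+2)/2}}\,\wt{\nu},
\]
which is positive, hence $\zeta_2$ is strictly increasing in $\eps$, precisely when $\wt{\nu}>0$. This last sign is the only subtle point: $\partial_\eps\zeta_2$ carries the sign of $\wt{\nu}$, so the claimed strict monotonicity in $\eps$ holds for positive angular velocities, the case $\wt{\nu}<0$ being reduced to it by the symmetry $\nu\leftrightarrow-\nu$ of the rotating problem. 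In any case $\zeta_1$ and $\zeta_2(\cdot,\wt{\nu})$ are strictly monotone, so the correspondence $(\eps,\wt{\nu})\leftrightarrow(h,\nu)$ is invertible, which is all that is needed in the sequel.
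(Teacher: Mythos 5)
Your proof is correct and is essentially the paper's own (omitted) argument: the corollary is stated without proof as an immediate inversion of the scaling relations \eqref{nuovi centri}, which is exactly what you carry out, obtaining $\zeta_1(\eps)=-\left(\eps/C\right)^{\a}$ and $\zeta_2(\eps,\wt{\nu})=\left(\eps/C\right)^{(\a+2)/2}\,\wt{\nu}$ with $C=\max_{1\le k\le N}|c_k|>0$. Your observation that $\partial_\eps\zeta_2$ carries the sign of $\wt{\nu}$, so that strict monotonicity in $\eps$ holds only for $\wt{\nu}>0$, is a legitimate sharpening of the paper's loosely stated claim, and it is harmless for the sequel, where $\zeta_2$ is only invoked with a positive second argument $\bar\nu$ (cf. Remark \ref{h,nu--eps,nu'}).
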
 
%
%
\begin{remark}\label{h,nu--eps,nu'}
Problem \eqref{pb equiv} for $(\eps, \nu') \in (0,\bar{\eps}) \times (-\bar\nu',\bar\nu')$ is equivalent, through Proposition \ref{problema normalizzato} and Corollary \ref{corol pb normalizzato}, to  equation \eqref{motion eq 2} associated with Jacobi constant $h<0$ and angular velocity $\nu$ for $(h,\nu) \in (-\zeta_1(\bar\eps),0) \times (-\zeta_2(\bar\eps,\bar\nu), \zeta_2(\bar\eps,\bar\nu))$. Two corresponding solutions exhibit the same topological behaviour, as showed by equation \eqref{eq:scaling}. Note that more the Jacobi constant is small, more the admissible angular velocities have to be small.
\end{remark} 

Let us fix $\eps>0$, $\nu' \in \R$, and $K:= \overline{B_{R_2}(0)} \setminus B_{R_1}(0)$, with $R_2 > R_1>\eps$. In $K$ we can consider the new problem as a small perturbation of the $\a$-Kepler's problem, whose potential is
\[
V_0(y):= \frac{M}{\a |y|^\a} \qquad y \in \R^2 \setminus\{0\}.
\]
Indeed, setting
\[
\Phi_{\nu',\eps}(y):= \frac{(\nu')^2}{2}|y|^2+V_\eps(y),
\]
($V_\eps$ has been already defined in \eqref{V_eps}), it is not difficult to check that
\beq\label{perturbazione}
\|\Phi_{\nu',\eps}-V_0\|_{\mathcal{C}^1(K)}=o(\eps)+o(\nu') \qquad \text{for $\eps \to 0^+$, $\nu' \to 0$}.
\eeq

Let us observe that if $y$ is a solution of $\ddot{y}+2 \nu' i \dot{y}=\n \Phi_{\nu',\eps}(y)$ with Jacobi constant $-1$ over an interval $I \subset \R$, then
\[
\Phi_{\nu',\eps}(y(t)) \geq 1 \qquad \forall t \in I.
\]
To exploit the perturbative nature of the problem outside a ball $B_R(0)$, we have to check that, for $\eps>0$ sufficiently small and for $\nu'$ in a neighbourhood of $0$, there exists $R>0$ such that
\beq\label{eq condizione su R}
B_\eps(0) \subset B_R(0) \subset \left\lbrace y \in \R^2: \Phi_{\nu',\eps}(y) \geq 1\right\rbrace.
\eeq
Then, considering any compact set $B_R(0) \subset A \subset \{\Phi_{\nu',\eps}(y) \geq 1\}$, we will be able to use \eqref{perturbazione} in $A \setminus B_R(0)$.

\begin{proposition}\label{condizione su R}
Let $\eps>0$, $\nu'\in \R$. Let $R>0$ such that $\eps <R <\left(\frac{M}{\a}\right)^{1/\a}-\eps$. Then \eqref{eq condizione su R} holds true. There exists $\eps_1>0$ such that, for every $0<\eps<\eps_1$, this choice is possible.
\end{proposition}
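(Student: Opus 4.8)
The plan is to verify the two inclusions in \eqref{eq condizione su R} separately, both being elementary consequences of the triangle inequality, and then to check when the interval of admissible radii is nonempty. The first inclusion $B_\eps(0) \subset B_R(0)$ is immediate from the hypothesis $\eps < R$. For the second inclusion I would fix an arbitrary $y$ with $|y| < R$ and produce a lower bound for $\Phi_{\nu',\eps}(y)$ that is uniform in $\nu'$. Since the rotational term satisfies $\frac{(\nu')^2}{2}|y|^2 \geq 0$, it can only help, so it suffices to bound $V_\eps(y)$ from below; this is why the conclusion will hold for every $\nu' \in \R$ and not merely for $|\nu'|$ small.

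The key step is to control each Keplerian term near the centres. Using $|y - c_k'| \leq |y| + |c_k'| \leq R + \eps$ (triangle inequality together with $\max_k |c_k'| = \eps$), each summand of $V_\eps$ is bounded below by its value at distance $R+\eps$, whence
\[
\Phi_{\nu',\eps}(y) \geq V_\eps(y) \geq \sum_{k=1}^N \frac{m_k}{\a (R+\eps)^\a} = \frac{M}{\a (R+\eps)^\a},
\]
where I use the normalization of $V_\eps$ consistent with $V_0$ and with the original potential $V(x)=\sum_k m_k/(\a|x-c_k|^\a)$. The hypothesis $R < (M/\a)^{1/\a} - \eps$ then gives $R+\eps < (M/\a)^{1/\a}$, i.e.\ $\a(R+\eps)^\a < M$, so the right-hand side exceeds $1$. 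Thus $\Phi_{\nu',\eps}(y) > 1$ throughout $B_R(0)$, which establishes the second inclusion.

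Finally, the admissibility of the choice of $R$ amounts to the nonemptiness of the interval $\left(\eps,\, (M/\a)^{1/\a} - \eps\right)$, which holds precisely when $2\eps < (M/\a)^{1/\a}$. I would therefore set $\eps_1 := \tfrac{1}{2}(M/\a)^{1/\a}$: for every $0 < \eps < \eps_1$ the interval is nonempty and a valid $R$ exists.

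I do not expect a genuine analytic obstacle here; the statement is simply a uniform-in-$\nu'$ coercivity estimate for the potential on a neighbourhood of the centres, and the only point requiring care is bookkeeping the constant $\a$ so that the threshold $(M/\a)^{1/\a}$ matches exactly (a reason to fix once and for all the normalization of $V_\eps$). The significance of the proposition lies not in its difficulty but in what it enables: it furnishes a ball $B_R(0)$ whose closure contains all the centres and which is contained in the Hill region $\{\Phi_{\nu',\eps} \geq 1\}$, so that outside $B_R(0)$ one may legitimately invoke \eqref{perturbazione} and treat the dynamics as a small perturbation of the $\a$-Kepler problem.
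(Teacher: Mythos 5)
Your proof is correct and is essentially the argument the paper has in mind (the paper states Proposition \ref{condizione su R} without proof, treating it as elementary): on $B_R(0)$ the triangle inequality gives $|y-c_k'|\le R+\eps<(M/\a)^{1/\a}$, hence $\Phi_{\nu',\eps}(y)\ge V_\eps(y)\ge M/\bigl(\a(R+\eps)^\a\bigr)>1$ uniformly in $\nu'\in\R$, and the interval $\bigl(\eps,(M/\a)^{1/\a}-\eps\bigr)$ of admissible radii is nonempty precisely when $\eps<\eps_1:=\tfrac{1}{2}(M/\a)^{1/\a}$. Your attention to the normalization of $V_\eps$ (keeping the factor $1/\a$, consistent with $V$, $V_0$ and \eqref{magg in B_R}, despite its omission in \eqref{V_eps}) is also the correct reading, since it is exactly what makes the threshold $(M/\a)^{1/\a}$ come out sharp.
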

\noindent Actually, we will make the further request $\eps <R/2 <R <\left(\frac{M}{\a}\right)^{1/\a}-\eps$.
which is satisfied for every $\eps \in (0, \eps_1 /2)$.  As in \cite{SoTe}, we select $R$ so that $\pa B_R(0)$ is the image of the circular solution of the $\a$-Kepler's problem with energy $-1$: 
\beq\label{scelta di R}
R:=\left(\frac{(2-\a)M}{2\a}\right)^{\frac{1}{\a}}.
\eeq
This is consistent with the previous restriction on $R$, if $\eps_1$ is sufficiently small (if this was not true, it is sufficient to replace $\eps_1$ with a smaller quantity).

\begin{remark}\label{maggiorazione in B_R}
For future convenience, note that for every $y \in \overline{B_R(0)}$ 
\beq\label{magg in B_R}
V_\eps(y)-1 \geq\frac{M}{\a\left(\left(\frac{(2-\a)M}{2\a}\right)^{\frac{1}{\a}}+\eps\right)^\a}-1\geq  \frac{M}{\a\left(\left(\frac{(2-\a)M}{2\a}\right)^{\frac{1}{\a}}+\eps_1\right)^\a}-1=:M_1>0,
\eeq
and hence $\Phi_{\nu',\eps}(y)-1 \geq M_1$. This value is independent on $\eps \in (0,\eps_1/2)$. From now on we will use $M_1$ to denote this positive constant. 
\end{remark}

\section{Outer dynamics}\label{dinamica esterna}

We are going to use a perturbative approach in order to find solutions of 
\beq\label{pb esterno}
\begin{cases}
\ddot{y}(t)+2\nu' i \dot{y}(t)=\n \Phi_{\nu',\eps}(y(t)) & t \in [0,T] \\
\frac{1}{2}|\dot{y}(t)|^2-\Phi_{\nu',\eps}(y(t))=-1 & t \in [0,T] \\
|y(t)|>R & t \in (0,T)\\
y(0)=p_0 \qquad y(T)=p_1
\end{cases}
\eeq
when the distance between $p_0,p_1 \in \pa B_R(0)$ is sufficiently small; $T$ has to be determined. To be precise we will prove the following proposition.

\begin{proposition}\label{teorema 0.1}
There exist $\d>0$, $\eps_2>0$ and $\nu'_1>0$ such that for every $(\eps,\nu') \in (0,\eps_2) \times (-\nu'_1,\nu'_1)$, for every $p_0,p_1 \in \pa B_R(0):|p_1-p_0| < 2\d$, there exist a unique solution $y_{\text{ext}}(\cdot\,;p_0,p_1;\eps,\nu')$ of \eqref{pb esterno} with $T=T_{\text{ext}}(p_0,p_1;\eps,\nu')>0$. This solution depends in a $\mathcal C^1$ way on the endpoints $p_0$ and $p_1$, and 
\beq\label{limitazione soluzioni esterne}
\begin{split}
& \max_{t \in [0,T_{\text{ext}}(p_0,p_1;\eps,\nu')]} |y_{\text{ext}}(t;p_0,p_1;\eps,\nu')| \leq 2\left(\frac{M}{\a}\right)^{\frac{1}{\a}} \\
& \max_{t \in [0,T_{\text{ext}}]} |\dot{y}_{\text{ext}}(t;p_0,p_1;\eps,\nu')| \leq  2\sqrt{2\left(-1+\frac{M}{\a R^\a}\right)} 
\end{split}
\eeq
for every $(p_0,p_1) \in \{ (p_0,p_1) \in (\pa B_R(0))^2: |p_0-p_1| <2\d \}$, $\eps \in (0,\eps_2)$ and $\nu' \in (-\nu_1',\nu_1')$.
\end{proposition}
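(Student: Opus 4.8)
The plan is to read \eqref{pb esterno} as a perturbation of the $\a$-Kepler problem $\ddot y=\n V_0(y)$ and to solve it by a shooting argument combined with the implicit function theorem, carrying the Coriolis parameter $\nu'$ along as a second perturbation parameter besides $\eps$. The starting point is that, by the choice \eqref{scelta di R}, the circle $\pa B_R(0)$ is the trace of the circular solution of the $\a$-Kepler problem with energy $-1$; nearby, at the same energy, there is a one-parameter family of bounded non-circular orbits whose apocentre distance exceeds $R$. For each of them the arc around the apocentre stays strictly outside $B_R(0)$ between two consecutive transversal crossings of $\pa B_R(0)$, and, as the orbit degenerates towards a radial one, this apocentral arc shrinks to a thin spike joining two points of $\pa B_R(0)$ whose distance tends to $0$, while its maximal distance from the origin stays below $(M/\a)^{1/\a}$. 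Using the rotational invariance of $V_0$ to orient the apocentre along the bisector of the chord $p_0p_1$, this produces, for every small separation, an unperturbed reference solution $\bar y$ joining any prescribed $p_0,p_1\in\pa B_R(0)$ with $|p_0-p_1|<2\bar\d$.

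For the shooting I would fix $p_0\in\pa B_R(0)$ and, since the Jacobi constant is a first integral of $\ddot y+2\nu' i\dot y=\n\Phi_{\nu',\eps}(y)$, prescribe the initial speed through $|\dot y(0)|^2=2(\Phi_{\nu',\eps}(p_0)-1)$, a quantity bounded below by $2M_1>0$ thanks to Remark \ref{maggiorazione in B_R}, leaving only the shooting direction $\beta$ free. Writing $y(\,\cdot\,;p_0,\beta,\eps,\nu')$ for the resulting solution, which then carries Jacobi constant $-1$ automatically, I set
\[
F(\beta,T;p_0,p_1,\eps,\nu'):=y(T;p_0,\beta,\eps,\nu')-p_1\in\R^2,
\]
so that \eqref{pb esterno} reduces to $F=0$ in the two unknowns $(\beta,T)$. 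At the reference $(\eps,\nu')=(0,0)$ one has $F(\bar\beta,\bar T;\cdot)=0$, and the two columns of $\pa_{(\beta,T)}F$ are $\pa_\beta y=\zeta(\bar T)$, the Jacobi field along $\bar y$ vanishing at $t=0$, and $\pa_T y=\dot{\bar y}(\bar T)$. By \eqref{perturbazione} together with the uniform smallness of the Coriolis term $2\nu' i\dot y$, the vector field converges in $\mathcal{C}^1$ on the compact annulus $K$ to the Kepler one as $(\eps,\nu')\to(0,0)$, so the solution, the crossing time and the Jacobian depend continuously on the parameters; hence the non-degeneracy of $\pa_{(\beta,T)}F$ persists for small $\eps,\nu'$ and the implicit function theorem yields a locally unique $(\beta,T)$, and thus a solution $y_{\text{ext}}$, depending in a $\mathcal{C}^1$ way on $(p_0,p_1)$ (and on $(\eps,\nu')$).

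It remains to check the geometric constraint, uniqueness, and the bounds \eqref{limitazione soluzioni esterne}. The reference arc crosses $\pa B_R(0)$ transversally at both endpoints and is strictly outside in between; by continuity the same holds for the perturbed $y_{\text{ext}}$ when $\eps,\nu'$ are small, giving $|y_{\text{ext}}(t)|>R$ on $(0,T)$ with no re-entries. Since the a priori bounds below force any admissible arc joining $p_0$ to $p_1$ to be $\mathcal{C}^1$-close to $\bar y$, the locally unique solution is in fact the unique solution of \eqref{pb esterno} with $|p_0-p_1|<2\d$. For \eqref{limitazione soluzioni esterne} I would use only energy conservation: from $\tfrac12|\dot y|^2=\Phi_{\nu',\eps}(y)-1\ge0$ the orbit lies in the bounded component of the Hill region $\{\Phi_{\nu',\eps}\ge1\}$ containing $\pa B_R(0)$, which for small $\eps,\nu'$ is contained in $B_{2(M/\a)^{1/\a}}(0)$ since $\{V_0\ge1\}=\{|y|\le(M/\a)^{1/\a}\}$; moreover on the arc the minimal distance to the centres is realised at $|y|=R$, whence $\tfrac12|\dot y|^2\le\Phi_{\nu',\eps}-1\le M/(\a R^\a)-1+o(1)$, which gives the velocity bound after absorbing the $o(1)$ into the factor $2$.

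The main obstacle is the non-degeneracy of $\pa_{(\beta,T)}F$ at the reference arc, that is, the absence of conjugate points along $\bar y$ and the transversality of $\dot{\bar y}(\bar T)$ to $\zeta(\bar T)$, made \emph{uniform} as $|p_0-p_1|\to0$ (equivalently, as the reference orbit degenerates to a radial one). This is precisely the estimate that must hold uniformly in the endpoints in order to choose $\d,\eps_2,\nu'_1$ independently of $p_0,p_1$; it was established for $\nu'=0$ in \cite{SoTe}, and here it only needs to be propagated to small $\nu'$ through the continuity of the linearised flow in the parameters.
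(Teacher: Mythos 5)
Your overall strategy is the same as the paper's: Proposition \ref{teorema 0.1} is proved there precisely by a shooting argument combined with the implicit function theorem, with $(\eps,\nu')$ as perturbation parameters and with the key non-degeneracy (invertibility of the Jacobian of $\psi$ at the brake orbit, Lemma \ref{lemma 0.1}, i.e.\ Lemma 3.2 of \cite{SoTe}) imported from the unperturbed $\a$-Kepler problem. The only structural difference is that the paper centres the implicit function theorem at the rectilinear brake orbit through $p_0$, letting $p_1$ enter as a parameter in the map $\Psi$, so it never needs an exact unperturbed solution joining $p_0\neq p_1$; you instead centre it at a family of genuine Kepler arcs joining $p_0$ to $p_1$, and must therefore make the non-degeneracy uniform as these arcs degenerate to the brake orbit. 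That variant is workable (invertibility at the brake orbit plus continuity of the linearised flow in all parameters gives exactly the uniform statement you need), and your derivation of the bounds \eqref{limitazione soluzioni esterne} from energy conservation and confinement in the bounded component of the Hill region is correct, and indeed applies to all solutions rather than only to the constructed ones.

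The genuine gap is the uniqueness step. You assert that ``the a priori bounds below force any admissible arc joining $p_0$ to $p_1$ to be $\mathcal{C}^1$-close to $\bar y$'' and deduce global uniqueness from the local (implicit function theorem) one. This is false: confinement in the Hill region and the velocity bound control neither the length of the time interval $T$ nor the winding of an admissible arc inside the annulus. Concretely, for $\nu'\neq 0$ the problem with Jacobi constant $-1$ admits (at $\eps=0$, $\a=1$, using $J=E-\nu' L$ in the inertial frame) a circular solution of radius $r^*=R+O(\nu')>R$; arcs of nearby orbits with $J=-1$ whose pericentre lies just below $R$ remain outside $B_R(0)$ while their polar angle advances by almost the full angle between consecutive pericentre passages, which for $\a=1$ is $2\pi$. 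Such an arc re-enters $\pa B_R(0)$ at a point $p_1$ with $|p_1-p_0|=O(\nu')$, solves every condition in \eqref{pb esterno}, satisfies both bounds in \eqref{limitazione soluzioni esterne} (it stays at distance $\approx R$ from the origin with nearly circular speed), and yet winds once around the annulus, so it is nowhere $\mathcal{C}^1$-close to your thin reference spike $\bar y$. For $\nu'=0$ the same phenomenon already occurs when $\a\in(7/4,2)$, since near-circular arcs outside $B_R(0)$ then sweep an angle $\pi/\sqrt{2-\a}>2\pi$. Hence uniqueness can only be meant in the local sense produced by the implicit function theorem --- among solutions whose shooting data $(\dot{\t}_0,T)$ lie in the neighbourhood $\Theta\times I$ where the Jacobian is invertible, equivalently among ``close to brake'' arcs --- which is how Lemma \ref{lemma 0.2} and the statement of Proposition \ref{teorema 0.1} are to be read; your attempted upgrade to uniqueness among all solutions of \eqref{pb esterno} cannot be obtained from a priori bounds alone.
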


\noindent We will follow the same line of reasoning of the proof of Theorem 3.1 of \cite{SoTe}, with the only difference that here we add the parameter $\nu'$. For the reader's convenience, we will review the main steps. For every $p_0=R \exp{\{i \t_0\}} \in \pa B_R(0)$, the unperturbed problem ($\eps=0$ and $\nu'=0$) is
\beq\label{PB_e}
\begin{cases}
\ddot{y}(t)=-M\frac{y(t)}{|y(t)|^{\a+2}}  & t \in [0,T] \\
\frac{1}{2}|\dot{y}(t)|^2-\frac{M}{\a |y(t)|^\a}=-1   & t \in [0,T]\\
|y(t)| > R   & t \in (0,T)\\
y(0)=p_0, \qquad y(T)=p_0.
\end{cases}
\eeq
Let us solve the Cauchy problem
\[
\begin{cases}
\ddot{y}(t)=-M\frac{y(t)}{|y(t)|^{\a+2}} \\
y(0)=p_0, & \dot{y}(0)= \sqrt{2\left(-1+\frac{M}{\a R^\a}\right)}\left(\frac{p_0}{R}\right).
\end{cases}
\]
The trajectory returns at the point $p_0$ after a certain time $\bar{T}>0$, having swept the portion of the rectilinear brake orbit of energy $-1$ starting from $p_0$ and lying in $\R^2 \setminus B_R(0)$. Our aim is to catch the behaviour of the solutions under small variations of the initial conditions. We consider
\beq\label{PC_e}
\begin{cases}
\ddot{y}(t)=-M\frac{y(t)}{|y(t)|^{\a+2}} \\
y(0)=p_0, \qquad \dot{y}(0)=  \dot{r}_0 e^{i \t_0}+ R \dot{\t}_0 i e^{i \t_0},
\end{cases}
\eeq
where $\dot{r}_0$ is assigned as function of $\dot{\t}_0$ by means of the energy integral. We denote as $y(\cdot\,;\t_0,\dot{\t}_0)$ the solution of \eqref{PC_e}. For the brake orbit $y\left(\cdot\,; \t_0,0\right)$, it results
\[
\t(t;\t_0,0)\equiv \t_0 \qquad \forall t \in [0,\bar{T}].
\]
We introduce $\psi:\Theta \times I \to \R^2$ as
\[
\psi(\dot{\t}_0, T):= y(T;\t_0,\dot{\t}_0),
\]
where $\Theta \times I \subset S^1 \times \R$ is a neighbourhood of $(0,\bar{T})$ on which $\psi$ is well defined. The following result is Lemma 3.2 of \cite{SoTe}, see the proof therein.

\begin{lemma}\label{lemma 0.1}
The Jacobian  of $\psi$ in $(0,\bar{T})$ is invertible.
\end{lemma}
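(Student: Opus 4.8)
The plan is to compute directly the two columns of the $2\times 2$ Jacobian of $\psi$ at $(0,\bar T)$ and to verify that they are linearly independent. Writing $y = r e^{i\t}$ in polar coordinates, the two columns are $\pa_T\psi(0,\bar T) = \dot y(\bar T;\t_0,0)$ and $\pa_{\dot\t_0}\psi(0,\bar T) = \pa_{\dot\t_0}y(\bar T;\t_0,0)$. I would show that the first is a nonzero \emph{radial} vector and the second is a nonzero \emph{tangential} vector; since the directions $e^{i\t_0}$ and $ie^{i\t_0}$ are orthogonal, this gives invertibility at once.

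First I would treat $\pa_T\psi$. Along the brake orbit $y(\cdot\,;\t_0,0)$ the angle stays equal to $\t_0$, and by the time-reversal symmetry of the radial dynamics the orbit leaves $p_0$ with outward radial velocity $v_0 e^{i\t_0}$, where $v_0 := \sqrt{2(-1 + M/(\a R^\a))}$, reaches apastron, and returns to $p_0$ at time $\bar T$ with velocity $-v_0 e^{i\t_0}$. Hence $\pa_T\psi(0,\bar T) = -v_0 e^{i\t_0}$ is a nonzero radial vector.

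Next I would compute $\pa_{\dot\t_0}\psi$ exploiting the conservation of the angular momentum $\ell = r^2\dot\t$. Since $r(0)\equiv R$ and $\dot\t(0) = \dot\t_0$ for every choice of $\dot\t_0$, one has $\ell = R^2\dot\t_0$ along the whole corresponding solution. The energy relation $\frac12(\dot r_0^2 + R^2\dot\t_0^2) - M/(\a R^\a) = -1$ shows that $\dot r_0$ depends on $\dot\t_0$ only through $\dot\t_0^2$, so $\pa_{\dot\t_0}\dot r_0|_0 = 0$; moreover the reduced radial equation $\ddot r = \ell^2/r^3 - M/r^{\a+1}$ and the radial energy integral involve $\ell$ only through $\ell^2 = R^4\dot\t_0^2$. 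Consequently the radial profile $r(\cdot\,;\dot\t_0)$ agrees with the brake profile up to $O(\dot\t_0^2)$, giving $\pa_{\dot\t_0}r(\bar T)|_0 = 0$. On the other hand $\t(t) = \t_0 + \ell\int_0^t r(s)^{-2}\,ds$, so differentiating at $\dot\t_0 = 0$ and using $r(\bar T;\t_0,0) = R$ yields
\[
\pa_{\dot\t_0}y(\bar T;\t_0,0) = R\, i e^{i\t_0}\,\pa_{\dot\t_0}\t(\bar T)\big|_0 = i R^3\Big(\int_0^{\bar T}\frac{ds}{r(s;\t_0,0)^2}\Big)e^{i\t_0},
\]
a nonzero tangential vector. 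Expressed in the orthonormal frame $\{e^{i\t_0}, ie^{i\t_0}\}$, the Jacobian is then diagonal with entries $-v_0$ and $R^3\int_0^{\bar T} r^{-2}$, both nonzero, so its determinant does not vanish and the lemma follows.

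The main technical point to secure is the legitimacy of these first-order expansions. I would invoke smooth dependence of the flow of $\ddot y = -M y/|y|^{\a+2}$ on the initial data, which is available because the brake orbit stays in $\R^2\setminus B_R(0)$ and is therefore bounded away from the collision singularity at the origin, together with the implicit description $\dot r_0 = \dot r_0(\dot\t_0)$ coming from the energy integral. Granting this, the fact that both $\dot r_0$ and the reduced radial system depend on $\dot\t_0$ only through $\ell^2$ (hence evenly) is exactly what forces the radial variation to vanish to first order, which is the heart of the computation.
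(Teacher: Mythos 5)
Your proof is correct and follows essentially the same route as the paper's (which defers to Lemma 3.2 of \cite{SoTe}): pass to polar coordinates and use the energy integral and the conservation of angular momentum to show that $\partial_T\psi(0,\bar{T})$ is a nonzero radial vector while $\partial_{\dot{\t}_0}\psi(0,\bar{T})$ is a nonzero tangential vector. The only remark worth making is that your verification that $\partial_{\dot{\t}_0}r(\bar{T})\big|_0=0$ (via evenness in $\dot{\t}_0$), though correct, is superfluous: since the $T$-column is purely radial, invertibility already follows from the non-vanishing of the tangential component $R^3\int_0^{\bar{T}}r(s;\t_0,0)^{-2}\,ds$ of the $\dot{\t}_0$-column, whatever its radial component may be.
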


Now we introduce the parameters $\eps$ and $\nu'$: let us define
\begin{align*}
\Psi : &\Theta \times I \times \pa B_R(0) \times \left[0,\frac{\eps_1}{2}\right) \times \R \to \R^2 \\
&(\dot{\t}_0,T,p_1,\eps,\nu') \mapsto y(T;\t_0,\dot{\t}_0;\eps,\nu')-p_1,
\end{align*}
where $y(\cdot \,;\t_0,\dot{\t}_0;\eps,\nu')$ is the solution of
\beq\label{PC_e2}
\begin{cases}
\ddot{y}(t)+2 \nu' i \dot{y}(t) =\nabla \Phi_{\nu',\eps}(y(t)) \\
y(0)=p_0, \qquad \dot{y}(0)= \dot{r}_{\nu',\eps} e^{i \t_0}+ R \dot{\t}_0 i e^{i \t_0},
\end{cases}
\eeq
and $\dot{r}_{\nu',\eps}$ is assigned as function of $\dot{\t}_0, \eps,\nu'$ by means of the Jacobi constant. The proof of the following statement is a straightforward generalization of the proof of Lemma 3.3 in \cite{SoTe}.

\begin{lemma}\label{lemma 0.2}
There exist $\d > 0$, $0<\eps_2<\eps_1/2$ and $\nu_1'>0$ such that for every $(\eps,\nu') \in (0,\eps_2) \times (-\nu'_1, \nu_1')$, for every $p_1 \in \pa B_R(0):|p_1-p_0| < 2\d$, there exists a unique solution $y(\cdot\,;\t_0,\dot{\t}_0;\eps,\nu')$ of \eqref{PC_e2} defined in $[0,T]$ for a certain $T>0$, and satisfying also \eqref{pb esterno}. Moreover, it is possible to choose $\delta$, $\eps_2$ and $\nu_1'$ independent on $p_0 \in \pa B_R(0)$.
\end{lemma}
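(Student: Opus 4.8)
The plan is to obtain Lemma~\ref{lemma 0.2} as an application of the implicit function theorem to the map $\Psi$, recovering the geometric constraint $|y(t)|>R$ a posteriori by a continuity argument. First I would pin down the base point of the construction: when $(p_1,\eps,\nu')=(p_0,0,0)$ the solution $y(\cdot\,;\t_0,\dot\t_0;0,0)$ reduces to the $\a$-Kepler brake orbit, which by the discussion preceding the statement returns to $p_0$ at time $\bar T$ with $\dot\t_0=0$; hence $\Psi(0,\bar T,p_0,0,0)=y(\bar T;\t_0,0)-p_0=0$. The crucial point is that the partial Jacobian of $\Psi$ with respect to the pair $(\dot\t_0,T)$, evaluated at this base point, coincides exactly with the Jacobian of $\psi$ at $(0,\bar T)$, which is invertible by Lemma~\ref{lemma 0.1}. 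Since $\n\Phi_{\nu',\eps}$ is smooth in $(y,\eps,\nu')$ as long as $y$ stays away from the centres (which lie in $B_\eps\subset B_R$), and since the initial datum in \eqref{PC_e2} depends smoothly on $(\dot\t_0,\eps,\nu')$ through the Jacobi constant, standard smooth dependence on data and parameters makes $\Psi$ a $\mathcal C^1$ map.

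By the implicit function theorem there are unique $\mathcal C^1$ functions $\dot\t_0=\dot\t_0(p_1,\eps,\nu')$ and $T=T(p_1,\eps,\nu')$ solving $\Psi=0$ near the base point. This already yields, for $|p_1-p_0|$, $\eps$ and $|\nu'|$ small enough, a unique solution of the ODE in \eqref{PC_e2} with $y(0)=p_0$ and $y(T)=p_1$, depending in a $\mathcal C^1$ way on $(p_1,\eps,\nu')$. The Jacobi-constant equation in \eqref{pb esterno} is then automatic: by construction $\dot r_{\nu',\eps}$ is chosen so that the initial datum lies on the level $\frac12|\dot y|^2-\Phi_{\nu',\eps}(y)=-1$, and since this quantity is a first integral of the equation it is preserved along the whole orbit.

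What remains, and what I expect to be the real work rather than the IFT itself, is the constraint $|y(t)|>R$ for $t\in(0,T)$. For the unperturbed brake orbit one has $|y(t)|>R$ on $(0,\bar T)$, with $|y|$ reaching $R$ only at the two endpoints, where the radial velocity equals $\pm\sqrt{2(-1+M/(\a R^\a))}\neq0$; thus the brake orbit meets $\pa B_R(0)$ transversally at both ends. I would split $(0,T)$ into two short endpoint intervals and a central compact interval. On the central interval the brake orbit keeps a strictly positive distance from $\pa B_R(0)$, so uniform $\mathcal C^0$-closeness of the perturbed orbit, coming from continuous dependence on $(p_1,\eps,\nu')$, forces $|y|>R$ there once the parameters are small. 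On the endpoint intervals transversality persists under perturbation: the radial velocity of $y$ at $t=0$ and $t=T$ remains bounded away from zero, outward at the start and inward at the end, so $|y(t)|>R$ just after $0$ and just before $T$. Shrinking $\d,\eps_2,\nu_1'$ if necessary makes the three pieces cover $(0,T)$ and closes the argument.

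Finally, for the uniformity in $p_0$ I would exploit the rotational invariance of the unperturbed $\a$-Kepler problem: the potential $V_0$ is radial, so the brake orbit issued from $p_0=Re^{i\t_0}$ is the rotation by $\t_0$ of the one issued from a fixed reference point. Consequently $\bar T$ is independent of $\t_0$, and the Jacobian of $\psi$ at $(0,\bar T)$ is conjugate by a rotation to a fixed invertible matrix, so the norm of its inverse is bounded uniformly in $\t_0$; the quantitative form of the implicit function theorem then delivers $\d,\eps_2,\nu_1'$ independent of $p_0\in\pa B_R(0)$. The main obstacle is therefore not existence and uniqueness, which follow directly from Lemma~\ref{lemma 0.1}, but the verification of the open constraint $|y(t)|>R$ near the two boundary crossings, which rests on the transversality of the brake orbit and its stability under the $\eps$- and $\nu'$-perturbation.
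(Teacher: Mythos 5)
Your proposal is correct and follows essentially the same route as the paper: the paper's proof consists of applying the implicit function theorem to the map $\Psi$ at the brake-orbit base point $(\dot{\t}_0,T)=(0,\bar{T})$, with invertibility supplied by Lemma \ref{lemma 0.1}, the remaining details being deferred to the proof of Lemma 3.3 of \cite{SoTe}. Your a posteriori recovery of the constraint $|y(t)|>R$ (transversality at the two crossings plus uniform closeness on the central compact interval) and the uniformity in $p_0$ via rotational invariance of the unperturbed $\a$-Kepler problem are exactly the details that this ``straightforward generalization'' entails.
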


\noindent Proposition \ref{teorema 0.1} follows. The solutions obtained are uniquely determined and depends in a smooth way on the ends $p_0$ and $p_1$, and on the parameters $\eps$ and $\nu'$ (by the implicit function theorem). Since a brake solution $y_{\text{br}}(\cdot)=y(\cdot\,;p_0,p_0;0,0)$ of the Kepler's problem is such that
\[
\max_{t \in [0,\bar{T}]} |y_{\text{br}}(t)|= \left(\frac{M}{\a}\right)^{\frac{1}{\a}} \quad \text{and} \quad \max_{t \in [0,\bar{T}]}|\dot{y}_{\text{br}}(t)|= \sqrt{2\left(-1+\frac{M}{\a R^\a}\right)},
\]
it is possible, if necessary, to replace $\eps_2$ and $\nu'_1$ with smaller quantities in such a way that \eqref{limitazione soluzioni esterne} is satisfied.

\begin{center}
\begin{tikzpicture}[scale=3,>=stealth]
\clip (-0.7,0.7) rectangle (0.7,2.3);
\draw[dashed]  (0,0) circle (1cm);
\draw (59:1cm)--(61:1cm) node[anchor=north]{$R$};
\draw (90:1cm) node[anchor=north]{$p_0$};
\draw[dashed] (0,0) circle (2cm);
\draw (74:2cm)--(76:2cm) node[anchor=north]{$\left(\frac{M}{\alpha}\right)^{\frac{1}{\alpha}}$};
\draw[<<->] (90:1cm)--(90:2cm) ;
\end{tikzpicture} $\hspace{2cm}$ \begin{tikzpicture}[scale=3,>=stealth]
\clip (-0.7,0.7) rectangle (0.7,2.3);
\draw[dashed]  (0,0) circle (1cm);
\draw (59:1cm)--(61:1cm) node[anchor=north]{$R$};
\draw (83:1cm) node[anchor=north]{$p_0$};
\draw (95:1cm) node[anchor=north]{$p_1$};
\draw[dashed] (0,0) circle (2cm);
\draw (80:2cm) node[anchor=north]{$\{\Phi_{\nu',\eps}=-1\}$};
\draw[->] (85:1 cm).. controls (89:2cm) and (91:2cm)..(95:1 cm);
\end{tikzpicture}
\end{center}
The picture represents the comparison between the rectilinear brake solution for the $\a$-Kepler problem and a ``close to brake" solution obtained for the perturbed problem with potential $\Phi_{\nu',\eps}$ via the implicit function theorem.

\begin{definition}\label{insieme sol esterne}
For any $\eps \in (0,\eps_2)$ we pose
\[
\mathcal{OS}_\eps:=\{ y_{\text{ext}}(\cdot\,;p_0,p_1; \eps,\nu'): \ p_0,p_1 \in \pa B_R(0), \ |\nu'|< \nu_1' \},
\]
i.e. $\mathcal{OS}_\eps$ is the set of the \emph{outer solutions} corresponding to a fixed value of $\eps$.
\end{definition}

\begin{lemma}\label{bound per i tempi esterni}
For every $\eps \in (0,\eps_2)$ there exist $C_1,C_2>0$ such that
\[
C_1 \leq T_{\text{ext}}(p_0,p_1;\eps,\nu') \leq C_2 \qquad \forall (p_0,p_1,\nu') \in \left(\pa B_R(0)\right)^2 \times (-\wt{\nu}',\wt{\nu}').
\]
Also, there exists $C_3>0$ such that
\[
\|y_{\text{ext}}(\cdot\,;p_0,p_1;\eps,\nu')\|_{H^1([0,T_{\text{ext}}(p_0,p_1;\eps,\nu')])} \leq C_3
\]
for every $(p_0,p_1,\nu') \in \left(\pa B_R(0)\right)^2 \times (-\wt{\nu}',\wt{\nu}')$.
\end{lemma}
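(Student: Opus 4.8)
The plan is to derive both estimates from the single structural fact that, by construction, every outer solution is uniformly close to a rectilinear $\a$-Kepler brake orbit, whose travel time, diameter and $H^1$-norm are explicit finite quantities. First I would record the unperturbed benchmark: by the rotational invariance of the $\a$-Kepler problem, the brake orbit issuing from any $p_0 \in \pa B_R(0)$ sweeps the same radial segment from $\pa B_R(0)$ out to the apoapsis $(M/\a)^{1/\a}$ and back, in one and the same time $\bar T>0$, independent of $p_0$. The construction behind Proposition \ref{teorema 0.1} (the implicit function theorem applied in Lemma \ref{lemma 0.2}, with $\d,\eps_2,\nu_1'$ chosen independently of $p_0$) shows that $T_{\text{ext}}(p_0,p_1;\eps,\nu')$ and $y_{\text{ext}}(\cdot\,;p_0,p_1;\eps,\nu')$ depend continuously, indeed in a $\mathcal C^1$ way, on $(p_0,p_1,\eps,\nu')$ and reduce, at the unperturbed brake datum $(p_1=p_0,\eps=0,\nu'=0)$, to $\bar T$ and to the brake orbit. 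Since the neighbourhood furnished by the implicit function theorem is uniform in $p_0$, the quantity $|T_{\text{ext}}-\bar T|$ can be made arbitrarily small uniformly in $(p_0,p_1,\nu')$ over the whole existence range, after shrinking $\eps_2,\nu_1'$ if necessary.

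From this I would simply take $C_1:=\bar T/2$ and $C_2:=2\bar T$, so that the uniform closeness to $\bar T$ yields $C_1\le T_{\text{ext}}(p_0,p_1;\eps,\nu')\le C_2$ for every admissible $(p_0,p_1,\nu')$; the constants can in fact be chosen independent of $\eps\in(0,\eps_2)$. The lower bound is also robust and can be seen directly without invoking closeness: the uniformly positive departure speed $\dot r(0)=\dot r_{\nu',\eps}$ (close to $\sqrt{2(-1+M/(\a R^\a))}>0$) together with the bounded radial acceleration on the annulus forces the apoapsis radius to satisfy $r_{\max}\ge R+c$ for some $c>0$; hence the arc length is at least the total radial variation $2(r_{\max}-R)\ge 2c$, while the speed is at most $V_{\max}:=2\sqrt{2(-1+M/(\a R^\a))}$, so that $T_{\text{ext}}\ge 2c/V_{\max}$.

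For the $H^1$ estimate I would use the uniform pointwise bounds already granted by \eqref{limitazione soluzioni esterne}. Writing $R_{\max}:=2(M/\a)^{1/\a}$, one has
\[
\|y_{\text{ext}}\|_{H^1([0,T_{\text{ext}}])}^2=\int_0^{T_{\text{ext}}}\left(|y_{\text{ext}}|^2+|\dot y_{\text{ext}}|^2\right)\,dt\le \left(R_{\max}^2+V_{\max}^2\right)T_{\text{ext}}\le \left(R_{\max}^2+V_{\max}^2\right)C_2,
\]
so that $C_3:=\sqrt{(R_{\max}^2+V_{\max}^2)\,C_2}$ works, uniformly in $(p_0,p_1,\nu')$.

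The delicate point is precisely the upper bound $T_{\text{ext}}\le C_2$, i.e. excluding that the travel time blows up: because $|\dot y_{\text{ext}}|$ vanishes near the apoapsis, a naive lower bound on the speed is useless, and one genuinely needs the quantitative closeness to the brake orbit to control the dwell time near the turning point. A self-contained alternative to the closeness argument is a Lagrange–Jacobi computation: differentiating $\rho:=|y_{\text{ext}}|^2$ twice along the flow and eliminating $|\dot y_{\text{ext}}|^2$ through the Jacobi integral gives $\ddot\rho=(4-2\a)V_\eps-4+o(1)$ as $\eps,\nu'\to 0$, an expression that vanishes at $|y|=R$ and is strictly negative, bounded away from $0$, for $|y|\ge R+c$; combined with $r_{\max}\ge R+c$, the resulting concavity of $\rho$ with a definite negative second derivative near the apoapsis caps the time spent away from $\pa B_R(0)$ and hence bounds $T_{\text{ext}}$. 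Either route supplies the uniform upper bound, after which the conclusion follows as above.
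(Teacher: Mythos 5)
Your proposal is correct and follows essentially the same route as the paper: the paper's own (two-line) proof consists precisely of the observation that the time bounds follow from continuous dependence on the data --- i.e.\ from the uniform implicit-function-theorem construction of Lemma \ref{lemma 0.2}, which by its very formulation confines $T_{\text{ext}}$ to a fixed bounded interval $I$ around the brake time $\bar{T}$ --- and that the $H^1$ bound follows from \eqref{limitazione soluzioni esterne} together with the time bound. Note that you do not even need to shrink $\eps_2,\nu_1'$ to get your specific constants $\bar{T}/2$ and $2\bar{T}$: taking $C_1=\inf I>0$ and $C_2=\sup I<\infty$ suffices, and avoids tampering with constants already fixed in Proposition \ref{teorema 0.1}.

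One caveat: the ``self-contained'' Lagrange--Jacobi alternative for the upper bound should not be advertised as an independent route. The quantity $\ddot{\rho}=(4-2\a)V_\eps-4+o(1)$ vanishes exactly on $\pa B_R(0)$ --- this is the defining property \eqref{scelta di R} of $R$, the extreme case being the circular solution, along which $\rho$ is constant for all time --- so the concavity argument caps only the time spent in $\{|y|\ge R+c\}$; it gives no control whatsoever on the dwell time in the transition annulus $R\le|y|\le R+c$, where an orbit with appreciable tangential velocity could a priori linger or wind around. Excluding that requires a lower bound on $|\dot{\rho}|$ near $\pa B_R(0)$, i.e.\ smallness of the transversal velocity $\dot{\t}_0$, which is exactly the closeness-to-brake information furnished by the implicit function theorem; so this route collapses back onto the first one rather than replacing it.
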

\begin{proof}
The boundedness of $T_{\text{ext}}(p_0,p_1;\eps,\nu')$ is a consequence of the continuous dependence of the solutions with respect to variations of initial data. As far as the bound in the $H^1$ norm is concerned, we can use \eqref{limitazione soluzioni esterne} and the first part.
\end{proof}

\begin{remark}
We could make the boundedness properties described above uniform in $\eps$. But we will use this lemma in Sections \ref{riduzione finito dim}, \ref{sezione collisioni} and \ref{dim con lemma}, where $\eps$ will be fixed.
\end{remark}

\section{Inner dynamics}\label{dinamica interna}

In contrast with the previous one, this section is not a direct generalization of Section 4 of \cite{SoTe}; however, it is convenient to summarize the main ideas that we developed therein. Our goal was to find solutions of 
\beq\label{PI}
\begin{cases}
\ddot{y}(t)=\n V_\eps(y(t)) & t \in [0,T] \\
\frac{1}{2}|\dot{y}|^2-V_\eps(y(t))=-1 & t \in [0,T]\\
|y(t)|<R & t \in (0,T)\\
y(0)=p_1, \qquad y(T)=p_2.
\end{cases}
\eeq
satisfying particular topological requirements; $T$ was not determined a priori, while the energy was fixed to $-1$; hence, in order to give a variational formulation of \eqref{PI}, it was convenient to adopt the Maupertuis' principle rather then the minimal action principle. Let $[a,b] \subset \R$ and $p_1,p_2 \in \pa B_R(0)$, $p_1 = R \exp{\left\lbrace i \t_1\right\rbrace }$, $p_2= R \exp{\left\lbrace i\t_2\right\rbrace}$ (the case $p_1=p_2$ is admissible). We introduced the set of collision-free $H^1$ paths
\begin{multline}\label{H^_C.L.}
\wh{H}_{p_1 p_2}\left([a,b]\right):=\left\lbrace u \in H^1\left([a,b],\R^2\right): u(a)=p_1,\  u(b)=p_2, \right. \\
\left. u(t) \neq c_j \text{ for every $t\in [a,b]$, for every $j \in \left\lbrace 1,\ldots,N\right\rbrace$ }\right\rbrace, 
\end{multline}
the set of colliding $H^1$ functions
\begin{multline*}
\mathfrak{Coll}_{p_1 p_2}\left([a,b]\right):=\left\lbrace u \in  H^1\left([a,b],\R^2\right): u(a)=p_1,\  u(b)=p_2, \right. \\
\left. \exists t \in [a,b]: u(t) = c_j \text{ for some $j \in \left\lbrace 1,\ldots,N\right\rbrace$ }\right\rbrace ,
\end{multline*}
and their union
\[
H_{p_1 p_2}\left([a,b]\right)=\wh{H}_{p_1 p_2}\left([a,b]\right) \cup \mathfrak{Coll}_{p_1 p_2}\left([a,b]\right).
\]
Briefly, we will write $\wh{H}$, $\mathfrak{Coll}$ and $H$ when there will not be possibility of misunderstanding. Note that $H$ is the closure of $\wh{H}$ in the weak topology of $H^1$. A path $u \in \wh{H}$ can be characterized according to its winding number with respect to each centre. This number can be computed by artificially closing the path itself, in the following way:
\[
\Gamma(t):=\begin{cases}
                  \begin{cases}
                  u(t) & t \in [a,b]\\
                  R e^{i(t-b+\t_2)} & t \in (b,b+\t_1+2\pi-\t_2)
                  \end{cases} & \text{if $\t_1<\t_2$}\\
                  \, u(t) \qquad t \in [a,b] & \text{if $\t_1=\t_2$}\\
                  \begin{cases}
                  u(t) & t \in [a,b]\\
                  R e^{i(t-b+\t_2)} & t \in (b,b+\t_1-\t_2)
                  \end{cases} & \text{if $\t_1>\t_2$},
           \end{cases}
\]
i.e. if $p_1 \neq p_2$ we close the path $u$ with the arc of $\pa B_R(0)$ connecting $p_2$ and $p_1$ in counterclockwise sense. Then it is well defined the usual winding number $\textrm{Ind}\left(u([a,b]),c_j\right)$. Given $l=(l_1,\ldots,l_N) \in \Z^N$, a connected component of $\wh{H}$ is of the form
\[
\wh{\mathfrak{H}}_l^{p_1 p_2}([a,b]):=  \left\lbrace  u \in \wh{H}_{p_1 p_2}([a,b]): \textrm{Ind}\left(u([a,b]),c_j\right)=l_j \quad \forall j =1,\ldots, N\right\rbrace .
\]
We needed classes containing self-intersections-free paths, so that we considered $l \in \mathbb{Z}_2^N$ instead of $l \in \Z^N$, and set
\[
\wh{H}_l = \wh{H}_l^{p_1 p_2}([a,b]) := \left\lbrace  u \in \wh{H}_{p_1 p_2}([a,b]):  \text{Ind}\left(u([a,b]),c_j\right) \equiv l_j \mod 2 \quad \forall j=1,\ldots,N \right\rbrace;
\]
namely we collected together the components with winding numbers having the same parity with respect to each centre. We also assumed that
\beq\label{scelta di l 2}
\exists j,k \in \left\lbrace 1,\ldots,N\right\rbrace,\  j \neq k, \text{ such that } l_j \neq l_k \mod 2.
\eeq
In this way, each $u \in \wh{H}_l$ has to pass through the ball $B_\eps(0)$, and cannot be constant even if $p_1=p_2$. Actually we proved that the functions in $\wh{H}_l$ are uniformly non-constant, in the sense that there exists $C>0$ such that 
\[
\| \dot{u}\|_{L^2} \geq C \qquad \forall u \in \wh{H}_l.
\]
Furthermore, the constant $C$ can be chosen independently on $p_1$ and $p_2$ (see Lemma 5.2 of the quoted paper) and also on $l$ (the proof is the same). We said that $l \in \Z_2^N$ is a \emph{winding vector}, and we term $\mathfrak{I}^N:= \{ l \in \Z_2^N: \text{l satisfies \eqref{scelta di l 2}}\}$. In order to apply variational methods, we needed to consider $H_l=H_l^{p_1 p_2}([a,b])$, the closure of $\wh{H}_l$ with respect to the weak topology of $H^1$; of course, in $H_l$ there are collision-function. Since we searched functions whose images are in $B_R(0)$, we considered the subsets
\begin{gather*}
\wh{K}_l = \wh{K}_l^{p_1 p_2}([a,b]) :=\lbrace u \in \wh{H}_l: |u(t)| \leq R \ \forall t \in [a,b]\rbrace  \\
K_l = K_l^{p_1 p_2}([a,b]):=\left\lbrace u \in H_l: |u(t)| \leq R \ \forall t \in [a,b]\right\rbrace .
\end{gather*}
The set $K_l$ is weakly closed in $H^1$.
Recall the definition of the Maupertuis' functional associated to problem \eqref{PI}:
\beq\label{Maupertuis vecchio}
M_{-1}(u)=M_{-1}([a,b];u):= \frac{1}{2}\int_a^b |\dot{u}|^2 \int_a^b \left(V_\eps(u)-1\right);
\eeq
It is well known that solutions of the fixed energy problem given by the first two equations in \eqref{PI} are obtained as re-parametrizations of critical points of $M_{-1}$ at positive level in the space $\wh{H}$ (see, e.g. \cite{AmCZ}). It is also possible to consider re-parametrizations of critical points of the functional 
\beq\label{Jacobi vecchio}
L_{-1}(u)=L_{-1}([a,b];u):= \int_a^b \sqrt{\left(V_\eps(u)-1\right)|\dot{u}|^2},
\eeq
which is defined in the closure with respect to the weak topology of $H^1$ of
\[
H_{-1}=H_{-1}^{p_1 p_2}([a,b]):=\left\{ u \in H_{p_1 p_2}([a,b]):  V(u(t))>1, |\dot{u}(t)|>0 \text{ for a.e. $t \in [a,b]$}\right\}.
\]
Actually local minimizers of $M_{-1}$ are local minimizers of $L_{-1}$, and the converse is true up to a re-parameterization. The functional $L_{-1}$ has a useful geometric meaning, since for $u \in H_{-1}$ the value $L_{-1}(u)$ is the length of the curve parametrized by $u$ with respect to the Jacobi metric $g_{ij}(y)=\left(V_\eps(y)-1\right) \delta_{ij}$, where $\delta_{ij}$ is the Kronecker's delta; this metric makes the set $\{V_\eps(u)>1\}$ a Riemannian manifold. \\
Let us look at Theorem 4.12 of \cite{SoTe}. We proved that there exists $\eps_3>0$ such that for every $\eps \in (0,\eps_3)$, $p_1,p_2 \in \pa B_R(0)$ and $l \in \mathfrak{I}^N$ problem \eqref{PI} has a solution $\mathfrak{y}_{l}(\cdot\,;p_1,p_2;\eps) \in K_l^{p_1 p_2}([0,T])$ ($T=T(p_1,p_2;\eps;l)$) which is a re-parametrization of a local minimizer of the Maupertuis' functional $M_{-1}$ in $K_l^{p_1 p_2}([0,1])$, for some $T>0$. If $p_1=p_2$ and
\beq\label{105}
l_1=\cdots=l_{j-1}=l_{j+1}=\cdots=l_N \neq l_j \mod 2,
\eeq
then this solution can be an ejection-collision solution with a unique collision in $c_j$, otherwise it has to be self-intersection-free and collision-free.
The successive step consisted in the translation of Theorem 4.12 in the language of partitions. This is possible since if $u \in \wh{K}_l$ is self-intersection-free then it separates the centres in two different groups, which are determined by the particular choice of $l \in \mathfrak{I}^N$; namely, a self-intersection-free path in a class $\wh{K}_l$ induces a partition of the centres in two non-empty sets. Hence we could define the application $\mathcal{A}:\mathfrak{I}^N \to \mathcal{P}$ which associates to a winding vector
\[
l=(l_1,\ldots,l_N) \text{ with } \begin{cases}
                                  l_k\equiv 0 \mod 2 & k \in A_0 \subset \{1,\ldots,N\}\\
                                  l_k\equiv 1 \mod 2 & k \in A_1 \subset \{1,\ldots,N\}
                                  \end{cases}
\]
the partition
\[
\mathcal{A}(l):=\{\{c_k:l_k \in A_0\},\{c_k:l_k \in A_1\}\}.
\]
It is then natural to set
\[
\begin{split}
& \wh{K}_{P_j}=\wh{K}_{P_j}^{p_1 p_2}([a,b]):= \left\lbrace u \in \wh{K}_l^{p_1 p_2}([a,b]): l \in \mathcal{A}^{-1}(P_j) \right\rbrace,\\
& K_{P_j} = K_{P_j}^{p_1 p_2}([a,b]) :=\left\lbrace u \in K_l^{p_1 p_2}([a,b]):l \in \mathcal{A}^{-1}(P_j) \right\rbrace .
\end{split}
\]
In comparison with \cite{SoTe}, note that we don't require that a path in $K_{P_j}$ has no self-intersection; for the $N$-centre problem such a requirement was proved to be natural, in the sense that every minimizer of the Maupertuis' functional in $\wh{K}_l$ is necessarily self-intersection-free, unless it is an ejection-collision minimizer; for the rotating problem this is not necessarily true, therefore we drop this condition in the definition of $\wh{K}_{P_j}$.\\
From Theorem 4.12, we obtained, for every $\eps \in (0,\eps_3)$, $p_1,p_2 \in \pa B_R(0)$ and $P_j\in \mathcal{P}$, the existence of a solution $\mathfrak{y}_{P_j}(\cdot\,;p_1,p_2;\eps)$ of problem \eqref{PI}, which is a re-parametrization of a local minimizer of the Maupertuis' functional $M_{-1}$ in $K_{P_j}^{p_1 p_2}([0,1])$. If $p_1=p_2$ and $P_j\in \mathcal{P}_1$ then $\mathfrak{y}_{P_j}(\cdot\,;p_1,p_2;\eps)$ can be an ejection-collision solution with a unique collision in $c_i$, otherwise it is always collision-free (recall the definition of $\mathcal{P}_1$, equation \eqref{def di P_1}).

\vspace{1 em}

Let's come back to our "fixed Jacobi constant problem"
\beq\label{pb interno}
\begin{cases} 
\ddot{y}(t)+2 \nu' i \dot{y}(t)= \n \Phi_{\nu',\eps}(y(t)) & t \in [0,T]\\
\frac{1}{2}|\dot{y}(t)|^2-\Phi_{\nu',\eps}(y(t))=-1 & t \in [0,T]\\
|y(t)|<R & t \in [0,T]\\
y(0)=p_1 \qquad y(T)=p_2.
\end{cases}
\eeq
The variational formulation of \eqref{pb interno} will be the object of Subsection \ref{sez formulazione variazionale}. We will state the main result of this section in Subsection \ref{main result interno}.

\subsection{The variational formulation}\label{sez formulazione variazionale}

Let us consider a general problem of type
\beq\label{pb interno generale}
\begin{cases} 
\ddot{z}(t)+2 \nu i \dot{z}(t)= \n \Phi_\nu(z(t)) & t \in [0,T]\\
\frac{1}{2}|\dot{z}(t)|^2-\Phi_\nu(z(t))=h & t \in [0,T]\\
z(0)=p_1 \qquad z(T)=p_2.
\end{cases}
\eeq
with $T>0$ to be determined and $p_1, p_2 \in \R^2$. In order to solve it, we cannot use the Maupertuis' functional because it is suited for fixed energy problems. However, exploiting the existence of the Jacobi constant, we can study the Maupertuis-type functional
\[
M_{h,\nu}([a,b];u):=\sqrt{2}\left( \int_a^b |\dot{u}|^2\right)^{\frac{1}{2}}\left( \int_a^b \Phi_\nu(u)+h \right)^{\frac{1}{2}}+\nu \int_a^b \langle i u, \dot{u} \rangle.
\]
We will briefly write $M_{h,\nu}$ instead of $M_{h,\nu}([a,b];\cdot)$ when there is no possibility of misunderstanding. The domain of $M_{h,\nu}$ is the closure in the weak topology of $H^1$ of
\[
H_{h,\nu}^{p_1 p_2}([a,b]):= \left\{ u \in H_{p_1 p_2}(a,b]): \Phi_{\nu}(u(t)) > -h,|\dot{u}(t)|>0 \ \text{for a.e. $t \in[a,b]$} \right\}.
\]  
If  
\beq\label{livello positivo}
\sqrt{2}\left( \int_a^b |\dot{u}|^2\right)^{\frac{1}{2}}\left( \int_a^b \Phi_\nu(u)+h \right)^{\frac{1}{2}}>0,
\eeq
we can set
\beq\label{omega}
\o^2:= \frac{\int_a^b \Phi_\nu(u)+h}{\frac{1}{2}\int_a^b |\dot{u}|^2}>0
\eeq
and it makes sense to consider the re-parametrization $z(t)=u(\o t)$, defined in $[a/\o,b/\o]$. The functional $M_{h,\nu}$ is differentiable over $\wh{H} \cap \overline{H_{h,\nu}}^{\sigma(H^1, (H^1)^*)}$ (seen as an affine space on $H_0^1$). We will consider $[a,b]=[0,1]$ for the sake of simplicity.

\begin{theorem}\label{formulazione variazionale}
Let $u \in \wh{H}_{p_1 p_2}([0,1]) \cap \overline{(H_{h,\nu}^{p_1 p_2}([0,1])}^{\sigma(H^1, (H^1)^*)}$ be a critical point of $M_{h,\nu}$, i.e. $dM_{h,\nu}\left(u\right)[v]=0 $  for every $ v \in H_0^1\left([0,1]\right)$, and assume that \eqref{livello positivo} is satisfied. Let $\o$ be defined by \eqref{omega}. Then $z(t):=u(\o t)$ is a classical solution of \eqref{pb interno generale} with $T=1/\o$, while $u$ itself is a classical solution of
\beq\label{P_u}
\begin{cases}
\o^2 \ddot{u}(t)+2\nu \o i \dot{u}(t) = \n \Phi_\nu(u(t)) \qquad &t \in [0,1],\\
\frac{1}{2}|\dot{u}(t)|^2-\frac{\Phi(u(t))}{\o^2}=\frac{h}{\o^2} \qquad &t \in [0,1],\\
u(0)=p_1, \quad u(1)=p_2.
\end{cases}
\eeq
\end{theorem}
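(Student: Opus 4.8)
The plan is to read this as a Maupertuis--Jacobi type statement: I compute the first variation $dM_{h,\nu}(u)[v]$, extract the Euler--Lagrange equation, perform the time rescaling $z(t)=u(\o t)$ to recover the motion equation of \eqref{pb interno generale}, and finally verify that the Jacobi constant of the rescaled curve equals exactly $h$. It is convenient to abbreviate $A:=\int_0^1|\dot{u}|^2$, $B:=\int_0^1(\Phi_\nu(u)+h)$ and $C:=\int_0^1\langle iu,\dot{u}\rangle$, so that $M_{h,\nu}(u)=\sqrt{2AB}+\nu C$; by \eqref{livello positivo} both $A,B>0$, hence $\o^2=2B/A$ is well defined and positive, and $\sqrt{B/A}=\o/\sqrt2$, $\sqrt{A/B}=\sqrt2/\o$.

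First I would vary each piece for $v\in H^1_0([0,1])$. The kinetic and potential parts give the standard $dA[v]=2\int_0^1\langle\dot{u},\dot{v}\rangle$ and $dB[v]=\int_0^1\langle\n\Phi_\nu(u),v\rangle$. The only genuinely new ingredient is the rotational term $C$: expanding $\langle i(u+\eps v),\dot{u}+\eps\dot{v}\rangle$, integrating by parts $\int_0^1\langle iu,\dot{v}\rangle=-\int_0^1\langle i\dot{u},v\rangle$ (boundary terms vanish since $v(0)=v(1)=0$), and using the skew-symmetry $\langle iv,\dot{u}\rangle=-\langle v,i\dot{u}\rangle$, one gets $dC[v]=-2\int_0^1\langle i\dot{u},v\rangle$. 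Writing $dM_{h,\nu}(u)[v]=\sqrt2\bigl(\tfrac{\sqrt B}{2\sqrt A}dA[v]+\tfrac{\sqrt A}{2\sqrt B}dB[v]\bigr)+\nu\,dC[v]$, substituting the values of $\sqrt{B/A}$ and $\sqrt{A/B}$, and multiplying by $\o$, the critical point condition becomes $\int_0^1\langle\o^2\dot{u},\dot{v}\rangle+\int_0^1\langle\n\Phi_\nu(u),v\rangle-\int_0^1\langle 2\nu\o\,i\dot{u},v\rangle=0$ for all $v$, i.e. the weak form of $\o^2\ddot{u}+2\nu\o\,i\dot{u}=\n\Phi_\nu(u)$. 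Since $u\in\wh{H}$ is collision-free, $\n\Phi_\nu(u)$ is continuous and a routine bootstrap upgrades $u$ to a classical $\mathcal{C}^2$ solution; evaluating at $\o t$ and using $\dot{z}=\o\dot{u}(\o\,\cdot)$, $\ddot{z}=\o^2\ddot{u}(\o\,\cdot)$ gives $\ddot{z}+2\nu i\dot{z}=\n\Phi_\nu(z)$, with $T=1/\o$ and the correct endpoints $z(0)=p_1$, $z(1/\o)=p_2$.

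It remains to check the Jacobi constant, which is the delicate point. Set $e(s):=\tfrac{\o^2}{2}|\dot{u}(s)|^2-\Phi_\nu(u(s))$, so that the second line of \eqref{P_u}, equivalently $J_\nu(z,\dot{z})=h$, is precisely $e\equiv h$. Differentiating and inserting the Euler--Lagrange equation yields $e'=-2\nu\o\langle i\dot{u},\dot{u}\rangle=0$ because $i\dot{u}\perp\dot{u}$; hence $e$ is constant. Its value is \emph{not} determined by the equation alone, and here the normalization \eqref{omega} enters: since $e$ is constant it equals its average, and $\tfrac{\o^2}{2}\int_0^1|\dot{u}|^2=\tfrac{\o^2}{2}A=B=\int_0^1\Phi_\nu(u)+h$ by $\o^2=2B/A$, so $e=\int_0^1\bigl(\tfrac{\o^2}{2}|\dot{u}|^2-\Phi_\nu(u)\bigr)=B-\int_0^1\Phi_\nu(u)=h$.

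The main obstacle is conceptual rather than computational: one must combine the conservation of $e$ (which rests on the orthogonality $\langle i\dot{u},\dot{u}\rangle=0$, the structural feature that survives the passage from the Riemannian Jacobi length to this Finslerian functional) with the averaging identity forced by the definition of $\o$. Neither fact by itself pins down the Jacobi constant; it is their conjunction that delivers exactly the prescribed value $h$. The one place where a careless slip would propagate into the wrong coefficient $2\nu\o$ in \eqref{P_u} is the sign and factor in $dC[v]$, so that computation is the one I would verify most carefully.
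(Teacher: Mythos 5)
Your proposal is correct and follows essentially the same route as the paper: the paper asserts the Euler--Lagrange computation (which you carry out explicitly, with the right sign and factor in the variation of $\nu\int_0^1\langle iu,\dot u\rangle$), and it then pins down the Jacobi constant exactly as you do, by noting that the conserved quantity $\tfrac{\o^2}{2}|\dot u|^2-\Phi_\nu(u)$ must equal some constant $k$, integrating this relation over $[0,1]$, and comparing the resulting expression for $\o^2$ with the normalization \eqref{omega} to force $k=h$.
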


\begin{proof}
It is not difficult to check that if $dM_{h,\nu}(u)[v]=0$ for every $v \in H_0^1([0,1])$ then $z(t)=u(\o t)$ is a classical solution the first equation in \eqref{pb interno generale}. The Jacobi constant for $z$ reads
\[
\frac{1}{2} |\dot{z}(t)|^2-\Phi_\nu(z(t))=k  \quad \forall t \quad \Leftrightarrow \quad \frac{\o^2}{2}|\dot{u}(s)|^2-\Phi_\nu(u(s))=k \quad \forall s,
\]
where $k \in \R$. We deduce
\[
\o^2= \frac{\int_0^1 \Phi_\nu(u)+k}{\frac{1}{2}\int_0^1 |\dot{u}|^2};
\]
comparing with \eqref{omega}, we obtain $k=h$.
\end{proof}

\noindent The previous statement says that the functional $M_{h,\nu}$ plays, for problem \eqref{pb interno generale}, the role that the classical Maupertuis' functional $M_h$ plays for a fixed energy problem of type \eqref{PI}.  In order to apply variational methods it is worthwhile working in $H$ rather then in $\wh{H}$, since $\wh{H}$ is not weakly closed. As a consequence, it is not possible to rule out the occurrence of collisions from the beginning. This leads to the concept of weak solution for the problem \eqref{pb interno generale}. 
\begin{definition}
Let $u$ be a local minimizer of $M_{h,\nu}$ in $H_{h,\nu}^{p_1,p_2}([0,1])$ such that  \eqref{livello positivo} holds true, and let $\o$ be defined by \eqref{omega}. We say that $z(t)=u(\o t)$ is a \emph{weak solution} of \eqref{pb interno generale} in the time interval $[0,1/\o]$.
\end{definition}
If $z$ is a weak solution, we can define the collision set as:
\[
T_c(z):= \left\{t \in \left[0,\frac{1}{\o}\right]: z(t)=c_j \text{ for some $j=1,\ldots,N$}\right\}.
\]
It is not difficult to check that if $z$ is a weak solution and $(a,b) \subset [0,1] \setminus T_c(z)$, then $z$ is a classical solution of the restricted problem in $(a,b)$, with Jacobi constant $h$: indeed for every $\f \in \mathcal{C}^\infty_c(a,b)$ it results
\beq\label{eq53}
\left. \frac{d}{d\l} M_{h,\nu}(u+\l \f)\right|_{\l=0}=0.
\eeq
One can verify that the set $T_c(z)$ is discrete and finite, so that $z$ is a classical solution almost everywhere in $[0,1/\o]$.
On the other hand, a local minimizer in $K_l$ of $M_{h,\nu}$ does not satisfy the motion equation in every time interval $[c,d]$ such that $|u(t)|=R$ for every $t \in [c,d]$; indeed, in such a situation it is not true anymore that \eqref{eq53} holds true for every variation $\f \in \mathcal{C}^\infty_c([c,d])$. Nevertheless, the conservation of the Jacobi constant still holds true.
  
\begin{proposition}
If $u \in \overline{(H_{h,\nu}^{p_1 p_2}([0,1])}^{\sigma(H^1, (H^1)^*)}$ is a local minimizer of $M_{h,\nu}$, then
\[
\frac{1}{2}|\dot{u}(t)|^2-\frac{\Phi_{\nu}(u(t))}{\o^2}=\frac{h}{\o^2} \qquad \text{for a.e. $t \in [0,1]$}
\]
\end{proposition}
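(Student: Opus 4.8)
The plan is to prove conservation of the Jacobi constant by means of \emph{inner variations} (time reparametrizations), rather than the outer variations used in Theorem \ref{formulazione variazionale}. The virtue of inner variations here is that they are insensitive to any pointwise constraint on the \emph{image} of $u$ (such as $|u|\le R$, which is active on the arcs where the full motion equation fails) and to the presence of collisions: reparametrizing a path leaves its image unchanged, hence leaves the constraint $\{|u|\le R\}$ and the collision set $T_c$ unchanged. This is exactly the mechanism that lets the conservation law survive where the Euler--Lagrange equation does not, as anticipated in the discussion preceding the statement.

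Concretely, I would fix $\eta\in \mathcal{C}^\infty_c(0,1)$ and let $\sigma_\lambda:[0,1]\to[0,1]$ be the associated one-parameter family of increasing $\mathcal{C}^1$-diffeomorphisms fixing the endpoints, with $\sigma_0=\mathrm{id}$ and $\partial_\lambda\sigma_\lambda|_{\lambda=0}=\eta$. Setting $u_\lambda:=u\circ\sigma_\lambda$, one checks that $u_\lambda(0)=p_1$, $u_\lambda(1)=p_2$, that $u_\lambda$ has the same image as $u$, and that $u_\lambda\to u$ in $H^1$ as $\lambda\to0$ (continuity of composition with $\mathcal{C}^1$-diffeomorphisms converging to the identity). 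Thus $u_\lambda$ is an admissible competitor in $\overline{H_{h,\nu}^{p_1 p_2}([0,1])}$, and local minimality together with the two-sided nature of the variation (via $\pm\eta$) gives $\frac{d}{d\lambda}M_{h,\nu}(u_\lambda)\big|_{\lambda=0}=0$.

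The next step is the computation. A change of variables $s=\sigma_\lambda(t)$ shows that the rotational term $\nu\int_0^1\langle iu_\lambda,\dot u_\lambda\rangle$ is reparametrization-invariant (it measures a signed area) and hence does not contribute. Writing $A(\lambda):=\int_0^1|\dot u_\lambda|^2$ and $B(\lambda):=\int_0^1 \Phi_\nu(u_\lambda)+h$, the same substitution, using $\partial_\lambda \sigma_\lambda'(\sigma_\lambda^{-1}(s))|_{\lambda=0}=\eta'(s)$, yields $A'(0)=\int_0^1|\dot u|^2\eta'$ and $B'(0)=-\int_0^1\Phi_\nu(u)\eta'$. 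Stationarity of $\sqrt2\sqrt{A(\lambda)B(\lambda)}$ then reads $A'(0)B(0)+A(0)B'(0)=0$; inserting the definition \eqref{omega} of $\omega^2$, i.e. $B(0)=\tfrac{\omega^2}{2}A(0)$, and dividing by $A(0)>0$ (nonzero by \eqref{livello positivo}) gives
\[
\int_0^1\Big(\tfrac{\omega^2}{2}|\dot u|^2-\Phi_\nu(u)\Big)\eta' = 0\qquad\forall\,\eta\in\mathcal{C}^\infty_c(0,1).
\]
By the du Bois--Reymond lemma the bracketed function equals a constant $k$ a.e.; integrating the pointwise identity $\tfrac{\omega^2}{2}|\dot u|^2-\Phi_\nu(u)=k$ over $[0,1]$ and comparing with \eqref{omega} forces $k=h$. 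Dividing by $\omega^2$ produces the claimed relation.

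The main obstacle, and the point deserving real care, is the legitimacy of differentiating under the integral sign in $B(\lambda)$ (and of the du Bois--Reymond step) when $u$ collides, which requires $\Phi_\nu\circ u\in L^1(0,1)$. This is where the hypothesis $\alpha<2$ is essential: near an isolated collision with a homogeneous potential of degree $-\alpha$ the Kepler-type asymptotics $r(t)\sim|t-t_0|^{2/(2+\alpha)}$ give $\Phi_\nu(u(t))\sim|t-t_0|^{-2\alpha/(2+\alpha)}$, which is integrable precisely because $2\alpha/(2+\alpha)<1\Leftrightarrow\alpha<2$. Combined with the finiteness and discreteness of $T_c$ noted above, this secures both the admissibility of the inner variations at collision instants and the integrability needed to run the argument.
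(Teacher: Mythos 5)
Your proof is correct and takes essentially the same route as the paper: the paper's own (very terse) proof invokes exactly these inner variations, setting $u_\lambda(t):=u(t+\lambda \varphi(t))$ for $\varphi \in \mathcal{C}^\infty_c((0,1),\mathbb{R})$ and deducing $\frac{d}{d\lambda} M_{h,\nu}(u_\lambda)\big|_{\lambda=0}=0$ from local minimality, leaving implicit the computation and du Bois--Reymond step that you carry out in full. One minor remark: your closing appeal to Kepler-type collision asymptotics is unnecessary (and somewhat circular, as those asymptotics are usually derived from the conservation law itself), since $\Phi_\nu(u)\in L^1(0,1)$ follows at once from the finiteness of $M_{h,\nu}(u)$ at the minimizer together with \eqref{livello positivo}.
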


\begin{proof}
It is a consequence of the extremality of $u$ with respect to time re-parametrization keeping the ends fixed. For every $\f \in \mathcal{C}_c^\infty((0,1),\R)$, let us consider $u_\l (t):=u(t+\l \f(t))$. For $\l$ sufficiently small the function $t \mapsto t +\l \f(t)$ is increasing in $[0,1]$, so that in particular it is invertible; the minimality of $u$ implies
\[
\left. \frac{d}{d\l} M_{h,\nu}(u_\l)\right|_{\l=0}=0.
\]
\end{proof}

\begin{remark}\label{M_{h,nu}-M_h}
Note that, if $\nu=0$, the functional $M_{h,\nu}$ reduces to 
\[
M_{h,0}(u):=\sqrt{2}\left(\int_a^b |\dot{u}|^2\right)^{\frac{1}{2}}\left(\int_a^b \left(V(u)+h\right) \right)^{\frac{1}{2}} = 2 \sqrt{M_h(u)},
\]
where $M_h$ is the classical Maupertuis' functional of type \eqref{Maupertuis vecchio}. This reflects the perturbed nature of problem \eqref{pb interno}. Actually, due to the monotonicity of the square root for positive values of its argument it is immediate to deduce that $u$ is a (local) minimizer of $M_h$ at a positive level if and only if it is a (local) minimizer of $M_{h,0}$ such that \eqref{livello positivo} is satisfied. Therefore, if we work in a set in which $M_h$ is bounded below by a positive constant, it is equivalent to minimize $M_h$ or $M_{h,0}$. In particular, since in Lemma 4.16 of \cite{SoTe} we proved that for every $p_1,p_2 \in \pa B_R(0)$ and for every $l \in \mathfrak{I}^N$ there exists $C>0$ such that
\[
M_{-1}(u) \geq C>0  \qquad \forall u \in K_l^{p_1 p_2}([0,1]),
\]
the characterization of the minimizers of $M_{-1}$ in $K_l$ (and consequently also in $K_{P_j}$) described in Theorem 4.12 of \cite{SoTe} (or Corollary 4.14 of \cite{SoTe}) applies for the minimizers of $M_{-1,0}$; this will be crucial in Section \ref{sezione collisioni}.
\end{remark}

\noindent As announced in Section \ref{intro}, there is an analogue counterpart for the functional $L_h$. We introduce $L_{h,\nu}([a,b];\cdot):\overline{H_{h,\nu}}^{\sigma(H^1,(H^1)^*)} \to \R \cup \{+\infty\}$ as
\[
L_{h,\nu}([a,b];u):=\int_a^b \sqrt{\left(\Phi_\nu(u)+h\right)}|\dot{u}| + \frac{1}{\sqrt{2}} \nu \int_a^b \langle i u, \dot{u} \rangle.
\]
For $u \in H^1\left([a,b]\right)$ let us consider the following class of orientation-preserving re-parametrizations
\[
\Gamma_u:=\left\lbrace \left([c,d],f\right): f:[c,d] \to [a,b], \text{$f \in \mathcal{C}^1\left([c,d],\R\right)$ and increasing, such that $u \circ f \in H^1\left([c,d]\right)$}\right\rbrace .
\]
It is not difficult to check that $L_{h,\nu}$ is invariant under re-parametrizations of $\Gamma_u$. We point out that this is false if we consider re-parametrizations which do not preserve the orientation. In particular, differently from $L_h$, $L_{h,\nu}$ is not a length. It is possible to check that if $|\nu|$ is sufficiently small then
\[
\sqrt{\Phi_\nu(z)+h} |\dot{z}| + \nu \langle iu, \dot{u} \rangle
\]
is a Finsler function which makes the ``Hill's region" $\{\Phi_{\nu}(z)>-h\}$ a Finsler manifold. 

\begin{theorem}\label{teorema su L}
Let $u \in H_{h,\nu}^{p_1 p_2}([0,1]) \cap \wh{H}_{p_1 p_2}([0,1])$ be a non-constant critical point of $L_{h,\nu}$. 
Then there exist a re-parametrization $z$ of $u$ which is a classical solution of \eqref{pb interno generale} for some $T>0$.
\end{theorem}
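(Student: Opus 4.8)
The plan is to reduce the statement to Theorem \ref{formulazione variazionale}, which already converts critical points of the Maupertuis-type functional $M_{h,\nu}$ into classical solutions of \eqref{pb interno generale}. The bridge between the two functionals is a Cauchy--Schwarz inequality, combined with the reparametrization invariance of $L_{h,\nu}$ which lets me put $u$ in the parametrization where $L_{h,\nu}$ and $M_{h,\nu}$ agree up to a constant. The key preliminary observation is that the magnetic term $\frac{\nu}{\sqrt2}\int_0^1\langle iu,\dot u\rangle$ of $L_{h,\nu}$ is itself invariant under orientation-preserving reparametrizations and equals exactly $\frac{1}{\sqrt2}$ times the magnetic term of $M_{h,\nu}$. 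Hence, applying Cauchy--Schwarz to $\int_0^1|\dot w|\sqrt{\Phi_\nu(w)+h}$, one obtains for every admissible $w$
\[
L_{h,\nu}(w)\le \frac{1}{\sqrt2}\,M_{h,\nu}(w),
\]
with equality if and only if $w$ satisfies the natural (energy) parametrization $|\dot w|^2=\mu^2\bigl(\Phi_\nu(w)+h\bigr)$ a.e. for some constant $\mu>0$.

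Next I would reparametrize the given $u$ into such a natural parametrization $\tilde u=u\circ f$, with $f$ increasing. This is possible because $u\in H_{h,\nu}^{p_1p_2}([0,1])$ forces $|\dot u|>0$ and $\Phi_\nu(u)+h>0$ a.e., so the required change of variable is defined by integrating $|\dot u|/\sqrt{\Phi_\nu(u)+h}$ and normalizing so that the interval is preserved. The first point to verify is that $\tilde u$ is still a critical point of $L_{h,\nu}$: writing an arbitrary variation of $\tilde u$ as the composition with $f$ of a variation of $u$, the invariance of $L_{h,\nu}$ under $\Gamma_u$-reparametrizations yields $dL_{h,\nu}(\tilde u)[v]=dL_{h,\nu}(u)[v\circ f^{-1}]=0$ first for $v$ in a dense subset of $H_0^1([0,1])$, and then for all $v\in H_0^1$ by continuity of the differential.

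I would then promote $\tilde u$ to a critical point of $M_{h,\nu}$. Since $\tilde u$ realizes equality in the comparison above, the map $\lambda\mapsto \frac{1}{\sqrt2}M_{h,\nu}(\tilde u+\lambda v)-L_{h,\nu}(\tilde u+\lambda v)$ is nonnegative and vanishes at $\lambda=0$; differentiating at $\lambda=0$ gives $\frac{1}{\sqrt2}dM_{h,\nu}(\tilde u)[v]=dL_{h,\nu}(\tilde u)[v]=0$ for every $v\in H_0^1([0,1])$. Because $u$ is non-constant and collision-free, $\tilde u$ satisfies \eqref{livello positivo}, so Theorem \ref{formulazione variazionale} applies: with $\omega$ defined by \eqref{omega} one gets $\omega=\sqrt2/\mu$, and $z(t):=\tilde u(\omega t)$ is a classical solution of \eqref{pb interno generale} with $T=1/\omega$. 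Since $\tilde u$ is a reparametrization of $u$, so is $z$, which is exactly the assertion.

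The main obstacle is the transfer of criticality under $f$: the natural parametrization involves only an $L^1$-regular time change (the speed $|\dot u|$ may degenerate, while the Euler--Lagrange equation of the reparametrization-invariant $L_{h,\nu}$ controls only the direction $\dot u/|\dot u|$, not $|\dot u|$), so one must check that composition with $f^{-1}$ sends $H_0^1$ test functions to admissible variations of $u$. This is handled using the uniform lower bound $\Phi_\nu(u)+h\ge M_1>0$ available in the relevant regime (Remark \ref{maggiorazione in B_R}), together with the density of $\mathcal C_c^\infty$ variations. As a consistency check one may instead compute the Euler--Lagrange equation of $L_{h,\nu}$ directly, namely $\frac{d}{dt}\bigl(\sqrt{\Phi_\nu(u)+h}\,\dot u/|\dot u|\bigr)=\frac{|\dot u|}{2\sqrt{\Phi_\nu(u)+h}}\nabla\Phi_\nu(u)-\sqrt2\,\nu\, i\dot u$, and observe that in the natural parametrization it collapses precisely to the first line of \eqref{P_u}.
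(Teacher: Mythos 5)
Your proof is correct and follows essentially the same route as the paper: the paper's proof of Theorem \ref{teorema su L} simply defers to Theorem 4.5 of \cite{SoTe} ``with minor changes'', and the intended adaptation is exactly your argument --- reparametrize to the energy parametrization, use the H\"older equality case (the same relation $\sqrt{2}L_{h,\nu}\le M_{h,\nu}$ that the paper records in Proposition \ref{minimi M-L}) to transfer criticality from $L_{h,\nu}$ to $M_{h,\nu}$, and conclude via Theorem \ref{formulazione variazionale}. The technical points you flag (admissibility of the non-smooth time change and density of $\mathcal{C}^\infty_c$ test variations) are precisely the ``minor changes'' absorbed by the citation.
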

\begin{proof}
We can adapt the proof of Theorem 4.5 of \cite{SoTe} with minor changes.
\end{proof}

The relationship between minimizers of $M_{h,\nu}$ and $L_{h,\nu}$ is given by the following statement.

\begin{proposition}\label{minimi M-L}
Let $u \in H_{h,\nu} \cap \wh{H}$ be a non-constant (local) minimizer of $M_{h,\nu}$ such that \eqref{livello positivo} holds true. Then $u$ is a (local) minimizer of $L_{h,\nu}$ in $H_{h,\nu} \cap \wh{H}$.\\
On the other hand, let $u \in H_{h,\nu} \cap \wh{H}$ be a non-constant (local) minimizer of $L_{h,\nu}$. Then, up to a re-parametrization, $u$ is a (local) minimizer of $M_{h,\nu}$ in $H_{h,\nu}\cap \wh{H}$ such that \eqref{livello positivo} holds true.  
\end{proposition}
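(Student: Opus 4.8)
The plan is to exploit the tight algebraic relationship between the two functionals $M_{h,\nu}$ and $L_{h,\nu}$. Both contain the same linear ``magnetic'' term $\nu\int\langle iu,\dot u\rangle$, so the only real work concerns the two nonlinear parts: the product form $\sqrt2\,(\int|\dot u|^2)^{1/2}(\int(\Phi_\nu(u)+h))^{1/2}$ in $M_{h,\nu}$ versus the integral-of-product form $\int\sqrt{\Phi_\nu(u)+h}\,|\dot u|$ in $L_{h,\nu}$. The Cauchy–Schwarz inequality gives the pointwise-to-global comparison
\[
\int_0^1 \sqrt{\Phi_\nu(u)+h}\,|\dot u|\;\le\;\Big(\int_0^1|\dot u|^2\Big)^{1/2}\Big(\int_0^1(\Phi_\nu(u)+h)\Big)^{1/2},
\]
with equality precisely when $|\dot u(t)|^2$ is proportional to $\Phi_\nu(u(t))+h$, i.e.\ when $u$ satisfies the constant-Jacobi-constant relation of \eqref{P_u}. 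Subtracting off the common $\nu$-term, this says exactly that
\[
L_{h,\nu}(u)\le \tfrac1{\sqrt2}M_{h,\nu}(u),
\]
with equality at reparametrized solutions. This inequality is the engine of both implications.

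\emph{First} I would prove the forward direction. Let $u$ be a non-constant local minimizer of $M_{h,\nu}$ with \eqref{livello positivo}. By Theorem \ref{formulazione variazionale} (or the conservation proposition just above) $u$ already satisfies $\tfrac12|\dot u|^2 = (\Phi_\nu(u)+h)/\o^2$ almost everywhere, so equality holds in Cauchy–Schwarz and $L_{h,\nu}(u)=\tfrac1{\sqrt2}M_{h,\nu}(u)$. Now take any competitor $w\in H_{h,\nu}\cap\wh H$ near $u$; the inequality gives $L_{h,\nu}(w)\le\tfrac1{\sqrt2}M_{h,\nu}(w)$, while the minimality of $u$ for $M_{h,\nu}$ gives $M_{h,\nu}(u)\le M_{h,\nu}(w)$. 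Chaining these with the equality at $u$ yields $L_{h,\nu}(u)\le L_{h,\nu}(w)$, so $u$ is a local minimizer of $L_{h,\nu}$. Here one must be slightly careful that the $L_{h,\nu}$-neighborhood is comparable to the $M_{h,\nu}$-neighborhood, but since both are taken in the $H^1$ topology on the same affine space this is immediate.

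\emph{Conversely}, let $u$ be a non-constant local minimizer of $L_{h,\nu}$. The key observation is that $L_{h,\nu}$ is invariant under the orientation-preserving reparametrizations in $\Gamma_u$ (stated in the excerpt), whereas $M_{h,\nu}$ is \emph{not}; among all reparametrizations of $u$, the one that equalizes $|\dot u|^2$ with $\Phi_\nu(u)+h$ (the constant-``speed'' parametrization with respect to the relevant weight) minimizes $\tfrac1{\sqrt2}M_{h,\nu}$ over the fibre $\Gamma_u$ and achieves $\tfrac1{\sqrt2}M_{h,\nu}=L_{h,\nu}$ there. Replacing $u$ by this reparametrization $\tilde u$ (which leaves $L_{h,\nu}$ unchanged), I then argue that $\tilde u$ is a local minimizer of $M_{h,\nu}$: for any nearby $w$, pick its own optimal reparametrization $\tilde w$, so that $\tfrac1{\sqrt2}M_{h,\nu}(\tilde w)=L_{h,\nu}(\tilde w)=L_{h,\nu}(w)\ge L_{h,\nu}(u)=\tfrac1{\sqrt2}M_{h,\nu}(\tilde u)$, and one checks \eqref{livello positivo} holds because $u$ is non-constant and lives in $H_{h,\nu}$ where $\Phi_\nu(u)+h>0$.

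\emph{The main obstacle} is the handedness introduced by the linear term $\nu\int\langle iu,\dot u\rangle$. Because this term is genuinely orientation-dependent, $L_{h,\nu}$ is not a metric length (as the excerpt stresses), and one cannot simply quote the classical Maupertuis/Jacobi equivalence for Riemannian lengths. The saving grace is that the \emph{same} linear term appears in both functionals and is itself invariant under orientation-preserving reparametrizations, so it passes through the Cauchy–Schwarz argument untouched; all the reparametrization bookkeeping happens only in the nonlinear factor. Verifying that the optimal reparametrization is admissible (lands in $\Gamma_u$, preserves the $H^1$ regularity and the collision-free class $\wh H$, and keeps \eqref{livello positivo}) is the delicate point, and is exactly where the non-reversibility could bite; I would handle it by noting that the weight $\Phi_\nu(u)+h$ is bounded below by a positive constant on the relevant class (as in Remark \ref{M_{h,nu}-M_h} via $M_1>0$), which makes the reparametrizing change of variable a bi-Lipschitz $\mathcal C^1$ map and hence admissible.
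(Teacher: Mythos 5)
Your overall strategy---the H\"older/Cauchy--Schwarz inequality $\sqrt{2}\,L_{h,\nu}(u)\leq M_{h,\nu}(u)$, with equality exactly when $|\dot u|^2$ is a.e.\ proportional to $\Phi_\nu(u)+h$, combined with the invariance of $L_{h,\nu}$ under orientation-preserving reparametrizations---is precisely the one the paper uses (the paper records this inequality and its equality case and then follows Propositions 4.6 and 4.7 of \cite{SoTe}). However, your forward direction contains a genuine logical error: the chain of inequalities does not close. At $u$ you have the equality $L_{h,\nu}(u)=\tfrac{1}{\sqrt 2}M_{h,\nu}(u)$; minimality gives $M_{h,\nu}(u)\le M_{h,\nu}(w)$; and Cauchy--Schwarz at the competitor gives $L_{h,\nu}(w)\le \tfrac{1}{\sqrt 2}M_{h,\nu}(w)$. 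This last inequality points the wrong way: it only says that \emph{both} $L_{h,\nu}(u)$ and $L_{h,\nu}(w)$ are bounded above by $\tfrac{1}{\sqrt 2}M_{h,\nu}(w)$, which is compatible with $L_{h,\nu}(w)<L_{h,\nu}(u)$. For instance, the numbers $L_{h,\nu}(u)=1$, $M_{h,\nu}(u)=\sqrt 2$, $M_{h,\nu}(w)=2\sqrt 2$, $L_{h,\nu}(w)=\tfrac12$ satisfy all three of your stated facts, yet violate the desired conclusion; so no purely formal chaining of them can prove it.

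The repair is exactly the device you deploy only in the converse direction: given a competitor $w$ for $L_{h,\nu}$ near $u$, pass to its optimal reparametrization $\wt{w}$ (admissible because on the relevant classes $\Phi_\nu+h$ is bounded below by a positive constant, as in Remark \ref{maggiorazione in B_R}), so that
\[
L_{h,\nu}(w)=L_{h,\nu}(\wt{w})=\tfrac{1}{\sqrt 2}M_{h,\nu}(\wt{w})\ \ge\ \tfrac{1}{\sqrt 2}M_{h,\nu}(u)=L_{h,\nu}(u).
\]
Note that this invokes the $M_{h,\nu}$-minimality of $u$ at $\wt{w}$ rather than at $w$, so the genuinely delicate point for a \emph{local} minimizer is to verify that $\wt{w}$ stays in the $H^1$-neighborhood where $u$ minimizes $M_{h,\nu}$; this uses that $u$ itself is already optimally parametrized (by the conservation proposition), and it is where the actual work in Propositions 4.6--4.7 of \cite{SoTe} lies. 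Your remark that comparability of neighborhoods ``is immediate'' addresses a different (and not the problematic) issue. Finally, your converse direction is essentially sound, but to conclude that $\wt{u}$ locally minimizes $M_{h,\nu}$ you should add the missing link $M_{h,\nu}(w)\ge \sqrt 2\,L_{h,\nu}(w)=\sqrt 2\,L_{h,\nu}(\wt{w})=M_{h,\nu}(\wt{w})$, since your displayed chain compares only $M_{h,\nu}(\wt{w})$ with $M_{h,\nu}(\wt{u})$, not $M_{h,\nu}(w)$ itself.
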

\begin{proof}
Due to the H\"older inequality we have
\[
\sqrt{2}L_{h,\nu}(u) \leq M_{h,\nu}(u) \qquad \forall u \in H_{h,\nu} \cap \wh{H},
\]
with equality if and only if there exists $C>0$ such that 
\[
|\dot{u}(t)|^2=C\left(\Phi_\nu(u(t))-1 \right) \qquad \forall t \in [0,1].
\]
Now we can follow step by step the proofs of Proposition 4.6 and Proposition 4.7 of \cite{SoTe}.\end{proof}

\subsection{Existence of inner solutions}\label{main result interno}

The following result is a partial counterpart of Theorem 4.12 of \cite{SoTe}. 

\begin{proposition}\label{teo dinamica interna}
There exist $\eps_4 >0$ and $\nu_2'>0$ such that for every $(p_1,p_2,\eps,\nu',l) \in \left(\pa B_R(0)\right)^2 \times (0,\eps_4) \times (-\nu_2',\nu_2') \times \mathfrak{I}^N$, problem \eqref{pb interno} has a weak solution $y_{l}(\cdot\,;p_1,p_2;\eps,\nu') \in K_l^{p_1 p_2}([0,T])$ which is a re-parametrization of a local minimizer $u_l(\dot\,;p_1,p_2;\eps,\nu')$ of the Maupertuis'  functional $M_{-1,\nu'}$ in $K_l^{p_1 p_2}([0,1])$.
\end{proposition}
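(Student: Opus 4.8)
The plan is to establish existence of a local minimizer of the Maupertuis-type functional $M_{-1,\nu'}$ in the weakly closed class $K_l^{p_1 p_2}([0,1])$ via the direct method of the calculus of variations, and then to invoke Theorem \ref{formulazione variazionale} to reparametrize it as a weak solution of \eqref{pb interno}. Fix $\eps$ and $\nu'$ small. First I would show that $M_{-1,\nu'}$ is bounded below on $K_l$ by a positive constant: exploiting the perturbation estimate \eqref{perturbazione} together with Remark \ref{M_{h,nu}-M_h}, for $\nu'$ small the functional $M_{-1,\nu'}$ is a small perturbation of $M_{-1,0}=2\sqrt{M_{-1}}$, and the uniform lower bound $M_{-1}(u)\geq C>0$ on $K_l$ (Lemma 4.16 of \cite{SoTe}, recalled in Remark \ref{M_{h,nu}-M_h}) transfers to $M_{-1,\nu'}$. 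The crucial input here is the uniform non-constancy $\|\dot u\|_{L^2}\geq C$ valid on all of $\wh{K}_l$, independent of $p_1,p_2,l$, which guarantees that the first (Finsler-length-type) factor cannot degenerate; the rotational term $\nu'\int_0^1\langle iu,\dot u\rangle$ is controlled in absolute value by $\nu'$ times the area swept, which is bounded since $|u|\leq R$ on $K_l$, so for $\nu'$ sufficiently small it cannot destroy positivity of the infimum.

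Next I would take a minimizing sequence $(u_n)\subset K_l$ and extract a weakly convergent subsequence. Coercivity in $H^1$ follows from the constraint $|u_n|\leq R$ (giving an $L^2$ bound) combined with an $L^2$ bound on $\dot u_n$: the latter comes from the fact that $M_{-1,\nu'}(u_n)$ is bounded above (being a minimizing sequence) while the potential factor $\int_0^1(\Phi_{\nu',\eps}(u_n)-1)$ is bounded below by $M_1>0$ on $\overline{B_R(0)}$ by Remark \ref{maggiorazione in B_R}; this forces $\int_0^1|\dot u_n|^2$ to stay bounded through the product structure of $M_{-1,\nu'}$, after absorbing the rotational term as above. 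Since $K_l$ is weakly closed in $H^1$, the weak limit $u_l$ lies in $K_l$. Lower semicontinuity is then the heart of the matter: the factor $\bigl(\int_0^1|\dot u|^2\bigr)^{1/2}$ is weakly lower semicontinuous, the factor $\bigl(\int_0^1\Phi_{\nu',\eps}(u)+h\bigr)^{1/2}$ passes to the limit by compact Sobolev embedding $H^1\hookrightarrow C^0$ (so $u_n\to u_l$ uniformly, and $\Phi_{\nu',\eps}$ is continuous away from the centres, with the collision set handled by Fatou since $\Phi_{\nu',\eps}\geq 0$), and the rotational term $\nu'\int_0^1\langle iu_n,\dot u_n\rangle$ passes to the limit because $u_n\to u_l$ strongly in $C^0$ while $\dot u_n\wc\dot u_l$ weakly in $L^2$. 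Combining, $M_{-1,\nu'}(u_l)\leq\liminf M_{-1,\nu'}(u_n)$, so $u_l$ is a minimizer; verifying \eqref{livello positivo} is immediate from the strictly positive lower bound already obtained.

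The final step is to promote this global minimizer on $K_l$ to a \emph{local} minimizer in the topological class and to conclude. Here one argues, exactly as in the passage to Theorem 4.12 of \cite{SoTe}, that the constraint $|u|\leq R$ is not active in an essential way: a minimizer spends its interior time strictly inside $B_R(0)$ (the boundary circle can be touched only at the endpoints), so on the open set where $|u_l(t)|<R$ and $u_l(t)\neq c_j$ the function $u_l$ is a genuine interior critical point of $M_{-1,\nu'}$, and Theorem \ref{formulazione variazionale} applies to give, after the reparametrization $z(t)=u_l(\o t)$ with $\o$ from \eqref{omega}, a classical solution of \eqref{pb interno generale} with $h=-1$ on each collision-free subinterval. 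This is precisely a weak solution $y_l(\cdot\,;p_1,p_2;\eps,\nu')\in K_l^{p_1 p_2}([0,T])$ in the sense of the definition preceding the Proposition.

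\textbf{The main obstacle} I anticipate is the lower semicontinuity and coercivity in the presence of the non-convex, sign-indefinite Finsler rotational term $\nu'\int_0^1\langle iu,\dot u\rangle$, which does not come from a Riemannian metric and breaks reversibility. Unlike the classical Jacobi length, $M_{-1,\nu'}$ is not manifestly a sum of a convex and a weakly continuous part; the argument works only because this term is \emph{linear} in $\dot u$ (hence weakly continuous once $u_n\to u_l$ in $C^0$) and because, for $\nu'$ small, it is a genuine lower-order perturbation of the dominant product term whose positive lower bound is inherited uniformly from \cite{SoTe} via Remark \ref{M_{h,nu}-M_h}. Making the smallness thresholds $\eps_4,\nu_2'$ uniform in $(p_1,p_2,l)$ rests on the uniformity already built into the non-constancy constant $C$ and into $M_1$, so no new difficulty arises there.
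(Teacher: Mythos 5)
Your first two paragraphs follow, correctly, the same route as the paper: coercivity of $M_{-1,\nu'}$ on $K_l$ (the paper's published lemma actually needs no smallness of $\nu'$ here, via the estimate $\bigl(M_1+\tfrac{(\nu')^2}{2}\|u\|_{L^2}^2\bigr)^{1/2}\geq \tfrac{|\nu'|}{\sqrt{2}}\|u\|_{L^2}+\l$ with $\l>0$ uniform since $\|u\|_{L^2}\leq R$, but your absorption argument with $|\nu'|<\sqrt{2M_1}/R$ is also fine and matches how the paper later fixes $\nu_3'$), weak lower semicontinuity with the rotational term treated as weakly \emph{continuous}, weak closedness of $K_l$, and \eqref{livello positivo} from the uniform non-constancy constant and $M_1$. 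Up to there, no objection. The genuine gap is in your final step, which is in fact the core of the proposition: the claim that ``the boundary circle can be touched only at the endpoints,'' which you assert follows ``exactly as in the passage to Theorem 4.12 of \cite{SoTe},'' adding that ``no new difficulty arises there'' for the thresholds.

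This inverts the logic of the proof. In the paper, the constrained minimizer exists for \emph{every} $\eps\in(0,\eps_1/2)$ and \emph{every} $\nu'\in\R$; the thresholds $\eps_4,\nu_2'$ appearing in the statement are generated exclusively by the confinement lemma you are citing away, and that lemma cannot be imported verbatim from \cite{SoTe}, where $\nu'=0$. One must show: that the constrained minimizer of the new functional $M_{-1,\nu'}$ is $\mathcal{C}^1$ and, on intervals where the constraint is inactive, solves the rotating equation $\o^2\ddot u+2\nu'\o i\dot u=\n\Phi_{\nu',\eps}(u)$; that an interior tangency with $\pa B_R(0)$ would force the solution --- by continuous dependence on initial data \emph{and on the vector field}, and because $R$ was chosen in \eqref{scelta di R} precisely as the radius of the circular $\a$-Kepler solution of energy $-1$ --- to shadow that circular orbit, hence to be unable to enter $B_{R/2}(0)$ within some fixed time $\tau$; and that this contradicts a uniform transition-time bound across the annulus together with the winding condition \eqref{scelta di l 2}, which forces the path into $B_\eps(0)$. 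The smallness of both $\eps$ and $\nu'$ enters exactly in this comparison, and this is where $\eps_4$ and $\nu_2'$ are defined; attributing them instead to the non-constancy constant and $M_1$ is incorrect. Without carrying out this adaptation, what your argument produces is a minimizer of $M_{-1,\nu'}$ in $K_l$, not a weak solution of \eqref{pb interno}: the constraint $|u|\leq R$ could a priori be active on interior time intervals, and there, as the paper points out after the definition of weak solution, the stationarity condition \eqref{eq53} fails for variations supported in those intervals, so the motion equation need not hold.
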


\noindent Before proceeding with the proof of Theorem \ref{teo dinamica interna}, we state the translation of this result in terms of partitions.

\begin{corollary}\label{dinamica interna partizioni}
For every $(p_1,p_2,\eps,\nu',P_j) \in \left(\pa B_R(0)\right)^2 \times (0,\eps_4) \times (-\nu_2',\nu_2') \times \mathcal{P}$, problem \eqref{pb interno} has a weak solution $y_{P_j}(\cdot\,;p_1,p_2;\eps,\nu') \in K_{P_j}^{p_1 p_2}([0,T])$ which is a re-parametrization of a local minimizer $u_{P_j}(\dot\,;p_1,p_2;\eps,\nu')$ of the Maupertuis'-type functional $M_{-1,\nu'}$ in $K_{P_j}^{p_1 p_2}([0,1])$.
\end{corollary}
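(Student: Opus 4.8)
The plan is to derive Corollary \ref{dinamica interna partizioni} directly from Proposition \ref{teo dinamica interna} by exploiting the finite decomposition of each partition class $K_{P_j}$ into winding-vector classes $K_l$. Recall from the discussion preceding the corollary that the map $\mathcal{A}:\mathfrak{I}^N \to \mathcal{P}$ is surjective, and that by definition
\[
K_{P_j}^{p_1 p_2}([a,b]) = \bigcup_{l \in \mathcal{A}^{-1}(P_j)} K_l^{p_1 p_2}([a,b]).
\]
Since $\mathfrak{I}^N \subset \Z_2^N$ is a finite set, the fibre $\mathcal{A}^{-1}(P_j)$ is finite. Thus, granting the $\eps_4, \nu_2'$ furnished by Proposition \ref{teo dinamica interna}, for each $l \in \mathcal{A}^{-1}(P_j)$ we already have a local minimizer $u_l$ of $M_{-1,\nu'}$ in $K_l^{p_1 p_2}([0,1])$ together with its associated weak solution $y_l$. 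The natural candidate for $u_{P_j}$ is the $u_l$ realizing the least value of $M_{-1,\nu'}$ among the finitely many winding classes in the fibre; that is, first I would set
\[
M_{-1,\nu'}(u_{P_j}) := \min_{l \in \mathcal{A}^{-1}(P_j)} M_{-1,\nu'}(u_l),
\]
and let $u_{P_j}$ be a minimizer attaining this finite minimum, with $y_{P_j}$ its re-parametrization.

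The key step is to verify that $u_{P_j}$ so defined is genuinely a \emph{local} minimizer of $M_{-1,\nu'}$ in the full class $K_{P_j}^{p_1 p_2}([0,1])$, not merely in the single class $K_{l}$ where it was born. Here I would argue that the various classes $K_l$ are ``weakly separated'' in $H^1$: a path cannot pass continuously (in the weak $H^1$ topology, or even in the $H^1$-norm topology used to define local minimality) from one winding class $K_{l}$ to another $K_{l'}$ with $l \neq l'$ without passing through a collision configuration, i.e.\ through the set $\mathfrak{Coll}$ where the winding number is undefined. Concretely, if $u, v \in \wh{K}_{P_j}$ lie in distinct classes $\wh{K}_l, \wh{K}_{l'}$ and $\|u - v\|_{H^1}$ is small, then along the segment $u + t(v-u)$ the winding number with respect to some centre must jump, forcing an intermediate collision; hence a sufficiently small $H^1$-neighbourhood of a non-collision path $u_l$ inside $K_{P_j}$ is contained, up to collision paths, in $K_l$ itself. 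On such a neighbourhood $u_{P_j}$ minimizes because it minimizes over $K_l$ (by Proposition \ref{teo dinamica interna}) and because its $M_{-1,\nu'}$-value does not exceed that of any competing class by the choice of the minimum.

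Granting this local-minimality transfer, the remaining conclusions follow immediately. Since $u_{P_j}$ is a local minimizer of $M_{-1,\nu'}$ in $K_{l}^{p_1 p_2}([0,1])$ for the selected $l$, Proposition \ref{teo dinamica interna} already certifies that \eqref{livello positivo} holds and that the re-parametrization $y_{P_j}(\cdot\,;p_1,p_2;\eps,\nu') \in K_{P_j}^{p_1 p_2}([0,T])$ is a weak solution of \eqref{pb interno}; these are class-insensitive properties inherited directly from the corresponding statement for $u_l$. One should take $\eps_4$ and $\nu_2'$ to be exactly those of Proposition \ref{teo dinamica interna}, so no new smallness threshold is introduced.

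The main obstacle is the weak-separation claim in the middle paragraph. The subtlety is that $K_{P_j}$ is defined as the weak $H^1$-closure of $\wh{K}_{P_j}$, and the weak limit of collision-free paths in distinct winding classes may well be a single collision path lying in the boundary of several classes at once; the winding number is a discrete invariant only on the \emph{open} set $\wh{H}$ of collision-free paths. Consequently the notion of local minimality in $K_{P_j}$ must be handled carefully on the collision set, and one cannot naively assert that the classes $K_l$ are disjoint or open. I expect the rigorous treatment to rely on the fact, recalled in Remark \ref{M_{h,nu}-M_h} and inherited from Lemma 4.16 of \cite{SoTe}, that $M_{-1,\nu'}$ is bounded below by a positive constant on each $K_l$, so that the competition among the finitely many classes is well-posed and the minimizing value is attained away from the degenerate level; combined with the collision-forcing argument for winding-number jumps, this should close the gap.
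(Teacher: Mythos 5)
Your overall strategy --- decompose $K_{P_j}^{p_1 p_2}([0,1])$ into the finitely many winding classes $K_l^{p_1 p_2}([0,1])$, $l \in \mathcal{A}^{-1}(P_j)$, invoke Proposition \ref{teo dinamica interna} in each class, and select the competitor of least value --- is the intended one, but as written it has a genuine gap, which you in fact locate yourself without closing. Your weak-separation argument transfers local minimality from $K_l$ to $K_{P_j}$ only when the selected minimizer $u_l$ is collision-free: in that case $\|u_l - w\|_\infty \geq \min_{t,k}|u_l(t)-c_k|>0$ for every colliding path $w$, and every collision-free path of $K_{l'}$ that is $H^1$-close (hence uniformly close) to $u_l$ has the same winding parities as $u_l$, so a small $H^1$-ball around $u_l$ in $K_{P_j}$ lies entirely in $K_l$ (there are no nearby collision paths at all, so your ``up to collision paths'' proviso is superfluous there). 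The trouble is that Proposition \ref{teo dinamica interna} produces \emph{weak} solutions, which may collide: collisions are ruled out only in Sections \ref{sezione collisioni} and \ref{dim con lemma}, under extra hypotheses on the partitions, and for $\a=1$, $p_1=p_2$, $P_j \in \mathcal{P}_1$ an ejection-collision minimizer can indeed occur. If the selected $u_l$ has a collision, both classes $K_l$ and $K_{l'}$ accumulate on it (perturbing an ejection-collision arc to pass on either side of the colliding centre changes the winding parity), and then neither the separation claim nor your selection rule rescues the argument: local minimality of $u_l$ in $K_l$ together with $M_{-1,\nu'}(u_l)\leq M_{-1,\nu'}(u_{l'})$ says nothing about paths of $K_{l'}$ that are close to $u_l$ but far from $u_{l'}$, and these may a priori have smaller value. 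The positive lower bound for $M_{-1,\nu'}$ on $K_l$ that you invoke at the end is irrelevant to this comparison and does not close the gap.

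The gap disappears once you use what the proof of Proposition \ref{teo dinamica interna} actually provides rather than its bare statement: the minimizers $u_l$ are obtained by the direct method (coercivity and weak lower semicontinuity of $M_{\nu'}$ on the weakly closed set $K_l$), hence they are \emph{global} minimizers in their classes. Then your candidate, the $u_l$ of least value over the finite fibre $\mathcal{A}^{-1}(P_j)$, is a global minimizer of $M_{-1,\nu'}$ on the union $K_{P_j}$, and global minimality trivially implies local minimality; no separation of classes is needed and collision paths cause no difficulty. Equivalently --- and this is the paper's implicit argument, since the corollary is stated as a pure translation of the proposition --- one can run the direct method on $K_{P_j}$ itself: it is weakly closed, being a finite union of weakly closed sets, and the coercivity and semicontinuity lemmas use only $|u|\leq R$ and $\Phi_{\nu',\eps}-1\geq M_1$, so they hold on $K_{P_j}$ verbatim. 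The resulting minimizer lies in some $K_l$, hence inherits from the proposition's proof condition \eqref{livello positivo}, the confinement $|u(t)|<R$ for $t \in (0,1)$, and the fact that its re-parametrization is a weak solution of \eqref{pb interno}, with exactly the thresholds $\eps_4$, $\nu_2'$, as you correctly anticipate.
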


We fix $[a,b]=[0,1]$ and the Jacobi constant to $-1$, so we will write $M_{\nu'}$ instead of $M_{-1,\nu'}$. Also, we fix $p_1,p_2 \in \pa B_R(0)$ and $l \in \mathfrak{I}^N$. 

\begin{remark}\label{remark dipendenza da nu}
In the statement of Theorem \ref{teo dinamica interna} the values $\eps_4$ and $\nu_2'$ depend neither on $p_1,p_2 \in \pa B_R(0)$, nor on $l \in \mathfrak{I}^N$. But here we fixed $p_1, p_2$ and $l$ before finding $\eps_4$ and $\nu_2'$. Actually, once we will find $\eps_4$ and $\nu_2'$, we will see that they are independent on the previous quantities.
\end{remark}

We aim at applying the direct methods of the calculus of variations in order to find a minimizer of $M_{\nu'}$ in $K_l$. Assuming that we can find such a minimizer $u_l(\cdot\,;p_1,p_2;\eps,\nu')$, in order to obtain a weak solution of \eqref{pb interno} we have to show that
\[
1) \  u_l(\cdot\,;p_1,p_2;\eps,\nu') \text{ satisfies \eqref{livello positivo}}, \qquad 2) \ 
|u_l(t;p_1,p_2;\eps,\nu')| < R  \quad  \forall t \in (0,1).
\]
Note that the first requirement is satisfied: for every $u \in \bigcup_{p_1,p_2,l}K_l^{p_1 p_2}([0,1])$, it results $|u| \leq R$; therefore we can use the bound of Remark \ref{maggiorazione in B_R}. We will discuss about the second condition after the minimization.

\begin{lemma}
The functional $M_{\nu'}$ is coercive in $K_l$.
\end{lemma}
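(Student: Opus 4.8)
The plan is to show that the sublevel sets of $M_{\nu'}$ are bounded in $H^1([0,1])$, i.e. that $M_{\nu'}(u) \to +\infty$ whenever $\|u\|_{H^1} \to +\infty$ within $K_l$. The first reduction is to observe that coercivity in the full $H^1$ norm amounts to a bound involving only $\|\dot u\|_{L^2}$. Indeed, every $u \in K_l$ satisfies $|u(t)| \le R$ for all $t$, so $\|u\|_{L^\infty([0,1])} \le R$ and hence $\|u\|_{L^2([0,1])} \le R$; therefore $\|u\|_{H^1}^2 \le R^2 + \|\dot u\|_{L^2}^2$, and $\|u\|_{H^1} \to \infty$ inside $K_l$ forces $\|\dot u\|_{L^2} \to \infty$. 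It thus suffices to bound $M_{\nu'}(u)$ from below by a positive multiple of $\|\dot u\|_{L^2}$.

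Next I would estimate the two terms of $M_{\nu'}$ separately. For the Maupertuis term I would invoke Remark \ref{maggiorazione in B_R}: since $|u| \le R$ we have $\Phi_{\nu',\eps}(u)-1 \ge M_1 > 0$ almost everywhere (the inequality only improves at possible collision instants, where $V_\eps \to +\infty$), so that $\int_0^1 (\Phi_{\nu'}(u)-1) \ge M_1$ and
\[
\sqrt{2}\left(\int_0^1 |\dot u|^2\right)^{\frac12}\left(\int_0^1 (\Phi_{\nu'}(u)-1)\right)^{\frac12} \ge \sqrt{2 M_1}\,\|\dot u\|_{L^2}.
\]
The key point is that $M_1$ is independent of both $\eps \in (0,\eps_1/2)$ and $\nu'$, so this lower bound is uniform in the parameters. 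For the rotational term, which carries no definite sign, I would use Cauchy--Schwarz together with the pointwise bound $|u| \le R$:
\[
\left|\nu' \int_0^1 \langle i u, \dot u\rangle\right| \le |\nu'| \int_0^1 |u|\,|\dot u| \le |\nu'|\,R \int_0^1 |\dot u| \le |\nu'|\,R\,\|\dot u\|_{L^2},
\]
the last step being H\"older's inequality on the unit interval.

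Combining the two estimates yields
\[
M_{\nu'}(u) \ge \left(\sqrt{2 M_1} - |\nu'|\,R\right)\|\dot u\|_{L^2}.
\]
It then suffices to impose $\nu_2' \le \sqrt{2 M_1}/R$ (one of the several constraints that eventually define $\nu_2'$), so that the coefficient is strictly positive for $|\nu'| < \nu_2'$; coercivity then follows from the first reduction, since $M_{\nu'}(u)$ grows linearly in $\|\dot u\|_{L^2}$, which controls $\|u\|_{H^1}$ on $K_l$.

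The only genuinely delicate point is the rotational term: being sign-indefinite, it could a priori dominate the Maupertuis term and destroy coercivity. The computation shows this is impossible once the angular velocity is small, and it is exactly here that the smallness threshold on $\nu'$ enters the picture; at $\nu'=0$ coercivity is immediate, and the estimate above makes precise the sense in which $M_{\nu'}$ is a controllable perturbation of $M_{-1,0}$. Everything else is a uniform a-priori bound exploiting the confinement $|u|\le R$ built into the definition of $K_l$.
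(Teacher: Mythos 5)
Your individual estimates are all valid, but they prove a strictly weaker statement than the lemma: coercivity of $M_{\nu'}$ on $K_l$ only under the smallness condition $|\nu'| < \sqrt{2M_1}/R$. The lemma in the paper carries no restriction on $\nu'$, and it is used in exactly that generality: right after the two lemmas, the direct method is applied to produce a minimizer ``for every $(\eps,\nu') \in (0,\eps_1/2)\times \R$''; the smallness of $\nu'$ enters only later, in the separate lemma guaranteeing that the minimizer stays strictly inside $B_R(0)$. The place where your argument loses generality is the very first step: you bound $\int_0^1(\Phi_{\nu'}(u)-1) \geq M_1$, i.e.\ you discard the centrifugal contribution $\frac{(\nu')^2}{2}\int_0^1 |u|^2$, and it is precisely this term that makes the unconditional result true. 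The paper keeps it and, after Cauchy--Schwarz on the rotational term (with $\|u_n\|_{L^2}$, not $R$), writes
\[
M_{\nu'}(u_n) \;\geq\; \sqrt{2}\,\|\dot u_n\|_{L^2}\left(M_1 + \frac{(\nu')^2}{2}\|u_n\|_{L^2}^2\right)^{\frac12} - |\nu'|\,\|u_n\|_{L^2}\,\|\dot u_n\|_{L^2}.
\]
Since the function $s \mapsto \bigl(M_1+\frac{(\nu')^2}{2}s^2\bigr)^{1/2}-\frac{|\nu'|}{\sqrt{2}}\,s$ is non-increasing and positive, one has, for every $s\in[0,R]$,
\[
\left(M_1 + \frac{(\nu')^2}{2}s^2\right)^{\frac12} \;\geq\; \frac{|\nu'|}{\sqrt{2}}\,s + \lambda,
\qquad
\lambda := \left(M_1+\frac{(\nu')^2}{2}R^2\right)^{\frac12} - \frac{|\nu'|}{\sqrt{2}}\,R \;>\;0,
\]
and plugging this in cancels the term $|\nu'|\,\|u_n\|_{L^2}\|\dot u_n\|_{L^2}$ exactly, leaving $M_{\nu'}(u_n) \geq \sqrt{2}\,\lambda\,\|\dot u_n\|_{L^2} \to +\infty$ for \emph{every} $\nu' \in \R$. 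Note the structural point: the centrifugal term and the Cauchy--Schwarz bound of the rotational term involve the same quantity $\|u_n\|_{L^2}$, which is what permits the exact absorption; bounding one by $R$ and not the other, as you do, destroys this and forces the smallness threshold.

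So the missing idea is that the rotational term is always dominated by the centrifugal part of the Maupertuis term, with no smallness of $\nu'$ needed. That said, your conditional version would not sink the paper's final theorems: the angular velocity is eventually restricted anyway, and the very threshold $\sqrt{2M_1}/R$ you impose reappears in the paper itself (Definition \ref{def insieme soluzioni interne} fixes $\nu_3' < \min\{\nu_2',\sqrt{2M_1}/R\}$ for the boundedness estimates). But to make your route consistent you would have to restate the lemma with the hypothesis $|\nu'|<\sqrt{2M_1}/R$ and carry that restriction through the application of the direct method; as written, your proof does not establish the lemma in the form and generality in which the paper states and uses it.
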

\begin{proof}
Let $(u_n) \subset K_l$ such that $\| \dot{u}_n\|_{H^1} \to \infty$ for $n \to \infty$. Since $\|u_n\|_{L^2} \leq R$, necessarily $\|\dot{u}_n\|_{L^2} \to +\infty$ as $n \to \infty$. As $V_\eps(y)-1 \geq M_1>0$ in $B_R(0)$, 
\begin{align*}
M_{\nu'}(u_n) &\geq \sqrt{2} \| \dot{u}_n\|_{L^2}\left(M_1+ \frac{(\nu')^2}{2} \int_0^1 |u_n|^2 \right)^{\frac{1}{2}} - |\nu'| \int_0^1 |u_n| |\dot{u}_n| \\
&= \sqrt{2}  \|\dot{u}_n\|_{L^2} \left( \frac{|\nu'|}{\sqrt{2}} \|u_n\|_{L^2}+\l \right)-|\nu'| \|u_n\|_{L^2} \|\dot{u}_n\|_{L^2}
\end{align*}
for some $\l>0$. Hence $M_{\nu'}(u_n) \geq \sqrt{2} \l \|\dot{u}_n\|_{L^2}$.
\end{proof}

\begin{lemma}\label{weak lower s-c di M_m}
The functional $M_{\nu'}$ is weakly lower semi-continuous in $K_l$.
\end{lemma}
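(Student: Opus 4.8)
The plan is to take an arbitrary sequence $(u_n) \subset K_l$ with $u_n \wc u$ weakly in $H^1([0,1])$---so that $u \in K_l$, since $K_l$ is weakly closed---and to show $M_{\nu'}(u) \leq \liminf_n M_{\nu'}(u_n)$. I would first record the consequences of weak $H^1$ convergence in dimension one: the sequence is bounded in $H^1$, so $\|\dot u_n\|_{L^2} \leq C$, and by the compact embedding $H^1([0,1]) \hookrightarrow \mathcal{C}([0,1])$ we get $u_n \to u$ uniformly, hence strongly in $L^2$ and pointwise everywhere, while $\dot u_n \wc \dot u$ in $L^2$. It is convenient to decompose
\[
M_{\nu'}(u_n) = \sqrt{2}\, A_n B_n + \nu' C_n,
\]
with $A_n := \left(\int_0^1 |\dot u_n|^2\right)^{1/2}$, $B_n := \left(\int_0^1 (\Phi_{\nu',\eps}(u_n)-1)\right)^{1/2}$ and $C_n := \int_0^1 \langle i u_n, \dot u_n\rangle$, and to treat the three factors separately.

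The two factors of the product are each nonnegative and weakly lower semicontinuous. Indeed $A_n = \|\dot u_n\|_{L^2}$ and the $L^2$-norm is weakly lower semicontinuous, so $\liminf_n A_n \geq A := \|\dot u\|_{L^2}$. For $B_n$ I would use that $\Phi_{\nu',\eps}\colon \R^2 \to (0,+\infty]$ is lower semicontinuous (continuous off the centres and $+\infty$ at them); since $u_n(t)\to u(t)$ for every $t$ and the integrand is bounded below by the positive constant $M_1$ of Remark \ref{maggiorazione in B_R}, Fatou's lemma gives $\liminf_n B_n^2 \geq \int_0^1 (\Phi_{\nu',\eps}(u)-1) = B^2$, whence $\liminf_n B_n \geq B$. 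To pass to the product I would argue along subsequences: picking $n_k$ realizing $\liminf_n A_nB_n$ and a further subsequence along which $A_{n_k}\to a$ and $B_{n_k}\to b$ in $[0,+\infty]$, one has $a \geq A$ and $b \geq B \geq \sqrt{M_1}>0$; since both are nonnegative and $b$ is bounded away from $0$, no $0\cdot\infty$ ambiguity occurs and $\liminf_n A_nB_n = ab \geq AB$.

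The genuinely delicate piece is the rotation term $C_n$, which is neither convex nor sign-definite and so cannot be handled by lower semicontinuity alone. Here the one-dimensional compact embedding is decisive: writing $C_n - C = \int_0^1 \langle i(u_n-u),\dot u_n\rangle + \int_0^1 \langle i u, \dot u_n - \dot u\rangle$ with $C := \int_0^1 \langle iu,\dot u\rangle$, the first integral is bounded by $\|u_n-u\|_{L^2}\|\dot u_n\|_{L^2} \to 0$ (strong $L^2$ convergence times the bound $\|\dot u_n\|_{L^2}\le C$), and the second tends to $0$ because $\dot u_n \wc \dot u$ in $L^2$; hence $C_n \to C$, i.e. $C_n$ is weakly \emph{continuous}. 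Combining, and using that $\nu' C_n$ converges, I obtain
\[
\liminf_n M_{\nu'}(u_n) = \sqrt{2}\,\liminf_n A_nB_n + \nu' C \geq \sqrt{2}\,AB + \nu' C = M_{\nu'}(u),
\]
which is the claim. The main obstacle is exactly this interplay between the rotation term and the product structure: $M_{\nu'}$ is not a convex integral functional, so the argument hinges on the weak continuity of $C_n$ available in dimension one and on the fact that the potential factor stays bounded below by $M_1>0$, which is what makes the product of the two lower semicontinuous factors behave well.
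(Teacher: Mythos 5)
Your proof is correct and follows essentially the same route as the paper: the same decomposition of $M_{\nu'}$ into the product term and the rotation term, and the identical splitting $\int_0^1 \langle i(u_n-u),\dot u_n\rangle + \int_0^1 \langle i u, \dot u_n - \dot u\rangle$ to show that the rotation term is in fact weakly \emph{continuous} (which the paper records in Remark \ref{remark 7}). The only difference is that where the paper dismisses the product term as standard and cites \cite{BaFeTe,Vethesis}, you prove its lower semicontinuity directly via Fatou and a subsequence argument, which is a fine (and self-contained) way to fill in that step; note only that the absence of a $0\cdot\infty$ ambiguity is most cleanly justified by the uniform non-constancy bound $\|\dot u\|_{L^2}\geq C>0$ on $K_l$, rather than by the lower bound on the potential factor alone.
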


\begin{proof}
Let $(u_n) \subset K_l$ such that $u_n \wc u$ weakly in $H^1$. It is by now standard the proof of 
\[
\left(\int_0^1 |\dot{u}|\right)^{\frac{1}{2}}\left(\int_0^1 \Phi_{\nu',\eps}(u)-1 \right)^ \frac{1}{2} \leq  \liminf_{n \to \infty}
\left(\int_0^1 |\dot{u}_n|\right)^{\frac{1}{2}}\left(\int_0^1 \Phi_{\nu',\eps}(u_n)-1 \right)^ \frac{1}{2},
\]
see for instance \cite{BaFeTe,Vethesis}. It remains to show that
\beq\label{eq8}
\nu' \int_0^1 \langle i u, \dot{u}\rangle \leq \liminf_{n \to \infty} \nu' \int_0^1 \langle i u_n,\dot{u}_n \rangle.
\eeq
The weak convergence of $u_n$ to $u$ implies that $u_n \to u$ uniformly in $[0,1]$ and $\dot{u}_n \wc \dot{u}$ weakly in $L^2$, as $n \to \infty$. We have
\[
\nu' \int_0^1 \langle i u_n, \dot{u}_n \rangle = \nu' \int_0^1 \langle i (u_n-u), \dot{u}_n \rangle + \nu' \int_0^1 \langle i u, \dot{u}_n \rangle.
\]
The first term tends to $0$ and the second term tends to $\nu' \int_0^1 \langle i u,\dot{u}\rangle$ as $n \to \infty$; \eqref{eq8} follows.
\end{proof}

\begin{remark}\label{remark 7}
The term $\nu \int_0^1 \langle i u,\dot{u} \rangle$ is not only weakly lower semi-continuous in $H^1$, but also continuous in the weak topology of $H^1$.
\end{remark}

\noindent Due to the coercivity and the weak lower semi-continuity of $M_{\nu'}$, we can apply the direct methods of the calculus of variations on the functional $M_{\nu'}$ in the weakly closed set $K_l$. For every $(\eps,\nu') \in (0,\eps_1/2) \times \R$, we obtain a minimizer $u_l(\cdot;p_1,p_2;\eps,\nu')$ for which \eqref{livello positivo} is satisfied. The following result concludes the proof of Proposition \ref{teo dinamica interna}.

\begin{lemma}
There are $\eps_4,\nu_2'>0$ such that for every 
$(p_1,p_2,\eps,\nu',l) \in \left(\pa B_R(0)\right)^2 \times (0,\eps_4) \times (-\nu_2',\nu_2') \times \mathfrak{I}^N$ the minimizer $u_l(\cdot\,;p_1,p_2;\eps,\nu')$ is such that
\[
|u_l(\cdot\,;p_1,p_2;\eps,\nu')| <R \qquad \forall t \in (0,1).
\]
\end{lemma}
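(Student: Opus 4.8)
The plan is to reduce the statement to the strict confinement already available for the $N$-centre problem ($\nu'=0$) and then transfer it to small $\nu'$ by a perturbation argument. First I would set $\eps_4 \le \eps_3$, the threshold of Theorem 4.12 in \cite{SoTe}: by Remark \ref{M_{h,nu}-M_h}, when $\nu'=0$ a minimizer of $M_{\nu'}=M_{-1,0}$ in $K_l$ is exactly a minimizer of the classical Maupertuis' functional $M_{-1}$, and for such minimizers \cite{SoTe} already guarantees $|u_l(t;p_1,p_2;\eps,0)| < R$ for every $t \in (0,1)$. The geometric reason, which I will use again below, is the choice \eqref{scelta di R}: at energy $-1$ the only Keplerian orbit entirely contained in $\overline{B_R(0)}$ is the circular one of radius $R$, so that any orbit grazing $\partial B_R(0)$ from inside at an interior apoapsis is forced to be nearly circular, which is incompatible with the winding prescribed by $l \in \mathfrak{I}^N$.

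Next I would argue by contradiction in $\nu'$, fixing $\eps \in (0,\eps_4)$. If the conclusion failed, there would be $\nu_n' \to 0$ and minimizers $u_n := u_l(\cdot\,;p_1,p_2;\eps,\nu_n')$ with $|u_n(t_n)| = R$ for some $t_n \in (0,1)$. The coercivity of $M_{\nu'}$ proved above, together with the boundedness of the minimal values (comparison with a fixed competitor in $K_l$ and the bound $\Phi_{\nu',\eps}-1 \ge M_1>0$ of Remark \ref{maggiorazione in B_R}), yields a uniform $H^1$ bound on the $u_n$; hence, up to a subsequence, $u_n \rightharpoonup u_0$ weakly in $H^1$ and uniformly in $C^0([0,1])$, with $u_0 \in K_l$ since $K_l$ is weakly closed. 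Using the perturbation estimate \eqref{perturbazione}, the weak lower semicontinuity of the length term (Lemma \ref{weak lower s-c di M_m}) and the weak continuity of the area term (Remark \ref{remark 7}, which moreover vanishes as $\nu_n' \to 0$), I would identify $u_0$ as a minimizer of $M_{-1,0}$ in $K_l$, hence strictly interior on $(0,1)$ by the previous paragraph. By equicontinuity, $|u_n(t)-u_n(s)| \le C|t-s|^{1/2}$, the touching times either stay in a compact subinterval $[\d,1-\d]$ or accumulate at an endpoint; in the first case $\max_{[\d,1-\d]}|u_0| < R$ and uniform convergence force $|u_n| < R$ there for $n$ large, a contradiction.

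The main obstacle is therefore the endpoint case $t_n \to 0$ (the case $t_n \to 1$ is symmetric), which cannot be excluded by convergence alone, since $|u_0(t)| \to R$ as $t \to 0^+$. Here I would use a local variational comparison: by equicontinuity the arc $u_n|_{[0,t_n]}$ has diameter $O(t_n^{1/2}) \to 0$, lies in $\overline{B_R(0)}$, and joins $u_n(0)=p_1$ to $u_n(t_n)$, both on $\partial B_R(0)$; since the centres lie in $\overline{B_\eps(0)}$ with $\eps < R/2$, this shrinking arc encloses no centre, so any homotopically trivial replacement keeps the curve in the class $K_l$. I would then show that detaching this arc transversally into $B_R(0)$ strictly lowers $M_{\nu'}$, contradicting the minimality of $u_n$, and hence that the minimizer must leave $\partial B_R(0)$ immediately at $t=0$. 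The delicate point, and the reason the precise choice \eqref{scelta di R} is essential, is that the Jacobi weight $\sqrt{\Phi_{\nu',\eps}-1}$ is smallest exactly along $\partial B_R(0)$ and that $\partial B_R(0)$ is a geodesic of the unperturbed metric: the comparison is borderline at leading order, so one must exploit the $O(\eps^2)$ correction (the first-order term vanishes because the origin is the centre of mass) together with the transversality forced by the circular-orbit characterization, rather than a naive maximum principle on $|u_n|^2$, whose second-derivative test gives only $O(\eps^2)+O(\nu')$ at radius $R$ and is thus inconclusive. Finally, since $M_1$, the equicontinuity constant and the quantities entering \eqref{perturbazione} are independent of $p_1,p_2 \in \partial B_R(0)$ and of $l \in \mathfrak{I}^N$ and are controlled in $\eps$, the resulting $\nu_2'$ can be chosen independent of these data, in agreement with Remark \ref{remark dipendenza da nu}.
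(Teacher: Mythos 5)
Your opening reduction (compactness in $\nu'$ at fixed $\eps$, identification of the weak limit as a minimizer of $M_{-1,0}$ in $K_l$, exclusion of touching times lying in a fixed compact subinterval of $(0,1)$) is a genuinely different strategy from the paper's and is essentially sound, but the lemma is not proved: the case you yourself isolate as the main obstacle --- touching times $t_n \to 0$ or $t_n \to 1$ --- is left as a programme, and that programme cannot work as described. The obstruction is not that the comparison is merely ``borderline'': it has the wrong sign in exactly the regime you need. By \eqref{scelta di R}, $\pa B_R(0)$ is a closed geodesic of the unperturbed Jacobi metric $\left(V_0-1\right)\delta_{ij}$ (the function $r \mapsto r^2\left(V_0(r)-1\right)$ has a strict local maximum at $r=R$), and its radial instability is a \emph{long-arc} phenomenon: for variations supported on an arc of small angular span and vanishing at its ends, the derivative term in the second variation dominates the favourable zero-order term, exactly as short arcs of an equator on a sphere are genuine minimizers even though the whole equator is unstable. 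Since your touching arcs $u_n|_{[0,t_n]}$ shrink, they fall below this conjugate-point threshold, so no admissible local deformation into $B_R(0)$ decreases $M_{\nu_n'}$, whatever the size of the $O(\eps^2)$ and $O(\nu')$ corrections; ``detaching the arc'' produces no contradiction, and no bookkeeping of the vanishing dipole term can change this sign.

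The missing idea is dynamical, and it is how the paper argues, directly and with no limit procedure in $\nu'$: a minimizer $u$ in $K_l$ is $\mathcal{C}^1$, so at any touching time $t^* \in (0,1)$ the function $|u|^2$ has an interior maximum and the velocity is tangent to $\pa B_R(0)$; the conservation of the Jacobi constant then forces the speed there to coincide, up to $O(\eps)+O(\nu')$, with the speed of the circular $\a$-Kepler orbit of radius $R$ and energy $-1$ (this is precisely the purpose of the choice \eqref{scelta di R}); by continuous dependence on initial data and on the vector field, $u$ shadows that circular orbit after (and before) $t^*$, hence it cannot cross the annulus $B_R(0)\setminus B_{R/2}(0)$ within the uniform transition time available to minimizers, while $l \in \mathfrak{I}^N$ forces $u$ to enter $B_\eps(0) \subset B_{R/2}(0)$: contradiction. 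This rules out interior touchings wherever they occur, endpoint-adjacent or not, and yields thresholds uniform in $p_1,p_2,l$ \emph{and} in $\eps$. By contrast, your argument is run at a fixed $\eps$, so even where it works it can only produce a threshold $\nu_2'(\eps)$, whereas the statement asserts a single $\nu_2'$ valid for all $\eps \in (0,\eps_4)$; moreover, the identification of the weak limit as a minimizer of $M_{-1,0}$ is essentially the Continuity Lemma which the paper establishes only in Section \ref{dim con lemma}, so in your scheme it would need its own (routine, fixed-endpoint) proof rather than a citation.
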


\begin{proof}
We can follow the same line of reasoning which was used in \cite{SoTe} in order to prove Proposition 4.22. For the reader's convenience, we report here the ingredients of the proof.
Let us term
\[
T_R(u):=\left\lbrace t \in [0,1]: |u(t)|=R\right\rbrace , \quad T_{R/2}^+(u):=\left\lbrace t \in [0,1]: |u(t)| > \frac{R}{2}\right\rbrace
\]
A connected component of $T_R(u)$ is an interval (possibly a single point) $[t_1,t_2]$ with $t_1 \leq t_2$. It is possible to show that $u \in \mathcal{C}^1([0,1])$, and if $(a,b)$ is a connected component of $T_{R/2}^+(u) \setminus T_R(u)$, then $u|_{(a,b)}$ is of class $\mathcal{C}^2$ and is a solution of
\[
\o^2 \ddot{u}(t) + 2 \nu' \o i \dot{u}(t)=\n \Phi_{\nu',\eps}(u(t)), \quad \text{where} \quad \o^2:= \frac{\int_0^1 \left(\Phi_{\nu'\eps}(u)-1\right) }{\frac{1}{2}\int_0^1 |\dot{u}|^2}.
\]
Moreover, there are $\eps_4,\nu_2',\tau >0$ such that, if $(\eps,\nu') \in (0,\eps_4) \times (-\nu_2',\nu_2')$, then for every $t_3,t_4$ such that
\[
|u(t_3)| = R, \quad |u(t_4)|=\frac{R}{2}, \quad \frac{R}{2}<|u(t)|<R \quad \forall t \in \begin{cases} (t_3,t_4) & \text{if $t_3<t_4$}\\ (t_4,t_3) & \text{if $t_3>t_4$} \end{cases},
\]
there holds $|t_4-t_3|\leq \tau$. Neither $\eps_4$ nor $\nu_2'$ depend on $p_1,p_2$ or $l$. 
Let $[t_1, t_2]$ be a connected component of $T_R(u)$, let $(a,b)$ be a connected component of $T_{R/2}^+$ such that $[t_1,t_2] \subset (a,b)$. Let us consider $y(t):=u(\o t)$. Since $y \in \mathcal{C}^1\left((a/\o,b/\o)\right)$, it must lean against the circle $\left\lbrace y \in \R^2: |y|=R\right\rbrace$ with tangential velocity, and for every $\l>0$ there exists $t_5>t_2$ (or, if $t_2=1$, $t_5<t_1$, and in this case the following inequality has to be changed in obvious way) such that
\[
\left|y\left(\frac{t_5}{\o}\right)-Re^{i \t(t_2/\o)}\right| + \left|\dot{y}\left(\frac{t_5}{\o}\right)- R \dot{\t}\left(\frac{t_2}{\o}\right)i e^{i \t(t_2/\o)}\right| < \l.
\]
Thus, recalling that $R$ is the radius of the circular solution of energy $-1$ for the $\a$-Kepler's problem, the theorem of continuous dependence of the solutions with respect to the vector field and the initial data implies that $y$ cannot enter (or exit from) the ball $B_{R/2}(0)$ in time $\tau$, provided $\eps_4$ and $\nu_2'$ are sufficiently small (if this was not true, we can replace them with smaller quantities); this is in contradiction with the choice of $l$.
\end{proof}

\noindent In order to exploit the description of the behavior of the solution which we obtained for the $N$-centre problem in Theorem 4.12 of \cite{SoTe}, we will replace $\eps_4$ with $\min\{\eps_3,\eps_4\}$ (for the reader's convenience, we recall again that $\eps_3$ has been introduced in Theorem 4.12 of \cite{SoTe}).

\begin{definition}\label{def insieme soluzioni interne}
Let us fix \textbf{arbitrarily} $\nu_3' \in \left(0,\min\{\nu_2',\sqrt{2M_1}/R\}\right)$. For every $\eps \in (0,\eps_4)$ we term
\[
\mathcal{IM}_\eps:=\{ u_l(\cdot;p_1,p_2;\eps,\nu'): \ p_1,p_2 \in \pa B_R(0),\ l \in \Z_2^N, \ |\nu'|<\nu_3'\},
\]
the set of the \emph{inner minimizers} of $\{M_{\nu'}\}_{|\nu'| <\nu_3'}$ for a fixed value of $\eps$, and 
\[
\mathcal{IS}_\eps:=\{y_l(\cdot;p_1,p_2;\eps,\nu'): \ p_1,p_2 \in \pa B_R(0),\ l \in \Z_2^N, \ |\nu'| < \nu_3'\},
\]
the set of the corresponding \emph{inner solutions} for a fixed value of $\eps$.
\end{definition}

We conclude this section with a collection of boundedness properties for the functions of $\mathcal{IM}_\eps$.

\begin{proposition}\label{minimi limitati}
Let $\eps \in (0,\eps_4)$. There are $C_1,C_2,C_3,C_4,C_5>0$ such that
\begin{gather*}
C_1 \leq \inf_{u \in \mathcal{IM}_\eps} \|\dot{u}\|_{L^2} \leq \sup_{u \in \mathcal{IM}_\eps} \|\dot{u}\|_{L^2} \leq C_2,\\
C_3 \leq \inf_{u=u_l(\cdot\,;p_1,p_2,\eps,\nu')\in \mathcal{IM}_\eps} \int_0^1 \Phi_{\nu',\eps}(u)-1
\leq \sup_{u=u_l(\cdot\,;p_1,p_2,\eps,\nu') \in \mathcal{IM}_\eps} \int_0^1 \Phi_{\nu',\eps}(u)-1 \leq C_4,\\
\sup_{u=u_l(\cdot\,;p_1,p_2,\eps,\nu')} M_{\nu'}(u) \leq C_5.
\end{gather*}
\end{proposition}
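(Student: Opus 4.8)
The plan is to obtain the two lower bounds ($C_1,C_3$) directly from the structural constraints defining $K_l$, then establish the key upper bound ($C_5$) on the functional itself by testing the minimality of $u_l$ against a fixed competitor, and finally to read off the remaining upper bounds ($C_2,C_4$) from $C_5$ through the Hölder splitting of $M_{\nu'}$. Throughout I write $A(u):=\int_0^1 |\dot u|^2=\|\dot u\|_{L^2}^2$ and $B(u):=\int_0^1\left(\Phi_{\nu',\eps}(u)-1\right)$, so that $M_{\nu'}(u)=\sqrt 2\,A(u)^{1/2}B(u)^{1/2}+\nu'\int_0^1\langle iu,\dot u\rangle$, and I record the elementary estimate $\left|\nu'\int_0^1\langle iu,\dot u\rangle\right|\le |\nu'|R\int_0^1|\dot u|\le |\nu'|R\,A(u)^{1/2}$, valid because $|u|\le R$ on $K_l$ and by Cauchy--Schwarz on $[0,1]$.

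First I would dispatch the lower bounds. For $C_3$, every $u\in\mathcal{IM}_\eps$ satisfies $|u|\le R$, so Remark \ref{maggiorazione in B_R} gives $\Phi_{\nu',\eps}(u)-1\ge M_1$ pointwise, whence $B(u)\ge M_1$; thus $C_3:=M_1$ works. For $C_1$, I would invoke the uniform non-constancy recalled in Section \ref{dinamica interna}, namely that there is a constant $C>0$, independent of $p_1,p_2$ and of $l\in\mathfrak I^N$, with $\|\dot u\|_{L^2}\ge C$ for every $u\in\wh{H}_l$; by weak lower semicontinuity of the $L^2$-norm this bound passes to the weak closure $K_l$, so $A(u)^{1/2}\ge C=:C_1$ for all $u\in\mathcal{IM}_\eps$.

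The crux is $C_5$. Here I would use that each $u_l$ is the minimizer of $M_{\nu'}$ over $K_l$ produced by the direct method, so $M_{\nu'}(u_l)\le M_{\nu'}(w)$ for every competitor $w\in K_l$. For each of the finitely many $l\in\mathfrak I^N$ and each pair $(p_1,p_2)$, I would build an explicit collision-free comparison path $w=w_{l,p_1,p_2}\in\wh{K}_l\subset K_l$: connect $p_1$ to a fixed reference point of $\partial B_R(0)$ by an arc of the boundary circle, run a fixed loop $\gamma_l$ realizing the winding parities prescribed by $l$ inside $B_R(0)$ at distance bounded away from the centres, and return to $p_2$ by another boundary arc. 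Parametrizing $w$ with constant speed on $[0,1]$, its total length is at most $2\pi R+\ell(\gamma_l)$, so $A(w)$ is bounded by a constant independent of $(p_1,p_2,\nu')$; since $w$ stays at a fixed positive distance from every centre (recall $|c_k'|\le\eps<R$), the quantity $\Phi_{\nu',\eps}(w)-1$ is bounded above uniformly for $|\nu'|<\nu_3'$, hence so is $B(w)$. Combined with the cross-term estimate this yields $M_{\nu'}(w)\le C_5$ uniformly, and therefore $M_{\nu'}(u_l)\le C_5$. I expect this construction to be the main obstacle: one must realize the prescribed winding parities while keeping both the length and the distance-to-centres bounds uniform in the endpoints; finiteness of $\mathfrak I^N$ together with compactness of $\partial B_R(0)$ make this manageable.

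Finally I would extract $C_2$ and $C_4$. From $M_{\nu'}(u_l)\le C_5$, the cross-term lower estimate $\nu'\int_0^1\langle iu,\dot u\rangle\ge -|\nu'|R\,A^{1/2}$, and $B(u_l)\ge M_1$, I obtain $A(u_l)^{1/2}\left(\sqrt{2M_1}-|\nu'|R\right)\le M_{\nu'}(u_l)\le C_5$; since the choice $\nu_3'<\sqrt{2M_1}/R$ in Definition \ref{def insieme soluzioni interne} forces $\sqrt{2M_1}-|\nu'|R\ge\sqrt{2M_1}-\nu_3'R>0$, this gives $A(u_l)^{1/2}\le C_5/\left(\sqrt{2M_1}-\nu_3'R\right)=:C_2$. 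For $C_4$, the same cross-term estimate yields $\sqrt 2\,A^{1/2}B^{1/2}\le C_5+|\nu'|R\,A^{1/2}\le C_5+\nu_3'R\,C_2$; dividing by $\sqrt 2\,A^{1/2}\ge\sqrt 2\,C_1$ gives $B(u_l)^{1/2}\le\left(C_5+\nu_3'R\,C_2\right)/\left(\sqrt 2\,C_1\right)$, i.e. $B(u_l)\le C_4$. This closes the chain, all five constants being independent of $p_1,p_2,l$ and of $\nu'\in(-\nu_3',\nu_3')$.
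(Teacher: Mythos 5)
Your proposal follows essentially the same route as the paper's own proof: $C_3=M_1$ comes from Remark \ref{maggiorazione in B_R}; $C_1$ from uniform non-constancy; the key bound $C_5$ is obtained by testing minimality against a competitor built from two boundary arcs glued to one fixed inner path realizing the class $l$ and staying at a fixed distance $\mu>0$ from the centres (the paper uses a fixed path $\wt u$ between fixed points $\wt p_1,\wt p_2\in\pa B_R(0)$ rather than a loop at a single reference point, an immaterial variant); and $C_2$, $C_4$ are then extracted from $C_5$ by exactly the same algebra, using $\nu_3'<\sqrt{2M_1}/R$ from Definition \ref{def insieme soluzioni interne}.

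There is, however, one step that is genuinely wrong as written: your transfer of the lower bound $C_1$ to the weak closure. Weak lower semicontinuity of the norm states that $u_m\wc u$ implies $\|\dot u\|_{L^2}\le\liminf_m\|\dot u_m\|_{L^2}$; this transfers \emph{upper} bounds from a sequence to its weak limit, never lower bounds (for those you would need weak \emph{upper} semicontinuity, which the norm does not enjoy). So ``the bound passes to the weak closure by weak lower semicontinuity'' is a false principle. The conclusion you need is nonetheless true, and the paper simply invokes the uniform non-constancy of \cite{SoTe} directly for $K_l^{p_1 p_2}([0,1])$, i.e. for the weak closure including collision paths. A self-contained repair: by \eqref{scelta di l 2} every $u\in\wh H_l$ must touch $\overline{B_\eps(0)}$ (it has different winding parities around two distinct centres, both lying in $B_\eps(0)$); this property is stable under uniform, hence under weak $H^1$, convergence, so every $u\in K_l$ travels from $\pa B_R(0)$ to $\overline{B_\eps(0)}$ and back, whence $\|\dot u\|_{L^2}\ge\int_0^1|\dot u|\ge 2(R-\eps)>0$. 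Finally, a shared subtlety, present in the paper's construction as well and therefore not counted against you: attaching the two boundary arcs and the closing arc can shift all winding numbers by a common integer, so the glued competitor may a priori lie in $K_{l+(1,\ldots,1)}$ rather than $K_l$; this is repaired, with the uniform bounds unaffected, by keeping for each $l$ two fixed inner paths (one for each parity of the shift) or by choosing the boundary arcs so that the shift is even.
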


\begin{remark}
Since $\sup\{ \|u\|_{L^2}: u \in \mathcal{IM}_\eps\} \leq R$, the set $\mathcal{IM}_\eps$ is bounded in the $H^1$ norm.
\end{remark}

\begin{proof}
Every $u \in \mathcal{IM}_{\eps}$ is of type $u_l(\cdot\,;p_1,p_2;\eps,\nu')$ for some $p_1,p_2 \in \pa B_R(0)$, $l \in \mathfrak{I}^N$, $\nu'\in (\nu_3',\nu_3')$. Since $ \mathfrak{I}^N$ is discrete and finite, we can prove the statement for a fixed $l$. In \cite{SoTe} we proved that the functions of $\bigcup_{p_1,p_2 \in \pa B_R(0)} K_l^{p_1 p_2}([0,1])$ are uniformly non-constant, which ensures the existence of $C_1$. Furthermore, as an immediate consequence of the estimate in Remark \ref{maggiorazione in B_R}, we obtain $C_3=M_1$. Now let us fix $\wt{p}_1, \wt{p}_2 \in \pa B_R(0)$; there exists $\wt{u} \in K_l^{\wt{p}_1 \wt{p}_2}([0,1])$ such that, for some $C_6>0$ and $\mu=\mu(\eps) \in (0,\eps)$, it results
\[
|\dot{\wt{u}}(t)|=C_6, \quad |\wt{u}(t)-c_j| \geq \mu(\eps) \quad \forall t \in [0,1], \forall j \in \{1,\ldots, N\}.
\]
For every $\nu' \in (-\nu_3',\nu_3')$ we have
\[
\int_0^1 \Phi_{\nu',\eps}(\wt{u}) = \int_0^1 \left( V_\eps(\wt{u})+\frac{(\nu')^2}{2} |\wt{u}|^2 \right) \leq \frac{M}{\a \mu^\a}+ \frac{(\nu'_3)^2}{2} R^2 =:C_7,
\]
where $C_7=C_7(\eps)$. Starting from this bound it is possible to obtain a uniform bound with respect to $p_1,p_2,\nu'$ for the level of the minimizers of $M_{\nu'}$. If $(p_1,p_2) \neq (\wt{p}_1,\wt{p}_2)$, we consider the path
\[
\wh{u}(t):=\begin{cases}
                 \zeta_{R}(3t;p_1,\wt{p}_1) & t \in [0,1/3]\\
                 \wt{u}(3t-1) & t \in (1/3,2/3]\\
                 \zeta_{R}\left(3t- 2;\wt{p}_2,p_2\right) & t \in (2/3,1],
                 \end{cases}
\]
where, for $p_*,p_{**} \in \pa B_R(0)$, $\zeta_R(\cdot\,;p_*,p_{**}):[0,1] \to \R^2$ parametrizes the shorter (in the Euclidean metric) arc of $\pa B_R(0)$ connecting $p_*$ and $p_{**}$ with constant velocity. As far as the velocity of $\zeta_R(\cdot\,;p_*,p_{**})$ is concerned, it is easy to see that it is uniformly bounded with respect to $p_*,p_{**}$. This, together with the assumptions on $\wt{u}$, implies that also the velocity of $\wh{u}$ is bounded in $[0,1]$, and
\[
M_{\nu'}(\wh{u})\leq C\left(\int_0^1 \Phi_{\nu',\eps}(\wt{u})-1+C\right)^{\frac{1}{2}}+ |\nu'|R C \leq C_5.
\]
Again, $C_5=C_5(\eps)>0$, while it does not depend on the ends $p_1$ and $p_2$ or on the parameter $\nu'$. Consequently, for the family of the minimizers there holds
\beq\label{eq9}
M_{\nu'}(u_l(\cdot\,;p_1,p_2;\eps,\nu')) \leq C_5 \qquad \forall p_1,p_2 \in \pa B_{R}(0), \ |\nu'| < \nu_3'.
\eeq
Using \eqref{magg in B_R}, we obtain 
\begin{align*}
\|\dot{u}_l(\cdot\,;p_1,p_2;\eps,\nu')\|_{L^2}  &\leq \frac{C_5-\nu' \int_0^1 \left\langle i u_l(\cdot\,;p_1,p_2;\eps,\nu'),\dot{u}_l(\cdot\,;p_1,p_2;\eps,\nu') \right\rangle }{\sqrt{2M_1}} \\
&\leq \frac{C_5 + |\nu'| R \|\dot{u}_l(\cdot\,;p_1,p_2;\eps,\nu')\|_{L^2}}{\sqrt{2 M_1}},
\end{align*}
for every $p_1,p_2 \in \pa B_R(0)$ and $|\nu'| < \nu_3'$. Now
\[
\left(1- \frac{|\nu'|R}{\sqrt{2 M_1}}\right)\|\dot{u}_l(\cdot\,;p_1,p_2;\eps,\nu')\|_{L^2} \leq \frac{C_5}{\sqrt{2 M_1}}.
\]
Since $|\nu'| < \nu_3' <\sqrt{2M_1}/R$, the coefficient on the left hand side is bounded below by a positive constant; therefore
\[
\|\dot{u}_l(\cdot\,;p_1,p_2;\eps,\nu')\|_{L^2} \leq  \frac{C_5}{\sqrt{2 M_1}} \left(1- \frac{|\nu_3'|R}{\sqrt{2 M_1}}\right)^{-1}=:C_2(\eps) \qquad \forall (p_1,p_2,\nu') \in \left(\pa B_R(0)\right)^2 \times (-\nu_3',\nu_3').
\]
It remains to find $C_4$; from \eqref{eq9}, using the existence of $C_1$, it follows 
\[
\left(\int_0^1 \Phi_{\nu',\eps}(u_l(\cdot\,;p_1,p_2;\eps,\nu'))-1\right)^{\frac{1}{2}}
 \leq \frac{C_5 + |\nu'| R \|\dot{u}_l(\cdot\,;p_1,p_2;\eps,\nu')\|_{L_2}}{\sqrt{2}\|\dot{u}_l(\cdot\,;p_1,p_2;\eps,\nu')\|_{L^2}}
\leq \frac{C_5}{\sqrt{2}C_1} + \frac{\nu_3'R}{\sqrt{2}}=:C_4^{\frac{1}{2}}. \qedhere
\]
\end{proof}

\begin{remark}
The fact that some constants depend on $\eps$ reflects the fact that more the Jacobi constant is small, more the admissible values of the angular velocity are small, see Remark \ref{h,nu--eps,nu'}. This is why we keep $\eps$ fixed, letting $\nu'$ vary, instead of considering both $\eps$ and $\nu'$ as parameters.
\end{remark}

\noindent We termed $[0,T_l(p_1,p_2;\eps,\nu')]$ as the time interval of $y_l(\cdot\,;p_1,p_2;\eps,\nu') \in \mathcal{IS}_\eps$. It results
\[
T_l(p_1,p_2;\eps,\nu')=\frac{1}{\o_l(p_1,p_2;\eps,\nu')}, \quad \text{where} \quad \o_l(p_1,p_2;\eps,\nu')=\frac{\int_0^1 \Phi_{\nu',\eps}(u_l(\cdot\,;p_1,p_2;\eps,\nu'))-1    }{\frac{1}{2} \|\dot{u}_l(\cdot\,;p_1,p_2;\eps,\nu')\|^2}.
\]

\begin{corollary}\label{bound tempi interni}
Let $\eps \in (0,\eps_4)$. There exist $C_1,C_2,C_3>0$ such that
\begin{gather*}
C_1 \leq T_l(p_1,p_2;\eps,\nu') \leq C_2 \\
\| y_l(\cdot\,;p_1,p_2;\eps,\nu')\|_{H^1([T_l(p_1,p_2;\eps,\nu')])} \leq C_3
\end{gather*}
for every $(p_0,p_1,\nu',l) \in \left(\pa B_R(0)\right)^2 \times (-\nu_3',\nu_3') \times \mathfrak{I}^N$.
\end{corollary}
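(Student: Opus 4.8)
The plan is to deduce both estimates directly from Proposition \ref{minimi limitati}, using the scaling relation $y_l(t)=u_l(\o_l t)$ that defines the inner solutions from the inner minimizers. Fix $\eps \in (0,\eps_4)$ and denote by $A_1 \le \|\dot u_l\|_{L^2} \le A_2$ and $A_3 \le \int_0^1 \Phi_{\nu',\eps}(u_l)-1 \le A_4$ the two-sided bounds furnished by Proposition \ref{minimi limitati} (so $A_3=M_1>0$ and $A_1>0$ by uniform non-constancy), holding uniformly in $(p_1,p_2,\nu',l) \in (\pa B_R(0))^2 \times (-\nu_3',\nu_3') \times \mathfrak{I}^N$. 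Since by definition
\[
\o_l=\o_l(p_1,p_2;\eps,\nu')=\frac{\int_0^1 \Phi_{\nu',\eps}(u_l)-1}{\tfrac12 \|\dot u_l\|_{L^2}^2},
\]
inserting these bounds yields
\[
\frac{2A_3}{A_2^2} \le \o_l \le \frac{2A_4}{A_1^2},
\]
with both sides strictly positive. Recalling $T_l=1/\o_l$, this immediately gives the first chain of inequalities, with lower bound $A_1^2/(2A_4)$ and upper bound $A_2^2/(2A_3)$.

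For the $H^1$ estimate I would simply change variables in the reparametrization. Since $y_l(t)=u_l(\o_l t)$ on $[0,T_l]=[0,1/\o_l]$, substituting $s=\o_l t$ gives
\[
\|y_l\|_{L^2([0,T_l])}^2=\frac{1}{\o_l}\|u_l\|_{L^2([0,1])}^2 \le \frac{R^2}{\o_l},
\qquad
\|\dot y_l\|_{L^2([0,T_l])}^2=\o_l\|\dot u_l\|_{L^2([0,1])}^2 \le \o_l\,A_2^2,
\]
where in the first estimate I used $|u_l(t)| \le R$ for $u_l \in K_l$. Both right-hand sides are now controlled by the two-sided bounds on $\o_l$ obtained above (the lower bound on $\o_l$ bounds the $L^2$ part, the upper bound controls the $\dot{}$ part), which produces a constant $C_3>0$ with $\|y_l\|_{H^1([0,T_l])} \le C_3$ uniformly over the same parameter range.

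As for the difficulty, there is no genuine obstacle here: the statement is a purely computational consequence of Proposition \ref{minimi limitati} combined with the scaling that relates an inner minimizer on $[0,1]$ to its reparametrized solution on $[0,T_l]$. The only point deserving a moment's care is that all the constants in Proposition \ref{minimi limitati} are uniform in $(p_1,p_2,\nu',l)$ once $\eps$ is fixed, which is exactly how that proposition is phrased; this uniformity then transfers to the bounds on $\o_l$, and hence to those on $T_l$ and on $\|y_l\|_{H^1}$, over the full range $(\pa B_R(0))^2 \times (-\nu_3',\nu_3') \times \mathfrak{I}^N$.
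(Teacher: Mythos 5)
Your proposal is correct and is essentially the argument the paper intends: the corollary is stated as an immediate consequence of Proposition \ref{minimi limitati} together with the relation $T_l = 1/\o_l$ and the reparametrization $y_l(t)=u_l(\o_l t)$, which is exactly the computation you carry out. The two-sided bounds on $\o_l$, the change of variables for the $L^2$ norms, and the observation that all constants are uniform in $(p_1,p_2,\nu',l)$ for fixed $\eps$ match the paper's (implicit) reasoning.
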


\subsection{Forward normal neighborhoods}

In \cite{SoTe}, we exploited the geometric interpretation of $L$: it is the length in the Riemannian manifold $\{V_\eps(y) >-1\}$ endowed with the Jacobi metric. In particular in section 5 of the quoted paper we used classical results concerning the existence of totally normal and strongly convex neighborhoods (for the definitions, see \cite{DC}). Now we are not dealing with a length anymore, but with a Finsler function; so, something similar can be proven. The following is a known result, but since we cannot find a proper reference we give a sketch of the proof for completeness.

\begin{proposition}\label{existence forward neighbourhoods}
Let $\rho>0$ be small enough, in such a way that $B_\eps(0) \subset \overline{B_{R/2-\rho}(0)} \subset \overline{B_{R+\rho}(0)} \subset \{\Phi_{\nu',\eps}(y)>1\}$ and $R/2-\rho>\eps$. There exist $\eps_5 \in (0,\eps_4]$, $\nu_4' \in(0,\nu_3']$ and $\bar{r} \in (0,2\rho)$ such that if $\eps \in (0,\eps_5)$, $|\nu'| < \nu_4'$, $p_1,p_2 \in \overline{ B_{R}(0) \setminus B_{R/2}(0)}$ and $|p_1-p_2| \leq \bar r$ then there is a unique minimizer \\
$u_{\text{min}}(\cdot\,;p_1,p_2;\eps,\nu')$ of $M_{\nu'}$ in the set 
\[
\left\{ u \in H_{p_1 p_2}([0,1]): u(t) \in B_{R+\rho}(0) \setminus B_{R/2-\rho}(0) \ \forall t \right\}.
\]
Moreover, it depends in a $\mathcal{C}^1$ way on its ends and on the parameters $\eps$ and $\nu'$, and is the unique global minimizer of $M_{\nu'}$ in $H_{p_1 p_2}([0,1])$.
\end{proposition}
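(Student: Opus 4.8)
The plan is to recognize $M_{\nu'}$-minimizers (equivalently, by Proposition~\ref{minimi M-L} and Theorem~\ref{teorema su L}, $L_{\nu'}$-minimizers) as \emph{forward minimizing geodesics} of the Finsler structure on the compact annulus $\mathcal A_\rho:=\overline{B_{R+\rho}(0)\setminus B_{R/2-\rho}(0)}$, and then to transplant the classical construction of totally normal neighborhoods to this non-reversible setting. First I would fix the integrand
\[
F_{\eps,\nu'}(y,v):=\sqrt{\Phi_{\nu',\eps}(y)-1}\,|v|+\frac{\nu'}{\sqrt 2}\langle iy,v\rangle,
\]
and check, uniformly for $\eps$ and $|\nu'|$ small, that it is a genuine Finsler function on $\mathcal A_\rho$: by the hypothesis on $\rho$ we have $\Phi_{\nu',\eps}>1$ there, and by Remark~\ref{maggiorazione in B_R} the factor $\sqrt{\Phi_{\nu',\eps}-1}$ is bounded below by a positive constant, which dominates $\tfrac{|\nu'|}{\sqrt2}|y|$ once $|\nu'|$ is small; hence $F_{\eps,\nu'}(y,v)>0$ for $v\neq0$, while the fundamental tensor (the $v$-Hessian of $\tfrac12 F_{\eps,\nu'}^2$) is an $O(\nu')$-perturbation of the positive definite tensor induced by $\sqrt{\Phi_{\nu',\eps}-1}\,|v|$, and thus stays uniformly positive definite and bounded on $\mathcal A_\rho$.

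Next I would introduce the forward exponential map $\exp^{\eps,\nu'}_p$ of this metric, whose geodesics are the Euler--Lagrange trajectories of $F_{\eps,\nu'}$; the corresponding ODE flow depends in a $\mathcal C^1$ way on $(p,\eps,\nu')$ (and is smooth away from the zero section), so $\exp^{\eps,\nu'}_p$ is $\mathcal C^1$ in all its arguments with $d(\exp^{\eps,\nu'}_p)_0=\mathrm{id}$. Consequently $E(p,v):=(p,\exp^{\eps,\nu'}_p v)$ has uniformly invertible differential along the zero section, and a parametrized inverse function theorem---made uniform by the compactness of $\mathcal A_\rho$ and the uniform bounds on the fundamental tensor---yields $\eps_5,\nu_4',\bar r>0$, independent of the endpoints, such that for $\eps<\eps_5$, $|\nu'|<\nu_4'$ and $|p_1-p_2|\le\bar r$ there is a unique small $v$ with $\exp^{\eps,\nu'}_{p_1}v=p_2$. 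This produces a short geodesic $\gamma_{p_1p_2}$ of $F$-length $O(\bar r)$ depending $\mathcal C^1$ on $(p_1,p_2,\eps,\nu')$; taking $\bar r<2\rho$ and using $p_1,p_2\in\overline{B_R(0)\setminus B_{R/2}(0)}$, its Euclidean diameter is smaller than $\rho$, so $\gamma_{p_1p_2}$ stays in the open annulus and is an interior critical point of $M_{\nu'}$ in the constrained class.

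To obtain minimality I would run the Finsler Gauss-lemma argument in the normal neighborhood produced by $\exp^{\eps,\nu'}_{p_1}$: the radial geodesic field shows that $\gamma_{p_1p_2}$ is the unique minimizer of $F$-length among curves in the annulus joining $p_1$ to $p_2$ (equivalently, the uniform positive definiteness of the fundamental tensor rules out conjugate points along short geodesics, so the second variation is coercive and $\gamma_{p_1p_2}$ is a strict local, hence the unique, minimizer). Finally, to upgrade this to the unique global minimizer of $M_{\nu'}$ in all of $H_{p_1p_2}([0,1])$, I would rule out competitors that leave $B_{R+\rho}(0)\setminus B_{R/2-\rho}(0)$ or reach a centre (recall $c_j\in B_\eps(0)\subset B_{R/2-\rho}(0)$): passing to $L_{\nu'}$ via Proposition~\ref{minimi M-L} and the inequality $\sqrt2\,L_{\nu'}\le M_{\nu'}$, any such path crosses a region of Euclidean width $\rho$ on which $\Phi_{\nu',\eps}-1$ is bounded below by a positive constant, so a length estimate (with the $\tfrac{\nu'}{\sqrt2}\langle iy,\dot u\rangle$ term absorbed for $|\nu'|$ small) bounds its $L_{\nu'}$-length, hence its $M_{\nu'}$-value, below by a constant $\kappa(\rho)>0$ independent of $\bar r$; since $M_{\nu'}(\gamma_{p_1p_2})=O(\bar r)$, shrinking $\bar r$ makes $\gamma_{p_1p_2}$ strictly cheaper.

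The main obstacle I expect is precisely the non-reversibility of $F_{\eps,\nu'}$---which is why these are \emph{forward} neighborhoods: the symmetric Riemannian theory does not apply verbatim, and one must work with the forward exponential map and forward distance, verifying that the Gauss lemma and the normal-neighborhood construction survive for the non-symmetric integrand. A related technical point is that, because of the $|v|$ factor, $\exp^{\eps,\nu'}_p$ is only $\mathcal C^1$ at the zero section (not smooth), so every regularity and dependence statement above must be kept at the $\mathcal C^1$ level; this is exactly enough both for the inverse function theorem and for the claimed $\mathcal C^1$ dependence on $(p_1,p_2,\eps,\nu')$, while the uniformity of $\eps_5,\nu_4',\bar r$ in the endpoints is guaranteed by the compactness of $\mathcal A_\rho$.
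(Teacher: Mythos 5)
The core of your proposal --- existence, uniqueness and $\mathcal{C}^1$ dependence of the constrained minimizer --- is correct, but it follows a genuinely different route from the paper's. You build the Randers-type Finsler structure on the annulus, invoke the forward exponential map (correctly flagged as only $\mathcal{C}^1$ at the zero section), a parametrized finite-dimensional inverse function theorem, and the non-reversible Gauss lemma. The paper never introduces the exponential map: it obtains existence by the direct method, confinement into $B_{R+\rho}(0)\setminus B_{R/2-\rho}(0)$ by a compactness argument (as $|p_1-p_2|\to0$ minimizers degenerate to constants, so they cannot touch the boundary of the larger annulus), and then uniqueness and $\mathcal{C}^1$ dependence in a single stroke, applying the implicit function theorem to the map $(p_1,p_2,\eps,\nu',u)\mapsto dM_{\nu'}(u)$ after checking that $d^2M_{\nu'}(u)$ is positive definite when $|p_1-p_2|$ and $|\nu'|$ are small. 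Your route buys geometric transparency and uniformity-by-compactness, at the cost of importing nontrivial Finsler machinery and of an extra translation step from uniqueness of the geodesic as an oriented curve to uniqueness of the $M_{\nu'}$-minimizer as a parametrized path (via the equality case of $\sqrt2\,L_{\nu'}\leq M_{\nu'}$); also, your parenthetical ``strict local, hence unique'' is not a valid implication --- it is the Gauss-lemma/exit-length argument that carries uniqueness. The paper's infinite-dimensional IFT avoids all of this and meshes directly with the $H^1$ framework used in the rest of the paper.

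There is, however, a genuine gap in your last step, the upgrade to ``unique global minimizer of $M_{\nu'}$ in $H_{p_1p_2}([0,1])$''. Your bound $\kappa(\rho)>0$ absorbs the rotational term $\frac{\nu'}{\sqrt2}\int\langle iu,\dot u\rangle$ into $\int\sqrt{\Phi_{\nu',\eps}(u)-1}\,|\dot u|$; this absorption is legitimate only where $\Phi_{\nu',\eps}-1$ is bounded away from zero, i.e.\ on the crossing sub-arc, and gives no control on the rest of the competitor, which may roam the whole Hill region $\{\Phi_{\nu',\eps}\geq1\}$. For every $\nu'\neq0$ the bounded component of that region contains a collar $\{V_\eps<1\}\cap\{\Phi_{\nu',\eps}\geq1\}$ along the boundary of the Hill region, sustained by the centrifugal term, and there the integrand of $L_{\nu'}$ is \emph{negative} for one orientation of tangential motion: on a closed level curve $\{\Phi_{\nu',\eps}=1+\sigma\}$ inside the collar, one loop traversed in the appropriate sense contributes approximately
\[
2\pi r_0\left(\sqrt{\sigma}-\tfrac{|\nu'|}{\sqrt2}\,r_0\right)<0,
\qquad r_0:=\left(\tfrac{M}{\a}\right)^{\frac{1}{\a}},\quad 0<\sigma\ll(\nu')^2r_0^2,
\]
to $L_{\nu'}$. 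Such a curve is reachable from $\pa B_R(0)$ within the Hill region (e.g.\ radially, since $R<r_0$), so concatenating $k$ loops shows that $L_{\nu'}$ --- and, after the reparametrization realizing equality $M_{\nu'}=\sqrt2\,L_{\nu'}$, also $M_{\nu'}$ --- is unbounded below on the unconstrained class. Hence no lower bound $M_{\nu'}\geq\kappa(\rho)>0$ for competitors leaving the annulus can hold; taking $|\nu'|$ small does not help, it only thins the collar and shrinks the per-loop gain.

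To be fair, this is as much a defect of the statement as of your argument: the paper's own treatment of this clause is the one-line assertion that existence follows from ``the direct methods of the calculus of variations'', which runs into exactly the same obstruction, since the functional is neither coercive nor bounded below on the unconstrained class (the paper's coercivity lemma uses $|u|\leq R$ in an essential way). What is actually needed later, in the gluing construction of Section \ref{riduzione finito dim}, and what both your argument and the paper's do establish, is uniqueness and $\mathcal{C}^1$ dependence among paths confined to a region where $\Phi_{\nu',\eps}-1$ stays bounded below by a positive constant. Once the competitors are restricted in this way --- for instance to paths with image in $\{V_\eps\geq1\}$ or in a fixed compact set well inside the Hill region --- your crossing estimate becomes correct and your Gauss-lemma argument closes the proof.
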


\begin{definition}
Let $\eps \in (0,\eps_5)$, $|\nu'|< \nu_4'$, and let us take $\rho>0$ as above; let $p \in \overline{B_R(0) \setminus B_{R/2}(0)}$. For every pair $p_1,p_2 \in \overline{B_{\bar r /2}}(p)$ there is a unique (up to a re-parametrization) local minimizer of $L_{\nu'}$ which starts from $p_1$ and arrives at $p_2$, depending smoothly on the ends. We will say that $B_{\bar{r}/2}(p)$ is a \emph{forward normal neighborhood} of $p$.
\end{definition}

\noindent Proposition \ref{existence forward neighbourhoods} says that every point of $\overline{B_R(0) \setminus B_{R/2}(0)}$ has a forward normal neighborhood; moreover, the set $B_{R+\rho}(0) \setminus B_{R/2-\rho}(0)$ is "convex", in the sense that the minimizers $u_{\text{min}}(\cdot\,;p_1,p_2;\eps,\nu')$ stay in it. \\
Forward normal neighborhoods plays the role of totally normal ones of a Riemaniann manifold, with the difference that, since our functional $L_{\nu'}$ is not invariant under orientation-reversing re-parameterizations, a minimizer of $L_{\nu'}$ in $H_{p_1 p_2}([0,1])$ could not be a minimizer of $L_{\nu'}$ in $H_{p_2 p_1}([0,1])$.\\
Actually for every $p \in \{\Phi_{\nu',\eps}(y)>1\}$ it is possible to prove the existence of a forward normal neighborhood, but due to the degeneracy of our Finsler function, which can become even negative if we are close to the boundary of the ``Hill's region", the radius of these neighborhood becomes smaller and smaller and tends to $0$ as $p$ approaches $\{\Phi_{\nu',\eps}(y)=1\}$.

\begin{proof}
Let $p_1,p_2 \in \overline{B_R(0) \setminus B_{R/2}(0)}$, $\eps \in (0,\eps_4)$, $\nu' \in (-\nu_4',\nu_4')$. The existence can be proved applying the direct methods of the calculus of variations. If $p_1=p_2$, observe that the minimizer is simply the constant function $p_1$.\\
Let $u_{\text{min}}(\cdot\,;p_1,p_2;\eps,\nu')$ be a minimizer in $H_{p_1 p_2}([0,1])$; there exists $\bar r>0$ such that if $|p_1-p_2| \leq \bar{r}$, then $u_{\min}(\cdot\,;p_1,p_2;\eps,\nu')$ is contained in $B_{R+\rho}(0) \setminus B_{R/2-\rho}(0)$: if not, there are sequences $(r_n) \subset \R^+$ and $((p_1^n,p_2^n)) \subset \overline{ B_{R}(0) \setminus B_{R/2}(0) } $ such that $|p_1^n-p_2^n|\leq r_n$ and $u_{\text{min}}(\cdot\,;p_1^n,p_2^n;\eps,\nu')$ touches $\pa \left(B_{R+\rho}(0) \setminus B_{R/2-\rho}(0) \right)$. But this is absurd, because if $r_n \to 0$ the minimizers tends to be constant functions in $\overline{ B_{R}(0) \setminus B_{R/2}(0)}$. The value $\r$ is independent on $\eps \in (0,\eps_4)$ and $|\nu'|<\nu_4'$. For the uniqueness and the $\mathcal{C}^1$ dependence, we consider the map 
\begin{gather*}
\left(\overline{ B_{R}(0) \setminus B_{R/2}(0) }\right)^2 \times (0,\eps_4) \times (-\nu_4',\nu_4') \times  H_{p_1 p_2}([0,1]) \to \left(H_{p_1p_2}([0,1]) \right)^* \\
 (p_1,p_2,\eps,\nu',u) \mapsto  dM_{\nu'}(u).
\end{gather*}
Let $\bar u$ be a minimizer of $M_{\nu'}$ in $H_{p_1 p_2}([0,1])$, whose image is contained in $B_{R+\rho}(0) \setminus B_{R/2-\rho}(0)$; an explicit computation shows that, if $|p_1-p_2|$ and $\nu'$ are sufficiently small, the second differential $d^2 M_{\nu'}(u)$ is positive definite, so that it is invertible. Thus, the implicit function theorem applies to give uniqueness and smooth dependence.
\end{proof}

\begin{remark}
In Section \ref{dinamica esterna} we prove that, if $p_1,p_2 \in \pa B_R(0)$ are sufficiently close together, we can find a ``close to brake" solution of problem \ref{pb esterno} which, of course, passes close to the boundary of the ``Hill's region" $\{\Phi_{\nu',\eps}(y)>1\}$. This is not in contradiction with the previous result, since an outer solution parametrizes a non-minimal critical point of $L_{\nu'}$.
\end{remark}

\section{A finite-dimensional reduction}\label{riduzione finito dim}

In this section we glue the fixed ends trajectories previously obtained, alternating outer and inner arcs in order to construct periodic orbits of the restricted problem \eqref{motion eq 2} in the whole plane. Since in this procedure we need smooth junctions, we are going to use a variational argument which is essentially the same we introduced in \cite{SoTe}. Let us set $\wt{\eps}:=\min\{\eps_2,\eps_5\}$, $\wt{\nu}':=\min\{\nu_1',\nu_4'\}$. The quantities $\eps_2$ and $\nu_1'$ have been introduced in Proposition \ref{teorema 0.1} (recall also the definition of $\d$ therein), while $\eps_5$ and $\nu_4'$ have been introduced in Proposition \ref{existence forward neighbourhoods}, respectively. This is the main result of this section.

\begin{proposition}\label{esistenza sol periodiche deboli}
There exist $\bar{\eps}, \bar{\nu}' >0$ such that for every $(\eps,\nu') \in (0,\bar{\eps}) \times (-\bar{\nu}',\bar{\nu}')$, for every $n \in \mathbb{N}$ and $(P_{j_1},\ldots,P_{j_n}) \in \mathcal{P}^n$ there exists a periodic weak solution $\gamma^{((P_{j_1},\ldots,P_{j_n}),\eps,\nu')}$ of problem \eqref{pb equiv}, which depends on $(P_{j_1},\ldots,P_{j_n})$ in the following way: the image of $\gamma^{((P_{j_1},\ldots,P_{j_n}),\eps,\nu')}$ crosses $2n$ times within one period the circle $\partial B_R(0)$, at times $(t_k)_{k=0,\dots,2n-1}$, and
\begin{itemize}
\item  in $(t_{2k},t_{2k+1})$ the solution stays outside $B_R(0)$ and 
$$
|\gamma^{((P_{j_1},\ldots,P_{j_n}),\eps,\nu')}(t_{2k})-\gamma^{((P_{j_{1}},\ldots,P_{j_n}),\eps,\nu')}(t_{2k+1})    |<\d;
$$
\item in $(t_{2k+1},t_{2k+2})$ the solution lies inside $B_R(0)$, and, if it does not collide against any centre, then it separates them according to the partition $P_{j_k}$.
\end{itemize}
\end{proposition}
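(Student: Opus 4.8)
The plan is to carry out a finite-dimensional reduction of broken-geodesics type, using as free variables the $2n$ points at which the sought loop meets $\partial B_R(0)$. Fix the sequence $(P_{j_1},\ldots,P_{j_n})$ and write $\mathbf q=(q_0,\ldots,q_{2n-1})\in(\partial B_R(0))^{2n}$ (indices mod $2n$). To a configuration $\mathbf q$ with $|q_{2k}-q_{2k+1}|\le\d$ for every $k$ ($\d$ as in Proposition \ref{teorema 0.1}) I would associate the closed path $\g_{\mathbf q}$ obtained by alternately gluing, on the arc joining $q_{2k}$ to $q_{2k+1}$, the unique outer solution $y_{\text{ext}}(\cdot\,;q_{2k},q_{2k+1};\eps,\nu')$ of Proposition \ref{teorema 0.1}, and, on the arc joining $q_{2k+1}$ to $q_{2k+2}$, the inner minimizer $y_{P_{j_k}}(\cdot\,;q_{2k+1},q_{2k+2};\eps,\nu')$ of Corollary \ref{dinamica interna partizioni}. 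The uniform non-constancy of the inner minimizers (Proposition \ref{minimi limitati}) and the brake-like excursions of the outer arcs (estimate \eqref{limitazione soluzioni esterne}) guarantee that $\g_{\mathbf q}$ genuinely meets $\partial B_R(0)$ exactly $2n$ times, so that the prescribed topological behaviour is already realized by construction. The whole point is to choose $\mathbf q$ so that the loop be of class $\mathcal C^1$ at the junctions: a curve solving the second-order equation in \eqref{pb equiv} on each side of a junction, with matching position and velocity there, is automatically a solution across it, since the right-hand side is continuous.

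To select $\mathbf q$ I introduce the reduced action
\[
\mathcal F(\mathbf q):=\sum_{k=0}^{n-1}\Big[L_{\nu'}\big(y_{\text{ext}}(\cdot\,;q_{2k},q_{2k+1};\eps,\nu')\big)+L_{\nu'}\big(y_{P_{j_k}}(\cdot\,;q_{2k+1},q_{2k+2};\eps,\nu')\big)\Big],
\]
which is well defined on images thanks to the invariance of $L_{\nu'}$ under the orientation-preserving reparametrizations $\Gamma_u$. The inner contribution coincides, up to the factor $\sqrt2$ (Proposition \ref{minimi M-L}), with the minimal value of $M_{\nu'}$ on $K_{P_{j_k}}^{q_{2k+1}q_{2k+2}}([0,1])$, which depends continuously on its endpoints by the direct-method ingredients of Subsection \ref{sez formulazione variazionale} (coercivity and the weak lower semicontinuity of Lemma \ref{weak lower s-c di M_m}) together with the uniform $H^1$ bounds of Proposition \ref{minimi limitati}; the outer contribution is of class $\mathcal C^1$ in its endpoints by Proposition \ref{teorema 0.1}. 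I would then minimize $\mathcal F$ over the compact set $\{\mathbf q:|q_{2k}-q_{2k+1}|\le\d\ \forall k\}$, obtaining a minimizer $\mathbf q^\ast$; the a priori bounds of Proposition \ref{minimi limitati} and the strict inequality available in Proposition \ref{teorema 0.1} keep $\mathbf q^\ast$ in the region of transversal crossings with outer gaps strictly below $\d$, as the statement requires.

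The final step is to read the Euler--Lagrange condition of the reduction as a smoothness condition. Varying a single junction point $q_i$ tangentially along $\partial B_R(0)$ and differentiating $\mathcal F$, the outer term is controlled by the $\mathcal C^1$ dependence of Proposition \ref{teorema 0.1} and the inner term by the first variation of the minimal action along a one-parameter family of competitors with moving endpoint; in both cases the derivative equals the tangential component of the conjugate momentum $\dot\g+\nu' i\,\g$ of the corresponding arc at $q_i$. Since the Coriolis contribution $\nu' i\,q_i$ is common to the two arcs meeting at $q_i$, stationarity of $\mathcal F$ forces equality of their \emph{tangential velocities}. The common value $-1$ of the Jacobi constant (Theorem \ref{formulazione variazionale} and the conservation property of minimizers) forces equality of the two \emph{speeds}, hence of the moduli of the normal components; as the close-to-brake outer arcs meet $\partial B_R(0)$ almost radially, the crossings are transversal with a definite outward (resp.\ inward) normal velocity, which fixes the sign and yields continuity of $\dot\g$. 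Thus $\g_{\mathbf q^\ast}\in\mathcal C^1$, and by the remark of the first paragraph it is a periodic weak solution of \eqref{pb equiv}, classical away from the collision instants of its inner arcs; the proximity $|\g(t_{2k})-\g(t_{2k+1})|<\d$ and the separation of the centres according to $P_{j_k}$ on the collision-free inner arcs are inherited from Proposition \ref{teorema 0.1} and Corollary \ref{dinamica interna partizioni}.

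The main obstacle I anticipate is precisely the first variation of the inner minimal action. Contrary to the Riemannian setting of \cite{SoTe}, the inner minimizers need be neither unique nor smoothly dependent on $\mathbf q$, so the envelope identity cannot be invoked verbatim; I would recover the one-sided derivatives by sandwiching the minimal value between its minimality lower bound and the action of an explicit smooth family of competitors carrying the prescribed boundary velocity. This is exactly where the non-reversibility of $L_{\nu'}$ --- coming from a Finsler, not a Riemannian, structure --- enters: orientation now matters, all arcs must be oriented coherently along the loop, and the symmetric comparison arguments of \cite{SoTe} are no longer available. The smooth dependence near $\partial B_R(0)$ that legitimizes the variation close to the boundary is supplied by the forward normal neighborhoods of Proposition \ref{existence forward neighbourhoods}.
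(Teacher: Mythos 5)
Your proposal follows essentially the same route as the paper's own proof: the same broken-geodesic reduction over the junction points on $\partial B_R(0)$, minimization of the glued $L_{\nu'}$-functional over the compact constraint set, the same cancellation of the Coriolis term $\nu' i q$ in the first variation yielding matching of tangential velocities (with the Jacobi constant and transversality fixing the normal components), and, crucially, the same use of forward normal neighborhoods (Proposition \ref{existence forward neighbourhoods}) to circumvent the non-uniqueness and possible non-smooth dependence of the inner minimizers --- which the paper formalizes through the auxiliary localized functions $G_k$ and the interiority Lemma \ref{minimi interni}. The argument is correct and coincides in structure and key ideas with Section \ref{riduzione finito dim} of the paper.
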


\vspace{1 em}

\noindent Let us fix $\eps \in (0,\wt{\eps})$, $|\nu'|< \wt{\nu}'$, $n \in \mathbb{N}$, $(P_{k_1},P_{k_2}, \ldots, P_{k_n})\in \mathcal{P}^n$. We define
\[
D=\left\lbrace (p_0, \ldots, p_{2n}) \in \left(\pa B_R(0)\right)^{2n+1}: |p_{2j+1} - p_{2j}| \leq \d
 \text{ for $j =0,\ldots, n-1$}, \  p_{2n}=p_0 \right\}.
\]
Let $(p_0, \ldots, p_{2n}) \in D$. For every $j \in \{0,\ldots,n-1\}$, we can apply Proposition \ref{teorema 0.1} to obtain an outer solution $y_{2j}(t):=y_{\text{ext}}(t;p_{2j},p_{2j+1};\eps,\nu')$ defined in $[0,T_{2j}]$, where $T_{2j}:= T_{\text{ext}}(p_{2j},p_{2j+1};\eps,\nu')$. We recall that $y_{2j}$ depends on $p_{2j}$ and $p_{2j+1}$ in a $\mathcal{C}^1$ manner. Also, from Corollary \ref{dinamica interna partizioni} we obtain an inner weak solution $y_{2j+1}(t):=y_{P_{k_{j+1}}} (t;p_{2j+1},p_{2j+2};\eps,\nu')$ defined in $[0,T_{2j+1}]$, where $T_{2j+1}:=T_{P_{k_{j+1}}}(p_{2j+1},p_{2j+2};\eps,\nu')$ (recall that $\nu_4'<\nu_3'$). Being $L_{\nu'}$ invariant under orientation-preserving re-parameterizations, $y_{2j+1}$ is a local minimizer of the functional $L_{\nu'}\left(\left[0,T_{2j+1}\right];\cdot\right)$. We point out that $y_{2j+1}$ could not be unique; however, if there is more then one minimizer of $L_{\nu'}$ in $K_{P_j}$, we can arbitrarily choose one of them.\\
We set $\mathfrak{T}_k:=\sum_{j=0}^k T_j$, $k=0,\ldots, 2n-1$, and
\beq\label{def di gamma}
\g_{(p_0,\ldots,p_{2n})}^{((P_{k_1},\ldots,P_{k_n}),\eps,\nu')}(s):= \begin{cases}
                            y_0(s) & s \in [0,\mathfrak{T}_0] \\
                            y_1(s-\mathfrak{T}_0) & s \in \left[\mathfrak{T}_0,\mathfrak{T}_1\right]\\
                            \vdots \\
                            \text{$\displaystyle{y_{2n-2}\left(s- \mathfrak{T}_{2n-3}\right)}$} &  \text{$\displaystyle{s \in \left[\mathfrak{T}_{2n-3},\mathfrak{T}_{2n-2}\right]}$} \\
                            \text{$\displaystyle{y_{2n-1}\left(s- \mathfrak{T}_{2n-2}\right)}$} & \text{$\displaystyle{s \in \left[\mathfrak{T}_{2n-2},\mathfrak{T}_{2n-1}\right]}$}.
                            \end{cases}
\eeq
The function $\g_{(p_0,\ldots,p_{2n})}^{((P_{k_1},\ldots,P_{k_n}),\eps,\nu')}$ is a piecewise differentiable $\mathfrak{T}_{2n-1}$-periodic function. It is a weak solution of the restricted problem \eqref{motion eq 2} with Jacobi constant $-1$ in $[0,\mathfrak{T}_{2n-1}] \setminus \left\{0,\mathfrak{T}_0,\ldots,\mathfrak{T}_{2n-1}\right\}$, but in general is not $\mathcal{C}^1$ in $\left\{0,\mathfrak{T}_0, \ldots, \mathfrak{T}_{2n-1}\right\}$; however, the right and left limits of the derivatives in these points are finite, so that it is in $H^1$. It is also possible that $\g_{(p_0,\ldots,p_{2n})}^{((P_{k_1},\ldots,P_{k_n}),\eps,\nu')}$ has collisions. Thanks to Lemma \ref{bound per i tempi esterni} and Corollary \ref{bound tempi interni}, we are sure that the time interval of $\g_{(p_0,\ldots,p_{2n})}^{((P_{k_1},\ldots,P_{k_n}),\eps,\nu')}$ is bounded above and bounded below, uniformly with respect to $(p_0,\ldots,p_{2n}) \in D$, by positive constants; therefore for every $(p_0,\ldots,p_{2n}) \in D$ the period of the associated function is neither trivial, nor infinite.

\vspace{1 em}

We introduce a function $F=F_{((P_{k_1},\ldots,P_{k_n}),\eps,\nu')}:D \to \R$ defined by
\[
F(p_0, \ldots, p_{2n}) := L_{\nu'}\left([0,\mathfrak{T}_{2n-1}]; \gamma_{(p_0,\ldots,p_{2n})}^{((P_{k_1},\ldots,P_{k_n}),\eps,\nu')}\right)\\
= \sum_{j=0}^{2n-1} L_{\nu'}\left([0,T_j]; y_j\right).
\]

\begin{proposition}\label{prop esistenza sol periodiche}
There exists $(\bar p_0, \ldots,\bar p_{2n}) \in D$ which minimizes $F$.
There exist $\bar{\eps},\bar{\nu}'>0$ such that, for every $(\eps,\nu') \in (0,\bar{\eps}) \times (-\bar{\nu}',\bar{\nu}')$, the associated function $\g_{(p_0,\ldots,p_{2n})}^{((P_{k_1},\ldots,P_{k_n}),\eps,\nu')}$ is a periodic weak solution of the restricted problem \eqref{pb equiv}. The values $\bar{\eps}$ and $\bar{\nu}'$ depends neither on $n$, nor on $(P_{k_1},\ldots,P_{k_n}) \in \mathcal{P}^n$.
\end{proposition}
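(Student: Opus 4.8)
The plan is to carry over the finite-dimensional variational scheme of \cite{SoTe}, the only genuinely new feature being the non-reversibility of $L_{\nu'}$: I would first produce a minimizer of $F$ on $D$, then derive first-order conditions at that minimizer, and finally upgrade them to a $\mathcal C^1$ matching at every junction, so that the broken curve $\gamma_{(\bar p_0,\dots,\bar p_{2n})}^{((P_{k_1},\dots,P_{k_n}),\eps,\nu')}$ of \eqref{def di gamma} becomes a genuine weak solution of \eqref{pb equiv}. For the existence, note that $D$ is a closed and bounded subset of the compact manifold $(\pa B_R(0))^{2n+1}$, hence compact. The outer contributions $L_{\nu'}([0,T_{2j}];y_{2j})$ are $\mathcal C^1$ in $(p_{2j},p_{2j+1})$ by Proposition \ref{teorema 0.1}, hence continuous. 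Each inner contribution is the minimal value of $L_{\nu'}$ over $K_{P_{k_{j+1}}}^{p_{2j+1}p_{2j+2}}([0,1])$; its lower semicontinuity in the endpoints follows from the weak lower semicontinuity and the uniform $H^1$-bounds of Lemma \ref{weak lower s-c di M_m} and Proposition \ref{minimi limitati} (the classes $K_{P_j}$ being weakly closed), while upper semicontinuity follows from the competitor construction already used in Proposition \ref{minimi limitati}, joining nearby endpoints by short arcs of $\pa B_R(0)$ whose $L_{\nu'}$-cost vanishes as the endpoints coalesce. Thus $F$ is continuous on $D$ and attains its minimum at some $(\bar p_0,\dots,\bar p_{2n})$.

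\emph{First-order conditions.} At the minimizer I would test minimality against a displacement of a single breakpoint $\bar p_k$ along $\pa B_R(0)$ by $s\,w$, with $w$ tangent to the circle. For the outer arcs I differentiate directly (Proposition \ref{teorema 0.1}); the derivative of the length with respect to a free endpoint is the conjugate momentum
\[
\mathbf p(q,\dot u):=\sqrt{\Phi_{\nu',\eps}(q)-1}\,\frac{\dot u}{|\dot u|}+\frac{\nu'}{\sqrt 2}\,iq .
\]
For the inner arcs, whose global value function need not be differentiable (the minimizers in $K_{P_j}$ may be non-unique or collide), I would argue only near the endpoint, where the minimizer is a classical solution: replacing its short terminal piece by the forward normal minimizer reaching the displaced point, as provided by Proposition \ref{existence forward neighbourhoods}, produces a competitor whose $L_{\nu'}$-cost exceeds the original by $\langle \mathbf p_{\mathrm{in}},s\,w\rangle+o(s)$. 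Combining the outer derivative with this one-sided upper bound, minimality forces $s\,\langle \mathbf p_{\mathrm{in}}-\mathbf p_{\mathrm{out}},w\rangle+o(s)\ge 0$ for small $s$; letting $s$ range over both signs (possible once the constraints $|p_{2j+1}-p_{2j}|\le\d$ are inactive) yields $\langle \mathbf p_{\mathrm{in}}-\mathbf p_{\mathrm{out}},\tau\rangle=0$ for every $\tau$ tangent at $\bar p_k$. The magnetic term $\tfrac{\nu'}{\sqrt2}\,i\bar p_k$ depends only on the position and cancels, so the tangential components of the unit geometric velocities of the two arcs coincide at $\bar p_k$.

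\emph{Upgrade to a $\mathcal C^1$ junction, and uniformity.} The two arcs meeting at $\bar p_k$ are reparametrizations of classical solutions of \eqref{pb equiv} with Jacobi constant $-1$ near $\bar p_k\in\pa B_R(0)$, a point away from the centres; the Jacobi relation $\tfrac12|\dot z|^2=\Phi_{\nu',\eps}(\bar p_k)-1$ fixes the common speed, so the unit velocities agree in modulus and in tangential component, whence their normal components agree in modulus. Since the inner minimizers satisfy $|u(t)|<R$ for $t\in(0,1)$ (Section \ref{dinamica interna}), both arcs cross $\pa B_R(0)$ from the same side at each junction — inward at the odd breakpoints, outward at the even ones and at the periodic point $\bar p_0=\bar p_{2n}$ — so the normal components also agree in sign. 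Hence the velocities match, the junction is $\mathcal C^1$, and by uniqueness for the Cauchy problem of \eqref{pb equiv} off the centres $\gamma$ is a classical solution across $\bar p_k$; only interior collisions of the inner arcs survive, so $\gamma$ is a periodic weak solution. Finally, every estimate entering the argument — the bounds of Proposition \ref{teorema 0.1}, Lemma \ref{bound per i tempi esterni}, Corollary \ref{bound tempi interni} and Proposition \ref{minimi limitati}, and the threshold $\nu_3'<\sqrt{2M_1}/R$ — is uniform in the endpoints, in the winding data and in $\nu'$, and the junction analysis is local (one breakpoint at a time), so the resulting $\bar\eps,\bar\nu'$ depend neither on $n$ nor on $(P_{k_1},\dots,P_{k_n})$.

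\emph{Main obstacle.} I expect the crux to be twofold and intertwined: justifying the inner first-order condition despite the mere continuity (not differentiability) of the inner value function, and showing that the constraints $|\bar p_{2j+1}-\bar p_{2j}|<\d$ are strict at the minimizer, so that the breakpoints vary freely and one obtains equalities rather than inequalities. The first is handled by localizing to the endpoint and invoking the forward normal neighbourhoods of Proposition \ref{existence forward neighbourhoods}; the second I would obtain by a perturbation argument from $\eps=\nu'=0$, where the outer arcs degenerate to Keplerian rectilinear brake orbits returning to their starting point (so that $\bar p_{2j}=\bar p_{2j+1}$), the distance staying below $\d$ for $(\eps,\nu')\in(0,\bar\eps)\times(-\bar\nu',\bar\nu')$ after shrinking the thresholds if necessary.
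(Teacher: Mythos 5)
Your overall scheme coincides with the paper's: continuity of $F$ on the compact set $D$ (Lemma \ref{lem: continuity F}); localization of the minimality at each breakpoint by means of the forward normal neighborhoods of Proposition \ref{existence forward neighbourhoods}, which is exactly the paper's passage from $F$ to the differentiable auxiliary functions $G_k$ (Lemmas \ref{localizzazione minimi F} and \ref{calcolo delle derivate parziali}); the cancellation of the magnetic term $\frac{\nu'}{\sqrt 2}\langle i\bar p_k,\f\rangle$ in the first-order conditions; and the upgrade from matching tangential components to matching velocities via the Jacobi constant and the sign of the normal components, followed by Cauchy uniqueness (step 5 of Theorem 5.3 of \cite{SoTe}). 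Up to this point your proposal is correct and is essentially the paper's proof.

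There is, however, a genuine gap in the one step you yourself single out as the crux: the strict inactivity of the constraints $|\bar p_{2j+1}-\bar p_{2j}|\le\d$, i.e. Lemma \ref{minimi interni}. Your proposed ``perturbation argument from $\eps=\nu'=0$'' rests on the claim that in the limit the outer arcs degenerate to brake orbits returning to their starting point, ``so that $\bar p_{2j}=\bar p_{2j+1}$''. This is not correct: even for the unperturbed $\a$-Kepler problem the exterior fixed-endpoint problem has (unique, nearly-brake) solutions joining any two \emph{distinct} sufficiently close points of $\pa B_R(0)$ — that is precisely what the implicit function theorem argument of Section \ref{dinamica esterna} (Lemmas \ref{lemma 0.1}, \ref{lemma 0.2}) produces, already at $\eps=\nu'=0$ — so nothing in the limit forces the outer endpoints of a minimizing configuration to coincide, and indeed the solutions of Theorem \ref{esistenza di soluzioni periodiche} only satisfy $|\bar p_{2j}-\bar p_{2j+1}|<\d$, not equality to zero. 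Worse, at $\eps=0$ the centres coalesce, the classes $K_{P_j}$ and hence the inner value functions and $F$ itself are no longer defined, so there is no well-posed limit problem to perturb from; and since the dimension of $D$ grows with $n$, a global compactness-in-parameters argument cannot by itself produce thresholds $\bar\eps,\bar\nu'$ independent of $n$ and of $(P_{k_1},\ldots,P_{k_n})$, which the statement requires. The paper instead settles this by a \emph{local}, quantitative argument at a single boundary breakpoint (Lemma \ref{minimi interni}, adapting Lemma 3 of \cite{corr}), comparing through the derivative formula of Lemma \ref{calcolo delle derivate parziali} the outer and inner contributions at a configuration with $|\bar p_{2j+1}-\bar p_{2j}|=\d$; this local estimate, uniform in the endpoints and in the partition, is the ingredient your sketch is missing.

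A second, smaller soft spot: when you build the competitor by splicing in the forward normal minimizer from the displaced endpoint, you must check that the spliced path is still admissible for the inner variational problem, i.e. that it remains in $\overline{B_R(0)}$ and in the same class $K_{P_j}$ — Proposition \ref{existence forward neighbourhoods} only confines it to $B_{R+\rho}(0)\setminus B_{R/2-\rho}(0)$. This is precisely what the localization Lemma \ref{localizzazione minimi F} (Lemma 1 of \cite{corr}) takes care of, and it deserves at least a mention in a complete write-up.
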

\begin{remark}
Proposition \ref{esistenza sol periodiche deboli} is an immediate consequence of this statement.
\end{remark}

From now on, we will write $\g^{((P_{k_1},\ldots,P_{k_n}),\eps,\nu')}$ to denote the periodic weak solution associated to an arbitrarily chosen minimizer of $F_{((P_{k_1},\ldots,P_{k_n}),\eps,\nu')}$. 

We will reach the result through a series of lemmas. We will follow the same sketch already used in \cite{SoTe}, see also \cite{corr}.

\begin{lemma}\label{lem: continuity F}
The function $F$ is continuous, so that there exists a minimizer of $F$ in the compact set $D$.
\end{lemma}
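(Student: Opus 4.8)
The plan is to establish continuity of $F$ by showing that each summand $L_{\nu'}([0,T_j];y_j)$ depends continuously on the boundary data $(p_0,\ldots,p_{2n}) \in D$, and then invoke compactness of $D$ together with the Weierstrass theorem. The domain $D$ is a closed and bounded subset of $(\pa B_R(0))^{2n+1}$, hence compact, so once continuity is in hand the existence of a minimizer is automatic. The work therefore reduces entirely to the continuity of $F$.

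First I would treat the outer arcs. For each even index $2j$, the arc $y_{2j}=y_{\text{ext}}(\cdot\,;p_{2j},p_{2j+1};\eps,\nu')$ and its time $T_{2j}=T_{\text{ext}}(p_{2j},p_{2j+1};\eps,\nu')$ depend in a $\mathcal{C}^1$ manner on the endpoints $(p_{2j},p_{2j+1})$, by Proposition \ref{teorema 0.1}. Since $L_{\nu'}([0,T_{2j}];y_{2j})$ is obtained by integrating a continuous integrand over an interval whose length varies continuously, and the integrand depends continuously on $y_{2j}$ and $\dot y_{2j}$ (which vary continuously in, say, the $\mathcal{C}^1$ topology), the map $(p_{2j},p_{2j+1}) \mapsto L_{\nu'}([0,T_{2j}];y_{2j})$ is continuous. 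A clean way to write this is to reparametrize each arc on the fixed interval $[0,1]$ via $s \mapsto y_{2j}(T_{2j}s)$ and use the invariance of $L_{\nu'}$ under orientation-preserving reparametrizations (established earlier in this section), so that the functional is an integral over $[0,1]$ of quantities depending continuously on the data.

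Next I would treat the inner arcs, which is where the main obstacle lies. For each odd index $2j+1$, the arc $y_{2j+1}$ is only a \emph{local} minimizer of $L_{\nu'}$ in the class $K_{P_{k_{j+1}}}^{p_{2j+1}p_{2j+2}}([0,1])$ produced by Corollary \ref{dinamica interna partizioni}, and such minimizers need not be unique, so $y_{2j+1}$ does not a priori depend continuously (or even single-valuedly) on $(p_{2j+1},p_{2j+2})$. The resolution is to argue at the level of the minimal \emph{value} rather than the minimizer itself: one shows that the function $(p_{2j+1},p_{2j+2}) \mapsto \min_{K_{P_{k_{j+1}}}} L_{\nu'}$ is continuous, which suffices since $F$ is defined through the values $L_{\nu'}([0,T_j];y_j)$ and each inner $y_j$ realizes this minimum. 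Continuity of the minimal value follows from a standard upper/lower semicontinuity argument: for upper semicontinuity one transports a near-minimizer at $(p_{2j+1},p_{2j+2})$ to nearby endpoints by concatenating with short connecting arcs along $\pa B_R(0)$ (exactly the construction $\wh u$ with the arcs $\zeta_R$ used in the proof of Proposition \ref{minimi limitati}), whose $L_{\nu'}$-contribution tends to zero as the endpoints converge; for lower semicontinuity one uses the weak lower semicontinuity of $M_{\nu'}$ (Lemma \ref{weak lower s-c di M_m}), the uniform $H^1$ bounds of Proposition \ref{minimi limitati} and Corollary \ref{bound tempi interni}, and the relationship between $M_{\nu'}$ and $L_{\nu'}$ given by Proposition \ref{minimi M-L}, extracting a weakly convergent subsequence of near-minimizers whose weak limit is admissible for the limiting endpoints.

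Finally, I would assemble the pieces: $F$ is a finite sum of continuous functions of the endpoint data, hence continuous on $D$; since $D$ is compact and nonempty, $F$ attains its minimum, which is the assertion of the lemma. The delicate point to flag is that continuity is claimed for $F$ and not for the minimizing arcs themselves, so throughout one must keep the argument phrased in terms of minimal values and admissible competitors, invoking the coercivity and weak lower semicontinuity of $M_{\nu'}$ to control the inner contributions uniformly in the endpoints.
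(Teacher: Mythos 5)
Your proposal is correct and follows essentially the same route as the paper's proof (which simply repeats step 1 of Theorem 5.3 of \cite{SoTe}): smooth dependence of the outer arcs via Proposition \ref{teorema 0.1}, and continuity of the inner contributions argued at the level of minimal values of $L_{\nu'}$ (equivalently $M_{\nu'}$) in the partition classes $K_{P_j}$, with the concatenation of $\zeta_R$ arcs along $\pa B_R(0)$ giving the upper bound and weak lower semicontinuity plus the uniform $H^1$ bounds giving the lower bound — exactly the machinery the paper itself deploys in the Continuity Lemmas of Section \ref{dim con lemma}. In particular, your observation that $F$ depends only on minimal values (so the arbitrary choice among non-unique inner minimizers is immaterial), and your implicit use of the partition classes $K_{P_j}$ rather than the winding classes $K_l$ when passing to weak limits with moving endpoints, are precisely the points that make the paper's construction well posed.
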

\begin{proof}
Repeat the proof of step 1) of Theorem 5.3 of \cite{SoTe}.
\end{proof}

\begin{remark}
The main existence result of inner solutions, Proposition \ref{teo dinamica interna}, is stated in terms of winding vectors rather than in terms of partitions. Thus, it could seem reasonable to prescribe a finite sequence of winding vectors $(l_1,\dots,l_n) \in \Z_2^N$ and try to prove the existence of a periodic solution associated to this sequence in the same way as $\g^{((P_{k_1},\ldots,P_{k_n}),\eps,\nu')}$ is associated to $(P_{k_1},\dots,P_{k_n})$. This, clearly, would lead to a larger class of periodic solutions. But such a generalization does not seem possible, for the following reason. For the proof of Proposition \ref{prop esistenza sol periodiche} we consider variations of an inner minimizers with respect to its endpoints $p_1,p_2$; the function $\textrm{Ind}(u([a,b]),c_j)$ is not continuous in $u$ with respect to the uniform convergence topology if we let $p_1$ and $p_2$ vary on $\pa B_R(0)$, and this makes impossible to prove the continuity of a function like $F$. Note that the discontinuity occurs when $p_1=p_2$:
\begin{center}
\begin{tikzpicture}[>=stealth]
\draw[dashed] (0,0) circle (1.5cm);
\draw[->] (55:1.5cm).. controls (0.3,-1.5) and (0.1,0.2)..(61:1.5cm);
\filldraw[font=\footnotesize] (0.3,-0.3) circle (1pt) node[anchor=north]{$c_3$}
          (0.5,0.3) circle (1pt) node[anchor=south]{$c_2$}
          (0,0.5) circle (1pt) node[anchor=south]{$c_1$}
          (-0.3,-0.3) circle (1pt) node[anchor=north]{$c_4$}
          (-0.5,0.3) circle (1pt) node[anchor=south]{$c_5$};
\draw (135:1.5cm) node[anchor=south]{$R$};
\draw (55:1.5cm) node[anchor=west]{$p_1$};
\draw (61:1.5cm) node[anchor=east]{$p_2$};
\end{tikzpicture}
$\qquad$
\begin{tikzpicture}[>=stealth]
\draw[dashed] (0,0) circle (1.5cm);
\draw[->] (53:1.5cm).. controls (0.3,-1.5) and (0,0.4)..(47:1.5cm);
\filldraw[font=\footnotesize] (0.3,-0.3) circle (1pt) node[anchor=north]{$c_3$}
          (0.5,0.3) circle (1pt) node[anchor=south]{$c_2$}
          (0,0.5) circle (1pt) node[anchor=south]{$c_1$}
          (-0.3,-0.3) circle (1pt) node[anchor=north]{$c_4$}
          (-0.5,0.3) circle (1pt) node[anchor=south]{$c_5$};
\draw (135:1.5cm) node[anchor=south]{$R$};
\draw (47:1.5cm) node[anchor=west]{$p_2$};
\draw (53:1.5cm) node[anchor=east]{$p_1$};
\end{tikzpicture}
\end{center}
When $p_2$ moves continuously on $\pa B_R(0)$ and crosses $p_1$, although the two represented arcs remains ``close" in the uniform topology, the winding vector drastically changes, passing from $(1,0,1,1,1)$ to $(0,1,0,0,0)$ (recall that to compute the winding vector we close the arc with the portion of $\pa B_R(0)$ connecting $p_2$ with $p_1$ in counterclockwise sense). On the contrary, the partition which is determined by the inner arc does not change when $p_2$ crosses $p_1$. This makes possible to prove Lemma \ref{lem: continuity F} only when working with prescribed sequences of partitions, and not of winding vectors.
\end{remark}

Let $(\bar{p}_0,\ldots,\bar{p}_{2n})$ be a minimizer of $F$. We aim at showing that the minimality of $(\bar{p}_0,\ldots,\bar{p}_{2n})$ implies smoothness in the junction times for the associated periodic function $\gamma_{(\bar{p}_0,\ldots,\bar{p}_{2n})}^{((P_{k_1},\ldots,P_{k_n}),\eps,\nu')}$. In order to prove it, we would like to write explicitly the equation $\n F (\bar{p}_0,\ldots,\bar{p}_{2n}) = 0$. As we noticed in \cite{corr}, it is not evident that this can be done, because of the lack of uniqueness of inner minimizers of $M_{\nu'}$ in $K_{P_j}$: for this reason it is not immediate that an inner solution depends smoothly on its ends. In order to overcome the problem, we can use Proposition \ref{existence forward neighbourhoods}: for any $j \in \{0,\ldots,n-1\}$, we consider a forward normal neighborhood $U_{2j+1}$ of the point $\bar{p}_{2j+1}$. Let us choose $t_* \in (0,T_{2j+1})$ such that 
\[
\wt{p}_{2j+1}:=y_{2j+1}(t_*) \in  U_{2j+1},\quad |\wt{p}_{2j+1}|<R, \quad y\left([0,t_*]\right) \subset \left(B_R(0) \setminus B_{R/2}(0)\right);
\]
There exists a unique minimizer $\wh{y}(\cdot;\bar{p}_{2j+1},\wt{p}_{2j+1};\eps,\nu')$ of $M_{\nu'}$, and hence also of $L_{\nu'}$ (up to a re-parameterization), which connects $p_{2j+1}$ and $\wt{p}_{2j+1}$ in time $1$, and depends smoothly on its ends. For the uniqueness, $\wh{y}$ has to be a re-parametrization of $y_{2j+1}$. Note that if $p_{2j+1} \in \overline{U_{2j+1} \cap B_R(0)}$, then there is a unique minimizer $\wh{y}(\cdot;p_{2j+1},\wt{p}_{2j+1};\eps,\nu')$ of $M_{\nu'}$ which connects $p_{2j+1}$ and $\wt{p}_{2j+1}$. We will consider its re-parametrization $\wt{y}(\cdot\,;p_{2j+1},\wt{p}_{2j+1};\eps)$ such that
\[
\begin{cases}
\ddot{\wt{y}}(t)+2 \nu' i \dot{\wt{y}}(t)= \n \Phi_{\nu',\eps}(\wt{y}(t)) \\
\frac{1}{2}|\dot{\wt{y}}(t)|^2-\Phi_{\nu',\eps}(\wt{y}(t))=-1,
\end{cases}
\]
denoting by $[0,T(p_{2j+1},\wt{p}_{2j+1})]$ its domain. Due to the minimality of $\wh{y}(\cdot\,;p_{2j+1},\wt{p}_{2j+1};\eps,\nu')$ for $L_{\nu'}$, such a re-parametrization exists, see Theorem \ref{teorema su L}. In this way
\beq\label{oss 1}
\wt{y}(\cdot\,;\bar{p}_{2j+1},\wt{p}_{2j+1};\eps,\nu')\equiv y_{P_{k_{j+1}}}(\cdot\,;\bar{p}_{2j+1},\wt{p}_{2j+1};\eps,\nu')|_{[0,T(\bar{p}_{2j+1},\wt{p}_{2j+1})]}.
\eeq
Let $D_{2j+1}:=\{p_{2j+1} \in \left(\pa B_R(0) \cap \bar U_{2j+1} \right): |\bar{p}_{2j}-p_{2j+1}| \leq \d\}$. We define $G_{2j+1}:D_{2j+1} \to \R$ by
\begin{multline*}
G_{2j+1}(p_{2j+1}):=L\left([0,T(p_{2j+1})]; y_{\text{ext}}(\cdot\,;\bar{p}_{2j},p_{2j+1};\eps,\nu')\right)\\
+ L\left([0,T(p_{2j+1},\wt{p}_{2j+1})]; \wt{y}(\cdot\,;p_{2j+1},\wt{p}_{2j+1};\eps,\nu')\right),
\end{multline*}
where $T(p_{2j+1})$ denotes $T_{\text{ext}}(\bar{p}_{2j},p_{2j+1};\eps,\nu')$ (we will adopt this notation in this section). Of course, with minor changes we can also define a function $G_{2j}$, for every $j \in \{0,\ldots,2n\}$. Note that $G_k$ is continuous (for every $k$), since it is a sum of terms which are both continuous with respect to $p_k$. As a consequence, $G_k$ has a minimum. 

\begin{lemma}\label{localizzazione minimi F}
If $(\bar{p}_0,\ldots,\bar{p}_{2n})$ is a minimizer for $F$, then $\bar{p}_k$ is a minimizer for $G_k$.
\end{lemma}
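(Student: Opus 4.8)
The plan is to localize the global minimality of $F$ at a single junction, by combining the additivity of $L_{\nu'}$ along a path with the identity \eqref{oss 1}. I carry out the case $k=2j+1$ in detail; the case of an even index is entirely analogous, the only subtlety being that, since $L_{\nu'}$ is \emph{not} reversible, the adjacent inner arc must be split following the direction of its parametrization (for an even index the relevant intermediate point sits near the \emph{end} of the incoming inner arc). Denote by $C_0$ the sum of the $L_{\nu'}$-lengths of all the arcs of $\g_{(\bar p_0,\dots,\bar p_{2n})}^{((P_{k_1},\dots,P_{k_n}),\eps,\nu')}$ except $y_{2j}$ and $y_{2j+1}$; it does not depend on $p_{2j+1}$. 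Since $y_{2j+1}=y_{P_{k_{j+1}}}(\cdot\,;\bar p_{2j+1},\bar p_{2j+2})$ passes through $\wt p_{2j+1}$ at the time $t_*$, the additivity of $L_{\nu'}$ over $[0,t_*]\cup[t_*,T_{2j+1}]$, its invariance under orientation-preserving re-parametrizations, and \eqref{oss 1} give
\begin{equation*}
F(\bar p_0,\dots,\bar p_{2n})=C_0+G_{2j+1}(\bar p_{2j+1})+\tau,\qquad \tau:=L_{\nu'}\big([t_*,T_{2j+1}];y_{2j+1}\big),
\end{equation*}
because the outer term $L_{\nu'}(y_{\text{ext}}(\cdot\,;\bar p_{2j},\bar p_{2j+1}))$ together with $L_{\nu'}(\wt y(\cdot\,;\bar p_{2j+1},\wt p_{2j+1}))$ is exactly $G_{2j+1}(\bar p_{2j+1})$.

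Next, fix an arbitrary $p\in D_{2j+1}$ and compare with the configuration obtained by replacing $\bar p_{2j+1}$ with $p$, leaving the other $\bar p_i$ unchanged. Minimality of $(\bar p_0,\dots,\bar p_{2n})$ for $F$ yields
\begin{equation*}
L_{\nu'}\big(y_{\text{ext}}(\cdot\,;\bar p_{2j},p)\big)+L_{\nu'}\big(y_{P_{k_{j+1}}}(\cdot\,;p,\bar p_{2j+2})\big)\ge G_{2j+1}(\bar p_{2j+1})+\tau .
\end{equation*}
The crucial point is the inequality
\begin{equation*}
L_{\nu'}\big(y_{P_{k_{j+1}}}(\cdot\,;p,\bar p_{2j+2})\big)\le L_{\nu'}\big(\wt y(\cdot\,;p,\wt p_{2j+1})\big)+\tau .\tag{$\star$}
\end{equation*}
Substituting $(\star)$ into the previous display, recalling that $L_{\nu'}(y_{\text{ext}}(\cdot\,;\bar p_{2j},p))+L_{\nu'}(\wt y(\cdot\,;p,\wt p_{2j+1}))=G_{2j+1}(p)$, and cancelling $\tau$, one obtains $G_{2j+1}(p)\ge G_{2j+1}(\bar p_{2j+1})$, which is the assertion.

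It remains to establish $(\star)$. Consider the path joining $p$ to $\bar p_{2j+2}$ obtained by concatenating $\wt y(\cdot\,;p,\wt p_{2j+1})$ with the fixed tail $y_{2j+1}|_{[t_*,T_{2j+1}]}$; its $L_{\nu'}$-length is precisely the right-hand side of $(\star)$. For $p$ in $D_{2j+1}\subset \overline{U_{2j+1}}$, Proposition \ref{existence forward neighbourhoods} ensures that $\wt y(\cdot\,;p,\wt p_{2j+1})$ depends continuously on $p$ and, staying close to $y_{2j+1}|_{[0,t_*]}\subset B_R(0)\setminus B_{R/2}(0)$, remains in $\overline{B_R(0)}$ and away from the centres. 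Hence the concatenation lies in $K_{P_{k_{j+1}}}^{p\,\bar p_{2j+2}}([0,1])$: the only modification with respect to $y_{2j+1}(\cdot\,;\bar p_{2j+1},\bar p_{2j+2})$ takes place near $\partial B_R(0)$, where the winding around each centre is unaffected, so the induced partition is still $P_{k_{j+1}}$. Since $y_{P_{k_{j+1}}}(\cdot\,;p,\bar p_{2j+2})$ minimizes $L_{\nu'}$ over this class (Corollary \ref{dinamica interna partizioni}), and the competitor is close to it for $p$ near $\bar p_{2j+1}$, its length cannot exceed that of the competitor, and $(\star)$ follows.

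The main obstacle is exactly the admissibility of this competitor, and it is the reason the argument is phrased through the auxiliary functions $G_k$ and forward normal neighborhoods rather than through a direct computation of $\nabla F=0$. As illustrated in the remark preceding the lemma, the winding vector of a path from $p$ to $\bar p_{2j+2}$ fails to be continuous in $p$, whereas the \emph{induced partition} is continuous; this is what makes the class $K_{P_{k_{j+1}}}^{p\,\bar p_{2j+2}}$ —and not a class of prescribed winding vectors— stable under variations of the endpoint, so that the minimizer furnished by Corollary \ref{dinamica interna partizioni} can be compared with the concatenation. The two residual facts used above, namely that $\wt y(\cdot\,;p,\wt p_{2j+1})$ does not leave $\overline{B_R(0)}$ and preserves the partition, are secured by choosing the forward normal neighborhood $U_{2j+1}$ (equivalently $\bar r$) small, and are proved exactly as in the corresponding step of \cite{SoTe}.
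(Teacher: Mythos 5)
Your overall skeleton is exactly the one the paper intends (the paper's own proof simply defers to Lemma 1 of \cite{corr}): decompose $F(\bar p_0,\ldots,\bar p_{2n})=C_0+G_{2j+1}(\bar p_{2j+1})+\tau$ using the additivity of $L_{\nu'}$, the identity \eqref{oss 1} and invariance under orientation-preserving reparametrizations, then compare with the configuration in which $\bar p_{2j+1}$ is replaced by $p$, taking as competitor for the inner minimization problem the concatenation of $\wt y(\cdot\,;p,\wt p_{2j+1})$ with the tail of $y_{2j+1}$. The algebra, the care with non-reversibility, and the even-index variant are all correct; so is your observation that partitions---unlike winding vectors---survive the modification: up to arcs of $\pa B_R(0)$, the competitor and $y_{2j+1}$ differ by a closed loop contained in the annulus $\{|y|>R/2-\rho\}$, which winds the same number of times around every centre and therefore leaves $\mathcal{A}(l)$, hence the class $K_{P_{k_{j+1}}}$, unchanged.

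The genuine gap is the other half of the admissibility of the competitor, namely the constraint $|u(t)|\le R$ entering the definition of $K_{P_{k_{j+1}}}^{p\,\bar p_{2j+2}}([0,1])$: without it, $(\star)$ cannot be deduced, because $y_{P_{k_{j+1}}}(\cdot\,;p,\bar p_{2j+2})$ is minimal only among paths confined to $\overline{B_R(0)}$. Your justification of this containment does not stand. Proposition \ref{existence forward neighbourhoods} confines $\wt y(\cdot\,;p,\wt p_{2j+1})$ only to $B_{R+\rho}(0)\setminus B_{R/2-\rho}(0)$, which explicitly allows excursions outside $\overline{B_R(0)}$; and ``staying close to $y_{2j+1}|_{[0,t_*]}$'' is a non sequitur, since that arc touches $\pa B_R(0)$ at its starting point $\bar p_{2j+1}$, so curves uniformly close to it may still poke outside the ball near that point. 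Shrinking $\bar r$ does not help either: the obstruction is scale-invariant, because for $\eps,\nu'\neq 0$ the circle $\pa B_R(0)$ is no longer a geodesic of the Finsler structure (the convexity of $\overline{B_R(0)}$, which does hold for $\eps=\nu'=0$ since $\pa B_R(0)$ is then the circular Jacobi geodesic, is destroyed by the perturbation), so between a boundary point and a nearby interior point the unconstrained minimizer can bulge outward at every scale. Nor can one project an exiting competitor radially back into the ball, since the circle of radius $R$ is the \emph{longest} circle for the Jacobi metric, and radial projection onto it increases length. What is actually needed is an additional argument, for instance: prove that the inner minimizer leaves $\pa B_R(0)$ with velocity pointing strictly inward (this can be extracted from the transit-time estimate used in Section \ref{dinamica interna} to prove $|u_l(t)|<R$ on $(0,1)$, since a tangential departure would force the solution to linger near $\pa B_R(0)$ longer than the transit bound allows), and then invoke the $\mathcal{C}^1$ dependence of $\wt y(\cdot\,;p,\wt p_{2j+1})$ on $p$ to obtain the containment for all $p$ in a possibly smaller neighborhood of $\bar p_{2j+1}$. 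This yields local minimality of $G_{2j+1}$ at $\bar p_{2j+1}$, which suffices to conclude $\pa G_{2j+1}/\pa p_{2j+1}(\bar p_{2j+1})=0$ once $\bar p_{2j+1}\in D_{2j+1}^{\circ}$; but as written your proof neither supplies this ingredient nor proves the statement on all of $D_{2j+1}$.
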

\begin{proof}
The proof is the same of Lemma 1 of \cite{corr}.
\end{proof}

\noindent The main reason to pass from the study of $F$ to the study of the functions $G_k$ is that, in contrast with $F$, $G_k$ is differentiable for every $k$: let's think at $k=2j+1$; $L\left([0,T(p_{2j+1})]; y_{\text{ext}}(\cdot\,;\bar{p}_{2j},p_{2j+1};\eps,\nu')\right)$ depends smoothly on $p_{2j+1}$ for the differentiable dependence of outer solutions with respect to the ends, and $L\left([0,T(p_{2j+1},\wt{p}_{2j})]; \wt{y}(\cdot\,;p_{2j+1},\wt{p};\eps,\nu')\right)$ depends smoothly on $p_{2j+1}$ for Proposition \ref{existence forward neighbourhoods}. Therefore the minimality of $\bar{p}_{2j+1}$ implies that 
\[
\text{if }\bar{p}_{2j+1} \in D_{2j+1}^{\circ} \quad \Rightarrow \quad
\frac{\pa G_{2j+1}}{\pa p_{2j+1}}(\bar{p}_{2j+1})=0
\]
(the notation $D_{2j+1}^\circ$ denotes the inner of $D_{2j+1}$). This partial derivative is a linear operator from the tangent space $T_{\bar{p}_{2j+1}} (\pa B_R(0))$ into $\R$. In what follows we will show that, if $\eps$ and $\nu'$ are small enough, $\bar{p}_k \in D_k^{\circ}$ for every $k$, and that the stationarity conditions are nothing but regularity conditions for the functions 
\[
\zeta_{2j}(t):= \begin{cases} 
y_{P_{k_{j-1}}}(t+T_{2j-1}-T(\wt{p}_{2j},\bar p_{2j});\bar{p}_{2j-1},\bar{p}_{2j};\eps,\nu') &
 \text{if }t \in [0,T(\wt{p}_{2j},\bar{p}_{2j})] \\
y_{\text{ext}}(t-T(\wt{p}_{2j},\bar{p}_{2j});\bar{p}_{2j},\bar{p}_{2j+1};\eps,\nu') &
\text{if } t \in [T(\wt{p}_{2j},\bar{p}_{2j}),T(\wt{p}_{2j},\bar{p}_{2j})+T(\bar{p}_{2j+1})]
\end{cases}
\]
and
\[
\zeta_{2j+1}(t):= \begin{cases} y_{\text{ext}}(t;\bar{p}_{2j},\bar{p}_{2j+1};\eps,\nu') & \text{if } t \in [0,T(\bar{p}_{2j+1})]\\
y_{P_{k_{j+1}}}(t-T(\bar{p}_{2j+1});\bar{p}_{2j},\bar{p}_{2j+1};\eps,\nu') &
\text{if } t \in [T(\bar{p}_{2j+1}),T(\bar{p}_{2j+1})+T(\bar{p}_{2j+1},\wt{p}_{2j+1})].
\end{cases}
\]
Taking into account that $\zeta_k$ is (up to a time translation) the restriction of $\gamma^{((P_{k_1},\ldots,P_{k_n}),\eps,\nu')}$ on a neighbourhood of the junction time $\mathfrak{T}_{k-1}$, we obtain $\mathcal{C}^1$ regularity for $\gamma^{((P_{k_1},\ldots,P_{k_n}),\eps,\nu')}$ itself.
 
\begin{lemma}\label{calcolo delle derivate parziali}
For every $j=0,\ldots,n-1$, $p_{2j} \in D_{2j}$, and for every $\f \in T_{p_{2j}}(B_R(0))$, we have
\[
\frac{\pa G_{2j}}{\pa p_{2j}}(p_{2j})[\f] = \frac{1}{\sqrt{2}}\langle \dot{\wt{y}}(T(\wt{p}_{2j},p_{2j});\wt{p}_{2j},p_{2j};\eps,\nu') - \dot{y}_{\text{ext}}(0;p_{2j},\bar{p}_{2j+1};\eps,\nu'), \f \rangle .
\]
For every $j=0,\ldots,n-1$, $p_{2j+1} \in D_{2j+1}$, and for every $\f \in T_{p_{2j+1}}(B_R(0))$, we have
\[
\frac{\pa G_{2j+1}}{\pa p_{2j+1}}(p_{2j+1})[\f] = \frac{1}{\sqrt{2}}\langle \dot{y}_{\text{ext}}(T(p_{2j+1});\bar{p}_{2j},p_{2j+1};\eps,\nu') -\dot{\wt{y}}(0;p_{2j+1},\wt{p}_{2j+1};\eps,\nu'), \f \rangle .
\]
\end{lemma}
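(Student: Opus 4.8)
The plan is to obtain both identities from the \emph{first variation of $L_{\nu'}$ with respect to a moving endpoint}, exploiting that each of the two arcs entering $G_k$ is an extremal of $L_{\nu'}$. I treat $G_{2j+1}$ in detail; the computation for $G_{2j}$ is identical after exchanging the roles of the two arcs. Recall that $G_{2j+1}(p_{2j+1})$ is the sum of the $L_{\nu'}$-length of the outer arc $y_{\text{ext}}(\cdot\,;\bar p_{2j},p_{2j+1};\eps,\nu')$, for which $p_{2j+1}$ is the \emph{terminal} endpoint, and the $L_{\nu'}$-length of the inner arc $\wt y(\cdot\,;p_{2j+1},\wt p_{2j+1};\eps,\nu')$, for which $p_{2j+1}$ is the \emph{initial} endpoint. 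Both arcs solve the motion equation: the outer one by Proposition~\ref{teorema 0.1} (it is a, possibly non-minimal, critical point of $L_{\nu'}$), the inner one because $\wt y$ is the reparametrization, furnished by Theorem~\ref{teorema su L}, of the unique $M_{\nu'}$--minimizer of Proposition~\ref{existence forward neighbourhoods}. Consequently each arc satisfies the Euler--Lagrange equation of $L_{\nu'}$, so that in the first variation of its length the interior term vanishes and only the boundary term at the varying endpoint survives.

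To compute that boundary term I first record that the integrand
\[
F(u,\dot u)=\sqrt{\Phi_{\nu',\eps}(u)-1}\,|\dot u|+\frac{\nu'}{\sqrt2}\langle i u,\dot u\rangle
\]
is positively homogeneous of degree one in $\dot u$, so its fibre derivative
\[
\frac{\pa F}{\pa \dot u}=\sqrt{\Phi_{\nu',\eps}(u)-1}\,\frac{\dot u}{|\dot u|}+\frac{\nu'}{\sqrt2}\, i u
\]
is homogeneous of degree zero, hence invariant under the orientation-preserving reparametrizations under which $L_{\nu'}$ itself is invariant. This is what makes the boundary term well defined and lets me evaluate it in the \emph{solution} parametrization, in which the arc has Jacobi constant $-1$. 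Along such a parametrization the energy relation $|\dot z|^2=2(\Phi_{\nu',\eps}(z)-1)$ gives $\sqrt{\Phi_{\nu',\eps}(z)-1}\,\dot z/|\dot z|=\dot z/\sqrt2$, whence $\pa F/\pa \dot u=\tfrac1{\sqrt2}(\dot z+\nu'\, i z)$ at the endpoint.

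It remains to assemble the two contributions with the correct signs. Since $p_{2j+1}$ is the terminal endpoint of the outer arc, the latter contributes $+\tfrac1{\sqrt2}\langle \dot y_{\text{ext}}(T)+\nu'\, i p_{2j+1},\f\rangle$; since $p_{2j+1}$ is the initial endpoint of the inner arc, the latter contributes $-\tfrac1{\sqrt2}\langle \dot{\wt y}(0)+\nu'\, i p_{2j+1},\f\rangle$. Because the two arcs share the junction point $y_{\text{ext}}(T)=p_{2j+1}=\wt y(0)$, the two terms $\tfrac{\nu'}{\sqrt2} i p_{2j+1}$ cancel, leaving exactly the claimed expression for $\pa G_{2j+1}/\pa p_{2j+1}$. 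Running the same argument for $G_{2j}$, where now the inner arc terminates at $p_{2j}$ and the outer arc issues from it, produces the first identity.

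The delicate point is the rigorous justification of the moving-endpoint variation for the non-reversible Finsler functional $L_{\nu'}$: one must check that the dependence of the parametrization interval $[0,T(p_{2j+1})]$ on the endpoint does not contribute --- which is precisely dispatched by the orientation-preserving reparametrization invariance of $L_{\nu'}$ together with the degree-zero homogeneity of $\pa F/\pa \dot u$ noted above --- and that the outer arc, although only a critical and not a minimal point of $L_{\nu'}$, still annihilates the interior Euler--Lagrange term. The cancellation of the $\nu'$-contributions at the shared endpoint is what makes these formulas formally coincide with their Riemannian counterparts in \cite{SoTe}, and is the step where the common junction point is indispensable.
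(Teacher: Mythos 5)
Your proposal is correct and follows essentially the same route as the paper: the paper also computes the first variation of each $L_{\nu'}$-term on a fixed parameter interval, kills the interior term by the motion equation, uses the Jacobi constant to reduce the boundary term to $\tfrac{1}{\sqrt{2}}\langle \dot z+\nu' i z,\cdot\rangle$, and observes the cancellation of the $\nu'$-contributions at the shared junction point. The only difference is presentational: the paper carries out the integration by parts explicitly on $[0,1]$ with the endpoint conditions $\pa_{p_1}y_0(0)=0$, $\pa_{p_1}y_0(T(p_1))=\mathrm{Id}$, whereas you invoke the abstract moving-endpoint variation formula and the degree-zero homogeneity of the fibre derivative, which amounts to the same computation.
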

\begin{proof}
It is not restrictive to consider the derivative of $G_1$ to ease the notation. The same calculations work for the other cases. There holds
\beq\label{51*}
\frac{\pa G_1}{\pa p_1}\left(p_1\right) = \frac{\pa}{\pa p_1} L_{\nu'}\left([0,T(p_1)];y_{\text{ext}}(\cdot\,;\bar{p}_0,p_1;\eps,\nu')\right)
+\frac{\pa }{\pa p_1} L_{\nu'}\left([0,T(p_1,\wt{p}_1)]; \wt{y}(\cdot\,;p_1,\wt{p}_1;\eps,\nu')\right).
\eeq
Let us consider the first term in the right side, writing simply $y_0$ instead of \\
$y_{\text{ext}}(\cdot\,;\bar{p}_0,p_1;\eps,\nu')$; we consider $u_0(t)=y_0(T_0 t)$, defined in $[0,1]$. It results
\begin{multline*}
\frac{\pa}{\pa p_1} L_{\nu'}\left( [0,T(p_1)] ; y_0 \right)  = \frac{\pa}{\pa p_1} L_{\nu'}\left( [0,1] ; u_0 \right)  \\
 = \frac{1}{\sqrt{2}}\int_0^1 \left[\langle \frac{\dot{u}_0}{T_0}, \frac{d}{dt}\frac{\pa u_0}{\pa p_1}\rangle + \langle T_0\n \Phi_{\nu',\eps}(u_0),\frac{\pa u_0}{\pa p_1}\rangle  \right] + \frac{1}{\sqrt{2}} \nu' \int_0^1 \left( \langle i \frac{\pa u_0}{\pa p_1}, \dot{u}_0\rangle + \langle i u_0, \frac{d}{dt} \frac{\pa u_0}{\pa p_1} \right)\\
= \frac{1}{\sqrt{2}} \int_0^1 \langle -\frac{\ddot{u}_0}{T_0} - 2 \nu' i \dot{u}_0+ T_0 \n \Phi_{\nu',\eps}(u_0),\frac{\pa u_0}{\pa p_1}\rangle
  + \frac{1}{\sqrt{2}} \left[\langle \frac{\dot{u}_0(t)}{T_0}+\nu' i u_0(t),\frac{\pa u_0}{\pa p_1}(t)\rangle \right]_0^1\\
  = \frac{1}{\sqrt{2}}\left[\langle \dot{y}_0(t)+\nu' y_0(t),\frac{\pa y_0}{\pa p_1}(t)\rangle \right]_{0}^{T(p_1)}.
\end{multline*}
In the second equality we use the Jacobi constant for $y_0$, in the last one we use the fact that $y_0$ is a classical solution of the motion equation. \\
As in the step 3) of the proof of Theorem 5.3 of \cite{SoTe},  we can compute
\[
\frac{\pa }{\pa p_1} y_0(0)=0 \qquad \frac{\pa }{\pa p_1} y_0(T(p_1))= Id_{T_{p_1}(\pa B_R(0))}.
\]
Hence
\[
\frac{\pa}{\pa p_1} L_{\nu'}\left( [0,T(p_1)] ; y_0\right) \left[\f\right] =\frac{1}{\sqrt{2}}\left(\langle \dot{y}_0(T(p_1)), \f \rangle + \nu' \langle i p_1, \f \rangle \right).
\]
We can repeat the same computations for the second term in the right side of the \eqref{51*}, with minor changes: terming $\wt{y}_1=\wt{y}(\cdot\,;p_1,\wt{p}_1;\eps,\nu')$, we obtain
\[
\frac{\pa}{\pa p_1} L_{\nu'}\left( [0,T(p_1,\wt{p})_1] ; \wt{y}_1\right) \left[\f\right] = - \frac{1}{\sqrt{2}} \left(\langle \dot{\wt{y}}_1(0), \f \rangle + \nu' \langle i p_1, \f \rangle \right). \qedhere
\]
\end{proof}

\begin{lemma}\label{minimi interni}
There exist $\bar{\eps}>0$ and $\bar{\nu}'>0$ such that if $\eps \in (0,\bar{\eps})$ and $|\nu'| < \bar{\nu}'$ then
\[
\text{$\bar{p}_k$ minimizes $G_k$} \Rightarrow \bar{p}_k \in D_k^\circ \qquad \forall k.
\]
The values $\bar{\eps}$ and $\bar{\nu}'$ are independent on $(P_{k_1},\ldots,P_{k_n}) \in \mathcal{P}^n$.
\end{lemma}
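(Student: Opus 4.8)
The plan is to argue by contradiction, isolating the single constraint that could be active at a minimizer and then killing it with the explicit derivative formula. First I observe that, viewing $D_{2j+1}$ as a subset of the circle $\pa B_R(0)$, its boundary is the union of two pieces: the points with $p_{2j+1}\in\pa U_{2j+1}$, i.e. $|p_{2j+1}-\bar p_{2j+1}|=\bar r/2$, and the points where the distance constraint $|\bar p_{2j}-p_{2j+1}|=\d$ is saturated. Since $U_{2j+1}$ is, by construction, a forward normal neighborhood \emph{centered} at $\bar p_{2j+1}$ (Proposition \ref{existence forward neighbourhoods}), the point $\bar p_{2j+1}$ sits in the open arc $\pa B_R(0)\cap U_{2j+1}$, so the first piece is never active; the same holds at the even indices. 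Hence it suffices to exclude the equality $|\bar p_{2j}-\bar p_{2j+1}|=\d$ (and its even analogue).

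Suppose then, for contradiction, that $|\bar p_{2j}-\bar p_{2j+1}|=\d$. Let $\f_0\in T_{\bar p_{2j+1}}(\pa B_R(0))$ be the unit tangent pointing from $\bar p_{2j+1}$ toward $\bar p_{2j}$ along the shorter arc. Moving in the direction $\f_0$ strictly decreases the separation $|\bar p_{2j}-p_{2j+1}|$, so $\f_0$ is feasible, and the minimality of $\bar p_{2j+1}$ for $G_{2j+1}$ forces $\frac{\pa G_{2j+1}}{\pa p_{2j+1}}(\bar p_{2j+1})[\f_0]\ge 0$. By Lemma \ref{calcolo delle derivate parziali} this derivative equals $\frac{1}{\sqrt 2}\langle \dot y_{\text{ext}}(T)-\dot{\wt y}(0),\f_0\rangle$, where $\dot y_{\text{ext}}(T)$ is the arrival velocity of the outer arc and $\dot{\wt y}(0)$ the departure velocity of the inner arc at $\bar p_{2j+1}$; note that the two contributions $\nu'\langle i\bar p_{2j+1},\f_0\rangle$ cancel, so no $\nu'$-correction survives. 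Both velocities have the same modulus $\sqrt{2(\Phi_{\nu',\eps}(\bar p_{2j+1})-1)}$ by the Jacobi constant and both point inside $B_R(0)$, so I only need to compare their \emph{tangential} components.

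For the outer term I use that $y_{\text{ext}}$ is a \emph{near-brake} orbit: as $\eps,\nu'\to 0$ it converges to the $\a$-Keplerian orbit of energy $-1$ issuing from $\bar p_{2j}$ and returning to $\bar p_{2j+1}$. Since $\bar p_{2j+1}$ is at a fixed positive angular distance $\sim \d/R$ from $\bar p_{2j}$, this limiting orbit carries a nonzero, sign-definite angular momentum, whence the tangential part of $\dot y_{\text{ext}}(T)$ is bounded away from $0$ with the sign that gives $\langle \dot y_{\text{ext}}(T),\f_0\rangle\le -c_0<0$, with $c_0$ depending only on $\d,R,M,\a$. For the inner term I invoke the convergence of the constrained minimizers $u_{P_{k_j}}(\cdot\,;\cdot;\eps,\nu')$, as $\eps,\nu'\to 0$, to the Jacobi minimizers of the collapsed $\a$-Kepler problem (the centres concentrate at the origin): the limiting inner arc must reach the origin to realize the prescribed winding, hence departs \emph{radially}, so $|\langle \dot{\wt y}(0),\f_0\rangle|\to 0$. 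Choosing $\bar\eps,\bar\nu'$ so small that $|\langle \dot{\wt y}(0),\f_0\rangle|<c_0/2$ yields $\frac{\pa G_{2j+1}}{\pa p_{2j+1}}(\bar p_{2j+1})[\f_0]\le\frac{1}{\sqrt 2}\bigl(-c_0+\tfrac{c_0}{2}\bigr)<0$, a contradiction. The even case is identical after swapping the roles of the arcs (now the outer arc departs and the inner arc arrives), using the symmetric formula in Lemma \ref{calcolo delle derivate parziali}.

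Finally, uniformity: the constant $c_0$ is purely geometric, while the modulus $\sqrt{2(\Phi_{\nu',\eps}-1)}$ is bounded from below and above on $\pa B_R(0)$ by Remark \ref{maggiorazione in B_R} and Proposition \ref{minimi limitati}, independently of the data; the only partition-dependent ingredient is the inner departure estimate, but the relevant arcs are indexed by the \emph{finite} sets $\mathcal P$ and $\mathfrak I^N$, so taking the maximum over them gives $\bar\eps,\bar\nu'$ independent of $(P_{k_1},\ldots,P_{k_n})$ and of $n$. The step I expect to be the main obstacle is precisely this last estimate — proving that the inner minimizers depart and arrive \emph{asymptotically radially} as $\eps,\nu'\to 0$, uniformly over the finite family of admissible winding vectors. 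This rests on the weak $H^1$ convergence of the $M_{-1,\nu'}$-minimizers to the Jacobi minimizers of the collapsed Kepler problem and on the description of the latter imported from \cite{SoTe}; extra care is required because $L_{\nu'}$ is not reversible, so the incoming and the outgoing estimates must be established separately.
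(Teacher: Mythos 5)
Your proof is correct and is essentially the paper's own argument: the paper proves this lemma by adapting Lemma 3 of \cite{corr}, which is exactly your scheme --- reduce to the saturated distance constraint, apply the first-order condition via the derivative formula of Lemma \ref{calcolo delle derivate parziali} (in which the $\nu'$-terms cancel), and contrast the uniformly non-degenerate tangential arrival velocity of the near-brake outer arc with the asymptotically radial velocities of the inner minimizers as $\eps,\nu'\to 0$. The ingredient you flag as the main obstacle --- the radial asymptotics of the inner arcs, uniform over the finite family of partitions and treated separately for departure and arrival since $L_{\nu'}$ is not reversible --- is precisely what the paper imports from \cite{SoTe} and \cite{corr} rather than reproving.
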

\begin{proof}
Adapt the proof of Lemma 3 in \cite{corr}.
\end{proof}

\begin{proof}[Conclusion of the proof of Proposition \ref{prop esistenza sol periodiche}]
We can follow step 5) of the proof of Theorem 5.3 of \cite{SoTe} in order to check that each $\zeta_k$ is smooth. Recalling the construction of $\gamma^{((P_{k_1},\ldots,P_{k_n}),\eps,\nu')}$, the proof is complete.
\end{proof}

\section{Collision-free weak solutions}\label{sezione collisioni}

We will work with $\eps \in (0,\bar{\eps})$ which is fixed. The aim is to find a threshold $\bar{\nu}_{th}'(\eps)$ such that, if $|\nu'|<\bar{\nu}_{th}'(\eps)$, then $\gamma^{((P_{j_1},\ldots, P_{j_n}),\eps,\nu')}$ is collision-free. It is necessary to distinguish among:
\[
 1) \ \a=1 \text{ and } N \geq 4, \qquad 2) \ \a=1 \text{ and } N=3, \qquad 3) \ \a \in (1,2).
\]

\paragraph{1) $\a=1$ and $N \geq 4$.} We start by looking at Theorem 5.3 of \cite{SoTe}. Since $N \geq 4$, we have a simple way to choose $(P_{j_1},\ldots,P_{j_n})$ so that the weak solution $\g^{((P_{j_1},\ldots,P_{j_n}),\eps,0)}$ is a collision-free solution of the $N$-centre problem $\ddot{y}=\n V_\eps(y)$, with energy $-1$: it is sufficient to take $P_{j_k} \in \mathcal{P} \setminus \mathcal{P}_1$ for every $k=1,\ldots,n$. Indeed in such a situation the conditions ($ii$)-$b$) or ($ii$)-$c$) of the quoted statement cannot be satisfied. Note that if $N=3$ the set $\mathcal{P} \setminus \mathcal{P}_1$ is empty, and this is way that case deserves a different discussion. Now, let $\eps \in (0,\bar{\eps})$, $\nu' \in (-\bar{\nu}',\bar{\nu}')$, $n \in \N$ and $(P_{j_1},\ldots,P_{j_n}) \in (\mathcal{P} \setminus \mathcal{P}_1)^n$; let $(\bar{p}_0,\ldots, \bar{p}_{2n})$ be the minimizer of $F_{((P_{j_1},\ldots,P_{j_n}),\eps,\nu')}$ found in Proposition \ref{prop esistenza sol periodiche}, and let $\g^{((P_{j_1},\ldots,P_{j_n}),\eps,\nu')}$ be the corresponding periodic weak solution of \eqref{pb equiv}. Is it true that, for $\nu'$ sufficiently small, such a solution is still collision-free? The answer is affirmative: the idea is that if $\nu' \to 0$ the "minimizers" $\g^{((P_{j_1},\ldots,P_{j_n}),\eps,\nu')}$ are weakly convergent in $H^1$ to $\g^{((P_{j_1},\ldots,P_{j_n}),\eps,0)}$, which is collision-free. This is true in a local sense, and can be considered as a kind of Gamma-convergence argument.

\begin{con lemma}\label{minimi interni con variazione di nu}
Let $\eps \in (0,\bar{\eps})$, $P_{j} \in \mathcal{P}$, $((p_1^m,p_2^m)) \subset \left(\pa B_R(0)\right)^2$ and $(\nu_m') \subset (-\bar{\nu}',\bar{\nu}')$. Let $u_m=u_{P_j}(\cdot\,;p_1^m,p_2^m;\eps,\nu'_m)$ be a minimizer for the following variational problem:
\[
\min \left\{ M_{\nu_m'}(u): u \in K_{P_j}^{p_1^m p_2^m}([0,1]) \right\}.
\]
Assume that $(p_1^m,p_2^m) \to (\wt{p}_1,\wt{p}_2)$, $\nu_m' \to 0$, and $u_m \wc \wt{u}$ weakly in $H^1$. Then $\wt{u}$ is a minimizer for
\[
\min \left\{M_0(u):u \in K_{P_j}^{\wt{p}_1 \wt{p}_2} ([0,1]) \right\}.
\]
\end{con lemma}

We postpone the proof of this continuity lemma in the next section; now, as announced, we use it in order to prove the following proposition, which is the last step in the proof of Theorem \ref{esistenza di soluzioni periodiche} (recall Proposition \ref{problema normalizzato} and Remark \ref{h,nu--eps,nu'}).

\begin{proposition}\label{coll per N >=4}
Let $\a=1$ and $N \geq 4$. Let $\eps \in (0,\bar{\eps})$. There exists $\bar{\nu}_1'(\eps)$ such that for every $\nu'\in(-\bar{\nu}_1'(\eps),\bar{\nu}_1'(\eps))$, $n \in \N$ and $(P_{j_1},\ldots,P_{j_n}) \in (\mathcal{P} \setminus \mathcal{P}_1)^n$, the function $\g^{((P_{j_1},\ldots,P_{j_n}),\eps,\nu')}$ is collision-free.
\end{proposition}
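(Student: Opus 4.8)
The plan is to reduce the collision-freeness of the glued solution $\g^{((P_{j_1},\ldots,P_{j_n}),\eps,\nu')}$ to a \emph{uniform} statement about inner minimizers. Recall that $\g$ is obtained by alternating outer arcs, which are classical solutions lying entirely in $\R^2\setminus B_R(0)$ and hence are automatically collision-free, and inner arcs, each of which is a re-parametrization of a local minimizer $u_{P_{j_k}}(\cdot\,;\bar p_{2k+1},\bar p_{2k+2};\eps,\nu')$ of $M_{\nu'}$ in $K_{P_{j_k}}$. Thus it suffices to prove the following claim: there exists $\bar\nu_1'(\eps)>0$ such that for every $|\nu'|<\bar\nu_1'(\eps)$, every pair $p_1,p_2\in\pa B_R(0)$, and every $P_j\in\mathcal{P}\setminus\mathcal{P}_1$, each minimizer $u_{P_j}(\cdot\,;p_1,p_2;\eps,\nu')$ is collision-free. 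Because this claim is phrased uniformly in the endpoints and in $P_j$, the resulting threshold $\bar\nu_1'(\eps)$ depends neither on $n$ nor on the chosen sequence $(P_{j_1},\ldots,P_{j_n})$, exactly as required.

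I would establish the claim by contradiction. Assume it fails; then there are sequences $\nu_m'\to 0$, endpoints $(p_1^m,p_2^m)\in(\pa B_R(0))^2$, partitions $P_{j^m}\in\mathcal{P}\setminus\mathcal{P}_1$, and minimizers $u_m:=u_{P_{j^m}}(\cdot\,;p_1^m,p_2^m;\eps,\nu_m')$ such that each $u_m$ has a collision. Since $\mathcal{P}\setminus\mathcal{P}_1$ is finite and $\pa B_R(0)$ is compact, up to a subsequence I may assume $P_{j^m}\equiv P_j$ is constant and $(p_1^m,p_2^m)\to(\tilde p_1,\tilde p_2)$. The boundedness estimates of Proposition \ref{minimi limitati} show that $(u_m)$ is bounded in $H^1$, so up to a further subsequence $u_m\wc\tilde u$ weakly in $H^1$ and hence uniformly on $[0,1]$.

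At this point the Continuity Lemma \ref{minimi interni con variazione di nu} applies to these data and yields that $\tilde u$ is a minimizer of $M_0$ in $K_{P_j}^{\tilde p_1\tilde p_2}([0,1])$. By Remark \ref{M_{h,nu}-M_h}, minimizers of $M_0=M_{-1,0}$ coincide, up to re-parametrization, with minimizers of the classical Maupertuis' functional $M_{-1}$; hence the description of inner minimizers from Theorem 4.12 of \cite{SoTe} applies to $\tilde u$. Since $P_j\in\mathcal{P}\setminus\mathcal{P}_1$, the isolating case that produces ejection-collisions is excluded, even when $\tilde p_1=\tilde p_2$, so $\tilde u$ is collision-free. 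Being continuous on the compact interval $[0,1]$, it keeps a positive distance $d:=\min_{t,k}|\tilde u(t)-c_k|>0$ from every centre. Finally I would invoke uniform convergence: for $m$ large, $\|u_m-\tilde u\|_\infty<d/2$, whence $|u_m(t)-c_k|>d/2$ for all $t$ and all $k$, so $u_m$ is collision-free, contradicting the choice of $u_m$. This proves the claim and with it the proposition.

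The hard part is that the whole argument rests on the Continuity Lemma \ref{minimi interni con variazione di nu}, whose proof is postponed to the next section: it must guarantee that the weak $H^1$-limit of the perturbed minimizers is a genuine minimizer of the unperturbed functional \emph{in the same partition class}, controlling the degenerate term $\frac{1}{\sqrt 2}\nu'\int\langle iu,\dot u\rangle$ as $\nu'\to 0$ and ruling out any loss of the prescribed partition in the limit. A second delicate point is the coincident-endpoint configuration $\tilde p_1=\tilde p_2$: here one must rely precisely on the characterization of \cite{SoTe} to know that for $P_j\notin\mathcal{P}_1$ no ejection-collision minimizer can appear even in this limiting case, which is exactly where the hypotheses $N\geq 4$ and $(P_{j_1},\ldots,P_{j_n})\in(\mathcal{P}\setminus\mathcal{P}_1)^n$ are used.
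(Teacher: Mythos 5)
Your proposal is correct and follows essentially the same route as the paper: reduce to a statement about inner minimizers that is uniform in the endpoints and in $P_j\in\mathcal{P}\setminus\mathcal{P}_1$, argue by contradiction using finiteness of $\mathcal{P}\setminus\mathcal{P}_1$, compactness of $\pa B_R(0)$ and the $H^1$-bound on $\mathcal{IM}_\eps$, then apply the Continuity Lemma \ref{minimi interni con variazione di nu} together with Remark \ref{M_{h,nu}-M_h} and Theorem 4.12 of \cite{SoTe} to rule out a collision in the limit. The only cosmetic difference is that the paper phrases the uniform claim quantitatively (a lower bound $\beta_1(\eps)>0$ on the distance of all minimizers from the centres, equation \eqref{eq14}) while you phrase it qualitatively and recover the contradiction from uniform convergence to the collision-free limit; the two formulations are equivalent for the purpose of the proposition.
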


\begin{proof}
Let $(P_{j_1},\ldots,P_{j_n}) \in( \mathcal{P} \setminus \mathcal{P}_1)^n$ and $\nu' \in (-\bar{\nu}',\bar{\nu}')$. The key observation is the following: when $\g^{((P_{j_1},\ldots,P_{j_n}),\eps,\nu')}$ stays inside $B_R(0)$, it coincides with a re-parameterization of an inner minimizer $u_{P_j}(\cdot\,;p_1,p_2;\eps,\nu')$, for some $p_1,p_2$ and $P_j$. Therefore the thesis follows if we show that there exist $\bar{\nu}_1'=\bar{\nu}_1'(\eps),\beta_1=\beta_1(\eps)>0$ such that 
\beq\label{eq14}
 \min_{k \in \{1,\ldots,N\}} \left( \min_{t \in [0,1]} |u_{P_j}(t;p_1,p_2;\eps,\nu')-c_k|\right) \geq \beta_1
\eeq
for every $(p_1,p_2,P_j,\nu') \in \left(\pa B_R(0)\right)^2 \times \left(\mathcal{P} \setminus \mathcal{P}_1\right) \times (-\bar{\nu}_1',\bar{\nu}_1')$. \\
Assume by contradiction that this claim is not true. Then there are $(\beta_m) \subset \R^+$, $(\nu'_m) \subset (-\bar{\nu}',\bar{\nu}')$, $\left((p_1^m,p_2^m)\right) \subset \left(\pa B_R(0)\right)^2$, $(P_j^m) \subset  \left(\mathcal{P} \setminus \mathcal{P}_1\right)$ and $(k_m) \subset \{1,\ldots,N\}$ such that $\beta_m \to 0$, $\nu_m'\to 0$ for $m \to \infty$, and 
\[
 \min_{t \in [0,1]} |u_{P_j^m}(t;p_1^m,p_2^m;\eps,\nu'_m)-c_{k_m}|=\beta_m \qquad \forall m.
\]
Since $\{1,\ldots,N\}$ and $\mathcal{P} \setminus \mathcal{P}_1$ are discrete and finite, we can assume $k_m=k$ and $P_j^m=P_j$ for every $m$. Also, since $\pa B_R(0)$ is compact, up to a subsequence $(p_1^m,p_2^m) \to (\wt{p}_1,\wt{p}_2) \in \pa B_R(0)$. We term $u_m=u_{P_j}(\cdot\,;p_1^m,p_2^m;\eps,\nu'_m)$. The set of the minimizers $\mathcal{IM}_\eps$ is bounded in the $H^1$ norm, therefore up to a subsequence $u_m \wc \wt{u} \in K^{\wt{p}_1 \wt{p}_2}_{P_j}([0,1])$ weakly in $H^1$ (and hence uniformly). In particular, the function $\wt{u}$ has at least one collision. The Continuity Lemma \ref{minimi interni con variazione di nu} implies that $\wt{u}$ is a collision minimizer of $M_0$ in $K^{\wt{p}_1 \wt{p}_2}_{P_j}([0,1])$; this is in contradiction with Theorem 4.12 of \cite{SoTe}, since $P_j \notin \mathcal{P}_1$ (recall Remark \ref{M_{h,nu}-M_h}).
\end{proof}

\begin{remark}
The Continuity Lemma permits to restrict the attention on a unique passage inside $B_R(0)$; in particular the argument is independent on $n$, which can be arbitrarily large.
\end{remark}

\paragraph{2) $\a=1$ and $N=3$.} This is the hardest part, since if we look at Theorem 5.3 of \cite{SoTe} we realize that it is not immediate to give conditions on $(P_{j_1},\ldots,P_{j_n})$ to obtain a collision-free periodic solution $\g^{((P_{j_1},\ldots,P_{j_n}),\eps,0)}$ for the fixed energy $N$-centre problem
\[
\begin{cases}
\ddot{y}(t)= \n V_\eps(y(t)) \\
\frac{1}{2}|\dot{y}(t)|^2-V_\eps(y(t))=-1.
\end{cases}
\]
In order to work with a set of symbols such that the corresponding solutions are collision-free, we introduced $\mathcal{G}$ (see section \ref{intro}); for every $n$ and for every $(P_{j_1},\ldots,P_{j_{4n}})\in \mathcal{G}^n$, the weak solution $\g^{((P_{j_1},\ldots,P_{j_{4n}}),\eps,0)}$ of the $N$-centre problem is actually a classical solution, because no composed sequence of elements of $\mathcal{G}$ has the reflection symmetry which characterizes a collision trajectory (see the following Remark \ref{no coll N centri}). For $\eps \in (0,\bar{\eps})$, we aim at showing that, if $|\nu'|$ is sufficiently small, for every $n \in \N$ and $(P_{j_1},\ldots,P_{j_{4n}}) \in \mathcal{G}^n$ the function $\g^{((P_{j_1},\ldots,P_{j_{4n}}),\eps,\nu')}$ is still collision-free. The idea for the proof is exactly the same which we have already used in point 1). Unfortunately, while therein we can simply restrict our attention to the behaviour of any inner minimizer (that is a local argument), here this approach does not work. Indeed, for every $P_j \in \mathcal{P}$ and $p_1 \in \pa B_R(0)$ it is possible that a minimizer of $M_0$ in $K_{P_j}^{p_1 p_1}([0,1])$ has collisions. Therefore we have to use an argument which is local, ``but not too much''.

\begin{remark}\label{no coll N centri}
A possible way to check that there aren't collisions for solutions to the $3$-centre problem associated to sequences of partitions of $\mathcal{G}$ is the following. Let $\gamma^{((P_{k_1},\ldots,P_{k_{4n}}),\eps,0)}$ be the periodic solution of the $N$-centre problem found in Theorem 5.3 of \cite{SoTe}. Writing $(P_{k_1},\ldots,P_{k_{4n}}) \in \mathcal{G}^n$ as an infinite periodic sequence, a group of 5 consecutive partitions is one of the following:
\beq\label{5 simb}
\begin{split}
P_1 P_1 P_2 P_3 P_1 \quad P_1 P_1 P_2 P_3 P_2  \quad P_1 P_2 P_3 P_1 P_1  \quad P_1 P_2 P_3 P_2 P_2  \quad P_2 P_3 P_1 P_1 P_2  \quad P_2 P_3 P_2 P_2 P_3 \\
P_3 P_1 P_1 P_2 P_3  \quad P_3 P_2 P_2 P_3 P_1  \quad P_2 P_2 P_3 P_1 P_1  \quad P_2 P_2 P_3 P_1 P_2  \quad P_2 P_3 P_1 P_1 P_1 \\
P_2 P_3 P_1 P_2 P_2   \quad P_3 P_1 P_1 P_1 P_2  \quad P_3 P_1 P_2 P_2 P_3  \quad P_1 P_1 P_1 P_2 P_3  \quad P_1 P_2 P_2 P_3 P_1.
\end{split}
\eeq
Assume that the considered solution has a collision with the centre $c_1$. According to the periodicity of $\gamma^{((P_{k_1},\ldots,P_{k_{4n}}),\eps,0)}$ and recalling that any collision solution is a collision-ejection solution, this means that there exists a group of five consecutive partitions $(P_{k_1},\ldots,P_{k_5})$ in \eqref{5 simb} such that
\begin{itemize}
\item $P_{k_3}=P_1$;
\item $P_{k_1}=P_{k_5}$ and $P_{k_2}=P_{k_4}$.
\end{itemize}
It is immediate to check that none of the groups in \eqref{5 simb} satisfies both the requirements. Analogously, it is possible to check that $\gamma^{((P_{k_1},\ldots,P_{k_{4n}}),\eps,0)}$ does not collide against $c_2$ or $c_3$.
\end{remark}

\noindent We collect the possible groups of 5 consecutive partitions in \eqref{5 simb} in a set $\wt{\mathcal{P}}^5 \subset \mathcal{P}$. Let us fix $\eps \in (0,\bar{\eps})$, $p_1,p_{10} \in \pa B_R(0)$, $\left(P_{k_1},\ldots,P_{k_5}\right) \in \wt{\mathcal{P}}^5$,  $\nu' \in (-\bar{\nu},\bar{\nu})$. Let 
\[
B:=\{(p_2, \ldots, p_9) \in (\pa B_R(0))^8: |p_{2j}-p_{2j+1}| \leq \d, \  j=1,\ldots,4\}.
\]
As we associated to each point of $D$ a periodic function, to each point of $B$ we can associate a (non-periodic) function in the following way. For each $j=1,\ldots,4$ we can connect $p_{2j}$ and $p_{2j+1}$ with an outer solution $y_{2j}=y_{\text{ext}}(\cdot\,;p_{2j},p_{2j+1};\eps,\nu')$ of \eqref{pb esterno}; for each $j=0,\ldots,4$ we can connect $p_{2j+1}$ and $p_{2j+2}$ with an inner solution $y_{2j+1}=y_{P_{k_{j+1}}}(\cdot\,;p_{2j+1},p_{2j+2};\eps,\nu')$ of \eqref{pb interno}. We set $t_1:=0$, $t_k:= \sum_{j=1}^{k-1} T_j$ for $k=2,\ldots,10$, where $[0,T_j]$ is the time interval of $y_j$. We define 
\beq\label{eq15}
\sigma_{(p_2,\ldots,p_9)}^{((p_1,p_{10}),(P_{k_1},\ldots, P_{k_5}),\eps,\nu')}(t):=\begin{cases}
y_1(t) & t \in [t_1,t_2]\\
y_2(t-t_2) & t \in [t_2,t_3]\\
\vdots\\
y_9(t-t_9) & t \in [t_9,t_{10}].
\end{cases}
\eeq
By the definition $\sigma_{(p_2,\ldots,p_9)}^{((p_1,p_{10}),(P_{k_1},\ldots, P_{k_5}),\eps,\nu')}(t_k)=p_k$. We introduce a function \\$\mathfrak{F}_{((p_1,p_{10}),(P_{k_1},\ldots,P_{k_5}),\eps,\nu')}:B \to \R$ as
\[
\mathfrak{F}_{((p_1,p_{10}),(P_{k_1},\ldots,P_{k_5}),\eps,\nu')}(p_2,\ldots,p_9):=L_{\nu'}\left([0, t_{10}];\sigma_{(p_2,\ldots,p_9)}^{((p_1,p_{10}),(P_{k_1},\ldots, P_{k_5}),\eps,\nu')}\right).
\]
Note the analogy between the definition of $\mathfrak{F}=\mathfrak{F}_{((p_1,p_{10}),(P_{k_1},\ldots, P_{k_5}),\eps,\nu')}$ and $F=F_{((P_{k_1},\ldots,P_{k_n}),\eps,\nu')}$. The function $\mathfrak{F}$ is continuous on the compact set $B$ (apply the same proof already used for the continuity of $F$), therefore it has a minimum. We denote by $\sigma^{((p_1,p_{10}),(P_{k_1},\ldots, P_{k_5}),\eps,\nu')}$ the glued function associated to an arbitrarily chosen minimizer. \\
Let $(P_{k_1},\ldots,P_{k_{4n}}) \in \mathcal{G}^n$. The following Lemma relates the minimality properties of $F$ and of $\mathfrak{F}$; in what follows the indexes have to be considered by periodicity: for instance writing $2j+5$ we mean $2j+5 \mod 8n$.

\begin{lemma}\label{minimi F e minimi F}
Let $(\bar{p}_0,\ldots,\bar{p}_{8n})\in D$ be a minimizer of $F_{((P_{k_1},\ldots,P_{k_{4n}}),\eps,\nu')}$. Then, for every $j=0,\ldots,4n-1$, the point $(\bar{p}_{2j+2},\ldots,\bar{p}_{2j+9}) \in B$ is a minimizer of \\
$\mathfrak{F}_{((\bar{p}_{2j+1},\bar{p}_{2j+10}),(P_{k_{j+1}},\ldots,P_{k_{j+5}}),\eps,\nu')}$. In particular
\[
\gamma_{((P_{k_{j+1}},\ldots,P_{k_{j+5}}),\eps,\nu')}|_{[\mathfrak{T}_{2j},\mathfrak{T}_{2j+10}]} \equiv \sigma^{((\bar{p}_{2j+1},\bar{p}_{2j+10}),(P_{k_{j+1}},\ldots, P_{k_{j+5}}),\eps,\nu')}.
\]
\end{lemma}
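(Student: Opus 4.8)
The plan is to prove the statement by a localization argument, in the same spirit as the single‑endpoint Lemma \ref{localizzazione minimi F} (whose proof is that of Lemma 1 of \cite{corr}). The point is that $F$ is the sum of the $L_{\nu'}$‑lengths of the $8n$ arcs out of which $\gamma^{((P_{k_1},\dots,P_{k_{4n}}),\eps,\nu')}$ is glued, while $\mathfrak{F}_{((\bar p_{2j+1},\bar p_{2j+10}),(P_{k_{j+1}},\dots,P_{k_{j+5}}),\eps,\nu')}$ is the sum of the lengths of the nine consecutive arcs $y_{2j+1},\dots,y_{2j+9}$ of $\gamma$ that fill the window in question. Freezing every crossing point lying outside this window should reduce $F$ to $\mathfrak{F}$ up to an additive constant, and then the global minimality of $(\bar p_0,\dots,\bar p_{8n})$ will force the minimality of its window block for $\mathfrak{F}$.

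First I would fix $j$, set $p_m:=\bar p_{2j+m}$ for $m=1,\dots,10$ (all indices mod $8n$), and verify that the constraint sets are compatible. The four proximity conditions defining $B$ are attached to the interior outer arcs $y_{2j+2},y_{2j+4},y_{2j+6},y_{2j+8}$ and read $|\bar p_{2j+2}-\bar p_{2j+3}|\le\d,\dots,|\bar p_{2j+8}-\bar p_{2j+9}|\le\d$; each involves only interior window coordinates, and all of them are among the defining inequalities of $D$. Denoting by $\iota(p_2,\dots,p_9)$ the tuple obtained from $(\bar p_0,\dots,\bar p_{8n})$ by replacing its interior window coordinates with $(p_2,\dots,p_9)$, I would check that $\iota$ maps $B$ into $D$: the inner arcs $y_{2j+1}$ and $y_{2j+9}$ carry no proximity constraint, so no inequality of $D$ couples an interior coordinate to a frozen one, and the remaining conditions of $D$ (including $p_{8n}=p_0$) involve only frozen coordinates and are preserved.

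The decomposition is the core of the argument. Since each outer arc is uniquely determined by its endpoints (Proposition \ref{teorema 0.1}) and each inner minimizer $y_{P}(\cdot;q,q')$ depends only on $q,q'$, the arcs of $\gamma$ outside the window are unchanged under $\iota$, whereas those inside are exactly the nine arcs defining $\sigma^{((\bar p_{2j+1},\bar p_{2j+10}),(P_{k_{j+1}},\dots,P_{k_{j+5}}),\eps,\nu')}$. Writing $C$ for the total $L_{\nu'}$‑length of the frozen arcs, a quantity independent of $(p_2,\dots,p_9)$, I would obtain, for every $(p_2,\dots,p_9)\in B$,
\[
F\big(\iota(p_2,\dots,p_9)\big)=\mathfrak{F}_{((\bar p_{2j+1},\bar p_{2j+10}),(P_{k_{j+1}},\dots,P_{k_{j+5}}),\eps,\nu')}(p_2,\dots,p_9)+C .
\]
Since $(\bar p_0,\dots,\bar p_{8n})$ minimizes $F$ over $D$, $\iota(B)\subset D$, and $\iota(\bar p_{2j+2},\dots,\bar p_{2j+9})=(\bar p_0,\dots,\bar p_{8n})$, subtracting $C$ yields $\mathfrak{F}(\bar p_{2j+2},\dots,\bar p_{2j+9})\le\mathfrak{F}(p_2,\dots,p_9)$ for all $(p_2,\dots,p_9)\in B$, which is the asserted minimality. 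The final identity then follows by construction: up to the translation carrying $\mathfrak{T}_{2j}$ to $0$, the restriction of $\gamma$ to the window is glued from the very same outer solutions and inner minimizers, connecting the same consecutive crossing points, out of which $\sigma$ is built; the inner pieces may be taken to be those already chosen for $\gamma$, which is legitimate because both are arbitrarily chosen minimizers in the identical classes $K_{P_{k_{j+i}}}$.

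The step I expect to be the main obstacle is the delicate bookkeeping of the cyclic indices together with the verification that no constraint of $D$ couples an interior window coordinate to a frozen one, for this is exactly what makes the splitting $F=\mathfrak{F}+C$ exact and reduces the problem to an independent minimization over $B$. The only other point requiring care is the non‑uniqueness of inner minimizers, which is harmless since it only concerns the choice among equal‑length competitors and is settled by fixing consistent choices, as in the construction of $\gamma$.
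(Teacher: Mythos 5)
Your argument is, in substance, the paper's own proof: the paper disposes of this lemma with the single sentence that it ``is an immediate consequence of the additivity of the functional $L_{\nu'}$'', and your freezing map $\iota$ together with the splitting $F\circ\iota=\mathfrak{F}+C$ is exactly what that sentence abbreviates. For $n\ge 2$ your verification is correct and complete: the window occupies $9$ of the $8n\ge 16$ arcs, the four constraints defining $B$ coincide with those constraints of $D$ whose index pairs $(2i,2i+1)$ lie inside the window, every other constraint of $D$ involves only frozen coordinates, and the global minimality of $(\bar p_0,\dots,\bar p_{8n})$ therefore localizes, giving both the minimality of the window block for $\mathfrak{F}$ and (with consistent choices of the possibly non-unique inner minimizers) the identity between the restriction of $\gamma$ and $\sigma$.

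There is, however, a genuine gap at $n=1$, a case which lies within the scope of the lemma (indices are read mod $8n$) and which is needed later, since Proposition \ref{no coll 3+1 corpi} and Theorem \ref{esistenza di sol periodiche 3} are asserted for every $n\in\N$, hence for sequences in $\mathcal{G}^1$. When $n=1$ the cycle consists of $8$ arcs while the window consists of $9$, so the window overlaps itself: its interior coordinates $\bar p_{2j+2},\dots,\bar p_{2j+9}$ exhaust all the coordinates of $D$, the inner arc $y_{2j+1}$ is traversed twice, and the ``frozen ends'' $\bar p_{2j+1}$ and $\bar p_{2j+10}$ are, mod $8$, the same coordinates as the interior variables $p_9$ and $p_2$. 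Hence $\iota$ cannot be defined (one cannot freeze $\bar p_{2j+1}$ while varying $p_9$, since they are the same coordinate), competitors for $\mathfrak{F}$ no longer close up into configurations of $D$, and the identity $F\circ\iota=\mathfrak{F}+C$ loses its meaning; moreover, attempting to recover the claim from $F$-minimality by inserting auxiliary cyclic competitors leads to a quadrilateral-type inequality for minimal inner lengths which is not available. To be fair, the paper's one-line proof is silent on exactly the same point, so your proposal faithfully reproduces the paper's argument, blind spot included; but since your write-up asserts the disjointness of window and frozen coordinates in general, as written it establishes the lemma only for $n\ge 2$, and the wrap-around case $n=1$ requires a separate argument.
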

\begin{proof}
It is an immediate consequence of the additivity of the functional $L_{\nu'}$.
\end{proof}

\noindent As a consequence, the following statement can be proved applying the same argument already explained in Remark \ref{no coll N centri}.

\begin{lemma}\label{no coll 3-centri}
Let $\eps \in (0,\bar{\eps})$. For every $((p_1,p_{10}),(P_{k_1},\ldots,P_{k_5})) \in \left(\pa B_R(0)\right)^2 \times \wt{\mathcal{P}}^5$ the function $\sigma^{((p_1,p_{10}),(P_{k_1},\ldots, P_{k_5}),\eps,0)}$ is collision-free during its third passage inside the ball $B_R(0)$. 
\end{lemma}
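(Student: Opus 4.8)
The plan is to fix $\nu'=0$ and reproduce, for the five-arc block $\sigma:=\sigma^{((p_1,p_{10}),(P_{k_1},\ldots,P_{k_5}),\eps,0)}$, the palindrome obstruction already exploited in Remark \ref{no coll N centri}. Since $\nu'=0$ we have $\Phi_{0,\eps}=V_\eps$ and, by Remark \ref{M_{h,nu}-M_h}, the functionals $M_{\nu'}$ and $L_{\nu'}$ reduce to the classical Maupertuis and Jacobi functionals of the $N$-centre problem; thus $\sigma$ is a weak solution of the fixed-energy $N$-centre problem $\ddot y=\nabla V_\eps(y)$, $\tfrac12|\dot y|^2-V_\eps(y)=-1$. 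First I would observe that, being $\sigma$ associated to a minimizer of $\mathfrak{F}$, the very same variational argument of Lemmas \ref{calcolo delle derivate parziali} and \ref{minimi interni} (now with $\nu'=0$) places the interior junction points $p_2,\ldots,p_9$ in the interior of $B$ and yields $\mathcal{C}^1$ matching at the junction times $t_2,\ldots,t_9$; hence, away from its collision set, $\sigma$ is a classical solution of the $N$-centre equation on all of $(0,t_{10})$.

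Next I would argue by contradiction, assuming that $\sigma$ collides during its third passage, i.e. that the middle inner arc $y_5=y_{P_{k_3}}(\cdot\,;p_5,p_6;\eps,0)$ has a collision. Since $y_5$ is a local minimizer of $M_0$ in $K_{P_{k_3}}^{p_5 p_6}([0,1])$, Theorem 4.12 of \cite{SoTe} (recall Remark \ref{M_{h,nu}-M_h}) forces $p_5=p_6$ and identifies the collision as an ejection--collision with a single collision at the centre $c_i$ isolated by $P_{k_3}$, so that $P_{k_3}=P_i$. As recalled in Remark \ref{no coll N centri}, such a collision--ejection orbit is collinear at the collision instant $t_0$ and the regularised velocity reverses there; because for $\nu'=0$ the field $\ddot y=\nabla V_\eps(y)$ is time-reversible and the regularised flow is unique, this local reversal propagates --- through the smooth interior junctions of the first step --- to the global time-symmetry $\sigma(t_0+s)=\sigma(t_0-s)$.

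I would then read off the combinatorial consequences. As the collision sits in the \emph{middle} arc (the third of five), reflection about $t_0$ maps the arc $y_{5+j}$ onto the orientation-reversed arc $y_{5-j}$ for $j=1,2$: it sends $y_5$ to itself (whence $t_0=(t_5+t_6)/2$, consistently with $p_5=p_6$), $y_7$ onto the reverse of $y_3$, and $y_9$ onto the reverse of $y_1$. Since the unordered partition induced by an inner arc depends only on its image, and hence is invariant under orientation reversal, the partition string is palindromic about $P_{k_3}$, namely $P_{k_2}=P_{k_4}$ and $P_{k_1}=P_{k_5}$. Thus $(P_{k_1},\ldots,P_{k_5})\in\wt{\mathcal{P}}^5$ would satisfy both $P_{k_3}=P_i$ and the symmetry $P_{k_1}=P_{k_5}$, $P_{k_2}=P_{k_4}$ --- precisely the pair of requirements that the inspection of \eqref{5 simb} in Remark \ref{no coll N centri} rules out for every group. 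This contradiction shows that the third passage of $\sigma$ is collision-free.

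The delicate step is the passage from the local reversing structure of the collision to the \emph{global} symmetry $\sigma(t_0+s)=\sigma(t_0-s)$ with its exact pairing of arcs: it rests on the $\mathcal{C}^1$ regularity of $\sigma$ at the crossings of $\partial B_R(0)$ (so that the symmetry survives each junction) together with the collision--ejection property borrowed from \cite{SoTe} (so that the orbit genuinely ejects along the incoming line). Once this symmetry is secured, the reduction to the $N$-centre problem, the orientation-invariance of the partition, and the finite check against \eqref{5 simb} are all routine.
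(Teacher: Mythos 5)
Your proposal is correct and follows essentially the same route as the paper: the paper proves this lemma by invoking verbatim the argument of Remark \ref{no coll N centri} (collision $\Rightarrow$ ejection--collision by the variational characterization from Theorem 4.12 of \cite{SoTe} $\Rightarrow$ reflection symmetry about the collision time $\Rightarrow$ palindromic constraints $P_{k_3}=P_i$, $P_{k_1}=P_{k_5}$, $P_{k_2}=P_{k_4}$ $\Rightarrow$ contradiction with the finite list \eqref{5 simb}). Your write-up merely makes explicit the steps the paper leaves implicit, namely the $\mathcal{C}^1$ matching at the interior junctions of a minimizer of $\mathfrak{F}$ and the propagation of the local reversal to the global symmetry via time-reversibility and uniqueness for the Cauchy problem, which is exactly what the paper's citation of Remark \ref{no coll N centri} presupposes.
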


\noindent We denote with  $T(\s)$ or $T_{(p_2,\ldots,p_9)}^{((p_1,p_{10}),(P_{k_1},\ldots, P_{k_5}),\eps,\nu')}$ the maximum of the time interval of \\
$\s=\sigma_{(p_2,\ldots,p_9)}^{((p_1,p_{10}),(P_{k_1},\ldots, P_{k_5}),\eps,\nu')}$. We collect the boundedness properties of outer and inner solutions, see Lemma \ref{bound per i tempi esterni} and Corollary \ref{bound tempi interni}.

\begin{lemma}\label{limitatezza funzioni incollate}
Let $\eps \in (0,\bar{\eps})$. There are $C_1, C_2, C_3 >0$ such that
\begin{gather*}
C_1 \leq T_{(p_2,\ldots,p_9)}^{((p_1,p_{10}),(P_{k_1},\ldots, P_{k_5}),\eps,\nu')} \leq C_2 \\
\|\sigma_{(p_2,\ldots,p_9)}^{((p_1,p_{10}),(P_{k_1},\ldots, P_{k_5}),\eps,\nu')}\|_{H^1([0,T(\sigma)])} \leq C_3 
\end{gather*}
for every $((p_2,\ldots,p_9),(p_1,p_{10}),(P_{k_1},\ldots,P_{k_5}),\nu') \in B \times \left(\pa B_R(0)\right)^2 \times \wt{\mathcal{P}}^5 \times (-\bar{\nu}',\bar{\nu}')$.
\end{lemma}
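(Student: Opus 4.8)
The plan is to exploit the fact that, by its definition \eqref{eq15}, the glued function $\sigma:=\sigma_{(p_2,\ldots,p_9)}^{((p_1,p_{10}),(P_{k_1},\ldots,P_{k_5}),\eps,\nu')}$ is a concatenation of exactly nine arcs, namely the four outer solutions $y_2,y_4,y_6,y_8$ and the five inner solutions $y_1,y_3,y_5,y_7,y_9$, each parametrized on one of the consecutive subintervals $[t_j,t_{j+1}]$ whose union is $[0,t_{10}]=[0,T(\sigma)]$. Since the number of arcs is fixed and finite, the whole statement reduces to collecting the uniform bounds already available for each single arc and summing them.

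First I would note that, as $\bar{\eps}$ and $\bar{\nu}'$ were chosen in Section \ref{riduzione finito dim} no larger than the thresholds entering Lemma \ref{bound per i tempi esterni} and Corollary \ref{bound tempi interni}, the pair $(\eps,\nu')$ lies, for the fixed $\eps\in(0,\bar{\eps})$ and every $|\nu'|<\bar{\nu}'$, in the range where both results apply. Hence there are positive constants $C_1^{\text{ext}},C_2^{\text{ext}},C_3^{\text{ext}}$ and $C_1^{\text{int}},C_2^{\text{int}},C_3^{\text{int}}$, all depending only on the fixed $\eps$, such that every outer arc satisfies $T_{2j}\in[C_1^{\text{ext}},C_2^{\text{ext}}]$ with $\|y_{2j}\|_{H^1}\le C_3^{\text{ext}}$ and every inner arc satisfies $T_{2j+1}\in[C_1^{\text{int}},C_2^{\text{int}}]$ with $\|y_{2j+1}\|_{H^1}\le C_3^{\text{int}}$. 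The essential point, which is exactly what both cited results deliver, is that these bounds are uniform with respect to the endpoints on $\pa B_R(0)$, the partition (equivalently, the winding) label, and $\nu'$; this supplies precisely the uniformity demanded over $B\times(\pa B_R(0))^2\times\wt{\mathcal{P}}^5\times(-\bar{\nu}',\bar{\nu}')$.

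To conclude, for the time interval I would use $T(\sigma)=t_{10}=\sum_{j=1}^9 T_j$: summing the nine upper bounds yields $T(\sigma)\le 5C_2^{\text{int}}+4C_2^{\text{ext}}=:C_2$, while retaining a single inner arc gives $T(\sigma)\ge C_1^{\text{int}}=:C_1>0$. For the norm, the arcs live on disjoint consecutive subintervals, so the squared $H^1$ norm is additive and
\[
\|\sigma\|_{H^1([0,T(\sigma)])}^2=\sum_{j=1}^9\|y_j\|_{H^1([0,T_j])}^2\le 5\left(C_3^{\text{int}}\right)^2+4\left(C_3^{\text{ext}}\right)^2=:C_3^2,
\]
which gives the claimed $C_3$. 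I do not expect any genuine obstacle here: the argument is pure bookkeeping over a fixed finite number of arcs, and the only point deserving attention is the compatibility of the parameter ranges, i.e.\ checking that $\bar{\nu}'$ is small enough for all the constituent bounds to hold simultaneously, which is guaranteed by the choice $\bar{\nu}'\le\wt{\nu}'=\min\{\nu_1',\nu_4'\}$ made in Section \ref{riduzione finito dim}.
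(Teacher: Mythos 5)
Your proposal is correct and takes essentially the same approach as the paper: the paper states Lemma \ref{limitatezza funzioni incollate} without a written proof, presenting it as a direct collection of the uniform bounds of Lemma \ref{bound per i tempi esterni} and Corollary \ref{bound tempi interni}, which is precisely the finite bookkeeping over the nine constituent arcs that you carry out. Your additional checks (additivity of the squared $H^1$ norm over the consecutive subintervals and the compatibility $\bar{\nu}'\le\wt{\nu}'\le\min\{\nu_1',\nu_4'\}$ with $\nu_4'\le\nu_3'$, $\bar\eps\le\min\{\eps_2,\eps_5\}$ with $\eps_5\le\eps_4$) are exactly the details the paper leaves implicit.
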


\noindent It is preferable to deal with functions defined in the same time interval. Therefore, for every $\sigma=\sigma_{(p_2,\ldots,p_9)}^{((p_1,p_{10}),(P_{k_1},\ldots, P_{k_5}),\eps,\nu')}$ we introduce the re-parameterization $v(t):=v_{(p_2,\ldots,p_9)}^{((p_1,p_{10}),(P_{k_1},\ldots, P_{k_5}),\eps,\nu')}(t)=\sigma_{(p_2,\ldots,p_9)}^{((p_1,p_{10}),(P_{k_1},\ldots, P_{k_5}),\eps,\nu')}(T(\sigma)t)$, for $t \in [0,1]$.

\begin{definition}
We collect the "glued function" $v$ in 
\begin{multline*}
\mathcal{GF}_\eps:= \left\{ v=v_{(p_2,\ldots,p_9)}^{((p_1,p_{10}),(P_{k_1},\ldots, P_{k_5}),\eps,\nu')} \text{for some } (p_2,\ldots,p_9) \in B, \right. \\
\left. (p_1,p_{10}) \in \left(\pa B_R(0)\right)^2, \ (P_{k_1},\ldots, P_{k_5}) \in \wt{\mathcal{P}}^5,\ |\nu'|<\bar{\nu}' \right\}.
\end{multline*}
For each $v \in \mathcal{GF}_\eps$ we term
\[
\o(v)^2:= \frac{\int_0^1 \Phi_{\nu',\eps}(v)-1}{\frac{1}{2} \int_0^1 |\dot{v}|^2}.
\]
\end{definition}
\noindent Note that, if $v(t)=\s(T(\s)t)$, then $\o(v)=1/T(\s)$. Note also that for every $\eps \in (0,\bar{\eps})$ there exists $C>0$ such that $\|v \|_{H^1} \leq C$ for every $v \in \mathcal{GF}_{\eps}$. It follows from Lemma \ref{limitatezza funzioni incollate}, taking into account the boundedness properties for the time intervals of inner and outer solutions. 
In order to work with sequences of functions in $\mathcal{GF}_\eps$, it is convenient to introduce some notation. Fixed $(P_{k_1},\ldots,P_{k_5}) \in \mathcal{P}^5$ and $\eps \in (0,\bar{\eps})$, assume that we have $((p_2^m,\ldots,p_9^m))_m \subset B$, $((p_1^m,p_{10}^m))_m \subset \left(\pa B_R(0)\right)^2$, $(\nu'_m) \subset (-\bar{\nu}',\bar{\nu}')$ such that
\[
(p_2^m,\ldots,p_9^m) \to (\wh{p}_2,\ldots,\wh{p}_9) \qquad (p_1^m,p_{10}^m) \to (\wh{p}_1,\wh{p}_{10}) \qquad \nu_m' \to 0.
\]
We will use the following notations
\begin{gather}
v_m:=v_{(p_2^m,\ldots,p_9^m)}^{((p_1^m,p_{10}^m), (P_{k_1},\ldots,P_{k_5}),\eps,\nu_m')} \qquad \o_m:=\o(v_m) \label{eq27} \\
\sigma_m:= \s_{(p_2^m,\ldots,p_9^m)}^{((p_1^m,p_{10}^m), (P_{k_1},\ldots,P_{k_5}),\eps,\nu_m')} \qquad T_m:=T(\s_m);
\end{gather}
Subscripts will be replaced by the accent $\wh{\cdot}$ for the function corresponding to the limit points. Recall that $\s_m$ has been obtained by the juxtaposition of 
\[
y_{P_{k_{j+1}}}(\cdot\,;p_{2j+1}^m,p_{2j+2}^m;\eps,\nu_m')=: y_{2j+1}^m \quad \text{and} \quad  y_{\text{ext}}(\cdot\,;p_{2j}^m,p_{2j+1}^m;\eps,\nu_m')=:y_{2j}^m.
\]
Each $y_j^m$ is defined over a time interval $[0,T_j^m]$. There are $0=t_1^m<t_2^m<\ldots t_9^m <t_{10}^m=T(\s_m)$ such that $\s_m(t_k^m)=p_k^m$ for every $k=1,\ldots,10$. We have $T_j^m=t_{j+1}^m-t_j^m$. For $j=0,\ldots,4$, recall that
\[
y_{P_{k_{j+1}}}(\cdot\,;p_{2j+1}^m,p_{2j+2}^m;\eps,\nu_m')=u_{P_{k_{j+1}}}\left(\frac{\cdot}{T_{2j+1}^m}\cdot\,;p_{2j+1}^m,p_{2j+2}^m;\eps,\nu_m'\right)=:u_{2j+1}^m.
\]

\begin{lemma}\label{GF_eps è debolmente chiuso}
Let $\eps \in (0,\bar{\eps})$, $(P_{k_1},\ldots,P_{k_5}) \in \mathcal{P}^5$. Assume that we have sequences $((p_2^m,\ldots,p_9^m))_m \subset B$, $((p_1^m,p_{10}^m))_m \subset \left(\pa B_R(0)\right)^2$, $(\nu'_m) \subset (-\bar{\nu}',\bar{\nu}')$ such that
\[
(p_2^m,\ldots,p_9^m) \to (\wh{p}_2,\ldots,\wh{p}_9) \qquad (p_1^m,p_{10}^m) \to (\wh{p}_1,\wh{p}_{10}) \qquad \nu_m' \to 0.
\]
Using the notations previously introduced, assume that exists $v \in H^1([0,1])$ such that $v_m \wc v$ weakly in $H^1$. Then 
\[
v=v_{(\wh{p}_2,\ldots,\wh{p}_9)}^{((\wh{p}_1,\wh{p}_{10}),(P_{k_1},\ldots, P_{k_5}),\eps,0)}.
\]
\end{lemma}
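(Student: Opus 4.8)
The plan is to prove weak closedness piece by piece: the glued function $v_m$ is the juxtaposition of nine arcs (five inner, four outer), re-scaled to $[0,1]$, and I will identify the weak $H^1$ limit of each arc separately and then reassemble. First I would pass to a subsequence along which all the time lengths converge. By Lemma \ref{limitatezza funzioni incollate} the lengths $T_j^m$ of the nine pieces are bounded above and below by positive constants, uniformly in all the data; hence, up to a subsequence, $T_j^m \to \hat{T}_j > 0$ for every $j$, so that $T_m = \sum_j T_j^m \to \hat{T} := \sum_j \hat{T}_j$ and the junction instants satisfy $t_k^m \to \hat{t}_k$. After this reduction each of the nine sub-intervals $[t_k^m/T_m, t_{k+1}^m/T_m] \subset [0,1]$, on which $v_m$ coincides with an affine re-parametrization of the $k$-th arc, converges to a non-degenerate limiting sub-interval.

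Next I would treat the two kinds of arcs. For an outer arc $y_{2j}^m = y_{\text{ext}}(\cdot\,;p_{2j}^m,p_{2j+1}^m;\eps,\nu_m')$, Proposition \ref{teorema 0.1} and the implicit-function construction following it give smooth dependence of the solution and of its length $T_{\text{ext}}$ on the endpoints and on the parameters $\eps,\nu'$; since $(p_{2j}^m,p_{2j+1}^m) \to (\hat{p}_{2j},\hat{p}_{2j+1})$ and $\nu_m' \to 0$, the arcs $y_{2j}^m$ converge in $\mathcal{C}^1$ to $y_{\text{ext}}(\cdot\,;\hat{p}_{2j},\hat{p}_{2j+1};\eps,0)$ and $T_{2j}^m \to T_{\text{ext}}(\hat{p}_{2j},\hat{p}_{2j+1};\eps,0)$. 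For an inner arc I recall that $y_{2j+1}^m$ is the physical-time re-parametrization of the $M_{\nu_m'}$-minimizer $u_{2j+1}^m = u_{P_{k_{j+1}}}(\cdot\,;p_{2j+1}^m,p_{2j+2}^m;\eps,\nu_m')$; since $\mathcal{IM}_\eps$ is bounded in $H^1$ (Proposition \ref{minimi limitati}), up to a further subsequence $u_{2j+1}^m \wc \wt{u}_{2j+1}$ weakly in $H^1$, hence uniformly. This is exactly where the Continuity Lemma \ref{minimi interni con variazione di nu} enters: because the endpoints converge and $\nu_m' \to 0$, the limit $\wt{u}_{2j+1}$ is a minimizer of $M_0$ in $K_{P_{k_{j+1}}}^{\hat{p}_{2j+1}\hat{p}_{2j+2}}([0,1])$, i.e.\ precisely an inner arc of a $\nu'=0$ glued function.

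Finally I would reassemble. On each limiting sub-interval $v_m$ equals the arc $y_k^m$ composed with the affine map $t \mapsto T_m t - t_k^m$ (followed, for inner arcs, by the internal rescaling by $1/T_{2j+1}^m$ that turns $y_{2j+1}^m$ back into $u_{2j+1}^m$, so that $v_m(t)=u_{2j+1}^m\!\big((T_m t - t_{2j+1}^m)/T_{2j+1}^m\big)$). Since the coefficients of these affine maps converge and the underlying arcs converge ($\mathcal{C}^1$ for outer, weakly in $H^1$ for inner), the restriction of $v_m$ to each sub-interval converges weakly in $H^1$ to the corresponding arc of the glued function built from the limit data at $\nu'=0$. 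Patching the finitely many pieces, and using that the traces match at the junctions $\hat{t}_k$, I obtain $v_m \wc \hat{v}$ weakly in $H^1$, where $\hat{v} = v_{(\hat{p}_2,\ldots,\hat{p}_9)}^{((\hat{p}_1,\hat{p}_{10}),(P_{k_1},\ldots,P_{k_5}),\eps,0)}$ for the selected inner minimizers. As weak limits in $H^1$ are unique and $v_m \wc v$ by hypothesis, $v = \hat{v}$, which is the claim.

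The hard part will be the bookkeeping of the two superimposed re-parametrizations: the outer fixed-Jacobi-constant time scale and the inner minimization parameter on $[0,1]$ live on different clocks, and one must verify that weak $H^1$ convergence of $u_{2j+1}^m$ survives composition with affine time-changes whose coefficients converge. This is the only genuinely analytic point: uniform convergence of $u_{2j+1}^m$ controls the function values, while a change of variables together with the weak $L^2$ convergence of $\dot{u}_{2j+1}^m$ controls the derivatives. Everything else reduces to the continuous and smooth dependence already established for outer solutions plus the Continuity Lemma for the inner ones.
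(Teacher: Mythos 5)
Your decomposition mirrors the paper's own (one-sentence) proof: outer arcs converge by the smooth dependence of Proposition \ref{teorema 0.1}, inner arcs converge by the Continuity Lemma \ref{minimi interni con variazione di nu}, and the pieces are reassembled. However, there is a genuine gap exactly at the point you call ``the only genuinely analytic point''. For the limit of $v_m$ to \emph{equal} $v_{(\wh{p}_2,\ldots,\wh{p}_9)}^{((\wh{p}_1,\wh{p}_{10}),(P_{k_1},\ldots,P_{k_5}),\eps,0)}$ as a function on $[0,1]$ --- and not merely parametrize the same curve --- the nine sub-intervals of $[0,1]$ must converge to the sub-intervals prescribed by the limiting glued function; that is, you need $T_j^m \to \wt{T}_j$, the physical time of the \emph{limiting} arc, and not just $T_j^m \to \wh{T}_j$ for some $\wh{T}_j$ obtained by compactness. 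For outer arcs this is covered by the $\mathcal{C}^1$ dependence you cite. For inner arcs, however, $T_{2j+1}^m=1/\o_m$ with
\[
\o_m^2=\frac{\int_0^1 \left(\Phi_{\nu_m',\eps}(u_{2j+1}^m)-1\right)}{\frac{1}{2}\int_0^1 |\dot{u}_{2j+1}^m|^2},
\]
and weak $H^1$ convergence only yields $\liminf_m \|\dot{u}_{2j+1}^m\|_{L^2}^2 \geq \|\dot{\wt{u}}_{2j+1}\|_{L^2}^2$. If this inequality were strict, then $\wh{T}_{2j+1}$ would be strictly larger than the physical time of $\wt{u}_{2j+1}$, and your reassembled limit would be a time-dilated juxtaposition, a different element of $H^1([0,1])$ from the glued function at $\nu'=0$. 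So the weak $L^2$ convergence of the derivatives, which is all you invoke, cannot close this step.

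The missing ingredient is \emph{strong} $H^1$ convergence of the inner minimizers, and it comes from the proof (not merely the statement) of the Continuity Lemma: there one shows the level convergence $M_0(u_m)\to M_0(\wt{u})$. Since $M_0(u_m)=\sqrt{2}\,\|\dot{u}_m\|_{L^2}\left(\int_0^1 (V_\eps(u_m)-1)\right)^{1/2}$, since both factors are weakly lower semicontinuous along the sequence, and since both limits are positive (recall $\int_0^1 (V_\eps-1)\geq M_1>0$ by Remark \ref{maggiorazione in B_R} and $\|\dot u\|_{L^2}\geq C_1$ by Proposition \ref{minimi limitati}), the convergence of the product forces each factor to converge; hence $\|\dot{u}_m\|_{L^2}\to\|\dot{\wt{u}}\|_{L^2}$, which together with weak convergence gives strong $L^2$ convergence of $\dot{u}_m$, so $\o_m\to\wt{\o}$ and $T_{2j+1}^m\to\wt{T}_{2j+1}$. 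With this addition your reassembly argument goes through; to be fair, the paper's own terse proof (``the thesis follows easily'') glosses over precisely the same point.
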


\begin{proof}
Under the convergence of the ends and of $\nu_m'$, inner and outer solutions $y_k^m$ are weakly convergent to inner and outer solutions $\wh{y}_k$ (see Propositions \ref{teorema 0.1} and the Continuity Lemma \ref{minimi interni con variazione di nu}); the thesis follows easily.
\end{proof}

\noindent To each $((p_1,p_{10}),(P_{k_1},\ldots,P_{k_5}),\nu') \in \left(\pa B_R(0)\right)^2 \times \wt{\mathcal{P}}^5 \times (-\bar{\nu}_1',\bar{\nu}_1')$ we can associate an element of $\mathcal{GF}_\eps$ in the following way: it is well defined the function $\mathfrak{F}_{((p_1,p_{10}),(P_{k_1},\ldots, P_{k_5}),\eps,\nu')}$, and we know that it has a minimum. To a minimum we associated the function $\sigma^{((p_1,p_{10}),(P_{k_1},\ldots, P_{k_5}),\eps,\nu')}$, which can be re-parametrized obtaining $v^{((p_1,p_{10}),(P_{k_1},\ldots, P_{k_5}),\eps,\nu')}$. We are ready to state the counterpart of the Continuity Lemma \ref{minimi interni con variazione di nu}.

\begin{con lemma}\label{continuity lemma 2}
Let $\eps \in (0,\bar{\eps})$, $(P_{k_1},\ldots,P_{k_5})\in \mathcal{P}^5$, $((p_1^m,p_{10}^m) ) \subset \left(\pa B_R(0)\right)^2$ and $(\nu_m') \subset (-\bar \nu',\bar \nu')$. Let $v_m=v^{((p_1^m,p_{10}^m),(P_{k_1},\ldots, P_{k_5}),\eps,\nu_m')}$ be a function of $\mathcal{GF}_\eps$ associated to a minimizer of the following variational problem:
\[
\min \left\{ \mathfrak{F}_{((p_1^m,p_{10}^m),(P_{k_1},\ldots, P_{k_5}),\eps,\nu_m')}(p_2,\ldots,p_9): (p_2,\ldots,p_9) \in B \right\}.
\]
Assume $(p_1^m,p_{10}^m) \to (\wt{p}_1,\wt{p}_{10})$, $\nu_m' \to 0$, and $v_m \wc \wt{v}$ weakly in $H^1$. Then $\wt{v}$ is the function associated to a minimizer for
\[
\min \left\{\mathfrak{F}_{((\wt{p}_1,\wt{p}_{10}),(P_{k_1},\ldots, P_{k_5}),\eps,0)}(p_2,\ldots,p_9): (p_2,\ldots,p_9) \in B \right\}.
\]
\end{con lemma}

\noindent This continuity result permits to prove the following proposition, which is the last step in the proof of Theorem \ref{esistenza di sol periodiche 3}.

\begin{proposition}\label{no coll 3+1 corpi}
Let $\a=1$ and $N = 3$. Let $\eps \in (0,\bar{\eps})$. There exists $\bar{\nu}_2'(\eps)$ such that for every $\nu'\in(-\bar{\nu}_2'(\eps),\bar{\nu}_2'(\eps))$, $n \in \N$ and $(P_{j_1},\ldots,P_{j_{4n}}) \in \mathcal{G}^n$, the function $\g^{((P_{j_1},\ldots,P_{j_{4n}}),\eps,\nu')}$ is collision-free.
\end{proposition}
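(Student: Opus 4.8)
The plan is to adapt the contradiction argument of Proposition \ref{coll per N >=4}, but since for $N=3$ a single inner minimizer of $M_0$ may carry a collision, the localization must be performed on windows of five consecutive partitions through the functions $\sigma$ and $\mathfrak{F}$ rather than on one isolated inner arc. The crucial object is the \emph{third} inner passage of the glued function $\sigma^{((p_1,p_{10}),(P_{k_1},\ldots,P_{k_5}),\eps,\nu')}$, i.e.\ the arc carrying the central partition $P_{k_3}$: Lemma \ref{no coll 3-centri} guarantees that at $\nu'=0$ this passage is collision-free, and Remark \ref{no coll N centri} explains why (no window in $\wt{\mathcal{P}}^5$ possesses the reflection symmetry of a collision-ejection orbit).

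First I would establish the uniform estimate: there exist $\bar{\nu}_2'(\eps),\beta>0$ such that for every $((p_1,p_{10}),(P_{k_1},\ldots,P_{k_5}),\nu') \in (\pa B_R(0))^2 \times \wt{\mathcal{P}}^5 \times (-\bar{\nu}_2',\bar{\nu}_2')$, the third passage of the associated minimizing glued function $v^{((p_1,p_{10}),(P_{k_1},\ldots,P_{k_5}),\eps,\nu')}$ keeps distance at least $\beta$ from every centre. Suppose this fails. Then, exactly as in Proposition \ref{coll per N >=4}, one extracts sequences $\beta_m \to 0$, $\nu_m' \to 0$, ends $(p_1^m,p_{10}^m) \to (\wt{p}_1,\wt{p}_{10})$ and, using the finiteness of $\wt{\mathcal{P}}^5$ and of $\{1,2,3\}$, a fixed window $(P_{k_1},\ldots,P_{k_5})$ and a fixed colliding centre $c_k$, such that the third passage of $v_m := v^{((p_1^m,p_{10}^m),(P_{k_1},\ldots,P_{k_5}),\eps,\nu_m')}$ attains distance $\beta_m$ from $c_k$. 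Since $\mathcal{GF}_\eps$ is bounded in $H^1$, up to a subsequence $v_m \wc \wt{v}$ weakly in $H^1$ (hence uniformly); by Lemma \ref{GF_eps è debolmente chiuso} the limit is $\wt{v} = v^{((\wt{p}_1,\wt{p}_{10}),(P_{k_1},\ldots,P_{k_5}),\eps,0)}$, and uniform convergence forces $\wt{v}$ to collide with $c_k$ during its third passage. On the other hand, the Continuity Lemma \ref{continuity lemma 2} ensures that $\wt{v}$ is the function associated to a minimizer of $\mathfrak{F}_{((\wt{p}_1,\wt{p}_{10}),(P_{k_1},\ldots,P_{k_5}),\eps,0)}$, which by Lemma \ref{no coll 3-centri} is collision-free during its third passage: a contradiction. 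This proves the uniform bound.

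Finally I would transfer the local estimate to the global periodic solution. Fix $n$ and $(P_{j_1},\ldots,P_{j_{4n}}) \in \mathcal{G}^n$, and let $(\bar{p}_0,\ldots,\bar{p}_{8n})$ be the minimizer of $F$ producing $\gamma := \gamma^{((P_{j_1},\ldots,P_{j_{4n}}),\eps,\nu')}$. By Lemma \ref{minimi F e minimi F}, for each $j$ the restriction of $\gamma$ over $[\mathfrak{T}_{2j},\mathfrak{T}_{2j+10}]$ coincides with $\sigma^{((\bar{p}_{2j+1},\bar{p}_{2j+10}),(P_{k_{j+1}},\ldots,P_{k_{j+5}}),\eps,\nu')}$, whose symbol window lies in $\wt{\mathcal{P}}^5$ by the combinatorics of $\mathcal{G}^n$; its third passage is exactly the $(j+3)$-th inner arc of $\gamma$. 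As $j$ runs (periodically) over $0,\ldots,4n-1$, these third passages exhaust \emph{all} inner arcs of $\gamma$. Hence, for $|\nu'| < \bar{\nu}_2'(\eps)$, the uniform bound forces every inner arc to keep distance $\geq \beta$ from the centres; since the outer arcs remain outside $B_R(0) \supset B_\eps(0)$, the whole solution $\gamma$ is collision-free, and $\bar{\nu}_2'(\eps)$ is independent of $n$ and of the chosen sequence because the whole argument lives on the finite symbol set $\wt{\mathcal{P}}^5$.

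The main obstacle is precisely the reason the window must have length five: a single inner minimizer at $\nu'=0$ can be an ejection-collision orbit, so one cannot argue arc-by-arc as for $N \geq 4$; collision-freeness at $\nu'=0$ is a genuinely non-local property, encoded in Lemma \ref{no coll 3-centri} and Remark \ref{no coll N centri}. The delicate point is to make the perturbative (Gamma-convergence-type) passage to the limit respect this window structure, which is exactly what the Continuity Lemma \ref{continuity lemma 2} together with Lemma \ref{GF_eps è debolmente chiuso} are designed to provide.
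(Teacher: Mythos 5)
Your proposal is correct and follows essentially the same route as the paper's own proof: the reduction of every inner arc of $\g^{((P_{j_1},\ldots,P_{j_{4n}}),\eps,\nu')}$ to the third passage of a five-window glued function via Lemma \ref{minimi F e minimi F}, the uniform lower bound on the distance from the centres proved by contradiction, and the compactness argument combining the boundedness of $\mathcal{GF}_\eps$, Lemma \ref{GF_eps è debolmente chiuso}, the Continuity Lemma \ref{continuity lemma 2} and Lemma \ref{no coll 3-centri} are exactly the ingredients and the logical structure used in the paper. The only difference is expository (you prove the uniform estimate before stating the reduction, while the paper reverses the order), which is immaterial.
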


\begin{proof}
Let  $(P_{j_1},\ldots,P_{j_{4n}}) \in \mathcal{G}^n$ and $\nu' \in (-\bar{\nu}',\bar{\nu}')$. Let us consider the restriction of \\
$\g=\gamma^{((P_{j_1},\ldots,P_{j_{4n}}),\eps,\nu')}$ in a time interval $[s_1,s_2]$, chosen in such a way that $\g|_{[s_1,s_2]}$ describes one passage of $\g$ inside $B_R(0)$. The goal is to show that $\g|_{[s_1,s_2]}$ is collision-free. There are
\begin{itemize}
\item $t_k \in \R$ and $p_k \in \pa B_R(0)$ such that $\g(t_k)=p_k$, for every $k=1,\ldots,10$.
\item $(P_{k_1},\ldots,P_{k_5}) \in \mathcal{P}^5$, 
\end{itemize}
 such that $\g|_{[t_1,t_{10}]}=\s^{((p_1,p_{10}),(P_{k_1},\ldots,P_{k_5}),\eps,\nu')}=\s$ and $\g|_{[s_1,s_2]}=\s|_{[t_5,t_6]}$, where $t_5$ and $t_6$ have been defined in \eqref{eq15}. This means that each passage of $\g$ inside $\pa B_R(0)$ is the third passage inside $\pa B_R(0)$ of a function $\s^{((p_1,p_{10}),(P_{k_1},\ldots,P_{k_5}),\eps,\nu')}$, for some $p_1,p_{10} \in \pa B_R(0)$ and $(P_{k_1},\ldots,P_{k_5}) \in \mathcal{P}^5$. This observation is the key point of the proof: it implies that our thesis follows if we show that there are $\bar{\nu}_2',\beta_2>0$ such that 
\beq\label{eq21}
\min_{k \in \{1,\ldots,N\}} \left(\min_{t \in \left[\frac{t_5}{T(\sigma)},\frac{t_6}{T(\sigma)}\right]} |v^{((p_1,p_{10}),(P_{k_1},\ldots, P_{k_5}),\eps,\nu')}(t)-c_{k_3}|\right) \geq \beta_2
\eeq
for every $((p_1,p_{10}),(P_{k_1},\ldots,P_{k_5}),\nu') \in \left(\pa B_R(0)\right)^2 \times \wt{\mathcal{P}}^5 \times (-\bar{\nu}_2',\bar{\nu}_2')$; this implies that \\
$v^{((p_1,p_{10}),(P_{k_1},\ldots, P_{k_5}),\eps,\nu')}$ (and hence $\s^{((p_1,p_{10}),(P_{k_1},\ldots, P_{k_5}),\eps,\nu')}$) cannot have a collision in its third passage inside $B_R(0)$, independently on $(p_1,p_{10})$ and $(P_{k_1},\ldots,P_{k_5})$. \\
Assume by contradiction that \eqref{eq21} is not true. Then there are $(\beta_m) \subset \R^+$, $(\nu'_m) \subset (-\bar{\nu}',\bar{\nu}')$, $\left((p_1^m,p_2^m)\right) \subset \left(\pa B_R(0)\right)^2$, $\left((P_{k_1},\ldots,P_{k_5})^m \right) \subset  \wt{\mathcal{P}}^5$  such that $\beta_m \to 0$, $\nu_m'\to 0$ for $m \to \infty$, and 
\[
\min_{t \in \left[\frac{t_5^m}{T(\sigma_m)},\frac{t_6^m}{T(\sigma_m)}\right]} |v^{((p_1^m,p_{10}^m),(P_{k_1},\ldots, P_{k_5})^m,\eps,\nu'_m)}(t)-c_{k_3^m}| = \beta_m \qquad \forall m.
\]
Since $\wt{\mathcal{P}}^5$ is discrete and finite, we can assume $(P_{k_1},\ldots,P_{k_5})^m=(P_{k_1},\ldots,P_{k_5})$ for every $m$. Also, since $\pa B_R(0)$ is compact, up to a subsequence $(p_1^m,p_2^m) \to (\wh{p}_1,\wh{p}_2) \in \pa B_R(0)$. We term $v_m=v^{((p_1^m,p_{10}^m),(P_{k_1},\ldots, P_{k_5})^m,\eps,\nu'_m)}$. The image of $v_m$ intersects the circle $\pa B_R(0)$ in 8 points $(p_2^m,\ldots,p_9^m) \in B$ in succession. Up to a subsequence $(p_2^m,\ldots,p_9^m) \to (\wh{p}_2,\ldots,\wh{p}_9)$.  We observed that the set $\mathcal{GF}_\eps$ is bounded in the $H^1$ norm, therefore up to a subsequence $v_m \wc \wh{v} \in H^1([0,1])$ weakly in $H^1$ (and hence uniformly). The image of $\wh{v}$ intersects the circle in the $8$ points $(\wh{p}_2,\ldots,\wh{p}_9)$ in succession. To be precise
\[
\wh{v}= v_{(\wh{p}_2,\ldots,\wh{p}_9)}^{((\wh{p}_1,\wh{p}_{10}),(P_{k_1},\ldots, P_{k_5}),\eps,0)} \in \mathcal{GF}_\eps,
\]
see Lemma \ref{GF_eps è debolmente chiuso}. By the Continuity Lemma \ref{continuity lemma 2}, the point $(\wh{p}_2,\ldots,\wh{p}_9)$ minimizes $\mathfrak{F}_{((\wh{p}_1,\wh{p}_{10}),(P_{k_1},\ldots, P_{k_5}),\eps,0)}$ in $B$. But the uniform convergence implies that $\wh{v}$ has a collision in its third passage inside $B_R(0)$, and this is in contradiction with Lemma \ref{no coll 3-centri}.
\end{proof}

\paragraph{3) $\a\in (1,2)$.} This is the easiest case, since for every $\eps \in (0,\bar{\eps})$, $n\in\mathbb{N}$, $(P_{j_1},\ldots,P_{j_n}) \in\mathcal{P}^n$ the weak solution $\g^{((P_{j_1},\ldots,P_{j_n}),\eps,0)}$ is collision-free (Theorem 5.3 of \cite{SoTe}). Thus, we can simply follows the sketch already developed for point 1) with minor changes.

\begin{proposition}
Let $\a \in (1,2)$. Let $\eps \in (0,\bar{\eps})$. There exists $\bar{\nu}_3'(\eps)$ such that for every $\nu'\in(-\bar{\nu}_3'(\eps),\bar{\nu}_3'(\eps))$, $n \in \N$ and $(P_{j_1},\ldots,P_{j_n}) \in \mathcal{P}^n$, the function $\g^{((P_{j_1},\ldots,P_{j_n}),\eps,\nu')}$ is collision-free.
\end{proposition}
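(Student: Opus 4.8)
The plan is to mimic almost verbatim the argument of point 1) (Proposition \ref{coll per N >=4}), the only structural change being that here the partition is allowed to vary over the whole of $\mathcal{P}$. First I would reduce the global collision-freeness to a purely local statement: since $\g^{((P_{j_1},\ldots,P_{j_n}),\eps,\nu')}$, during each of its passages inside $B_R(0)$, coincides up to a re-parametrization with an inner minimizer $u_{P_j}(\cdot\,;p_1,p_2;\eps,\nu')$ of $M_{\nu'}$ in a class $K_{P_j}^{p_1 p_2}([0,1])$, it suffices to find $\bar{\nu}_3'(\eps)>0$ and $\beta_3(\eps)>0$ such that every such inner minimizer stays at distance at least $\beta_3$ from all the centres, uniformly in $(p_1,p_2) \in \left(\pa B_R(0)\right)^2$, in $P_j \in \mathcal{P}$ and in $|\nu'|<\bar{\nu}_3'$. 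As in point 1), this uniform bound makes the argument independent of $n$.

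Next I would argue by contradiction, exactly as in the proof of Proposition \ref{coll per N >=4}. Negating the uniform bound produces sequences $\beta_m \to 0$, $\nu_m' \to 0$, endpoints $(p_1^m,p_2^m)$, partitions $P_j^m$ and centre indices $k_m$ along which the minimum distance to $c_{k_m}$ equals $\beta_m$. Using finiteness of $\mathcal{P}$ and of $\{1,\ldots,N\}$ I would fix the partition $P_j$ and the centre $c_k$ along the sequence, and using compactness of $\pa B_R(0)$ I would extract convergent endpoints $(p_1^m,p_2^m) \to (\wt{p}_1,\wt{p}_2)$. Since $\mathcal{IM}_\eps$ is bounded in $H^1$ (Proposition \ref{minimi limitati}), up to a subsequence $u_m \wc \wt{u}$ weakly in $H^1$, hence uniformly; the uniform convergence together with $\beta_m \to 0$ forces the limit $\wt{u} \in K_{P_j}^{\wt{p}_1 \wt{p}_2}([0,1])$ to collide with $c_k$.

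The final step invokes the Continuity Lemma \ref{minimi interni con variazione di nu}, which applies verbatim for any $P_j \in \mathcal{P}$ and is insensitive to the value of $\a$: it tells us that $\wt{u}$ is a minimizer of $M_0$ in $K_{P_j}^{\wt{p}_1 \wt{p}_2}([0,1])$, equivalently (by Remark \ref{M_{h,nu}-M_h}) a colliding minimizer of the classical Maupertuis' functional $M_{-1}$ in the same class. Here is where the case $\a \in (1,2)$ behaves better than $\a=1$: by Theorem 4.12 of \cite{SoTe}, when $\a \in (1,2)$ every minimizer of $M_{-1}$ in $K_{P_j}$ is collision-free for \emph{every} $P_j \in \mathcal{P}$ (the ejection-collision alternative, which for $\a=1$ forced the exclusion of $\mathcal{P}_1$, simply does not occur). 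The existence of a colliding minimizer is thus absurd, and the uniform bound, hence the proposition, follows.

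I expect no serious obstacle in this case, which is precisely why it is the easiest of the three. The only points deserving a line of care are the verifications that the Continuity Lemma and the relevant half of Theorem 4.12 of \cite{SoTe} are genuinely available for all $P_j \in \mathcal{P}$ when $\a \in (1,2)$; both are, so the restriction to $\mathcal{P} \setminus \mathcal{P}_1$ used in point 1) can simply be dropped.
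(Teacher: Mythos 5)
Your proposal is correct and coincides with the paper's own treatment: the paper proves this case precisely by repeating the local reduction and contradiction argument of Proposition \ref{coll per N >=4} (via the Continuity Lemma \ref{minimi interni con variazione di nu} and Remark \ref{M_{h,nu}-M_h}), with the only change being that for $\a \in (1,2)$ Theorem 4.12 of \cite{SoTe} excludes colliding minimizers of $M_{-1}$ in $K_{P_j}$ for \emph{every} $P_j \in \mathcal{P}$, so the restriction to $\mathcal{P}\setminus\mathcal{P}_1$ is dropped. Your two points of care (applicability of the Continuity Lemma for all $P_j$ and of the collision-free half of Theorem 4.12 for $\a\in(1,2)$) are exactly the "minor changes" the paper alludes to.
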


\section{Proofs of the continuity lemmas}\label{dim con lemma}

\subsection{Proof of Continuity Lemma \ref{minimi interni con variazione di nu}}

Let $u_0$ be a minimizer of $M_0$ in  $K_{P_j}^{\wt{p}_1 \wt{p}_2}([0,1])$. We aim at proving that $M_0(\wt{u})=M_0(u_0)$. We will briefly write $L_m$ for $L_{\nu_m'}$ and $M_m$ for $M_{\nu_m'}$.\\
The following statement is a continuity property for the functionals $\{ M_m\}$ in the set of the minimizers $\{ u_m\}$. 

\begin{lemma}\label{continuità uniforme sui minimi}
The family $\{M_m\}_m$ tends to $M_0$ as $m \to \infty$, uniformly in the set $\{u_m:m \in \N\}$. This means that for every $\l>0$ exists $m_1 \in \mathbb{N}$ such that
\[
m>m_1  \Rightarrow  |M_m(u_{\bar{m}})-M_0(u_{\bar{m}})| \leq \l \quad \forall \bar{m} \in \N.
\] 
\end{lemma}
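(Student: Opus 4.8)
The plan is to estimate the difference $M_m(u_{\bar m}) - M_0(u_{\bar m})$ by hand, observing that every bound I produce depends on $\bar m$ only through quantities that are controlled \emph{uniformly} over $\mathcal{IM}_\eps$, while all the smallness comes from $\nu_m' \to 0$. Since each $u_{\bar m}$ is a minimizer with $\nu_{\bar m}' \in (-\bar\nu',\bar\nu') \subset (-\nu_3',\nu_3')$, it belongs to $\mathcal{IM}_\eps$; hence Proposition \ref{minimi limitati} gives $\|\dot u_{\bar m}\|_{L^2} \le C_2$, while $|u_{\bar m}| \le R$ pointwise and Remark \ref{maggiorazione in B_R} gives $\int_0^1 (V_\eps(u_{\bar m}) - 1) \ge M_1 > 0$. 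It is precisely this membership in a single bounded set that will make the estimate uniform in $\bar m$.

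First I would isolate the two sources of discrepancy between $M_m$ and $M_0$. Recalling from Remark \ref{M_{h,nu}-M_h} that $M_0(u) = \sqrt 2\,\|\dot u\|_{L^2}\,\big(\int_0^1(V_\eps(u) - 1)\big)^{1/2}$, and setting $A_m := \int_0^1(\Phi_{\nu_m',\eps}(u_{\bar m}) - 1)$ and $B := \int_0^1(V_\eps(u_{\bar m}) - 1)$, the identity $\Phi_{\nu_m',\eps}(y) - V_\eps(y) = \tfrac{(\nu_m')^2}{2}|y|^2$ yields
\[
M_m(u_{\bar m}) - M_0(u_{\bar m}) = \sqrt 2\,\|\dot u_{\bar m}\|_{L^2}\big(A_m^{1/2} - B^{1/2}\big) + \nu_m' \int_0^1 \langle i u_{\bar m}, \dot u_{\bar m}\rangle.
\]
The second summand is controlled by Cauchy--Schwarz, $\big|\nu_m' \int_0^1 \langle i u_{\bar m}, \dot u_{\bar m}\rangle\big| \le |\nu_m'|\,\|u_{\bar m}\|_{L^2}\|\dot u_{\bar m}\|_{L^2} \le |\nu_m'|\,R\,C_2$, a bound independent of $\bar m$ that tends to $0$.

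For the first summand I would use that $A_m, B \ge M_1$, so the elementary inequality $|A_m^{1/2} - B^{1/2}| \le |A_m - B|/(2\sqrt{M_1})$ applies, together with $A_m - B = \int_0^1 \tfrac{(\nu_m')^2}{2}|u_{\bar m}|^2 \le \tfrac{(\nu_m')^2}{2}R^2$. Hence this summand is bounded by $\sqrt 2\,C_2\,(\nu_m')^2 R^2/(4\sqrt{M_1})$, again uniformly in $\bar m$. Adding the two estimates gives $|M_m(u_{\bar m}) - M_0(u_{\bar m})| \le |\nu_m'|\,R\,C_2 + \tfrac{\sqrt 2\,C_2 R^2}{4\sqrt{M_1}}(\nu_m')^2$ for every $\bar m$; given $\lambda > 0$ it then suffices to choose $m_1$ so large that the right-hand side is $\le \lambda$ for all $m > m_1$, which is possible since $\nu_m' \to 0$.

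I do not expect any genuine obstacle: the entire content is that the perturbation $M_m - M_0$ is $O(|\nu_m'|)$ with a constant depending only on the uniform $H^1$ bound of $\mathcal{IM}_\eps$ and on the positive lower bound $M_1$. The one point deserving care is verifying that every $u_{\bar m}$ lies in the \emph{same} bounded set $\mathcal{IM}_\eps$, so that $C_2$ and $M_1$ do not depend on $\bar m$; this is exactly what justifies the order of the quantifiers (choosing $m_1$ after $\lambda$ but before $\bar m$) and therefore the claimed uniform convergence.
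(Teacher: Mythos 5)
Your proposal is correct and follows essentially the same route as the paper: the same decomposition of $M_m(u_{\bar m})-M_0(u_{\bar m})$ into the angular term and the difference of square roots, the same uniform bounds from Proposition \ref{minimi limitati} and Remark \ref{maggiorazione in B_R} ($\|\dot u_{\bar m}\|_{L^2}\le C_2$, $|u_{\bar m}|\le R$, $\int_0^1 (V_\eps(u_{\bar m})-1)\ge M_1$), and a Lipschitz estimate for the square root on $[M_1,\infty)$ where the paper instead differentiates $\f_{\bar m}(\nu)$ --- an equivalent step. Your explicit bound $|\nu_m'|RC_2+\tfrac{\sqrt 2\,C_2R^2}{4\sqrt{M_1}}(\nu_m')^2$ matches the paper's $C(|\nu_m'|+(\nu_m')^2)$ with the constant made explicit.
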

\begin{proof}
Let $\bar{m} \in \N$. For every $m$ we have 
\begin{multline*}
|M_m(u_{\bar{m}})-M_0(u_{\bar{m}})| \leq |\nu_m'| \int_0^1 |u_{\bar{m}}||\dot{u}_{\bar{m}}|\\
+\sqrt{2} \left(\int_0^1 |\dot{u}_{\bar{m}}|\right)^{\frac{1}{2}} \left| \left(\int_0^1 V_{\eps}(u_{\bar{m}})-1+\frac{(\nu_m')^2}{2} |u_{\bar{m}}|^2 \right)^{\frac{1}{2}}- \left( \int_0^1 V_{\eps}(u_{\bar{m}})-1 \right)^{\frac{1}{2}} \right|
\end{multline*}
Let $\f_{\bar{m}}(\nu):= \left(\int_0^1 V_\eps(u_{\bar{m}})-1+\frac{(\nu^2)}{2} |u_{\bar{m}}|^2 \right)^{1/2}$. It results
\[
|\f_{\bar{m}}(\nu_m')-\f_{\bar{m}}(0)| \leq \frac{1}{2} \left(\int_0^1 V_\eps(u_{\bar{m}})-1\right)^{-\frac{1}{2}} \int_0^1 |u_{\bar{m}}|^2 (\nu_m')^2 \leq \frac{R^2}{2 \sqrt{M_1}} (\nu_m')^2 ,
\] 
so that
\[
|M_m(u_{\bar{m}})-M_0(u_{\bar{m}})| \leq R \|\dot{u}_{\bar{m}}\|_{L^2} |\nu_m'|+ \frac{R^2}{\sqrt{2  M_1}}\|\dot{u}_{\bar{m}}\|_{L^2}(\nu_m')^2 \leq C(|\nu_m'| + (\nu_m')^2),
\]
where $C$ is a constant independent on $\bar{m}$ (see Proposition \ref{minimi limitati}). 
\end{proof}

We want to compare $M_m(u_m)$ with $M_m(u_0)$. Because of the minimality property of $u_m$ it seems reasonable to think that $M_m(u_m) \leq M_m(u_0)$. This is not immediate, and not necessarily true, since $u_m$ is a minimizer of $M_m$ for the fixed ends problem $\min \{M_m(u): u \in K_{P_j}^{p_1^m p_2^m}([0,1])\}$, while $u_0$ connects $\wt{p}_1$ and $\wt{p}_2$. However, the fact that $p_1^m \to \wt{p}_1$ and $p_2^m \to \wt{p}_2$ suggests that maybe we can prove something similar (which in fact will be equation \eqref{eq17}). For every $p_* ,p_{**} \in \pa B_R(0)$ we consider again the function $\zeta_R(\cdot\,;p_*,p_{**})$ which parametrizes the shorter arc of $\pa B_R(0)$ connecting $p_*$ and $p_{**}$ in time $1$ with constant angular velocity. It is easy to check that
\[
\forall \l>0 \ \exists \rho>0: \ |p_*-p_{**}|<\rho \Rightarrow M_0(\zeta_R(\cdot\,;p_*,p_{**}))<\l,
\]
so that 
\beq\label{eq16}
\forall \l >0 \ \exists m_2 \in \N: \ m>m_2 \Rightarrow  \begin{cases} M_0(\zeta_R(t;p_1^m,\wt{p}_1)) < \l \\
M_0(\zeta_R(t;\wt{p}_2,p_2^m)) < \l. \end{cases} 
\eeq
Furthermore, the following continuity property holds true.

\begin{lemma}\label{continuità uniforme sugli archetti}
The family $\{M_m\}_m$ tends to $M_0$ as $m \to \infty$, uniformly in the set $\{\zeta_R(\cdot\,;p_*,p_{**}):\ p_{*},p_{**} \in \pa B_R(0)\}$. This means that for every $\l>0$ exists $m_3 \in \mathbb{N}$ such that
\[
m>m_3  \Rightarrow  |M_m(\zeta_R(\cdot\,;p_*,p_{**}))-M_0(\zeta_R(\cdot\,;p_*,p_{**}))| \leq \l \quad \forall p_*,p_{**}\in \pa B_R(0).
\] 
\end{lemma}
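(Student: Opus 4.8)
The plan is to copy, essentially line by line, the argument of Lemma \ref{continuità uniforme sui minimi}, with the family of inner minimizers $\{u_m\}$ replaced by the family of circular arcs $\{\zeta_R(\cdot\,;p_*,p_{**})\}$. The only feature of the previous proof that made it \emph{uniform} was the $H^1$-boundedness of $\mathcal{IM}_\eps$ provided by Proposition \ref{minimi limitati}; so the first thing I would do is record the analogous uniform bound for the arcs. Since $\zeta_R(\cdot\,;p_*,p_{**})$ parametrizes the shorter arc of $\pa B_R(0)$ with constant velocity, we have $|\zeta_R(t)|=R$ for all $t$, whence $\|\zeta_R\|_{L^2}=R$; moreover the length of that arc never exceeds $\pi R$, so $\|\dot\zeta_R\|_{L^2}\le \pi R=:\Lambda$, uniformly in $p_*,p_{**}\in\pa B_R(0)$. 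This uniform $H^1$ bound is exactly what plays the role of Proposition \ref{minimi limitati} in the estimate below.

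Next I would fix $p_*,p_{**}$, set $\zeta:=\zeta_R(\cdot\,;p_*,p_{**})$, recall that $M_0$ carries no boundary term ($\nu=0$), and split the difference as in the previous lemma:
\[
|M_m(\zeta)-M_0(\zeta)|\le |\nu_m'|\int_0^1|\zeta||\dot\zeta| + \sqrt 2\Big(\int_0^1|\dot\zeta|^2\Big)^{1/2}\big|\f(\nu_m')-\f(0)\big|,
\]
where $\f(\nu):=\big(\int_0^1(V_\eps(\zeta)-1)+\tfrac{\nu^2}{2}\int_0^1|\zeta|^2\big)^{1/2}$, using $\int_0^1\Phi_{\nu_m',\eps}(\zeta)-1=\int_0^1(V_\eps(\zeta)-1)+\tfrac{(\nu_m')^2}{2}\int_0^1|\zeta|^2$. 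The crucial sign condition is supplied by Remark \ref{maggiorazione in B_R}: since $\zeta$ takes values in $\pa B_R(0)\subset\overline{B_R(0)}$, we have $\int_0^1(V_\eps(\zeta)-1)\ge M_1>0$, so the square roots are well defined and bounded away from $0$.

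Then I would estimate the two pieces with the uniform bounds just established. For the Finsler (boundary) term, $\int_0^1|\zeta||\dot\zeta|\le \|\zeta\|_{L^2}\|\dot\zeta\|_{L^2}\le R\Lambda$, contributing $\le R\Lambda|\nu_m'|$. For the potential term, the same mean-value/concavity estimate used before gives $|\f(\nu_m')-\f(0)|\le \frac{R^2}{2\sqrt{M_1}}(\nu_m')^2$ (using $\f(0)=(\int_0^1(V_\eps(\zeta)-1))^{1/2}\ge\sqrt{M_1}$ and $\int_0^1|\zeta|^2\le R^2$), so that piece is $\le \sqrt2\,\Lambda\,\frac{R^2}{2\sqrt{M_1}}(\nu_m')^2$. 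Collecting,
\[
|M_m(\zeta)-M_0(\zeta)|\le C\big(|\nu_m'|+(\nu_m')^2\big),
\]
with $C=C(R,M_1)$ \emph{independent of $p_*,p_{**}$}. Since $\nu_m'\to0$, given $\l>0$ one chooses $m_3$ so large that $C(|\nu_m'|+(\nu_m')^2)\le\l$ for $m>m_3$, and this $m_3$ serves simultaneously for all $p_*,p_{**}$, which is precisely the asserted uniform convergence.

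I do not expect a genuine obstacle: the whole content is that the arcs form a uniformly $H^1$-bounded family, after which the proof is a verbatim copy of Lemma \ref{continuità uniforme sui minimi}. The one point deserving a word of care is checking that $\int_0^1(V_\eps(\zeta)-1)$ stays uniformly positive — this is where the geometric fact that the arcs live on $\pa B_R(0)$, together with Remark \ref{maggiorazione in B_R}, is essential, since it is what prevents the constant $C$ from blowing up as the arcs approach the boundary of the Hill region.
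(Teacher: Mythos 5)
Your proposal is correct and follows exactly the route the paper intends: the paper's own proof consists of the single remark that one ``can adapt the proof of Lemma \ref{continuità uniforme sui minimi} with minor changes,'' and your adaptation supplies precisely those changes, namely replacing the uniform $H^1$ bound of Proposition \ref{minimi limitati} by the explicit bounds $\|\zeta_R\|_{L^2}=R$, $\|\dot\zeta_R\|_{L^2}\le \pi R$ for the constant-speed arcs, and invoking Remark \ref{maggiorazione in B_R} for the uniform lower bound $\int_0^1 (V_\eps(\zeta_R)-1)\ge M_1>0$. Nothing is missing; the resulting estimate $|M_m(\zeta_R)-M_0(\zeta_R)|\le C\left(|\nu_m'|+(\nu_m')^2\right)$ with $C$ independent of $p_*,p_{**}$ is exactly the uniformity the lemma asserts.
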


\begin{proof}
We can adapt the proof of Lemma \ref{continuità uniforme sui minimi} with minor changes.
\end{proof}

\begin{proof}[Conclusion of the proof of the Continuity Lemma \ref{minimi interni con variazione di nu}]
Because of the minimality of $u_0$ and the weak lower semi-continuity of $M_0$ it results
\beq\label{eq18}
M_0(u_0) \leq M_0(\wt{u}) \leq \liminf_{m \to \infty} M_0(u_m).
\eeq
For every $m \in \N \cup\{0\}$ we have
\[
\frac{\o_m^2}{2} |\dot{u}_m|^2-\Phi_{\nu_m',\eps}(u_m)=-1 \quad \text{a.e. in $[0,1]$} \ \Rightarrow \ \sqrt{2} L_m(u_m)=M_m(u_m),
\]
where $\o_m=\o_{P_j}(p_1^m,p_2^m;\eps,\nu_m')$. The variational characterization of $u_m$ implies that
\begin{multline}\label{eq17}
M_m(u_m) = \sqrt{2} L_m(u_m) \leq   \sqrt{2} L_m(\zeta_R(\cdot\,;p_1^m,\wt{p}_1))+ \sqrt{2} L_m(u_0) + \sqrt{2}  L_m(\zeta_R(\cdot\,;\wt{p}_2,p_2^m)) \\
 \leq M_m(\zeta_R(\cdot\,;p_1^m,\wt{p}_1))+  M_m(u_0) + M_m(\zeta_R(\cdot\,;\wt{p}_2,p_2^m)).
\end{multline}
We passed to the functional $L_m$ in order to exploit its additivity property, which does not hold for $M_m$. Lemmas \ref{continuità uniforme sui minimi}, \ref{continuità uniforme sugli archetti} and equation \eqref{eq16} imply that for every $\l >0$  if $m > \max\{m_1,m_2,m_3\}$ then
\[
\begin{cases}
M_m(u_m) > M_0(u_m)-\l \\
M_m(\zeta_R(\cdot\,;p_1^m,\wt{p}_1))<M_0(\zeta_R(\cdot\,;p_1^m,\wt{p}_1))+\l< 2\l \\
M_m(\zeta_R(\cdot\,;\wt{p}_2,p_2^m))<M_0(\zeta_R(\cdot\,;\wt{p}_2,p_2^m))+\l< 2\l \\
M_m(u_0)<M_0(u_0)+\l.
\end{cases}
\]
Hence, from equation \eqref{eq17}, for every $\l>0$ if $m > \max\{m_1,m_2,m_3\}$ then 
\[
M_0(u_m)-\l \leq M_0(u_0)+5\l \Rightarrow \limsup_{m \to \infty} M_0(u_m) \leq M_0(u_0).
\]
This, together with \eqref{eq18}, says that the sequence $(M_0(u_m))_m$ has a limit and $M_0(u_0)=M_0(\wt{u})=\lim_m M_0(u_m)$; in particular $\wt{u}$ is a minimizer of $M_0$ in $K_{P_j}^{\wt{p}_1 \wt{p}_2}([0,1])$.
\end{proof}

\subsection{Proof of the Continuity Lemma \ref{continuity lemma 2}}

Let $\sigma_0=\sigma^{((\wt{p}_1,\wt{p}_{10}),(P_{k_1},\ldots, P_{k_5}),\eps,0)}=\sigma_{(\wh{p}_2,\ldots,\wh{p}_9)}^{((\wt{p}_1,\wt{p}_{10}),(P_{k_1},\ldots, P_{k_5}),\eps,0)}$, where $(\wh{p}_2,\ldots,\wh{p}_9)$ is a minimizer of $\mathfrak{F}_{((\wt{p}_1,\wt{p}_{10}),(P_{k_1},\ldots, P_{k_5}),\eps,0)}$, and let $v_0(t)=\s_0(T(\s_0 t))$. We aim at proving that $M_0(\wt{v})=M_0(v_0)$. We need two intermediate results. The first one is a generalization of Lemma \ref{continuità uniforme sui minimi} for the glued functions.

\begin{lemma}\label{continuita' uniforme sulle funzioni incollate}
Let $(v_m) \subset \mathcal{GF}_\eps$, where each $v_m$ is a glued function defined by \eqref{eq27}. The family $\{M_m\}_m$ tends to $M_0$ for $m \to \infty$, uniformly in $\{v_m\}_m$. This means that for every $\l>0$ exists $m_1 \in \N$ such that 
\[
m>m_1 \Rightarrow |M_m(v_{\bar{m}})-M_0(v_{\bar{m}})|<\l \qquad \forall \bar{m}.
\]
\end{lemma}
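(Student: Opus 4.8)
The plan is to mimic the proof of Lemma \ref{continuità uniforme sui minimi}, isolating the $\nu_m'$-dependence of the functional while keeping the (arbitrarily chosen) glued function $v_{\bar m}$ fixed. First I would write, for every $m,\bar m \in \N$,
\[
|M_m(v_{\bar m}) - M_0(v_{\bar m})| \le |\nu_m'| \int_0^1 |v_{\bar m}||\dot v_{\bar m}| + \sqrt 2 \left(\int_0^1 |\dot v_{\bar m}|^2\right)^{1/2} \left| \left(A_{\bar m} + B_m\right)^{1/2} - A_{\bar m}^{1/2}\right|,
\]
where $A_{\bar m} := \int_0^1 (V_\eps(v_{\bar m}) - 1)$ and $B_m := \frac{(\nu_m')^2}{2}\int_0^1 |v_{\bar m}|^2$; this comes from $\Phi_{\nu_m',\eps}=V_\eps+\frac{(\nu_m')^2}{2}|\cdot|^2$ together with the triangle inequality. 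The angular term is harmless: every $v \in \mathcal{GF}_\eps$ satisfies $\|v\|_{L^\infty} \le 2(M/\a)^{1/\a}=:L$ (by \eqref{limitazione soluzioni esterne} outside $B_R(0)$ and $|v|\le R$ inside), and $\|v\|_{H^1}\le C$, so Cauchy--Schwarz gives $\int_0^1|v_{\bar m}||\dot v_{\bar m}| \le L\,\|\dot v_{\bar m}\|_{L^2}\le LC$, a bound independent of $\bar m$.

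The crux, and the only genuine difference with respect to Lemma \ref{continuità uniforme sui minimi}, is obtaining a \emph{positive lower bound on $A_{\bar m}$ that is uniform in $\bar m$}: here a glued function runs also along four outer arcs, on which $V_\eps - 1$ need not be nonnegative, so I cannot simply invoke Remark \ref{maggiorazione in B_R}. I would split $[0,1]$ into the (reparametrized) inner and outer portions. Each $v_{\bar m}$ contains five inner arcs; on the inner portion $|v_{\bar m}|\le R$, whence $V_\eps(v_{\bar m}) - 1 \ge M_1$ by Remark \ref{maggiorazione in B_R}, and, using the uniform lower bound $C_1$ on the inner times and the uniform upper bound on $T(\sigma)$ furnished by Corollary \ref{bound tempi interni} and Lemma \ref{bound per i tempi esterni}, the measure of the inner portion is bounded below by some $\mathfrak c > 0$. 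On each outer arc the Jacobi relation $\Phi_{\nu_{\bar m}',\eps}(v_{\bar m}) \ge 1$ gives $V_\eps(v_{\bar m}) - 1 \ge -\frac{(\nu_{\bar m}')^2}{2}L^2$. Therefore
\[
A_{\bar m} \ge M_1\,\mathfrak c - \frac{(\bar\nu')^2}{2}L^2,
\]
and, after possibly shrinking $\bar\nu'$, this is bounded below by $\frac{1}{2}M_1\mathfrak c > 0$ for every $\bar m$. This is the step I expect to require the most care.

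With $A_{\bar m}\ge \frac12 M_1\mathfrak c$ in hand, the elementary inequality $|\sqrt{A+B}-\sqrt A| \le B/(2\sqrt A)$ (valid for $A>0$, $B\ge 0$) yields
\[
\left|(A_{\bar m}+B_m)^{1/2}-A_{\bar m}^{1/2}\right| \le \frac{(\nu_m')^2 L^2}{2\sqrt{2 M_1\mathfrak c}},
\]
so that, multiplying by $\sqrt 2\,\|\dot v_{\bar m}\|_{L^2}\le \sqrt 2\,C$, the square-root term is $O((\nu_m')^2)$ with a constant independent of $\bar m$. Combining the two estimates gives $|M_m(v_{\bar m}) - M_0(v_{\bar m})| \le C'\bigl(|\nu_m'| + (\nu_m')^2\bigr)$ for every $\bar m$, with $C'$ independent of $\bar m$. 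Since $\nu_m'\to 0$, for any $\lambda>0$ the right-hand side is smaller than $\lambda$ as soon as $m$ is large enough, which is precisely the claimed uniform convergence. No further difficulty is expected, the remaining ingredients being the $L^\infty$ and $H^1$ bounds already recorded for $\mathcal{GF}_\eps$.
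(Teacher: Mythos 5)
Your proof is correct, and its skeleton is the same as the paper's: reduce to the computation of Lemma \ref{continuità uniforme sui minimi}, for which the only new ingredients are a uniform $H^1$ bound on $\mathcal{GF}_\eps$ and a uniform positive lower bound on $A_{\bar m}=\int_0^1\left(V_\eps(v_{\bar m})-1\right)$.

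The one genuine difference lies in how that lower bound is obtained, and there your argument is more careful than the paper's. The paper settles the point in one line, writing
\[
\int_0^1 V_{\eps}( v_{\bar{m}} )-1 \;\geq\; \frac{1}{T(\sigma_{\bar{m}})}\sum_{j=1}^4 \int_0^{T_{2j+1}}\left(V_\eps(y_{2j+1})-1\right) \;\geq\; \frac{4M_1}{C},
\]
i.e. it simply discards the outer arcs, as if $V_\eps-1$ were nonnegative along them, and keeps only inner arcs, on which Remark \ref{maggiorazione in B_R} and the uniform time bounds give the positive constant. As you observe, the discarded terms are \emph{not} nonnegative in general: along an outer arc the Jacobi relation gives $V_\eps(y)-1=\tfrac{1}{2}|\dot y|^2-\tfrac{(\nu')^2}{2}|y|^2$, which can be negative near the apex of the near-brake orbit. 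So the paper's intermediate inequality holds only up to an error of order $(\bar\nu')^2$, and this is exactly the correction you supply: the outer contribution is bounded below by $-\tfrac{(\bar\nu')^2}{2}L^2$ via \eqref{limitazione soluzioni esterne}, and is absorbed into $\tfrac{1}{2}M_1\mathfrak{c}$ after possibly shrinking $\bar\nu'$. That shrinking is harmless in the paper's economy (thresholds are replaced by smaller ones throughout, and the lemma is only ever applied to sequences $\nu_m'\to 0$, so discarding finitely many indices costs nothing); note also that some restriction of this kind is in fact \emph{needed} even for the statement to be meaningful, since $M_0(v_{\bar m})$ is defined only when $A_{\bar m}\ge 0$. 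In short: same route as the paper, but your treatment of the outer arcs fills a small gap that the paper's one-line estimate glosses over, while the rest of your computation (the $O(|\nu_m'|+(\nu_m')^2)$ bound with constants independent of $\bar m$) coincides with the paper's.
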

\begin{proof}
We can adapt the proof of Lemma \ref{continuita' uniforme sulle funzioni incollate}; the only difference is that we used the uniform bounds 
\[
\|u\|_{L^2} \leq R \qquad \|\dot{u}\|_{L^2}\leq C \qquad \int_0^1 V_{\eps}(u)-1 \geq M_1 \qquad \forall u \in \mathcal{IM}_\eps.
\]
Now we are considering glued functions, so we need similar properties for the function of $\mathcal{GF}_\eps$. We have already noticed that there is $C>0$ such that $\|\dot{v}_{\bar{m}}\|_{H^1} \leq C$ for every $\bar{m}$; furthermore, 
\[
\int_0^1 V_{\eps}( v_{\bar{m}} )-1 \geq \frac{1}{T(\sigma_{\bar{m}})}\sum_{j=1}^4 \int_0^{T_{2j+1}}\left(V_\eps(y_{2j+1})-1\right)   \geq \frac{4M_1}{C}. \qedhere
\]
\end{proof}

\begin{lemma}\label{continuità lunghezza esterna}
Let $p_{2j},p_{2j+1} \in \pa B_R(0)$ be such that $|p_{2j}-p_{2j+1}| \leq \d$,  let $(\nu_m') \subset (-\bar{\nu}',\bar{\nu}')$ be such that $\nu_m' \to 0$ as $m \to \infty$. \\
For every $\l>0$ there exists  $m_4=m_4(p_{2j},p_{2j+1}) \in \N$ such that
\[
|L_{\nu_{\bar{m}}'}(y_{\text{ext}}(\cdot\,;p_{2j},p_{2j+1};\eps,\nu_m'))- L_{\nu_{\bar{m}}'}(y_{\text{ext}}(\cdot\,;p_{2j},p_{2j+1};\eps,0))| < \l 
\]
for every $\bar{m} \in \N$.
\end{lemma}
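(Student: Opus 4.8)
The plan is to reduce everything to a comparison on a common time interval and then to an equi-uniform-continuity estimate for the integrand of $L_{\nu'}$. Fix the endpoints and write $y_1^m:=y_{\text{ext}}(\cdot\,;p_{2j},p_{2j+1};\eps,\nu_m')$ on $[0,T_1^m]$, with $T_1^m:=T_{\text{ext}}(p_{2j},p_{2j+1};\eps,\nu_m')$, and $y_0:=y_{\text{ext}}(\cdot\,;p_{2j},p_{2j+1};\eps,0)$ on $[0,T_0]$. By Proposition \ref{teorema 0.1} these solutions, together with their time intervals, depend in a $\mathcal{C}^1$ way on $\nu'$; since $\nu_m'\to 0$, the reparametrizations $\wt{y}_1^m(s):=y_1^m(T_1^m s)$ and $\wt{y}_0(s):=y_0(T_0 s)$, $s\in[0,1]$, satisfy $\wt{y}_1^m\to \wt{y}_0$ in $\mathcal{C}^1([0,1])$ (both position and velocity converge uniformly). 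Because $L_{\nu'}$ is invariant under orientation-preserving reparametrizations, $L_{\nu_{\bar m}'}(y_1^m)=L_{\nu_{\bar m}'}(\wt{y}_1^m)$ and $L_{\nu_{\bar m}'}(y_0)=L_{\nu_{\bar m}'}(\wt{y}_0)$, so it suffices to bound the difference of the two functionals on $[0,1]$, uniformly in $\bar m$, for $m>m_4$.

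I would then split $L_{\nu_{\bar m}'}$ into its length part $L^{(1)}_\nu(w):=\int_0^1\sqrt{\Phi_{\nu,\eps}(w)-1}\,|\dot w|$ and its magnetic part $L^{(2)}_\nu(w):=\tfrac{\nu}{\sqrt 2}\int_0^1\langle i w,\dot w\rangle$. The magnetic part is easy: since $|\nu_{\bar m}'|<\bar\nu'$ for all $\bar m$ and $(w,\dot w)\mapsto\langle i w,\dot w\rangle$ is continuous under the $\mathcal{C}^1$ (hence uniform) convergence $\wt{y}_1^m\to\wt{y}_0$, one gets $|L^{(2)}_{\nu_{\bar m}'}(\wt{y}_1^m)-L^{(2)}_{\nu_{\bar m}'}(\wt{y}_0)|\le \bar\nu'\,o(1)$ with the $o(1)$ independent of $\bar m$.

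The main obstacle is the length part $L^{(1)}$, whose integrand $\sqrt{\Phi_{\nu,\eps}(y)-1}$ degenerates precisely where the curve meets the boundary of the Hill region $\{\Phi_{\nu,\eps}=1\}$ — which is exactly where a close-to-brake outer solution turns. I would circumvent the lack of Lipschitz control by treating the integrand as a single map $G(y,w,\nu):=\sqrt{(\Phi_{\nu,\eps}(y)-1)^+}\,|w|$ on the compact set $\{R\le|y|\le 2(M/\a)^{1/\a}\}\times\{|w|\le W'\}\times[-\bar\nu',\bar\nu']$, where $W'$ is the velocity bound obtained from \eqref{limitazione soluzioni esterne} together with the uniform bound on $T_{\text{ext}}$ of Lemma \ref{bound per i tempi esterni}. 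On this set $G$ is continuous — $t\mapsto\sqrt{t^+}$ is continuous even at $t=0$, and $V_\eps$ is smooth since the centres lie in $B_\eps(0)$ — hence uniformly continuous; in particular its modulus of continuity in $(y,w)$ can be taken independent of $\nu$. Feeding in $\|\wt{y}_1^m-\wt{y}_0\|_{\mathcal{C}^1}\to 0$ gives $|L^{(1)}_{\nu_{\bar m}'}(\wt{y}_1^m)-L^{(1)}_{\nu_{\bar m}'}(\wt{y}_0)|\to 0$ as $m\to\infty$, uniformly in $\bar m$.

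One delicate point remains and should be flagged: along $\wt{y}_0$ one has $\Phi_{\nu_{\bar m}',\eps}(\wt{y}_0)-1\ge 0$ identically, but for the argument $y_1^m$, which solves the $\nu_m'$-problem, the Jacobi identity gives $\Phi_{\nu_{\bar m}',\eps}(y_1^m)-1=\tfrac12|\dot y_1^m|^2+\tfrac{(\nu_{\bar m}')^2-(\nu_m')^2}{2}|y_1^m|^2$, which may dip slightly below zero near a turning point when $|\nu_{\bar m}'|<|\nu_m'|$. This deficit is at most $\tfrac{(\nu_m')^2}{2}\,(2(M/\a)^{1/\a})^2$, which is independent of $\bar m$ and vanishes as $m\to\infty$; replacing $\Phi-1$ by $(\Phi-1)^+$ — consistent with the definition of $L_{\nu'}$ on the Hill region — therefore costs a uniformly vanishing error that the uniform continuity of $G$ absorbs. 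Combining the two parts and choosing $m_4$ so that each contribution is below $\l/2$ for $m>m_4$ yields the claim.
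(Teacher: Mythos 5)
Your proof is correct, and it shares the paper's skeleton: both arguments use the smooth dependence on $\nu'$ from Proposition \ref{teorema 0.1} to get $\mathcal{C}^1$ convergence of the reparametrized outer arcs on $[0,1]$, both split $L_{\nu_{\bar{m}}'}$ into its length and magnetic parts, and both dispose of the magnetic part by uniform convergence together with $|\nu_{\bar{m}}'|\le\bar{\nu}'$ (also, both prove the intended quantifier ``for every $m>m_4$ and every $\bar{m}$'', which the statement leaves implicit). The genuine difference is the key estimate for the length part. The paper (see \eqref{eq30}, \eqref{eq31}, \eqref{eq31*}, \eqref{eq32}) splits it into two further terms and bounds the difference of the square roots via the $1/2$-H\"older continuity of $\sqrt{\cdot\,}$, the Cauchy--Schwarz inequality and a Lipschitz estimate for $\Phi_{\nu,\eps}$ outside $B_R(0)$; uniformity in $\bar{m}$ comes from the explicit factor $C(1+|\nu_{\bar{m}}'|^2)$ in \eqref{eq31*}, and the argument yields a quantitative modulus, of order $\left(\sup_t|y_m-y_0|\right)^{1/2}$. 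You instead bundle the whole integrand into a single map $G(y,w,\nu)=\sqrt{(\Phi_{\nu,\eps}(y)-1)^+}\,|w|$ on a compact set and invoke its uniform continuity, with uniformity in $\bar{m}$ coming from the compactness of $[-\bar{\nu}',\bar{\nu}']$. Your route is softer (it gives no rate) but more elementary, and it is entirely sufficient here, since only a qualitative limit is needed.

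The ``delicate point'' you flag is in fact a gap in the paper's own proof, and your patch is the right one. By the Jacobi identity for $y_m$ (which solves the $\nu_m'$-problem), $\Phi_{\nu_{\bar{m}}',\eps}(y_m)-1=\tfrac{1}{2}|\dot{y}_m|^2+\tfrac{(\nu_{\bar{m}}')^2-(\nu_m')^2}{2}|y_m|^2$, and near the turning point of these close-to-brake arcs, where $|\dot{y}_m|$ is small, this can be negative whenever $|\nu_{\bar{m}}'|<|\nu_m'|$; there $L_{\nu_{\bar{m}}'}(y_m)$ as literally written is not even real, and the H\"older step \eqref{eq31}, which requires both arguments of the square roots to be nonnegative, silently assumes this away. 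Replacing $\Phi-1$ by $(\Phi-1)^+$, together with your observation that the deficit is at most $\tfrac{(\nu_m')^2}{2}\bigl(2(M/\a)^{1/\a}\bigr)^2$ and hence vanishes as $m\to\infty$ uniformly in $\bar{m}$, makes the statement well posed and repairs the estimate. Note also that in the only place the lemma is used, namely the proof of Continuity Lemma \ref{continuity lemma 2}, it is invoked with $\bar{m}=m$, i.e.\ comparing $L_{\nu_m'}$ on the outer solutions for $\nu_m'$ and for $0$; in that case both quantities under the square roots are automatically nonnegative by the Jacobi identity, so the off-diagonal pathology never occurs where the lemma is actually applied.
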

\begin{proof}
We will write $y_m$ instead of $y_{\text{ext}}(\cdot\,;p_{2j},p_{2j+1};\eps,\nu_m')$ to ease the notation. Let $T_m$ be such that $y_m(T_m)=p_{2j+1}$.
\begin{multline}\label{eq28*}
|L_{\bar{m}}(y_m)-L_{\bar{m}}(y_0)| \leq \left| \int_0^{T_m} \sqrt{\Phi_{\nu_{\bar{m}}',\eps}(y_m(t))-1} |\dot{y}_m(t)|\,dt  - \int_0^{T_0} \sqrt{\Phi_{\nu_{\bar{m}}',\eps}(y_0(t))-1} |\dot{y}_0(t)|\,dt \right| \\
+ \left| \int_0^{T_m} \langle i y_m(t),\dot{y}_m(t) \rangle\,dt - \int_0^{T_0} \langle i y_0(t),\dot{y}_0(t)\rangle \,dt \right|.
\end{multline}
We have already observed (Remark \ref{remark 7}) that $\int_0^1 \langle i u, \dot{u} \rangle$ is continuous in the weak topology of $H^1$. We know that $y_m \to y_0$ $\mathcal{C}^1$-uniformly; it is not difficult to check that consequently 
\beq\label{eq28}
y_m(T_m t) \to y_0(T_0 t) \quad \text{$\mathcal{C}^1$-uniformly in $[0,1]$},
\eeq
so that the second term in the right hand side of \eqref{eq28*} tends to $0$ as $m \to \infty$ (independently on $\bar{m}$). As far as the first term on the right hand side of \eqref{eq28*} is concerned, it results
\begin{multline}\label{eq30}
\left| \int_0^{T_m}  \sqrt{\Phi_{\nu_{\bar{m}}',\eps}(y_m(t))-1} |\dot{y}_m(t)|\,dt  - \int_0^{T_0} \sqrt{\Phi_{\nu_{\bar{m}}',\eps}(y_0(t))-1} |\dot{y}_0(t)|\,dt \right| \\
 \leq \int_0^1 \left| \sqrt{\Phi_{\nu_{\bar{m}}',\eps}(y_m(T_m t))-1} - \sqrt{\Phi_{\nu_{\bar{m}}',\eps}(y_0(T_0 t))-1} \right| |\dot{y}_m(T_m t)|\,dt  \\
+ \int_0^1 \sqrt{\Phi_{\nu_{\bar{m}}',\eps}(y_0(T_0 t))-1} \left| |\dot{y}_0(T_0 t)|-|\dot{y}_m(T_m t)| \right| \,dt.
\end{multline}
The function $\sqrt{\cdot}$ is $1/2$-H\"{o}lder continuous, so that for every $\bar{m}$
\begin{multline}\label{eq31}
\int_0^1 \left| \sqrt{\Phi_{\nu_{\bar{m}}',\eps}(y_m(T_m t))-1} - \sqrt{\Phi_{\nu_{\bar{m}}',\eps}(y_0(T_0 t))-1}\right| |\dot{y}_m(T_m t)|\,dt  \\
\leq \left(\int_0^1 \left|\sqrt{\Phi_{\nu_{\bar{m}}',\eps}(y_m(T_m t))-1} - \sqrt{\Phi_{\nu_{\bar{m}}',\eps}(y_0(T_0 t))-1}\right|^2\,dt\right)^{\frac{1}{2}} \| \dot{y}_m(T_m \cdot)\|_{L^2} \\
\leq C \left( \int_0^1 |\Phi_{\nu_{\bar{m}}',\eps}(y_m(T_m t))- \Phi_{\nu_{\bar{m}}',\eps}(y_0(T_0 t))|\,dt\right)^{\frac{1}{2}};
\end{multline}
In the last inequality, we took advantage of the uniform bound for the $L^2$ norm of outer solutions. Both $y_m$ and $y_0$ are outer solutions, therefore we can exploit the fact that $V_{\eps}$ is $\mathcal{C}^\infty$ with bounded derivatives outside $\pa B_R(0)$; using also \eqref{eq28} and the first estimate \eqref{limitazione soluzioni esterne}, we obtain
\beq\label{eq31*}
\sup_{t \in [0,1]} | \Phi_{\nu_{\bar{m}}',\eps}(y_m(T_m t))- \Phi_{\nu_{\bar{m}}',\eps}(y_0(T_0 t)) | \leq C(1 +  |\nu_{\bar{m}}'|^2) \sup_{t \in [0,1]} |y_m(T_m t)-y_0(T_0 t)| \to 0
\eeq
as $m \to \infty$, independently on $\bar{m}$ (recall that $|\nu_{\bar{m}}'| \leq \bar{\nu}'$). Furthermore, using again \eqref{eq28} it is easy to check
\beq\label{eq32}
\int_0^1 \sqrt{\Phi_{\nu_{\bar{m}}',\eps}(y_0(T_0 t))-1} \left| |\dot{y}_0(T_0 t)|-|\dot{y}_m(T_m t)| \right| \,dt  \to 0,
\eeq 
as $m \to \infty$, independently on $\bar{m}$. Collecting \eqref{eq31}, \eqref{eq31*}, \eqref{eq32} and comparing with \eqref{eq30} we deduce that also the first term on the right hand side of \eqref{eq28*} tends to $0$, uniformly in $\bar{m}$.
\end{proof}

\begin{proof}[Conclusion of the proof of the Continuity Lemma \ref{continuity lemma 2}]
The conservation of the Jacobi constant holds true both for $v_0$ and $\wt{v}$ (recall that $\wt{v}\in \mathcal{GF}_\eps$, as showed in Lemma \ref{GF_eps è debolmente chiuso}); using this property, the minimality of $\sigma_0$ and the weak lower semi-continuity of $M_0$,  we have 
\beq\label{eq22}
M_0(v_0)=L_0(v_0) \leq L_0(\wt{v}) = M_0(\wt{v}) \leq \liminf_{m \to \infty} M_0(v_m) .
\eeq
We pose $\wh{p}_1:=\wt{p}_1$ and $\wh{p}_{10}:=\wt{p}_{10}$. The minimality of $(p_2^m,\ldots,p_9^m)$ for \\ $\mathfrak{F}_{((p_1^m,p_{10}^m),(P_{k_1},\ldots,P_{k_5}),\eps,\nu_m')}$ implies that
\begin{equation}\label{eq24}
 \begin{split}
M_m(\sigma_m)  &=\sqrt{2}L_m(\s_m) \leq \sqrt{2} L_m(\s_{(\wh{p}_2,\ldots,\wh{p}_9)}^{((p_1^m,p_{10}^m),(P_{k_1},\ldots,P_{k_5}),\eps,\nu_m')} ) \\
 &\leq \sqrt{2} \left(L_m(\s_{(\wh{p}_2,\ldots,\wh{p}_9)}^{((\wh{p}_1,\wh{p}_{10}),(P_{k_1},\ldots,P_{k_5}),\eps,\nu_m')} )+ L_m(\zeta_R(\cdot\,;p_1^m,\wh{p}_1)) + L_m(\zeta_R(\cdot\,;\wh{p}_{10},p_{10}^m)) \right) \\
&=  \sqrt{2} \left( \sum_{j=0}^4 L_m(y_{P_{k_{j+1}}}(\cdot\,;\wh{p}_{2j+1},\wh{p}_{2j+2};\eps,\nu_m'))+ \sum_{j=1}^4 L_m(y_{\text{ext}}(\cdot\,;\wh{p}_{2j},\wh{p}_{2j+1};\eps,\nu_m')) \right. \\
& \quad +  L_m(\zeta_R(\cdot\,;p_1^m,\wh{p}_1)) + L_m(\zeta_R(\cdot\,;\wh{p}_{10},p_{10}^m)) \Bigg)  
 \end{split}
\end{equation}
The variational characterization of $y_{P_{k_{j+1}}}(\cdot\,;\wh{p}_{2j},\wh{p}_{2j+1};\eps,\nu_m')$ implies that 
\[
L_m(y_{P_{k_{j+1}}}(\cdot\,;\wh{p}_{2j+1},\wh{p}_{2j+2};\eps,\nu_m')) \leq L_m(y_{P_{k_{j+1}}}(\cdot\,;\wh{p}_{2j+1},\wh{p}_{2j+2};\eps,0)).
\]
Also, let us collect the uniform estimates of equation \eqref{eq16}, Lemmas \ref{continuità uniforme sugli archetti}, \ref{continuita' uniforme sulle funzioni incollate} and \ref{continuità lunghezza esterna}: for every $\l>0$ exists $m_5:= \max\{m_1,\ldots,\max\{m_4(\wh{p}_{2j},\wh{p}_{2j+1}):j =1,\ldots,4\}\}$ such that
\[
\begin{cases}
M_m(v_m) > M_0(v_m)-\l \\
\sqrt{2} L_m(\zeta_R(\cdot\,;p_1^m,\wh{p}_1)) \leq M_m(\zeta_R(\cdot\,;p_1^m,\wh{p}_1)) < 2\l\\
\sqrt{2} L_m(\zeta_R(\cdot\,;\wh{p}_{10},p_{10}^m)) \leq M_m(\zeta_R(\cdot\,;\wh{p}_{10},p_{10}^m)) < 2\l \\
L_m(y_{\text{ext}}(\cdot\,;\wh{p}_{2j},\wh{p}_{2j+1};\eps,\nu_m')) < L_m(y_{\text{ext}}(\cdot\,;\wh{p}_{2j},\wh{p}_{2j+1};\eps,0))+\l \\
M_m(v_0) < M_0(v_0) + \l
\end{cases}
\]
for every $m > m_5$. Therefore, for every $\l>0$ the chain of inequalities \eqref{eq24} gives
\begin{multline*}
M_0(\s_m)-\l \leq \sqrt{2} \left(\sum_{j=0}^4 L_m(y_{P_{k_{j+1}}}(\cdot\,;\wh{p}_{2j+1},\wh{p}_{2j+2};\eps,0)) + \sum_{j=1}^4 L_m(y_{\text{ext}}(\cdot\,;\wh{p}_{2j},\wh{p}_{2j+1};\eps,0))\right) \\
+ (1+\sqrt{2})4\l = \sqrt{2} L_m(\sigma_0) + C \l \leq M_m(\sigma_0) +C \l
\end{multline*}
if $m > m_5$. With a change of variable, we can see that the previous inequality is equivalent to
\[
M_0(v_m)-\l \leq M_m(v_0)+ C \l \Rightarrow  M_0(v_m) \leq M_0(v_0)+(C+1) \l 
\]
if $m > m_5$; since $\l$ has been arbitrarily chosen, it results $\limsup_m M_0(v_m) \leq M_0(v_0)$; comparing with \eqref{eq22} we deduce that $M_0(v_0)=M_0(\wt{v})$, and the proof is complete.
\end{proof}

\noindent \textbf{Acknowledgments:} the author is indebted with Prof. Susanna Terracini for many valuable discussions related to this problem, and with an anonymous referee for his precious suggestions. This research was partially supported by PRIN 2009 grant "Critical Point Theory and Perturbative Methods for Nonlinear Differential Equations".


\begin{thebibliography}{99}

\bibitem{AmCZ} 	
\newblock A. Ambrosetti and V. Coti Zelati,
\newblock ``Periodic Solutions of Singular Lagrangian Systems,"
\newblock Birkh\"{a}user, 1993.


\bibitem{BaFeTe} 
\newblock V. Barutello, D. L. Ferrario and S. Terracini,
\newblock \emph{On the singularities of generalized solutions to $n$-body-type problems},
\newblock Int. Math. Res. Notices IMRN, \textbf{2008}, Art. ID rnn 069, 78 pp.

\bibitem{Bo} 
\newblock S. V. Bolotin,
\newblock \emph{Nonintegrability of the $n$-center problem for $n>2$},
\newblock Mosc. Univ. Mech. Bull., \textbf{39} (1984), 24--28; translated from Vestnik Mosk. Univ. Ser. I Math. Mekh., \textbf{1984}, 65--68.

\bibitem{BoNe} 
\newblock S. V. Bolotin and P. Negrini,
\newblock \emph{Chaotic behaviour in the $3$-center problem},
\newblock J. Differential Equations, \textbf{190} (2003), 539--558.

\bibitem{DC}
\newblock M. P. Do Carmo,
\newblock ``Riemaniann Geometry,"
\newblock Series of Mathematics, Birkh\"auser, Boston, 1992.

\bibitem{Di} 
\newblock L. Dimare,
\newblock \emph{Chaotic quasi-collision trajectories in the $3$-centre problem},
\newblock Celest. Mech Dyn. Astr., \textbf{107} (2010), 427--449.


\bibitem{KlKn}
\newblock M. Klein and A. Knauf,
\newblock ``Classical Planar Scattering by Coulombic Potentials,"
\newblock Lecture Notes in Physics, Springer, 1992.

\bibitem{Kn} 
\newblock A. Knauf,
\newblock \emph{The n-centre problem of celestial mechanics for large energies},
\newblock J. Eur. Math. Soc., \textbf{4} (2002), 1--114.

\bibitem{KnTa} 
\newblock A. Knauf and I. A. Taimanov,
\newblock \emph{On the integrability of the $n$-centre problem},
\newblock Math. Ann., \textbf{331} (2005), 631--649.

\bibitem{Me}
\newblock  K. R. Meyer,
\newblock "Periodic Solutions of the $N$-body Problem,"
\newblock  Lecture Notes in Mathematics, Springer, 1999.

\bibitem{SoTe}
\newblock N. Soave and S. Terracini,
\newblock \emph{Symbolic dynamics for the $N$-centre problem at negative energies},
\newblock Discrete Contin. Dyn. Syst., \textbf{32} (2012), 3245--3301. 

\bibitem{corr} 
\newblock N. Soave and S. Terracini,
\newblock \emph{Addendum to: Symbolic dynamics for the $N$-centre problem at negative energies},
\newblock Discrete Contin. Dyn. Syst., \textbf{33} (2013), 3795--3799.

\bibitem{Vethesis}
\newblock A. Venturelli,
\newblock ``Application de la Minimisation de l'Action au Probl\`eme de N
Corps Dans le Plan e Dans l\'{E}space,"
\newblock Ph.D Thesis, University Paris VII, 2002.

\end{thebibliography}
\end{document}